\theoremstyle{definition}
\newtheorem{thm}{Theorem}[section]
\newtheorem*{thm*}{Theorem}
\newtheorem{lem}[thm]{Lemma}
\newtheorem{defn}[thm]{Definition}
\newtheorem{claim}[thm]{Claim}
\newtheorem{prop}[thm]{Proposition}
\newtheorem{cor}[thm]{Corollary}
\newtheorem{remark}[thm]{Remark}
\newtheorem{fact}[thm]{Fact}
\newtheorem{obs}[thm]{Observation}
\newtheorem{conj}[thm]{Conjecture}
\renewcommand{\subset}{\subseteq}
\newcommand{\rest}{\restriction}
\newcommand\lto{\longrightarrow}
\newcommand\force{\Vdash}
\newcommand\R{\mathbb{R}}
\newcommand\N{\mathbb{N}}
\DeclareMathOperator{\dom}{dom}
\newcommand{\set}[2]{ \left\{ #1 :\, #2 \right\} }
\newcommand{\seqq}[2]{ \left\langle #1 :\, #2\right\rangle }
\title{Borel reducibility and symmetric models}
\author{Assaf Shani}
\date{\today}
\begin{document}

\maketitle

\begin{abstract}
We develop a correspondence between Borel equivalence relations induced by closed subgroups of $S_\infty$ and weak choice principles, and apply it to prove a conjecture of Hjorth-Kechris-Louveau (1998).

For example, we show that the equivalence relation $\cong^\ast_{\omega+1,0}$ is strictly below $\cong^\ast_{\omega+1,<\omega}$ in Borel reducibility.
By results of Hjorth-Kechris-Louveau, $\cong^\ast_{\omega+1,<\omega}$ provides invariants for $\Sigma^0_{\omega+1}$ equivalence relations induced by actions of $S_\infty$, while $\cong^\ast_{\omega+1,0}$ provides invariants for $\Sigma^0_{\omega+1}$ equivalence relations induced by actions of \textit{abelian} closed subgroups of $S_\infty$.

We further apply these techniques to study the Friedman-Stanley jumps.
For example, we find an equivalence relation $F$, Borel bireducible with $=^{++}$, so that $F\rest C$ is not Borel reducible to $=^{+}$ for any non-meager set $C$. This answers a question of Zapletal, arising from the results of Kanovei-Sabok-Zapletal (2013).

For these proofs we analyze the symmetric models $M_n$, $n<\omega$, developed by Monro (1973), and extend the construction past $\omega$, through all countable ordinals.
This answers a question of Karagila (2019).
\end{abstract}

\section{Introduction}
\subsection{Background}
The notion of {\bf Borel reducibility} gives a precise way of measuring the complexity of various equivalence relations.
Given equivalence relations $E$ and $F$ on Polish spaces $X$ and $Y$ respectively, we say that a map $f\colon X\lto Y$ is a \textbf{reduction} of $E$ to $F$ if for any $x,y$ in $X$, $x\mathrel{E}y\iff f(x)\mathrel{F}f(y)$.
That is, $f$ reduces the problem of determining $E$-relation to that of $F$-relation.
Given that $f$ is \textit{definable} in a simple way, we think of $E$ as less complicated than $F$.
The common definability requirement is that $f$ is a Borel map.
Say that $E\leq_B F$ ($E$ is \textbf{Borel reducible} to $F$) if there exists some Borel reduction of $E$ to $F$.

We say that $E$ and $F$ are \textbf{Borel bireducible} (in symbols $E\sim_B F$) if $E\leq_B F$ and $F\leq_B E$.
Furthermore, $E<_BF$ means that $E\leq_B F$ and $F\not\leq_B E$, and we say that $E$ is strictly below $F$ in Borel reducibility.

Another point of view comes from the notion of classification.
A \textbf{complete classification} of an equivalence relation $E$ on $X$ is a map $c\colon X\lto I$ such that for any $x,y\in X$, $x\mathrel{E}y\iff c(x)=c(y)$, where $I$ is some set of \textbf{complete invariants}.
To be useful, such map $c$ needs to be definable in a reasonable way.
If $E$ is Borel reducible to $F$ then any set of complete invariants for $F$ can be used as a set of complete invariants for $E$, thus  the invariants required to classify $E$ are no more complicated than those required to classify $F$.

A {\bf Borel equivalence relation} $E$ on a Polish space $X$ is an equivalence relation on $X$ which is Borel as a subset of $X\times X$.
An equivalence relation is {\bf classifiable by countable structures} if it is Borel reducible to an orbit equivalence relation induced by a continuous action of a closed subgroup of $S_\infty$.
This is a wide notion of being classifiable by ``reasonably concrete'' invariants.
The {\bf Friedman-Stanley jump hierarchy}, defined below, is cofinal among the Borel equivalence relations which are classifiable by countable structures, and is used to calibrate those. 

Recall that $\cong_2$, often called $=^+$ (the first Friedman-Stanley jump), is the equivalence relation on $\R^\N$ relating two sequences $\left\langle x_0,x_1,x_2,... \right\rangle$ and $\left\langle y_0,y_1,y_2,... \right\rangle$ if for any $n$ there is some $m$ such that $x_n=y_m$, and vice versa.
That is, if the two sequences enumerate the same countable set of reals.
The map sending a sequence $\left\langle x_0,x_1,x_2,... \right\rangle$ to the unordered set $\set{x_n}{n=0,1,2,...}$ is a complete classification of $\cong_2$.
The complete invariants here are all countable sets of reals.

For a countable ordinal $\alpha$, the equivalence relation $\cong_\alpha$ is defined in a similar way so that it can be classified by invariants which are the hereditarily countable sets in $\mathcal{P}^\alpha(\N)$ (see \cite{FS89}, \cite{HKL98}).
Here $\mathcal{P}^\alpha(\N)$ is the $\alpha$-iterated powerset of the natural numbers.
For example, $\cong_3$, also called $=^{++}$ (the second Friedman-Stanley jump), is defined on $\R^{\N^2}$ so that the map
\begin{equation*}
        \seqq{x_{i,j}}{i,j\in\N}\in\R^{\N^2}\mapsto \set{\set{x_{i,j}}{j\in\N}}{i\in\N}\in\mathcal{P}^3(\N)
\end{equation*}
is a complete classification.


A Borel equivalence relation $E$ is said to be of {\bf potential complexity} $\Gamma$ ($E$ is $\mathrm{pot}(\Gamma)$), for a point class $\Gamma$ (closed under continuous preimages) if there is an equivalence relation $F$ in $\Gamma$ such that $E\leq_B F$.
We say that $\Gamma$ is {\bf the potential complexity of $E$} if it is minimal such that $E$ is $\mathrm{pot}(\Gamma)$ (see \cite{HKL98} p. 65).

In \cite{HKL98}, Hjorth, Kechris and Louveau have completely classified the possible potential complexities of Borel equivalence relations classifiable by countable structures, and found them to be precisely the point classes $\mathbf{\Delta}^0_1$, $\mathbf{\Pi}^0_1$ $\mathbf{\Sigma}^0_2$, $\mathbf{\Pi}^0_n$, $D(\mathbf{\Pi}^0_n)$ ($n\geq 3$), $\bigoplus_{\alpha<\lambda}\mathbf{\Pi}^0_\alpha$, $\mathbf{\Sigma}^0_{\lambda+1}$, $\mathbf{\Pi}^0_{\lambda+n}$, $D(\mathbf{\Pi}^0_{\lambda+n})$ ($\lambda$ limit and $n\geq 2$).
Recall that $D(\Gamma)$ is the class of all sets of the form $A\setminus B$ for $A,B\in \Gamma$.

Furthermore, for each potential class which appears in the list above they find a maximal equivalence relation, among those classifiable by countable structures \cite[Corollary 6.4]{HKL98}.
For example, $\cong_n$ is maximal $\mathbf{\Pi}^0_{n+1}$  for $n\geq 2$ and $\cong_{\lambda+n}$ is maximal $\mathbf{\Pi}^0_{\lambda+n+1}$ for limit $\lambda$ and $n\geq 1$.
For the classes $D(\mathbf{\Pi}^0_n)$, $n\geq 3$, $D(\mathbf{\Pi}^0_{\lambda+n})$ and $\mathbf{\Sigma}^0_{\lambda+1}$, $\lambda$ limit and $n\geq 2$, Hjorth-Kechris-Louveau refined the Friedman-Stanley hierarchy as follows.

\subsection{The equivalence relations of Hjorth, Kechris and Louveau}\label{sec;HKL-relations}

For $n\geq 3$ and $0\leq k\leq n-2$, the equivalence relation $\cong^\ast_{n,k}$ is defined to have complete invariants in the collection $\mathcal{P}_\ast^{n,k}(\mathbb{N})$, which is defined as follows (pages 95, 98, 99 in \cite{HKL98})\footnote{The iterated powersets $\mathcal{P}^n(\mathbb{N})$ are defined slightly differently in \cite{HKL98}, by taking $\mathcal{P}^{k+1}(\mathbb{N})=\mathcal{P}(\mathcal{P}^k(\mathbb{N})\cup\mathbb{N})$. This does not present a serious difference, see Remark~\ref{remark;cong-ast-defn}}.
The members of $\mathcal{P}_\ast^{n,k}(\mathbb{N})$ are pairs $(A,R)$ such that:
\begin{itemize}
    \item $A$ is a hereditarily countable set in $\mathcal{P}^n(\N)$ (i.e., a $\cong_n$-invariant);
    \item $R$ is a ternary relation on $A\times A\times(\mathcal{P}^k(\N)\cap \mathrm{tc}(A))$;
    \item  given any $a\in A$, for any $b,b'\in A$ and any $x\in\mathcal{P}^k(\mathbb{N})$, if $R(a,b,x)$ and $R(a,b',x)$ holds then $b=b'$. Furthermore for any $a,b\in A$ there is some $x$ such that $R(a,b,x)$ holds.
\end{itemize}
A useful intuition is as follows: if one assumes additionally that for any $a,b\in A$ there is a unique $x$ such that $R(a,b,x)$ holds, then the third condition says that $R(a,-,-)$ is an injective map. Thus $R$ allows to code $A$ into a lower rank set, uniformly in a parameter $a\in A$.
This can always be achieved if $\beta=0$. Furthermore, this will be true for most pairs $(A,R)$ we consider in this paper.
In general, we get injective maps $b\mapsto \set{x}{R(a,b,x)}$, from $A$ to $\mathcal{P}^{k+1}(\mathbb{N})$ such that the sets in the image are disjoint. (See the remarks after Claim~\ref{claim;cong-ast-invs} for how the disjointness is used.)

The idea is that the members of $\mathcal{P}_\ast^{n,k}(\mathbb{N})$ have complexity intermediate between $\mathcal{P}^n(\mathbb{N})$ and $\mathcal{P}^{n-1}(\mathbb{N})$.
A good intuition comes from countable equivalence relations: if $E$ is a countable Borel equivalence relation, then the map $x\mapsto[x]_E=A_x$ is a classification by countable structures, using invariants in $\mathcal{P}^2(\mathbb{N})$.
These invariants have additional structure: using a parameter $a\in A_x$, the set $A_x$ can be definably enumerated, and therefore coded as a single real, that is, a member of $\mathcal{P}^1(\mathbb{N})$.

The Borel equivalence relations $\cong^\ast_{n,k}$ are defined precisely so that they admit a natural complete classification with complete invariants in $\mathcal{P}_\ast^{n,k}(\mathbb{N})$. (See \cite{HKL98} p.98-99 and p.95 for an explicit Polish space $P_\ast^{n,k}$ on which $\cong^\ast_{n,k}$ is defined. 
More specifically, it is the space of countable structures coding a set in $\mathcal{P}^n(\mathbb{N})$ together with a relation satisfying the three conditions above.)
The treatment of $\cong^\ast_{n,k}$ here is in terms of invariants, and the particular coding in a Polish space does not play a role below.


For a limit ordinal $\lambda$, complete invariants for the equivalence relations $\cong^\ast_{\lambda+1,<\lambda}$, $\cong^\ast_{\lambda+1,\beta}$ for $\beta<\lambda$ and 
$\cong^\ast_{\lambda+n,\beta}$ for $n\geq 2$, $\beta\leq\lambda+n-2$ are defined in a similar way, such that for $\beta\leq\gamma$, $\cong^\ast_{\alpha,\beta}\leq_B\cong^\ast_{\alpha,\gamma}\leq_B\cong_\alpha$. (See \cite{HKL98} p.99, also p.67.)

Hjorth-Kechris-Louveau show that, among Borel equivalence relations classifiable by countable structures, $\cong^\ast_{n,n-2}$ is maximal $D(\mathbf{\Pi}^0_n)$ for $n\geq 3$, $\cong^\ast_{\lambda+1,<\lambda}$ is maximal $\mathbf{\Sigma}^0_{\lambda+1}$ and $\cong^\ast_{\lambda+n,\lambda+n-2}$ is maximal $D(\mathbf{\Pi}^0_{\lambda+n})$ for limit $\lambda$ and $n\geq 2$.
Moreover, they show that a $D(\mathbf{\Pi}^0_n)$ (respectively $\mathbf{\Sigma}^0_{\lambda+1}$, $D(\mathbf{\Pi}^0_{\lambda+n})$) equivalence relation which is induced by an action of an \textit{abelian} closed subgroup of $S_\infty$ is in fact Borel reducible to $\cong^\ast_{n,0}$ (respectively $\cong^\ast_{\lambda+1,0}$, $\cong^\ast_{\lambda+n,0}$).
They conjecture that all these hierarchies are strict with respect to Borel reducibility.

\begin{conj}[Hjorth-Kechris-Louveau \cite{HKL98}, p. 104]\label{conj;HKL}
\hfill
\begin{enumerate}
    \item For any $n\geq 3$, $l<k\leq n-2$, $\cong^\ast_{n,l}<_B\cong^\ast_{n,k}$;
    \item For limit $\lambda$, $\alpha<\beta<\lambda$, $\cong^\ast_{\lambda+1,\alpha}<_B\cong^\ast_{\lambda+1,\beta}$;
    \item For $\lambda$ limit, $n\geq 2$, $\alpha<\beta\leq\lambda+n-2$, $\cong^\ast_{\lambda+n,\alpha}<_B\cong^\ast_{\lambda+n,\beta}$.
\end{enumerate}

\end{conj}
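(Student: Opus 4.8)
The plan is to prove, in each of the three parts, only the nontrivial inequality, namely that $\cong^\ast_{n,k}\not\leq_B\cong^\ast_{n,l}$ for $l<k$ (and the corresponding statements at the limit levels), since $\cong^\ast_{\alpha,\beta}\leq_B\cong^\ast_{\alpha,\gamma}$ for $\beta\leq\gamma$ has already been noted. The argument rests on three ingredients: (i) a hierarchy of symmetric models carrying canonical ``generic invariants'' for the relations $\cong^\ast_{\alpha,\beta}$; (ii) a transfer lemma turning a hypothetical Borel reduction into a definable operation on such generic invariants; and (iii) a rigidity analysis of the models, computing exactly which invariants are ordinal-definable from a generic $\cong^\ast_{n,k}$-invariant. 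The slogan is that the extra strength of $\cong^\ast_{n,k}$ over $\cong^\ast_{n,l}$ corresponds to the failure, in the relevant symmetric model, of a weak choice principle asserting that a certain generic set injects into $\mathcal{P}^l(\N)$.

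For (i) I would begin with Monro's symmetric models $M_n$ and iterate the symmetric-extension construction through all countable ordinals, producing models $M_\alpha$ adapted to invariants lying in $\mathcal{P}^\alpha(\N)$; carrying this out answers Karagila's question as a byproduct. Inside $M_\alpha$ one isolates a canonical generic $\cong_\alpha$-invariant $A$, and then, via a further symmetric forcing, a generic $\cong^\ast_{\alpha,\beta}$-invariant $(A,R)$: one forces to add, for each $a\in A$, an injection $R(a,-,-)$ of $A$ into $\mathcal{P}^\beta(\N)$, but symmetrizes so that $(A,R)$ itself exists while no injection of $A$ into $\bigcup_{\beta'<\beta}\mathcal{P}^{\beta'}(\N)$ does. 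For (ii): given a Borel reduction $f$ of $\cong^\ast_{n,k}$ to $\cong^\ast_{n,l}$, one evaluates a Borel code for $f$ on a generic $\cong^\ast_{n,k}$-invariant $x$; by Shoenfield absoluteness this produces, in the symmetric model, a real coding a $\cong^\ast_{n,l}$-invariant $y=f(x)$ whose $\cong^\ast_{n,l}$-class is ordinal-definable from $x$ together with a ground-model real, and such that $x\,\cong^\ast_{n,k}\,x'\iff f(x)\,\cong^\ast_{n,l}\,f(x')$ for generics $x,x'$ in suitably mutually generic extensions.

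Step (iii) is where the real work lies, and is the main obstacle. One must show that, in $M_\alpha$, any $\cong^\ast_{n,l}$-invariant whose $\cong^\ast_{n,l}$-class is ordinal-definable from the generic $\cong^\ast_{n,k}$-invariant $x$ and a fixed real is in fact ``low'': the symmetry group of the forcing fixing that class and that real is still rich enough to carry $x$ to a conjugate generic $\cong^\ast_{n,k}$-invariant $x'$ with $\neg(x\,\cong^\ast_{n,k}\,x')$. Feeding such a symmetry into the transfer lemma gives $f(x)\,\cong^\ast_{n,l}\,f(x')$ together with $\neg(x\,\cong^\ast_{n,k}\,x')$, contradicting that $f$ is a reduction. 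The delicate point is controlling the $R$-component: $f$ outputs an entire pair $(A',R')$ in which $R'$ injects $A'$ into $\mathcal{P}^l(\N)$, and one must argue that such a rank-$l$ family of injections cannot coexist with $(A',R')$ faithfully recording the genuinely rank-$k$ structure carried by $x$ — which is precisely the failure of choice isolated in (i).

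The limit cases, including the announced separation $\cong^\ast_{\omega+1,0}<_B\cong^\ast_{\omega+1,<\omega}$, are handled identically, with the finite rank gap $l<k$ replaced by the gap between $\mathcal{P}^\beta(\N)$ and $\bigcup_{\beta'<\lambda}\mathcal{P}^{\beta'}(\N)$, and using the models $M_\alpha$ for $\alpha$ past $\omega$; the only additional bookkeeping is to verify that the iterated symmetric construction behaves well at limit stages, so that the relevant weak choice principle still fails in $M_\lambda$ and $M_{\lambda+n}$.
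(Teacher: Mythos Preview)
Your three-ingredient framework (symmetric models carrying generic invariants, a transfer principle from Borel reductions to definability in those models, and a symmetry/rigidity argument) is exactly the strategy the paper follows, and your step (ii) matches Lemma~\ref{lem;reduction-def-generating-set} and Claim~\ref{claim;cong-ast-invs} closely. Two points deserve correction, however.

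First, your construction of the generic $\cong^\ast_{n,k}$-invariant is not the one that makes step (iii) go through cleanly. You propose to start with a Monro invariant $A$ and force, for each $a\in A$, an injection $R(a,-,-)\colon A\to\mathcal{P}^k(\N)$, then ``symmetrize''. It is unclear what symmetrization would leave $(A,R)$ in the model while killing lower-rank injections of $A$, and it is also unclear why the range of these injections should land in $\mathrm{tc}(A)$ as the definition requires. The paper instead forces a single generic function $g\colon A^{n}\to A^{k}$ (over the Monro model $M_n$) and takes as invariant the orbit $A=\{\pi\circ g:\pi\in\Pi\}$ under the group $\Pi$ of finite permutations of $A^{k}$. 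The relation $R$ is then definable from $A$ (given $h,h'\in A$, the unique $\pi$ with $h'=\pi\circ h$ is a rank-$k$ object), so no separate symmetric forcing for $R$ is needed. The rigidity step is then the short argument of Proposition~\ref{prop;cong-ast-n-k}: a transposition $\pi^a_b$ of two elements of $A^{k}$ not in the image of a fixed condition fixes $\dot A$ but moves $\dot g$, and varying $b$ produces an injection of $A^{k}$ into a finite power of the putative $B$, contradicting that $M_k$ is not generated by a rank-$k$ set.

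Second, and more seriously, your assertion that the limit cases ``are handled identically'' with ``only additional bookkeeping'' is a genuine gap. Extending Monro's construction to stage $\omega+1$ is precisely Karagila's question, and the paper spends Section~\ref{sec;trans-jumps} on it: naively forcing a subset of $A^\omega$ (or a choice function in $\prod_n A^n$) adds new reals, and the paper's fix is a two-step construction, first adding a generic binary tree $T$ of approximations with indiscernible levels, then an infinite set of branches through $T$. Even more to the point, for part (2) the paper explicitly notes (Section~\ref{subsec;cong-ast-omega+1-k}) that the finite-case construction \emph{cannot} be repeated: one cannot force a function $A^\omega\to A^{k}$ without adding low-rank sets. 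The workaround is to modify the entire Monro tower, inserting generic functions $g_n\colon A^n\to A^{k}$ at odd stages $n>k+2$ during the construction, so that the needed group action is already present in the limit model. None of this is bookkeeping; it is where the new ideas live.
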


Hjorth-Kechris-Louveau prove that for any countable ordinal $\alpha$, $\cong^\ast_{\alpha+3,\alpha}<_B\cong^\ast_{\alpha+3,\alpha+1}$ \cite[Theorem 6.6]{HKL98}.
All other instances of the conjecture, for example, whether $\cong^\ast_{4,0}$ and $\cong^\ast_{4,1}$ are distinct, remained open.
The most important instances of the conjecture are for $\omega+1$ and $\omega+2$, in which case the results in \cite{HKL98} left open whether $\cong^\ast_{\omega+1,0}$ is different than $\cong^\ast_{\omega+1,<\omega}$ and whether $\cong^\ast_{\omega+2,0}$ is different than $\cong^\ast_{\omega+2,\omega}$.
That is, whether invariants for $\mathbf{\Sigma}^0_{\omega+1}$ (respectively $D(\mathbf{\Pi}^0_{\omega+2})$) equivalence relations induced by \textit{abelian} group actions are genuinely simpler (see \cite[p.68]{{HKL98}}). 

The central result of this paper is to verify the conjecture above.

\begin{thm}\label{thm;HKL-conj}
Conjecture~\ref{conj;HKL} is true.
\end{thm}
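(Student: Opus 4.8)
The plan is to establish the conjecture by developing the correspondence, advertised in the abstract, between Borel reducibility of the relations $\cong^\ast_{\alpha,\beta}$ and provability of weak choice principles in the symmetric models $M_\alpha$ (the Monro models, extended past $\omega$). The guiding heuristic is that a $\cong^\ast_{\alpha,\beta}$-invariant $(A,R)$ is, in essence, a countable structure of ``rank $\alpha$'' equipped with definable injections of its $\aleph_1$-many pieces into sets of rank $\beta$; the existence of a Borel reduction $\cong^\ast_{\alpha,l}\leq_B\cong^\ast_{\alpha,k}$ would then force, inside an appropriate symmetric extension, a uniform way to code generic rank-$l$ objects by rank-$k$ objects, which for $l<k$ should be ruled out by a forcing/symmetry argument. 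So the first step is to make this dictionary precise: to each $\cong^\ast_{\alpha,\beta}$ associate a homogeneous forcing and a group of automorphisms whose symmetric model $M_{\alpha,\beta}$ has, as a definable class, a set that ``is'' a generic $\cong^\ast_{\alpha,\beta}$-invariant, and to show that $E\leq_B F$ implies a corresponding definable (indeed, $\mathrm{HOD}$-of-the-parameters) embedding between the associated symmetric models.

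Second, I would prove the ``absoluteness'' or ``pullback'' lemma that converts a Borel reduction into a statement about symmetric models: if $f$ is a Borel reduction of $\cong^\ast_{\alpha,l}$ to $\cong^\ast_{\alpha,k}$, then feeding a sufficiently generic real into $f$ and passing to the symmetric model generated by its invariant yields, inside $M_{\alpha,k}$-like model, a copy of the $M_{\alpha,l}$-structure; since Borel functions are absolutely defined and the invariants are read off in a $\mathbf{\Delta}^1_1$ way from the orbit, the key point is that the symmetric model does not acquire the extra choice needed to witness the rank-$l$ invariant from rank-$k$ data. Concretely this is where the Monro-type analysis enters: one needs the sequence of models $M_\beta$, $\beta$ a countable ordinal, together with a fine understanding of which instances of countable choice / ``every set of rank $\beta$ is well-orderable relative to a rank-$\beta'$ parameter'' hold in $M_\beta$ versus $M_{\beta'}$. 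Extending Monro's construction through all countable ordinals — answering Karagila's question — is a prerequisite, and I would do this by iterating the basic symmetric extension along the ordinals with the appropriate supports and direct/inverse limits, verifying at limit stages that the relevant homogeneity and symmetry lemmas persist.

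Third, with the dictionary and the pullback lemma in hand, the conjecture reduces to a sequence of non-reducibility statements, one for each triple $(\alpha,l,k)$ with $l<k$, each of the form: the symmetric model attached to $\cong^\ast_{\alpha,k}$ does not contain a definable-with-real-parameters injection exhibiting the $\cong^\ast_{\alpha,l}$-generic object. These I would prove by a genericity/mutual-genericity argument showing that any such injection, restricted to a generic slice, would have to be captured by a small fragment of the forcing, contradicting the symmetry group acting transitively enough on the rank-$l$ side; the case $\cong^\ast_{\omega+1,0}<_B\cong^\ast_{\omega+1,<\omega}$ is isolated as the headline instance and, for the Friedman–Stanley application ($F\sim_B{=^{++}}$ with $F\rest C\not\leq_B{=^+}$ on non-meager $C$), one additionally invokes Baire-category genericity in place of (or alongside) forcing genericity, which is why the statement is phrased for non-meager rather than merely comeager or conull sets.

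I expect the main obstacle to be the limit and successor bookkeeping in the symmetric-model side: getting the models $M_\alpha$ to exist and behave correctly for \emph{all} countable $\alpha$ (not just finite), with uniform homogeneity lemmas that survive the iteration, and then matching the fine structure of ``which rank-$\beta$ choice holds'' exactly to the combinatorics of the pair $(\alpha,\beta)$ in $\cong^\ast_{\alpha,\beta}$ — in particular handling the three separate regimes of Conjecture~\ref{conj;HKL} (the finite $n$ case, the $\lambda+1$ case with its $<\lambda$ relation, and the $\lambda+n$ case) within one framework. The Borel-reducibility-to-symmetric-model translation itself is, by contrast, expected to be a clean application of the fact that Borel maps are coded by reals and computed absolutely, together with standard facts about $\mathrm{HOD}$ of a real in a symmetric extension.
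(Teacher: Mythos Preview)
Your high-level architecture matches the paper's: translate Borel reductions into statements about symmetric models generated by generic invariants, extend Monro's construction through all countable ordinals, and then rule out the relevant generating sets by a symmetry argument. But as written the proposal has two genuine gaps, and one systematic confusion.

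First, the direction is backwards throughout. You repeatedly speak of ruling out a reduction $\cong^\ast_{\alpha,l}\leq_B\cong^\ast_{\alpha,k}$ for $l<k$, and of the impossibility of ``coding generic rank-$l$ objects by rank-$k$ objects''; but that direction is trivially true, and coding low-rank objects by higher-rank ones is always possible. The content of the conjecture is $\cong^\ast_{\alpha,k}\not\leq_B\cong^\ast_{\alpha,l}$, and the correct translation is: construct a generic $\cong^\ast_{\alpha,k}$-invariant $A$ and show that $V(A)$ cannot be generated (over $V$, using only $A$ and ground-model parameters) by any $\cong^\ast_{\alpha,l}$-invariant, equivalently by any rank-$(\alpha{+}1)$ set admitting an injection into a rank-$(l{+}1)$ set. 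Your ``pullback lemma'' should therefore start from a hypothetical reduction of the \emph{larger} relation to the \emph{smaller} one, not the reverse.

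Second, and more substantively, the proof does not go through for an arbitrary generic $\cong^\ast_{\alpha,k}$-invariant; one must \emph{design} the invariant so that the symmetry argument bites. The paper does this by forcing a generic function $g\colon A^{n}\to A^{k}$ over the Monro model $V(A^{n})$ and taking $A=\{\pi\circ g:\pi\in\Pi\}$, where $\Pi$ is the group of finite-support permutations of $A^{k}$. The point is that the permutations $\pi^a_b$ swapping two elements of $A^k$ fix $A$ (and hence any $B$ definable from $A$) but move $g$; this produces an injection of $A^{k}$ into $B^m$, which, combined with the assumed injection of $B$ into a rank-$l$ set, yields a rank-$l$ generator for $V(A^k)$ and a contradiction with the Monro analysis. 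Your proposal speaks only of ``a generic $\cong^\ast_{\alpha,\beta}$-invariant'' and of ``embeddings between the associated symmetric models $M_{\alpha,\beta}$''; neither the specific group-action construction nor the reason the contradiction appears is visible, and without them the argument has no engine.

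Third, the extension of Monro past $\omega$ is not ``iterating the basic symmetric extension along the ordinals with the appropriate supports and direct/inverse limits'' --- the paper explicitly explains why the naive iteration fails at $\omega$: forcing finite partial choice functions in $\prod_n A^n$ adds new reals (e.g.\ $\{n:a_n\in a_{n+1}\}$), and adding several such branches creates further unwanted interactions. The actual fix is a two-step construction: first force an auxiliary binary tree $T$ of approximations whose nodes are provably indiscernible (via a carefully stated indiscernibility lemma for tuples of the same $\in$-type over low-rank parameters), and only then force an infinite set of branches through $T$. The indiscernibility lemmas (not ``homogeneity'' in the usual sense) are what make both steps add no low-rank sets, and they are the technical heart of the transfinite case. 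Treating this as ``limit and successor bookkeeping'' underestimates where the work lies.
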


Part (1) of the conjecture is established in Section~\ref{subsec;cong-ast-n-k}, Corollary~\ref{cor;conj-part-1}.
For parts (2) and (3), we focus on showing $\cong^\ast_{\omega+1,n}<_B\cong^\ast_{\omega+1,<\omega}$ for any $n<\omega$ and that $\cong^\ast_{\omega+2,<\omega}<_B\cong^\ast_{\omega+2,\omega}$.
These two results are dealt with in sections \ref{subsec;cong-ast-omega+1} and \ref{subsec;cong-ast-omega+2} respectively.
The general proof of parts (2) and (3) is outlined in Section~\ref{sec;general-conj-proof}.

The techniques developed for proving the above mentioned irreducibility results are flexible.
In particular, they can be adapted to show that

\begin{itemize}
    \item $\cong^\ast_{\alpha+1,\beta+1}\not\leq_B (\cong^\ast_{\alpha+1,\beta})^\omega$;
    \item $(\cong^\ast_{\alpha+1,0})^\omega\not\leq_B \cong^\ast_{\alpha+1,\beta}$;
    \item $(\cong^\ast_{\alpha+1,\beta})^\omega<_B\cong_{\alpha+1}$;
\end{itemize}
where $2\leq\alpha$, $1\leq\beta$ and $\beta+1<\alpha$.
These properties of $\cong^\ast_{\alpha+1,\beta}$, with respect to $\cong_{\alpha+1}$, are similar to the behavior of countable Borel equivalence relations with respect to $\cong_2$.

\subsection{Generic reductions and homomorphisms}\label{subsec;gen-red}


The following definition captures those equivalence relations whose complexity is based on Baire category arguments.

\begin{defn}[Kanovei-Sabok-Zapletal \cite{ksz}, Definition 1.16\footnote{Kanovei-Sabok-Zapletal study the behaviour of equivalence relations on $I$-positive sets for various ideals $I$. We only mention the case where $I$ is the meager ideal here.}]
An analytic equivalence relation $E$ is {\bf in the spectrum of the meager ideal} if there is an equivalence relation $F$ on a Polish space $Y$ which is Borel bireducible with $E$, and furthermore for any non meager set $C\subset Y$, $F\rest C$ is Borel bireducible with $E$.
\end{defn}
For example, the equivalence relations which admit a dichotomy theorem, such as $E_0$, $E_1$ and $E_0^\N$, are in the spectrum of the meager ideal, as witnessed by the standard product topology on $2^\N$, $\R^\N$ and $(2^\N)^\N$ respectively.
Moreover, this fact is crucial in the proof of the dichotomy theorems (see \cite{HKL90}, \cite{KL97} and \cite{HK97}).
Kanovei-Sabok-Zapletal \cite[Theorem 6.23]{ksz} show that $=^+$ is in the spectrum of the meager ideal, as witnessed by the standard product topology on $\R^\N$.

Very few natural equivalence relations are known to be in the spectrum of the meager ideal (see page 6 of \cite{ksz}).
In particular, Zapletal asks (private communication) whether $=^{++}$ is in the spectrum of the meager ideal, which was left open by the results of \cite{ksz}.
In fact, it was not known whether the irreducibility proof $=^{++}\not\leq_B=^+$ holds on comeager sets.
That is, the known proofs of this irreducibility, \cite{FS89}, \cite{HKL98} and \cite{zprep}, do not involve Baire category arguments.


\begin{prop}[Proposition~\ref{prop;reduction-of-++} below]
Consider $\R^{\N^2}$ equipped with the standard product topology.
There is a comeager $C\subset \R^{\N^2}$ such that $=^{++}\rest C$ \textbf{is} Borel reducible to $=^+$.
\end{prop}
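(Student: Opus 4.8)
The plan is to exploit the fact that for a comeager set of points in $\R^{\N^2}$, the double-indexed sequence $\seqq{x_{i,j}}{i,j\in\N}$ behaves ``generically'': the countable sets $B_i=\set{x_{i,j}}{j\in\N}$ are all distinct, and more importantly each $B_i$ can be recovered together with an enumeration-independent amount of extra information that lets us code the whole $=^{++}$-invariant $\set{B_i}{i\in\N}$ by a single countable set of reals, i.e.\ a $=^+$-invariant. The key observation is that a generic real in $\R^{\N^2}$ is Cohen-generic over any fixed countable model, so that for each $i$ the pair $(i$-th row, residue) is ``sufficiently random'' relative to the others. Concretely, I would take $C$ to be the comeager set of $z=\seqq{x_{i,j}}{i,j}$ such that (i) all the reals $x_{i,j}$ are pairwise distinct, (ii) for each $i$ the sequence $\seq{x_{i,j}}{j\in\N}$ is Cohen-generic over the model generated by the countably many reals $\set{x_{i',j}}{i'\neq i,\ j\in\N}$ together with a fixed real coding enough of the construction, and (iii) the rows are mutually Cohen-generic in the natural product sense. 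Each of these is comeager, so $C$ is comeager.

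The main step is to define the reduction $f\colon C\to\R^\N$. The idea is that on $C$ each unordered set $B_i$ ``knows'' the ambient point $z$ up to $=^{++}$: because $\seq{x_{i,j}}{j}$ is Cohen-generic over the rest, the set $B_i$ together with any one of its elements $x_{i,j}$ determines, by a Baire-category (genericity) argument, a canonical real $r_i$ coding the $=^{++}$-class $[z]$; indeed one can recover from $x_{i,j}$ and the surrounding structure a generic filter and hence a name for $[z]$. Then $f(z)$ will be a countable set of reals, namely $\set{r_i}{i\in\N}$ suitably amalgamated with codes for the individual $B_i$'s, arranged so that $f(z)=^+ f(z')$ iff $z=^{++}z'$. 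The ``$\Rightarrow$'' direction is the routine direction: if $z=^{++}z'$ then the two double sequences enumerate the same family of sets, and one checks the construction of $f$ respects this. The ``$\Leftarrow$'' direction uses genericity: if $f(z)=^+f(z')$ then the codes $r_i$ match up, and each $r_i$ (being extracted generically) pins down $[z]$ exactly, forcing $z=^{++}z'$.

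The hard part will be the extraction of the canonical code $r_i$ from $B_i$ in a Borel way while genuinely using comeagerness: one must turn ``$\seq{x_{i,j}}{j}$ is Cohen-generic over everything else'' into an \emph{actual} Borel function on $C$ that outputs, from the unordered set $B_i$ alone (no enumeration!), a real determined only up to the data we are allowed to use. This is where the symmetric-model / forcing correspondence of the paper should do the work: the point is that Cohen forcing to add $\seq{x_{i,j}}{j}$ has a natural automorphism group (permuting the $j$'s) whose fixed points are exactly the information carried by the unordered set $B_i$, and the symmetric submodel analysis tells us precisely what $B_i$ can and cannot compute — namely it can compute $[z]_{=^{++}}$ but not the finer data. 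I would phrase the construction of $f$ so that the verification reduces to a forcing/symmetric-model computation already available (or to be established) in the body of the paper, and then check Borelness of $f$ by noting that all the genericity-driven choices can be made by fixed Borel recipes valid off a meager set. A secondary technical point is ensuring $f$ is total and Borel on a genuinely comeager (not merely dense $G_\delta$ inside a smaller set) $C$; this is handled by absorbing all the ``bad'' configurations into a meager set using the standard fact that the set of points Cohen-generic over a fixed countable structure is comeager.
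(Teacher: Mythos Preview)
Your proposal has a genuine gap, and more importantly misses a much simpler argument. The central claim---that from the unordered set $B_i$ one can extract, in a Borel way depending only on $B_i$, a single real $r_i$ that ``codes the $=^{++}$-class $[z]$''---cannot be right as stated. By your own hypothesis the other rows are mutually Cohen-generic over $B_i$, so $B_i$ carries no information about them; nothing definable from $B_i$ alone can pin down $[z]_{=^{++}}$. Retreating to the weaker goal of merely producing a canonical real $c_i$ coding just $B_i$ (so that $\{c_i:i\in\N\}$ would be your $=^+$-invariant) also fails: that would amount to a Borel reduction of $=^+$ to equality on reals on a comeager set, which is ruled out by the Kanovei--Sabok--Zapletal result you yourself invoke (that $=^+$ retains its complexity on nonmeager sets). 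The forcing/symmetric-model machinery you bring in is the toolkit the paper uses for \emph{irreducibility} results; it is not what produces this reduction.

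The paper's proof is entirely elementary and needs none of this. Take $C=D$ to be the comeager set of $z\in\R^{\N^2}$ where all the reals $x_{i,j}$ are pairwise distinct (your condition (i) alone). Then the sets $B_i$ are pairwise \emph{disjoint}, so the family $A=\{B_i:i\in\N\}$ is completely recovered from the equivalence relation $R=\{(x,y):\exists X\in A\ (x,y\in X)\}$ on $\bigcup A$: the $B_i$ are exactly the $R$-classes. But $R$ is a set of pairs of reals, hence (via any fixed pairing function) a countable set of reals, i.e.\ a $=^+$-invariant. The map $z\mapsto$ (an enumeration of) $R$ is visibly Borel and is the desired reduction. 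No genericity, no symmetric models---just the observation that for generic $z$ the $B_i$'s are disjoint, so the ``set of sets'' is really a partition and can be coded by its graph.
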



On the other hand, we find a different presentation of $=^{++}$, susceptible to Baire-category arguments.

\begin{prop}[Proposition~\ref{prop;=++-no-reduction} below]\label{prop;=++-no-reduction-intro}
There is an equivalence relation $F$, Borel bireducible with $=^{++}$, such that $F\rest C\not\leq_B =^+$ for any nonmeager set $C$.
\end{prop}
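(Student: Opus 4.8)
The plan is to build $F$ so that $=^{++}$-invariants are "spread out" in a way that is invisible to any Borel reduction restricted to a non-meager set, by coding the second level of the $=^{++}$-invariant not as a literal countable set of countable sets of reals, but via a generic object whose information is only recoverable from a comeager set of parameters. Concretely, I would let $F$ live on a Polish space of pairs $(z, \seqq{w_i}{i\in\N})$ where $z$ codes a $=^{++}$-invariant and each $w_i$ is a sequence of "sufficiently generic" reals (e.g.\ a Cohen-generic element of $\R^\N$ over some fixed countable model containing $z$) whose role is to name the $i$-th constituent countable set of reals of the invariant coded by $z$; then declare $(z,\bar w)\mathrel{F}(z',\bar w')$ iff the decoded $=^{++}$-invariants agree. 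The forward direction, $F\sim_B=^{++}$, should be routine: one reduces $=^{++}$ to $F$ by appending generic reals (which one can do Borel-measurably on a comeager set, then patch), and reduces $F$ to $=^{++}$ by decoding, which is Borel because the decoding only asks, for each $i$, which $=^+$-class the sequence $w_i$ names, and this is arranged to be a Borel function of the data.

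The heart of the argument is the irreducibility on non-meager sets. Suppose $C\subseteq Y$ is non-meager and $g\colon C\to \R^\N$ is a Borel reduction of $F\rest C$ to $=^+$. By the Kuratowski–Ulam theorem and a standard reflection/absoluteness argument, there is a countable elementary submodel $M$ (a "Scott-type" condition) and a non-meager Borel set $B\subseteq C$ coded in $M$ on which $g$ is continuous, such that for any $M$-generic real $c\in B$, the value $g(c)$ and its $=^+$-class are computed inside $M[c]$. Now I would exploit the structure of $F$: feeding in two parameters $c_0,c_1\in B$ that are mutually generic over $M$ but code the same $=^{++}$-invariant $A$ — possible because the generic reals $w_i$ can be replaced by mutual generics without changing the decoded invariant — one gets $c_0\mathrel{F}c_1$, hence $g(c_0)\mathrel{=^+}g(c_1)$, i.e.\ they enumerate the same countable set $S$ of reals. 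But $S\in M[c_0]\cap M[c_1]$, and by mutual genericity $M[c_0]\cap M[c_1]=M$, so $S\in M$; thus $g$ restricted to the mutual-genericity fibre over $A$ takes, modulo $=^+$, only values coded in $M$. Varying $A$ over the continuum-many $=^{++}$-invariants realizable by $B$-generics over $M$ — and there are more than countably many of these, since the $w_i$-components give genuine freedom in choosing the constituent $=^+$-classes — one obtains more than $\size{M}$-many distinct $=^+$-classes among the $g$-values, a contradiction once one checks (using that distinct $F$-classes over $B$ really do map to distinct $=^+$-classes on $B$, which uses that $g$ is a reduction) that these classes are pairwise distinct and all coded in $M$. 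This mirrors the strategy by which Monro-style symmetric models separate powerset iterates, and the symmetric-model machinery developed later in the paper is what makes precise the claim "the constituent $=^+$-classes of the $F$-invariant are not in $M$ unless the coding parameter is."

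The main obstacle, which I expect to require the genuinely new input of the paper, is arranging the presentation $F$ so that the decoding map is simultaneously (i) Borel, so that $F\sim_B=^{++}$, and (ii) "generically rigid" in the sense that a generic parameter $c$ over $M$ reveals only the $=^{++}$-invariant and none of its internal $=^+$-witnessing data to $M$ — i.e.\ that the second-level structure of the invariant genuinely escapes $M$. Naively appending Cohen reals does not obviously achieve (ii), because a Borel reduction could conceivably read the invariant off the Cohen reals themselves in a way that leaks more than the $=^{++}$-class; the fix is to use the symmetric-extension analysis (the models $M_n$) to pin down exactly which sets are added and which are not, and to choose the coding so that the countable-set-of-countable-sets structure corresponds precisely to the symmetric-model-theoretic failure of choice that obstructs definable injections into lower levels. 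Once that correspondence is set up, the Baire-category argument above runs, and the contradiction is the statement that a Borel function on a non-meager set, computed in $M[c]$ for $M$-generic $c$, cannot produce uncountably many $M$-coded $=^+$-classes.
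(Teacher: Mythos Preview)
Your mutual-genericity counting argument has a genuine gap. You want $c_0,c_1\in B$ which are $F$-equivalent (so share a common $=^{++}$-invariant $A$) and simultaneously satisfy $M[c_0]\cap M[c_1]=M$. These two requirements are in tension: the invariant $A$ lies in $M[c_0]\cap M[c_1]$, so if that intersection is $M$ then $A\in M$. But $M$ is countable, so only countably many invariants $A$ arise this way, and your counting step (``continuum-many $=^{++}$-invariants realizable by $B$-generics'') collapses. Conversely, if you allow $A\notin M$, then the intersection model is strictly larger than $M$ --- in fact it is $M(A)$, the symmetric model generated by $A$ --- and there is no reason the $=^+$-invariant $S$ should land in $M$. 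Your proposed presentation $F$ (a $z$ coding the invariant, plus auxiliary generics $w_i$) does not escape this: either $z$ is shared between $c_0,c_1$ (destroying mutual genericity) or it is not (and then the invariants need not agree).

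The paper's argument replaces the counting step by a structural one. The specific $F$ is defined on $\R^\omega\times(2^\omega)^\omega$ so that, for Cohen-generic $x=(x_1,x_2)$, the invariant $A_x$ is exactly Monro's set $A^2$: the first coordinate gives a Cohen set $A^1$ of reals, the second gives characteristic functions carving out generic subsets of $A^1$. The correspondence theorem (Theorem~\ref{thm;correspondence}) then says that a reduction to $=^+$ on a non-meager set would produce a set of reals $B\in M(A^2)$ with $M(A^2)=M(B)$. Monro's analysis (Lemma~\ref{lem;gen-blass-thm}) rules this out: any set of reals in $M(A^2)$ already lives in $M(A^1)[\bar a]$ for some finite $\bar a\subset A^2$, hence cannot generate all of $M(A^2)$. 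So the contradiction is not cardinality but the failure of $M(A^2)$ to be generated by any rank-$2$ set --- precisely the symmetric-model input you gesture at in your last paragraph, but used to replace, not repair, the counting argument.
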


The equivalence relation $F$ (defined in Section~\ref{sec;topologies}) is based on a new proof of the irreducibility $=^{++}\not\leq_B =^+$, presented in Section~\ref{sec;Monro-models}.
It can be shown further that $F$ retains its complexity on non-meager sets, thus $=^{++}$ is in the spectrum of the meager ideal.
A proof will appear in future work.

Our methods further apply to study homomorphisms between equivalence relations.
Given equivalence relations $E$ and $F$ on $X$ and $Y$ respectively, a map $f\colon X\lto Y$ is a \textbf{homomorphism} from $E$ to $F$ if for any $x,y\in X$, if $xEy$ then $f(x)Ff(y)$.
A Borel homomorphism between $E$ and $F$ corresponds to a definable map between $E$-invariants and $F$-invariants which is not necessarily injective.

Kanovei-Sabok-Zapletal established the following strong structural result about homomorphisms of $=^+$.

\begin{thm}[Kanovei-Sabok-Zapletal, a corollary of Theorem 6.24 \cite{ksz}]\label{thm;ksz-thm-6-24}
Let $E$ be an analytic equivalence relations. Then either
\begin{itemize}
    \item $=^+$ is Borel reducible to $E$, or
    \item any Borel homomorphism from $=^+$ to $E$ maps a comeager set into a single $E$-class (in which case we say that $=^+$ is {\bf generically $E$-ergodic}).
\end{itemize}
\end{thm}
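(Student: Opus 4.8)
The plan is to reduce the statement to a self-referential dichotomy on $\R^\N$ and prove that by a Baire-category and forcing argument. Assume $=^+\not\leq_B E$, and let $f\colon\R^\N\lto Y$ be any Borel homomorphism from $=^+$ to $E$; I must produce a comeager set that $f$ maps into a single $E$-class. Pull $E$ back along $f$: let $R$ be the equivalence relation on $\R^\N$ given by $c\mathrel{R}c'\iff f(c)\mathrel{E}f(c')$. Then $R$ is $\mathbf{\Sigma}^1_1$, the map $f$ is itself a Borel reduction of $R$ to $E$, and $=^+\subset R$ because $f$ is a homomorphism; in particular $=^+\not\leq_B R$, since otherwise $=^+\leq_B R\leq_B E$. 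So it suffices to show: any $\mathbf{\Sigma}^1_1$ equivalence relation $R$ on $\R^\N$ with $=^+\subset R$ that has no comeager class satisfies $=^+\leq_B R$. Indeed, combined with $=^+\not\leq_B R$ this forces $R$ to have a comeager class $[c_0]_R$, and then $f$ maps the comeager set $[c_0]_R$ into the single class $[f(c_0)]_E$. (This dichotomy for $R$ — a comeager class, or $=^+\leq_B R$ — is essentially \cite[Theorem 6.24]{ksz}.)

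To prove it, pass to a comeager $G_\delta$ set $D$ on which $R$ is suitably well behaved (using that $\mathbf{\Sigma}^1_1$ sets have the Baire property), fix a countable transitive model $M$ of a large enough fragment of $\mathrm{ZFC}$ containing a code for $R$, and regard $\R^\N$ with its product topology as Cohen forcing (equivalently, the finite-support product of $\omega$ copies of Cohen forcing). For $x\in\R^\N$ generic over $M$, the set $A=\im(x)$ is an $M$-generic countable set of reals, and every re-enumeration of $A$ is $=^+$- and hence $R$-equivalent to $x$, so the $R$-class of $x$ is fixed by the coordinate-permutation action of $S_\infty$; equivalently, $[x]_R$ lies in the finite-support symmetric extension of $M$ (Monro's model, in which $A$ is present but no enumeration of $A$ is). The dichotomy records whether this symmetric model can recover $A$ up to $R$-class:
\begin{itemize}
\item If $x\mathrel{R}x'$ whenever $x$ is generic over $M$ and $x'$ is generic over $M[x]$, then, fixing one such $x_0$, every $x'$ generic over $M[x_0]$ satisfies $x'\mathrel{R}x_0$, and the set of such $x'$ is comeager; so $[x_0]_R$ is comeager and we are done. (Here one uses that ``$x\mathrel{R}x'$'' is $\mathbf{\Sigma}^1_1$, hence absolute between $M[x][x']$ and $V$, and the standard amalgamation of mutually generic Cohen reals.)
\item Otherwise there are $x$ generic over $M$ and $x'$ generic over $M[x]$ with $x\not\mathrel{R}x'$. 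By Kuratowski--Ulam the set of ``extra'' identifications $W=\set{(x,x')\in D\times D}{x\mathrel{R}x'\text{ and }x\neq^+x'}$ is then meager in $D\times D$, since a box on which $W$ was comeager would contain a mutually generic pair contradicting the case hypothesis. One now builds a Borel reduction of $=^+$ to $R$ by composing the reduction $=^+\leq_B{=^+}\rest D$ furnished by \cite[Theorem 6.23]{ksz} with a ``generic spreading'' of the input that encodes $\im(c)$ as independent Cohen-like noise, arranged so that $=^+$-inequivalent inputs are sent to pairs avoiding the meager vertical sections of $W$; composing further with $f$ yields $=^+\leq_B R\leq_B E$.
\end{itemize}

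The routine ingredients are the pullback, the $\mathbf{\Sigma}^1_1$ absoluteness, and the mutual-genericity amalgamation in the first horn. The main obstacle is the construction in the second horn: converting ``$R$ coincides with $=^+$ off a meager set of pairs'' into an honest Borel reduction defined on \emph{all} points requires a Mycielski-type fusion producing a perfect family of sequences that are simultaneously pairwise $=^+$-inequivalent with prescribed ranges, mutually generic enough to dodge every section $W_x$, and organized so that the resulting map respects $=^+$ on the diagonal. This is precisely where the specific combinatorics of the product topology on $\R^\N$, together with \cite[Theorem 6.23]{ksz}, are genuinely used, and it is the technical core of the argument of \cite{ksz}.
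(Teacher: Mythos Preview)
The paper does not prove this theorem; it is stated as a result of Kanovei--Sabok--Zapletal (a corollary of \cite[Theorem~6.24]{ksz}) and used as a black box. There is no proof in the paper to compare against.

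On your proposal itself: the reduction to a dichotomy on $\R^\N$ --- pull $E$ back along $f$ to an analytic $R\supseteq{=^+}$ and show that either $R$ has a comeager class or $=^+\leq_B R$ --- is correct, and is indeed the content of \cite[Theorem~6.24]{ksz}. Your first horn is fine. In the second horn, however, your justification that $W$ is meager is wrong as stated: the case hypothesis gives only \emph{one} mutually generic pair with $\neg(x\mathrel{R}x')$, hence a single box $[p]\times[p']$ on which $W$ is meager; a mutually generic pair lying in $W$ would satisfy $x\mathrel{R}x'$, which does not contradict your case hypothesis at all. This is easily patched (restrict to that box, using homogeneity of Cohen forcing), but the inference you wrote is backwards. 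More significantly, you openly concede that the actual construction of the reduction --- the ``generic spreading'' fusion --- is the technical core and defer it entirely to \cite{ksz}. So your proposal is an accurate outline of the Kanovei--Sabok--Zapletal strategy rather than an independent proof, which is reasonable given that the paper itself only cites the result.
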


This result cannot generalize to $=^{++}$, as there is a non-trivial homomorphism from $=^{++}$ to $=^+$.
That is, the union map, which we denote by $u$, sending a set of sets of reals $A$ to its union $\bigcup A$.
With the presentation of $=^{++}$ verifying Proposition~\ref{prop;=++-no-reduction-intro}, we can show that this is the only interesting homomorphism from $=^{++}$ to $=^+$, generically. 
\begin{wrapfigure}{r}{2.5cm}
\begin{tikzpicture}[
node/.style={}]
\node[node]      (dplus)           {$F$};
\node[node]     (uplus) [below=of dplus]
{$=^+$};
\node[node]     (fplus) [right=of uplus]
{$=^+$};

\draw[->]  (dplus.south) --node[anchor=west] {$u$} (uplus.north) ;
\draw[->] (dplus.south east) -- node[anchor=south west] {$f$} (fplus.north west)   ;
\draw[->,dashed]    (uplus.east) --node[anchor=south] {$h$} (fplus.west) ;
\end{tikzpicture}
\end{wrapfigure}

\begin{thm}[Proposition~\ref{prop;=++-unique-homo} below]\label{thm;=++-unique-homo}
There is an equivalence relation $F$, Borel bireducible with $=^{++}$ such that for any homomorphism $f$ from $F$ to $=^+$ there is a homomorphism $h$ from $=^+$ to $=^+$ such that, on a comeager set, $f(x)=^{+} h\circ u(x)$.

\end{thm}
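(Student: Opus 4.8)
The plan is to study an arbitrary Borel homomorphism $f$ from $F$ to $=^+$ by evaluating it at a sufficiently generic point of the domain of $F$, and to extract from its behaviour a single Borel rule that produces the members of $f(x)$ out of the countable set of reals $u(x)=\bigcup A$, where $A=\{A_i:i\in\N\}$ denotes the $F$-invariant (equivalently, the $=^{++}$-invariant) of $x$. Recall that $F$ is defined in Section~\ref{sec;topologies} on a Polish space $X$ carrying a topology designed so that a generic $x\in X$ presents its $=^{++}$-invariant in the manner of the Monro model analyzed in Section~\ref{sec;Monro-models}: the countable set $\bigcup A$ is present, but no enumeration of it is, and, crucially, no way to recover which reals were grouped together into the same piece $A_i$. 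Let $\mathcal G$ be the associated group of symmetries of this presentation --- the automorphisms that re-enumerate each $A_i$ and permute the pieces $A_i$ among themselves; these fix $\bigcup A$ as a set, and for generic $x$ each point in the $\mathcal G$-orbit of $x$ is $F$-related to $x$, the orbit being comeager in the relevant fibre.

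The key step is the following claim: for comeager many $x\in X$, every real $r\in f(x)$ is obtained from $u(x)$ and a finite tuple of members of $u(x)$ by a Borel procedure not depending on $x$. Fix such a generic $x$. Since $f$ is a homomorphism and each $g\in\mathcal G$ sends $x$ to a point $gx$ with $gx\mathrel{F}x$, we have $f(gx)=^+f(x)$; that is, the countable set $f(x)$ is invariant, as a set, under the induced action of $\mathcal G$. Hence each $r\in f(x)$ has a $\mathcal G$-orbit contained in $f(x)$ and, as $f(x)$ is countable, a stabilizer containing a ``finite-support'' subgroup of $\mathcal G$ (one fixing all but finitely many pieces $A_i$ and, within those, all but finitely many elements) --- here one uses that $f$ is Borel with a fixed code, so that $r$, read off the generic $x$, depends on only finitely much of the generic data in the symmetric sense. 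Being fixed by such a subgroup, together with $\mathcal G$-invariance of $f(x)$, forces $r$ into the symmetric submodel generated by $\bigcup A$, which in the present setting is precisely the statement that $r$ is computed from $\bigcup A$ together with a finite tuple of its members by a Borel function; absorbing the countably many possible such functions into one (exactly as in the proof of Proposition~\ref{prop;=++-no-reduction-intro}) we obtain a single Borel operation $\Phi$ with $f(x)\subset\set{\Phi(u(x),\bar s)}{\bar s\in u(x)^{<\omega}}$ for comeager $x$.

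Granting the claim, define $h$ on $=^+$-invariants by $h(S)=\set{\Phi(S,\bar s)}{\bar s\in S^{<\omega}}$ for a countable set of reals $S$. Then $h$ is Borel, being a countable union of applications of a Borel map; it is a homomorphism from $=^+$ to $=^+$ since $h(S)$ depends only on the set $S$; and $f(x)\subset h(u(x))$ for comeager $x$ by the claim. For the reverse inclusion up to $=^+$, a mutual-genericity argument gives that, for comeager $x$, every real of the form $\Phi(u(x),\bar s)$ that is not identically excluded does appear in $f(x)$: were this to fail on a non-meager set, $f$ would omit a real definable from $u(x)$ there, contradicting the homogeneity of the presentation used above (the rigidity on which Proposition~\ref{prop;=++-no-reduction-intro} rests). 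Intersecting the comeager sets obtained in the two directions yields $f(x)=^+h(u(x))$ on a comeager set, which is the assertion of the theorem, with $F$ the presentation constructed in Section~\ref{sec;topologies}.

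The main obstacle is the key step: passing from the Borel-homomorphism hypothesis to a statement about the generic point, and then running the symmetry argument to conclude that the members of $f(x)$ are definable from $u(x)$ alone rather than from the finer partition $A=\{A_i\}$. This is exactly where the specific construction of $F$ and the Monro-model analysis of Section~\ref{sec;Monro-models} are indispensable: one needs $\mathcal G$ rich enough that a real with a large stabilizer cannot encode the enumeration of any $A_i$ nor the labelling of the pieces --- leaving only the bare countable set $\bigcup A$ and finitely many of its elements --- while the underlying forcing must be homogeneous enough that such a real genuinely lies in the $\bigcup A$-submodel. Care is also needed in the bookkeeping of the finitely many real parameters, so that the resulting $h$ is honestly Borel and honestly valued in $=^+$-invariants (independent of any choice of enumeration), and in matching up the comeager sets for the two inclusions.
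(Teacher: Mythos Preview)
Your overall strategy---pass to a Cohen-generic $x$, use that $f$ is a homomorphism so the $=^+$-invariant $B$ of $f(x)$ is fixed setwise by the symmetry group, and read off a Borel $h$---is the right one, and matches the paper's in spirit. But you take an unnecessary detour that creates a real gap. You correctly observe that the \emph{set} $B$ is $\mathcal G$-invariant, yet you then drop to individual reals $r\in B$, extract for each a finite support in $\bigcup A$, and define $h(S)=\{\Phi(S,\bar s):\bar s\in S^{<\omega}\}$. This only gives the inclusion $B\subseteq h(A^1)$, and your ``mutual-genericity'' argument for the reverse inclusion is not a proof: there is no reason every $\Phi(A^1,\bar s)$ should land in $B$, and ``not identically excluded'' has no precise content. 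As written, your $h$ may produce a strictly larger set of reals than $f(x)$, so $f(x)=^+h(u(x))$ fails.

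The paper avoids this by never decomposing $B$ into individual reals. The key point is Lemma~\ref{lem;reduction-def-generating-set}: because $f$ is a homomorphism, $B$ is definable in $V(A^2)$ from $A^2$ and parameters in $V$ \emph{only}---no parameters from the transitive closure of $A^2$. Hence in the sense of Lemma~\ref{lem;symmetry} (applied over the base $V(A^1)$, with $A=A^2$) the support of $B$ is \emph{empty}, so $B\in V(A^1)$ and is definable there from $A^1$ and $V$-parameters alone. Now Proposition~\ref{prop;reduction-from-sym-model} converts this single definition back into a Borel homomorphism $h\colon\R^\omega\to\R^\omega$ with $B_{h(x_1)}=B$ exactly, i.e.\ $f(x)=^+h(u(x))$ on a comeager set; no separate reverse inclusion is needed. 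The moral: once you have noted that $B$ as a whole is $\mathcal G$-invariant, apply the support argument to $B$ itself rather than to its elements.
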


Similar results for the higher Friedman-Stanley jumps also follow from the results in this paper.

\subsection{}

A brief outline of our approach is as follows.
In this paper we develop a correspondence between the study of Borel equivalence relations, up to Borel reducibility, and the study of symmetric models and fragments of the axiom of choice.
Thus providing new tools for proving Borel irreducibility results (sections \ref{subsec;connection}, \ref{sec;double-brackets}).

With this translation we will see that the models developed by Monro in 1973 are closely related to the finite Friedman-Stanley jumps $\cong_n$ (sections \ref{subsec;Kinna-Wagner}, \ref{sec;Monro-models}) and use them to conclude the results about $=^{++}$ (section~\ref{sec;topologies}).
We further show how to study the relations $\cong^\ast_{n,k}$ with these methods and establish part (1) of Conjecture~\ref{conj;HKL} (section \ref{sec;cong-ast-n-k}). 

It will then be evident that a proof of Conjecture~\ref{conj;HKL} relies on extending Monro's construction past $\omega$.
The latter problem was recently asked by Karagila and is closely related to some recent developments in symmetric models (see section \ref{subsec;Kinna-Wagner}).
A considerable chunk of this paper is then devoted to solve this problem and conclude Theorem~\ref{thm;HKL-conj} (sections \ref{sec;trans-jumps}, \ref{sec;general-conj-proof}).

\subsection{A connection with symmetric models}\label{subsec;connection}

To illustrate the relationship with symmetric models we recall first the ``basic Cohen model'' in which the axiom of choice fails.
Let $x=\left<x_0,x_1,...\right>$ be a sequence of Cohen reals generic over some base model $V$ and let $A=\set{x_n}{n=0,1,...}$ be the unordered collection of these reals.
The basic Cohen model can be seen as the closure of $A$ over $V$ under definable set-theoretic operations, denoted $V(A)$.
Based on earlier results of Fraenkel and Mostowski, Cohen \cite{Coh63} shows that the set $A$ cannot be well ordered in $V(A)$, hence the axiom of choice fails.

Note that the set $A$ is simply the $=^+$-invariant of the generic sequence $x\in\R^\N$.
Let $=_\R$ be the equality relation on $\R$.
It is well known that given a real $r$ (an $=_\R$-invariant) in some generic extension of $V$, the model $V(r)$ does satisfy the axiom of choice. 
This draws a distinction between $=^+$ and $=_\R$.


More generally, the idea is as follows.
Let $E$ be an equivalence relation on a Polish space $X$, and $x\mapsto A_x$ a complete classification of $E$, witnessing that $E$ is classifiable by countable structures.
Given a generic real $x\in X$, it corresponds to a {\bf generic $E$-invariant} $A=A_x$.
We will study the set-theoretic definable closure of this $E$-invariant, $V(A)$.

Suppose $F$ is another equivalence relation, classifiable by countable structures, and there is a Borel reduction of $E$ to $F$.
This corresponds to a definable injective map between $E$-invariants and $F$-invariants.
In particular, the $E$-invariant $A$ is mapped to some $F$-invariant $B$ which is definable from $A$, and $A$ can be definably recovered from $B$.

We conclude that $B$ is in the definable closure of $A$, $V(A)$, and furthermore it generates the whole model: $V(A)=V(B)$.
We stress the contrapositive, which is the central tool used
in this paper (see Lemma~\ref{lem;reduction-def-generating-set} for a more precise statement):

\begin{thm}\label{thm;intro-corresp}
To show that there is no Borel reduction of $E$ to $F$, it suffices to find some generic $E$-invariant $A$ so that the model $V(A)$ is not generated by any $F$-invariant.
\end{thm}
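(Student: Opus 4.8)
The plan is to make precise the intuitive argument already sketched in the surrounding text, isolating exactly what structure is needed to run it. First I would fix the setup: a Borel reduction $f\colon X\lto Y$ of $E$ to $F$, together with the given complete classifications $x\mapsto A_x$ (of $E$, into some space of hereditarily countable sets/countable structures) and $y\mapsto B_y$ (of $F$). Composing, $f$ induces a Borel-definable assignment $A_x\mapsto B_{f(x)}$ which is well-defined on $E$-classes (since $f$ respects $E$ and $B$ is a complete invariant for $F$) and injective on the set of $E$-invariants (since $f$ is a reduction and $A$ is complete for $E$). The goal is to show: for a sufficiently generic $x$, the $F$-invariant $B:=B_{f(x)}$ satisfies $V(B)=V(A)$ where $A:=A_x$.

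The key step is the absoluteness/forcing-theoretic translation. Work in a model $V$ and a forcing $\mathbb{P}$ (say $\mathrm{Coll}(\omega,X)$ or whatever poset makes a sufficiently generic point of $X$) with a name $\dot x$ for the generic point. Since $f$ is Borel, it has a Borel code $c\in V$; by Borel absoluteness (or Shoenfield), $V$ and $V[G]$ agree on the graph of $f$, so $f(x)$ is computed correctly in $V[G]$, and likewise for the Borel maps $x\mapsto A_x$ and $y\mapsto B_y$. Hence $A=A_x$ and $B=B_{f(x)}$ both lie in $V[G]$ and are definable from $x$ over $V$ via these codes. Now I pass to the symmetric-type submodels: $V(A)$ is the class of sets hereditarily definable from $A$ and elements of $V$, and similarly $V(B)$. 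From the Borel map $x\mapsto A_x$ and its reduction-inverse, $A$ is definable (over $V$) from $B$: concretely, $A$ is the unique $E$-invariant $A'$ such that the image of $A'$ under the induced map is $F$-equivalent to $B$, and this is a definition using only $B$ and the Borel codes in $V$. Conversely $B$ is definable from $A$ (it is the image of $A$ under the induced map). Therefore $B\in V(A)$ and $A\in V(B)$, whence $V(A)=V(B)$, i.e.\ $V(A)$ \emph{is} generated by the $F$-invariant $B$. This is the contrapositive of the statement, so we are done; the clean formulation is exactly the promised Lemma~\ref{lem;reduction-def-generating-set}.

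The main obstacle — and the place where care is genuinely needed — is the precise meaning of ``$V(A)$ is generated by an $F$-invariant'' and the interface between Borel reducibility (a statement about all reals, with Borel witnesses) and the symmetric models (statements about definable closures in a forcing extension). Two subtleties must be handled. First, ``generic $E$-invariant'': one must specify which forcing and which genericity suffices, and check that $A_x$ as computed in the extension really is (the canonical code of) a hereditarily countable set giving a bona fide $E$-invariant — this is where the hypothesis that $E$ is classifiable by countable structures, with an \emph{actual} complete classification into $\mathcal{P}^\alpha(\N)$-type invariants, is used, rather than just a reduction into an $S_\infty$-orbit equivalence relation. Second, one needs that the induced map between invariants is not merely injective on a fixed Polish space but that its inverse is \emph{definable over $V$} on the relevant class of invariants; this again follows from Borel absoluteness applied to the Borel code of $f$ together with the Borel codes of the two classifications, but it should be stated as a lemma so that it can be quoted cleanly in the later applications (Monro models, $\cong_n$, $\cong^\ast_{n,k}$, etc.). Once these points are nailed down the proof is short; the real work of the paper is in \emph{producing} generic $E$-invariants $A$ with $V(A)$ provably not generated by any $F$-invariant, which is a separate matter carried out via the analysis of symmetric models in the later sections.
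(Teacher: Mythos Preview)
Your proposal is correct and follows essentially the same approach as the paper: take the contrapositive, use Borel absoluteness to interpret $f$, $A_x$, $B_y$ in the extension, and argue that $A$ and $B$ are each definable from the other over $V$, so $V(A)=V(B)$. The paper's Lemma~\ref{lem;reduction-def-generating-set} carries this out with the definition of $A$ from $B$ phrased via representatives (``the unique set such that for any generic $x'$, if $B_{f(x')}=B$ then $A=A_{x'}$'') rather than via an induced map on invariants, but this is only a presentational difference and you already flag the need for such care in your obstacles paragraph.
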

The converse is also true, generically. (See Theorem~\ref{thm;correspondence} for a more precise statement.)

\begin{thm}\label{thm;intro-corresp-equiv}
The following are equivalent:
\begin{itemize}
    \item There is a partial reduction of $E$ to $F$ defined on a non-meager set;
    \item There is a Cohen-generic $x\in X$ and some $F$-invariant $B$ in $V(A_x)$ such that $V(A_x)=V(B)$ and $A_x$ and $B$ are bi-definable. 
\end{itemize}
\end{thm}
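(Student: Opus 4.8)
The plan is to prove both directions of the equivalence by exploiting the genericity of $x$ and the ``definable closure'' framework. For the direction from a partial reduction to the set-theoretic statement, suppose $f\colon X \to Y$ is a Borel reduction of $E$ to $F$ defined on a non-meager Borel set $D$. Since $f$ is Borel and $D$ is non-meager, $D$ is comeager in some basic open box, so after shrinking to that box (and re-coordinatizing) we may assume $D$ is comeager. A Cohen-generic $x$ over $V$ then lands in $D$, and the value $B := A^F_{f(x)}$ (the $F$-invariant coded by $f(x)$) is definable from $x$ and the Borel code of $f$ (a ground-model parameter), hence $B \in V(A_x)$ provided we know $A_x$ already computes $x$ up to the relevant genericity --- the point is that $A_x$ and $x$ generate the same symmetric submodel in the sense that $V(A_x)$ contains $B$, and conversely, since $f$ restricted to $D$ is a reduction, $A_x$ is recoverable from $B$ in a Borel (hence definable-with-ground-parameters) way: two generic reals with the same $F$-image of $f$ have the same $E$-class, which pins down $A_x$ inside $V(B)$. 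This gives $A_x \in V(B)$, so $V(A_x) = V(B)$, and the bi-definability is exactly the statement that each of $A_x$, $B$ lies in the definable closure of the other over $V$.

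For the converse, suppose there is a Cohen-generic $x \in X$ and an $F$-invariant $B \in V(A_x)$ with $V(A_x) = V(B)$ and $A_x$, $B$ bi-definable. The bi-definability means there are formulas $\varphi$ and $\psi$ (with ground-model parameters) such that $B$ is the unique set satisfying $\varphi(B, A_x)$ and $A_x$ is the unique set satisfying $\psi(A_x, B)$, and these facts are forced by some condition $p$ in the Cohen forcing below which $x$ is generic. By homogeneity of Cohen forcing and the fact that $A_x$ is itself definable from $x$ (via the Borel classification $x \mapsto A_x$), we get that on the comeager set of generics extending $p$, the assignment $x \mapsto B$ is well-defined and Borel --- one reads off $B$ from $x$ by computing $A_x$ and then the unique witness to $\varphi$, and the forcing theorem (together with absoluteness of the relevant Borel/analytic statements) ensures this is genuinely a Borel function $g$ on a comeager set. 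Then $g$ sends $E$-classes to $F$-classes because $A_x$ determines $B$ up to equality, and it is a reduction rather than merely a homomorphism precisely because of the reverse definition $\psi$: if $f(x)$ and $f(x')$ code $F$-equivalent invariants $B$, then $A_x = \psi\text{-witness}(B) = A_{x'}$, so $x \mathrel{E} x'$. This yields a partial reduction on a non-meager (indeed comeager-in-a-box) set.

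The main obstacle I expect is making the passage between ``$B \in V(A_x)$ and bi-definable'' and ``$x \mapsto B$ is Borel on a comeager set'' fully rigorous. The subtlety is that $V(A_x)$ is a symmetric-model-style object, and membership in it is governed by \emph{hereditary} definability from $A_x$ together with invariance under a group of automorphisms; one must check that a set $B$ that is first-order definable from $A_x$ over $V$ (not just an element of $V(A_x)$) is exactly what is computed pointwise by a Borel function of $x$, uniformly for comeagerly many $x$. This requires pinning down the correspondence between the forcing language over $V$ and Borel functions on the Polish space --- essentially that a ``definable-from-$A_x$-over-$V$'' object corresponds, via the canonical name for the generic $E$-invariant, to a Borel-in-ground-parameters function on a comeager set, which is the technical heart of the $V(A)$-to-Borel-reducibility dictionary. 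Once that dictionary is set up (which is the purpose of the sections referenced as \ref{subsec;connection}, \ref{sec;double-brackets}), both directions become bookkeeping; I would therefore organize the proof to first record the precise dictionary lemma and then apply it twice, rather than proving the two implications from scratch. One should also be careful that ``partial reduction on a non-meager set'' can be upgraded to ``comeager in a basic open set'' using that Borel non-meager sets are comeager in some box, which is where the Baire category hypothesis is genuinely used.
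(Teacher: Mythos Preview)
Your outline matches the paper's approach: the theorem is the Cohen-forcing case of Theorem~\ref{thm;correspondence}, which is assembled from Lemma~\ref{lem;reduction-def-generating-set} (forward direction) and Proposition~\ref{prop;reduction-from-sym-model} (converse). A couple of points where your write-up diverges from, or is looser than, the paper's argument are worth flagging.

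In the forward direction, your justification for $B\in V(A_x)$ is garbled: you write that this holds ``provided we know $A_x$ already computes $x$,'' but $V(A_x)$ is typically strictly smaller than $V[x]$, so this is false as stated. The correct reason --- which you essentially give two sentences later, but for the reverse inclusion --- is that $B$ depends only on the $E$-class of $x$: for any generic $x'$ with $A_{x'}=A_x$ one has $f(x')\,F\,f(x)$ and hence $B_{f(x')}=B$. This is exactly how Lemma~\ref{lem;reduction-def-generating-set} defines $B$ inside $V(A)$ from $A$ and ground-model parameters alone.

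In the converse, the paper does \emph{not} try to make ``$x\mapsto B$'' Borel. The target of a reduction must be a point of $Y$, not an invariant, and there is no general Borel selector from $F$-invariants to representatives. Instead, Proposition~\ref{prop;reduction-from-sym-model} fixes some $y\in V[x]$ with $B_y=B$, takes a $P_I$-name $\dot y$ and a condition $p$ forcing the relevant definability statements, passes to a countable elementary submodel $M$, and defines $f$ on the Borel non-meager set of $M$-generics extending $p$ by $f(x')=\dot y[x']$. That interpreting a fixed name yields a Borel function is standard (Zapletal), and this is the precise content of the ``dictionary lemma'' you allude to. The verification that $f$ is a reduction then proceeds as you sketch, using that both $\phi$ (defining $B$ from $A$) and the reverse formula (defining $A$ from $B$) are forced by $p$, so two generics with the same $B$-value must have the same $A$-value.
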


\begin{remark}
The study of definable invariants and their complexity is fundamental to the theory of Borel equivalence relations.
The notion of Borel reducibility seems to capture this basic intuition (see \cite{FS89}, \cite{Kec92}, \cite{HK96}, \cite{HKL98}).
Theorems \ref{thm;intro-corresp} and \ref{thm;intro-corresp-equiv} can be seen as further justification of this idea.
Here the notion of definability is quite generous: we only ask that given a single $E$-invariant $A$, it defines in some set-theoretic way an $F$-invariant which codes $A$.
        
\end{remark}

The notion of symmetric models was first introduced by Fraenkel~\cite{Fra22} to argue for the independence of the axiom of choice and was further developed by Fraenkel \cite{Fra37}, Mostowski \cite{Mos39}, and others to study the relationship between fragments of the axiom of choice.
Cohen's method of forcing together with the earlier Fraenkel-Mostowski techniques initiated a large industry of independence results among fragments of the axiom of choice, still active today.
An encyclopedic summary of these results can be found in \cite{HR98} (see also \cite{Jec73}, \cite{Kan06}, \cite{Kan08}).

The main point we hope to convey is that the correspondence between the two fields, together with the almost hundred years of developments in symmetric models, provides a powerful tool to the study of Borel equivalence relations.
We believe that the methods developed in this paper will find further applications and contribute both to the study of Borel equivalence relations and the study of symmetric models.

\subsection{Kinna-Wagner principles}\label{subsec;Kinna-Wagner}

In this section we introduce the generalized Kinna-Wagner principles.
We review their original motivation and their modern relevance in connection with Woodin's Axiom of Choice Conjecture.

\begin{defn}[Monro \cite{Mon73}]
The \textbf{n'th Kinna-Wagner Principle} ($\mathbf{KWP}^n$) is the statement
\begin{equation*}
    \textrm{For any set }X\textrm{ there is an ordinal } \eta \textrm{ and a 1-to-1 function }f\colon X\lto \mathcal{P}^{n}(\eta).
\end{equation*}
\end{defn}
For example, $\mathrm{KWP}^0$ states that any set can be embedded in an ordinal, which is equivalent to the axiom of choice.
This definition is motivated by the following.

\textbf{Selection principle}\footnote{Often referred to as the Kinna-Wagner selection principle in the literature. According to \cite{Jec73} it was originally considered by Kuratowski.}: Suppose $X$ is a set all of whose elements are sets of size at least 2. Then there is a function $f$ defined on $X$ such that $f(x)$ is a non-empty proper subset of $x$ for any $x\in X$.

Kinna and Wagner \cite{KW55} showed that this selection principle is equivalent to $\mathrm{KWP}^1$, i.e. that any set $X$ can be embedded into the powerset of some ordinal.
Halpern and Levy \cite{HL64} show that $\mathrm{KWP}^1$ holds in the basic Cohen model.
Monro \cite{Mon73} constructed models $M_n$ in which $\mathrm{KWP}^{n-1}$ fails.

We will show that Monro's models are closely related to the finite Friedman-Stanley jumps, based on the results from Section~\ref{sec;double-brackets}.
That is, $M_n$ is of the form $V(A^n)$ where $A^n$ is a generic $\cong_{n+1}$-invariant, and is not generated by a simpler invariant.
The main difficulty will be to generalize these methods in order to study the transfinite jumps.

The principles $\mathrm{KWP}^\alpha$ are defined analogously for any ordinal $\alpha$ (see \cite{Kar16}).
A closely related notion comes from Woodin's Axiom of Choice Conjecture.
They both measure how far the model is from satisfying the axiom of choice.

\begin{defn}[See \cite{WDR16}, Definition 29]
The {\bf Axiom of Choice Conjecture} asserts in ZF, that if $\delta$ is an extendible cardinal then the axiom of choice holds in $V[G]$, where $G$ is generic over $V$ for collapsing $V_\delta$ to be countable.
\end{defn}

The axiom of choice conjecture and its relationship to Woodin's HOD conjecture were addressed in a workshop in Bristol, 2011.
In particular, the group in Bristol looked at a possible failure of the conclusion in the conjecture.
The axiom of choice conjecture implies that if $\delta$ is extendible then $\mathrm{KWP^{\delta}}$ holds.
At that time however, there was no known model in which $\mathrm{KWP}^\omega$ fails, regardless of large cardinals.
Towards that end, they sketched a construction of a model in which $\mathrm{KWP}^\alpha$ fails for all ordinals $\alpha$.
This construction was developed and analyzed by Karagila in \cite{Kar17}.

In this so called ``Bristol Model'' there are no extendible cardinals so it does not serve as a counter example to the axiom of choice conjecture.
In fact, the construction relies on certain combinatorial objects in the ground model which are incompatible with large cardinals (such as supercompacts).
Even to establish the failure of $\mathrm{KWP}^{\omega}$, some non trivial combinatorial objects are used, and the sets of high Kinna-Wagner rank do not look like invariants for Friedman-Stanley jumps.


A finer understanding of how to construct models with high failures of Kinna-Wagner principles is required to better understand the Axiom of Choice Conjecture, from this perspective.
That is, to get lower bounds for how large the cardinal $\delta$ needs to be.
We present here a different method of getting the failure of $\mathrm{KWP}^\omega$, using a significantly simpler construction than in the Bristol model and without any ground model assumptions.
Furthermore, our construction extends Monro's models and is based on Friedman-Stanley invariants.

The question of extending Monro's techniques was addressed by Karagila~\cite{Kar16}, where he casts Monro's construction as a symmetric iteration and produces a limit model $M_\omega$ satisfying $\mathrm{KWP}^\omega$ and $\neg\mathrm{KWP}^n$ for any $n<\omega$.
Karagila then asks whether Monro's construction can be continued to stage $\omega+1$ such that $\mathrm{KWP}^\omega$ fails, noting that Monro's method cannot be used directly.

\begin{thm}\label{thm;Kinna-Wagner-separation}
For any $\omega\leq\alpha<\omega_1$ there is a ``Monro-style'' model $M_\alpha=V(A^\alpha)$.
\begin{itemize}
    \item $A^\alpha$ is a generic $\cong_\alpha$-invariant;
    \item $V(A^\alpha)$ is not of the form $V(B)$ for any $\cong_\beta$-invariant $B$, $\beta<\alpha$;
    \item $V(A^{\alpha+1})\models\mathrm{KWP}^{\alpha+1}\wedge\neg\mathrm{KWP}^{\alpha}$;
    \item The construction works without any ground model assumptions.
\end{itemize}
\end{thm}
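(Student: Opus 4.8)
I would build the models $M_\alpha=V(A^\alpha)$ by recursion on $\alpha<\omega_1$ as symmetric extensions --- symmetric iterations in the sense of Karagila \cite{Kar16} --- taking the finite Monro models and Karagila's limit $M_\omega$ as the base (after reindexing so that $A^\alpha$ is a $\cong_\alpha$-invariant); the four properties are then read off using Theorem~\ref{thm;intro-corresp} and the support calculus. Concretely: starting from the basic Cohen model $V(A^2)$, where $A^2$ is a generic countable set of Cohen reals, the successor step is the Monro step --- over the symmetric system for $M_\alpha$, force with the finite-support product of $\omega$ mutually generic copies of the stage-$\alpha$ iteration, obtain mutually generic $\cong_\alpha$-invariants $\seqq{B_i}{i<\omega}$, and set $A^{\alpha+1}=\set{B_i}{i<\omega}$; the symmetry group is generated by the coordinate permutations of $\omega$ together with the stage-$\alpha$ automorphisms acting coordinatewise, and supports are finite sets of coordinates, each carrying a stage-$\alpha$ support. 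At a limit $\lambda$, fix an increasing cofinal $\seqq{\beta_k}{k<\omega}$ and take the symmetric iteration whose $k$-th block is the stage-$\beta_k$ construction, letting $A^\lambda$ code $\seqq{B_k}{k<\omega}$; since $B_k$ has $\in$-rank $\beta_k$ over $\N$ and $\sup_k\beta_k=\lambda$, the set $A^\lambda$ has $\in$-rank exactly $\lambda$ and is a generic $\cong_\lambda$-invariant.

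\textbf{Generic invariant, and no lower generating invariant.} By induction the underlying forcing is, up to a dense embedding, the canonical poset of finite approximations to an element of $\mathcal{P}^\alpha(\N)$ of rank $\alpha$, so $A^\alpha=A_x$ for a Cohen-generic $x$ in the associated Polish space --- precisely the setting of Theorem~\ref{thm;intro-corresp}. Suppose $B\in V(A^\alpha)$ is a $\cong_\beta$-invariant, $\beta<\alpha$, with $V(B)=V(A^\alpha)$; then $A^\alpha$ is definable in $V(A^\alpha)$ from $B$, finitely many members of $\mathrm{tc}(B)$, finitely many elements of $V$, and ordinals. Fixing symmetric names for these parameters with combined (finite) support $s$, a standard homogeneity argument on names shows that any automorphism $\pi$ of the iteration with $\pi\rest s=\mathrm{id}$ forces $\pi\dot A^\alpha=\dot A^\alpha$. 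But $s$ meets only finitely many of the blocks $B_i$ (respectively boundedly many of the $\beta_k$), each of $\in$-rank below $\alpha$, so --- invoking the inductive ``large symmetry'' of the lower invariants --- there is such a $\pi$ that permutes the generic blocks nontrivially and hence forces $\pi\dot A^\alpha\neq\dot A^\alpha$: a contradiction. In particular, via Theorem~\ref{thm;intro-corresp} this also re-derives $\cong_\alpha\not\leq_B\cong_\beta$ along this generic.

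\textbf{Kinna--Wagner.} Since $A^{\alpha+1}$ is a $\cong_{\alpha+1}$-invariant generating the model, every set in $V(A^{\alpha+1})$ injects definably into some $\mathcal{P}^{\alpha+1}(\eta)$: each of its elements has a finite-support symmetric name, hence is coded by a ground-model name (an ordinal, after coding) together with a finite piece of $A^{\alpha+1}$, an object of $\in$-rank at most $\alpha+1$ over the ordinals. Thus $V(A^{\alpha+1})\models\mathrm{KWP}^{\alpha+1}$. If, on the other hand, $\mathrm{KWP}^\alpha$ held in $V(A^{\alpha+1})$, the resulting injection of $\mathrm{tc}(A^{\alpha+1})$ into a $\mathcal{P}^\alpha(\eta)$ would let one code the $\in$-structure of $\mathrm{tc}(A^{\alpha+1})$ into a single set $B\in\mathcal{P}^\alpha(\eta')$ definable from $A^{\alpha+1}$ and ordinals, making $B$ a $\cong_\alpha$-invariant with $V(B)=V(A^{\alpha+1})$ --- contradicting the preceding clause at $\alpha+1$. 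Hence $V(A^{\alpha+1})\models\neg\mathrm{KWP}^\alpha$; and since the whole recursion uses only finite-support Cohen-type forcing and symmetric-system bookkeeping, it requires no hypotheses on $V$ beyond $\mathrm{ZFC}$.

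\textbf{The main obstacle.} The difficulty is entirely at the limit stages. Monro's iteration stalls at $\omega$ precisely because the honest limit $M_\omega$ satisfies $\mathrm{KWP}^\omega$; to continue to $\lambda+1$ one must build a further symmetric iteration \emph{over the symmetric model} $M_\lambda$ --- which is not a forcing extension of $V$ --- and arrange, simultaneously and uniformly at every countable limit, that (i) the tail iteration and its symmetries neither well-order nor lower the $\in$-rank of the blocks $B_k$, so that $A^{\lambda+1}$ has genuine rank $\lambda+1$, and (ii) enough homogeneity survives to run the ``large symmetry'' argument above. Organizing the supports so that this double requirement propagates through the recursion is the technical heart of the proof; the successor steps and the symmetry calculus are then routine.
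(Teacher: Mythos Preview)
Your proposal has a genuine gap. What you describe as ``the Monro step'' --- adding $\omega$ mutually generic independent $\cong_\alpha$-invariants $B_i$ and setting $A^{\alpha+1}=\{B_i\}$ --- is not Monro's construction: Monro forces over $V(A^n)$ with $P(A^n)$ to add generic \emph{subsets of the single $A^n$}, and the members of $A^{n+1}$ are not $\cong_n$-invariants in their own right. Your version already fails at the first nontrivial stage: the resulting $A^3$ is exactly the $=^{++}$-invariant of a Cohen-generic in $\R^{\omega^2}$ under the product topology, and Proposition~\ref{prop;reduction-of-++} shows that this model \emph{is} generated by a set of reals (the mutually generic $B_i$ are disjoint, so $A^3$ is recovered from the equivalence relation it induces on $\bigcup_i B_i$). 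The inductive hypothesis you need for your homogeneity argument is therefore false from the start.

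More importantly, even granting the correct finite Monro models and Karagila's $M_\omega$, the passage to $M_{\omega+1}$ is precisely the problem Karagila left open, and you supply no construction --- your final paragraph only restates the requirements. Every naive forcing over $V(A^\omega)$ adds new reals: a single choice sequence $\seqq{a_n}{n}\in\prod_n A^n$ adds $\set{n}{a_n\in a_{n+1}}$; two of them add $\set{n}{a_n=b_n}$. The paper's key idea is a genuinely new two-stage construction: first force over $V(A^\omega)$ a regular binary tree $T\subset\bigcup_n A^n$ whose tree order is $\in$ --- no low-rank sets are added because all conditions have the same type (Lemmas~\ref{lem;indiscern-Aomega} and~\ref{lem;homo-T}) --- and then force an infinite set $A^{\omega+1}$ of branches through $T$. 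The point of the tree is to deliberately \emph{restrict} the indiscernibility among its nodes just enough that finitely many branches cannot interact to create a new real (Lemmas~\ref{lem;cones-indisc} and~\ref{lem;homo-B-k}). At limits of limits $\in$ is unavailable between non-adjacent levels, so the tree relation itself must be added generically along a carefully separated ladder system (Section~\ref{subsec;cong-lambda}). None of this machinery appears in your proposal; ``organizing the supports so that this double requirement propagates'' is exactly the content that must be invented.
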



Section~\ref{sec;trans-jumps} deals with $M_{\omega+1}$.
The proof for arbitrary $\alpha$ is outlined in Section~\ref{subsec;cong-lambda}.
These models will be necessary in order to study the equivalence relations of Hjorth-Kechris-Louveau and for the proof of Theorem~\ref{thm;HKL-conj}.

\subsection*{Acknowledgements}
I would like to express my sincere gratitude to my advisor, Andrew Marks, for his guidance and support, and for numerous enlightening discussions.

I would also like to thank Yair Hayut, Menachem Magidor and Zach Norwood for many helpful conversations.

Final thanks to the referee for some helpful comments and corrections.
\section{Preliminaries}\label{sec;prelm}

We use the standard development of forcing as in \cite{Jec03}.
Our approach to symmetric models will rely on the following well known fact.

\begin{fact}[Folklore, see \cite{Mon73}, \cite{Gri75}]
Let $V$ be a ZF-model.
Suppose $A$ is a set in some generic extension of $V$.
Then there exists a {\bf minimal transitive model of ZF containing $V$ and $A$, denoted  $V(A)$.}
\end{fact}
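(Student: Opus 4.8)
The plan is to construct $V(A)$ explicitly as a class of ZF-terms, interpreted in the generic extension, that are hereditarily definable from $V$ and $A$, and then verify the required properties. First I would fix the setup: let $\mathbb{P}$ be a forcing notion in $V$ and $G\subseteq\mathbb{P}$ a $V$-generic filter with $A\in V[G]$; since $V[G]$ is a model of ZFC and a transitive model of ZF containing both $V$ and $A$ exists (namely $V[G]$ itself), the question is to extract the minimal one. The natural candidate is
\begin{equation*}
V(A)=\bigcup_{\alpha\in\mathrm{Ord}}L_\alpha(V\cup\{A\}\cup\operatorname{tc}(\{A\})),
\end{equation*}
that is, the result of running the constructible hierarchy starting from the transitive closure of $V\cup\{A\}$ as the set of urelements at stage $0$ (formally, one puts $L_0=\operatorname{tc}(V\cup\{A\})$ and takes definable power sets thereafter, or equivalently one relativizes Gödel's $L$ construction to the predicate ``$x\in V$'' together with the parameter $A$). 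I would then show this class has all the desired features.

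The key steps, in order, are as follows. (1) \textbf{$V(A)$ is transitive and contains $V$ and $A$}: transitivity is immediate since at each stage we take subsets of previously constructed sets, and $V\subseteq L_1$, $A\in L_1$ by construction. (2) \textbf{$V(A)\models\mathrm{ZF}$}: this is the standard verification that a constructible-style hierarchy built over a transitive base satisfies ZF. Extensionality, Foundation, Pairing, Union, and Infinity are routine; Separation and Replacement follow from a reflection/condensation argument internal to the hierarchy (using that each $L_\alpha(\cdots)$ is a set and the hierarchy is definable with parameters $V$ (as a class predicate) and $A$); Power Set holds because $V(A)\subseteq V[G]$ and $V[G]\models\mathrm{ZFC}$, so for any $x\in V(A)$ the set $\mathcal{P}(x)\cap V(A)$ is a set in $V[G]$, and one checks it appears at some stage $L_\alpha(\cdots)$. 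The one subtlety is that we are building over the \emph{proper class} $V$; this is handled exactly as in the construction of $\mathrm{HOD}$ or of $L(V)=V$, by treating ``$\in V$'' as a unary predicate and noting the hierarchy is still definable stage-by-stage. (3) \textbf{Minimality}: suppose $N$ is any transitive model of ZF with $V\subseteq N$ and $A\in N$. Then $\operatorname{tc}(\{A\})\subseteq N$ by transitivity, so $L_0=\operatorname{tc}(V\cup\{A\})\subseteq N$; by induction each $L_\alpha(V\cup\{A\}\cup\operatorname{tc}(\{A\}))\subseteq N$, since $N$ is closed under definable power set (a consequence of Separation in $N$) and each stage is obtained by a definable operation with parameters lying in $N$. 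Hence $V(A)\subseteq N$, which is the required minimality. This also yields uniqueness of the minimal model, justifying the notation.

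The main obstacle I expect is the Power Set axiom in step (2): unlike ZFC forcing extensions, a class defined by a definability closure over $V$ need not obviously be closed under arbitrary subsets, and one must genuinely use the ambient model $V[G]$ (or, more carefully, any ZF model containing $V$ and $A$) to see that $\mathcal{P}(x)\cap V(A)$ is already a \emph{set} and is captured at a bounded stage of the hierarchy — this is where a pigeonhole/reflection argument on the ranks of terms is needed. A secondary technical point is being careful about what ``definable'' means when the base includes the proper class $V$: one should either work with the first-order definability over structures $(L_\alpha(\cdots);\in,V\cap L_\alpha(\cdots))$, or invoke the standard machinery (as in \cite{Mon73}, \cite{Gri75}) for relative constructibility over a transitive class. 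Since this Fact is explicitly labelled folklore with references, in the paper I would give only this brief construction and minimality argument and cite \cite{Mon73} and \cite{Gri75} for the full verification of the ZF axioms.
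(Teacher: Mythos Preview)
The paper gives no proof of this Fact; it is stated as folklore with citations to \cite{Mon73} and \cite{Gri75}, and the only additional comment is the remark immediately following it that when $V=L(X)$ one has $V(A)=L(X,A)$. Your proposal to build $V(A)$ as the relativized constructible hierarchy over $V\cup\{A\}$ is exactly this standard construction, and your outline of the ZF verification and the minimality argument is correct and is the approach in the cited references. So there is nothing to compare: you have supplied a (correct, standard) proof where the paper simply cites one.
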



For the results in this paper it suffices to consider models of the form $V=L(X)$ (the Hajnal relativized L construction) for some set $X$.
In this case $V(A)$ is $L(X,A)$.

We will use standard facts about {\bf ordinal definability} (see \cite{Jec03}).
Working in some generic extension $V[G]$, let $\mathrm{HOD}(V,A)$ be the collection of all sets which are hereditarily definable from $A$, parameters from the transitive closure of $A$, and parameters from $V$.
Then $\mathrm{HOD}(V,A)$ is a model of ZF, extending $V$ and containing $A$.
In the examples considered in this paper $V(A)$ and $\mathrm{HOD}(V,A)$ will coincide.

More important for the development below is that when $\mathrm{HOD}(V,A)$ is calculated inside $V(A)$ (instead of $V[G]$), the resulting model must be $V(A)$ again.
That is: 

\begin{center}
for any $X\in V(A)$, there is a formula $\psi$, parameters $\bar{a}$ from the transitive closure of $A$ and $v\in V$ such that $X$ is the unique set satisfying $\psi(X,A,\bar{a},v)$ in $V(A)$.  
\end{center}

Equivalently, for any set $X\in V(A)$ there is a formula $\varphi$, parameters $\bar{a}$ from the transitive closure of $A$ and $v\in V$ such that $X=\set{x}{\varphi(x,A,\bar{a},v)}$.

\begin{remark}\label{remark-symmetricm-presentation}
The presentation of symmetric models in this paper, as the set-theoretic definable closure of a generic object, goes back to Halpern and Levy \cite{HL64}, and is dominant in \cite{Mon73}, \cite{Gri75} and \cite{Bla81}.
It is not the standard presentation in the literature (e.g., \cite{Jec73}, \cite{Jec03}, \cite{HR98}, \cite{Kar16}, \cite{Kar17}), but is familiar.
For example, the equivalence of the two approaches for the basic Cohen model is well known (see \cite{Jec03}).
In this paper, as done above, we emphasize this approach because it is in this form that the relationship with equivalence relations and the study of invariants is most evident (see Section~\ref{sec;double-brackets}).
\end{remark}


The following ``mutual genericity'' lemma will be used.
It generalizes the fact that if $G,H$ are $P\times P$-generic over $N$, then $N[G]\cap N[H]=N$.

\begin{lem}[Folklore]\label{lem;mutgen}
Suppose $N\subset M$ are models of ZF, $P\in N$ is a poset. If $G$ is $P$-generic over $M$, then $N[G]\cap M=N$.
\end{lem}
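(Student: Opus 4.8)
The plan is to prove the Mutual Genericity Lemma by a standard density/name-theoretic argument, exploiting that the poset $P$ lives in the small model $N$ while the genericity of $G$ is with respect to the larger model $M$. First I would fix a set $X \in N[G] \cap M$ and aim to show $X \in N$. Let $\dot{X}$ be a $P$-name in $N$ with $\dot{X}^G = X$, and let $p \in G$ force (over $M$, hence also over $N$) that $\dot{X}$ equals the canonical $M$-check name $\check{Y}$ for the actual set $X = Y \in M$. The key observation is that, since $P \in N$, the statement ``$p \Vdash_P \dot{X} = \check{z}$'' for a fixed $z$ is absolute between $N$ and $M$ (the forcing relation for $P$ over a fixed transitive model is definable from $P$ and the name), so all the data needed to reconstruct $X$ from $\dot X$ and $p$ can be located, piece by piece, inside $N$.

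The main step is the following. Working in $M$, we have $p \Vdash_P \dot X = \check Y$. I claim that in fact $Y \in N$, and that $Y = \{\, z \in N : (\exists q \leq p)\, q \Vdash_P \check z \in \dot X \,\}$ where the displayed set is computed in $N$. For the forward inclusion: if $z \in Y$, then since $p$ forces $\dot X = \check Y$ and $z \in Y$, every condition below $p$ forces $\check z \in \dot X$; such conditions exist and lie in $N$, and the forcing relation $q \Vdash_P \check z \in \dot X$ is the same whether evaluated in $N$ or $M$ because $P$, $\dot X$, $\check z$ are all in $N$. For the reverse inclusion: suppose $z \in N$ and some $q \leq p$ in $N$ forces $\check z \in \dot X$; pick (in $M$) a generic $G$ through $q$ — or rather, note that since $G$ is generic over $M$ and the set of conditions deciding ``$\check z \in \dot X$'' below $p$ is dense below $p$, and $G \ni p$, $G$ meets it; if $G$ met a condition forcing $\check z \notin \dot X$ we would get $z \notin X = Y$, contradicting... — here one must be slightly careful, so instead I would argue directly: the set $Y$, being forced equal to $\dot X$ by $p$, satisfies $z \in Y \iff p \not\Vdash \check z \notin \dot X$ is not quite right either since $p$ need not decide membership. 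The clean formulation: define $Y' := \{\, z : p \Vdash_P \check z \in \dot X\,\}$ computed in $N$; then $p \Vdash_P \check Y' \subseteq \dot X$, and since also $p \Vdash \dot X = \check Y$ with $Y \in M$, one gets $Y' \subseteq Y$. For the other direction, for each $z \in Y$, $p \Vdash \check z \in \dot X$ (as $p \Vdash \dot X = \check Y$), so $z \in Y'$; hence $Y = Y' \in N$, and therefore $X = Y \in N$.

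Thus the heart of the argument is just the absoluteness of the forcing relation for the fixed poset $P \in N$: both $N$ and $M$ compute $p \Vdash_P \check z \in \dot X$ the same way, since all parameters are in $N$. The anticipated obstacle is the same subtlety one always meets with $\in$ applied to arbitrary sets: $X$ may be a set of sets that are themselves built nontrivially from $G$, so one cannot literally take $X$ to be a set of ground-model elements. The fix is exactly as above: knowing $X \in M$ and $p \Vdash \dot X = \check Y$ forces the \emph{elements} of $X$ to be precisely those $z$ with $p \Vdash \check z \in \dot X$, a condition expressible and absolute in $N$; once the elements are pinned down in $N$ and we know $X$ is a set (by replacement in $N$, applied to the definable class $\{z : p \Vdash_P \check z \in \dot X\}$, which is a set because it is a subset of the transitive closure of the range of $\dot X$), we are done. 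No genericity of $G$ over $N$ is even needed — only genericity over $M$ to guarantee $p \in G$ behaves correctly and that $X = \dot X^G$ really equals the forced value $Y$. I would close by remarking this recovers the quoted product-forcing fact by taking $M = N[H]$ for $H$ the second coordinate.
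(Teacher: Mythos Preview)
The paper does not prove this lemma; it is labeled ``Folklore'' and stated without proof, so there is no argument to compare against. I will just comment on your proof.

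Your approach is the standard one and the key ideas are correct: pick a name $\dot X \in N$ for $X$, find $p \in G$ with $p \Vdash^M \dot X = \check X$, use absoluteness of the atomic forcing relation between $N$ and $M$, and recover $X$ inside $N$ as $Y' = \{z \in N : p \Vdash^N \check z \in \dot X\}$. But there is a real gap at the step ``for each $z \in Y$, $p \Vdash \check z \in \dot X$, so $z \in Y'$'': membership in $Y'$ requires $z \in N$, and you have not shown that the elements of $X$ lie in $N$. You correctly flag this as ``the anticipated obstacle,'' yet your proposed fix does not close it --- the predicate $p \Vdash \check z \in \dot X$ is only expressible in $N$ when $z \in N$, so characterizing the elements of $X$ by this predicate presupposes exactly what is at issue.

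The actual fix is to run the whole argument by $\in$-induction on $X$. Since $N[G]$ and $M$ are transitive, every $z \in X$ lies in $N[G] \cap M$, hence in $N$ by the inductive hypothesis; with that in hand your computation $X = Y' \in N$ goes through verbatim. (One minor quibble: your justification that $Y'$ is a set in $N$ is not quite right --- $z$ need not literally lie in the transitive closure of the range of $\dot X$ --- but a rank bound works, since any $z$ with $p \Vdash \check z \in \dot X$ has rank below the name-rank of $\dot X$.)
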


Given a poset $Q$ and an index set $I$, let $P=\mathrm{FS}(Q,I)$ be the poset of all functions $p\colon \dom p\to Q$ where $\dom p$ is a finite subset of $I$ ($P$ is the finite support product of $I$ copies of $Q$).
For $p,q\in P$, say that $p$ extends $q$ if $\dom p\supseteq \dom q$ and $p(i)$ extends $q(i)$ in $Q$ for all $i\in \dom q$.

Let $V$ be some model of ZF, $Q,I,P$ in $V$ as above.
Given a $P$-generic filter $G$ over $V$, we may think of $G$ as a function with domain $I$ so that $G(i)$ is $Q$-generic over $V$ for each $i\in I$.
As common in many symmetric model constructions, we will be interested in the unorderd collection of these generics, $\set{G(i)}{i\in I}$.

For example, if $Q$ is Cohen forcing for adding a single real in $2^\omega$ and $I=\omega$, then $P$ is Cohen forcing for adding a generic Cohen real in $(2^\omega)^\omega$. 
Given a $P$-generic $G$, each $G(i)$ can be interpreted as a generic real and $A=\set{G(i)}{i\in\omega}$ is the unordered Cohen set of reals, that is, $V(A)$ is the basic Cohen model (assuming we start with a model $V$ of ZFC).

A well known property of the basic Cohen model $V(A)$ is that for every set $X\subset V$ there is a finite set $\bar{a}\subset A$ (often called a \textbf{support} for $X$) such that $X\in V[\bar{a}]$ (see Proposition 1.2 in \cite{Bla81}).
The standard proof works in the following more general setting:

\begin{lem}[see the arguments in \cite{Bla81}, Proposition 1.2]\label{lem;symmetry}
Let $V$ be a ZF model, $I\in V$ some index set, $Q\in V$ a poset and $P=\mathrm{FS}(Q,I)$ as above.
Let $G$ be $P$-generic over $V$ and define
\begin{equation*}
    A=\set{G(i)}{i\in I}.
\end{equation*}
Suppose $X\in V(A)$ and $X\subset V$. 
Fix $\bar{a}=a_1,...,a_n$ with $a_i\in A$, $\psi$ a formula and $v\in V$ such that $X$ is defined in $V(A)$ by $\psi(X,A,\bar{a},v)$.
(see the discussion before Remark~\ref{remark-symmetricm-presentation}). 
Then 
\begin{equation*}
   X\in V[a_1,...a_n]. 
\end{equation*}
Furthermore, $X$ is definable in $V[a_1,...,a_n]$ using $P$, $a_1,...,a_n$ and $w$.
\end{lem}
\begin{proof}[Proof sketch]
Let $i_1,...,i_n\in I$ be such that $a_k=G(i_k)$. 
Suppose $p,q\in P$ agree on $\bar{a}$, that is, $p(i_k)$ and $q(i_k)$ are compatible for each $k=1,...,n$. 
Then for any $x\in V$, $p$ and $q$ cannot force incompatible statements about whether $\check{x}\in\dot X$.
(This is because we can send $p$ to a condition compatible with $q$, by applying a finite permutation of $I$, fixing $i_1,...,i_n$ and therefore fixing $\dot{X}$.)

It follows that $X$ can be defined in $V[a_1,...,a_n]$ as the set of all $x\in V$ for which there is some condition $p\in P$ such that $p(i_k)\in a_k$ for $k=1,...,n$ and $p\force \check{x}\in\dot{X}$. 
\end{proof}

Given a Polish space $X$ we can interpret it in any generic extension.
In all examples considered here it will be very clear how to do so (e.g. $X=\R^\mathbb{N}$). See \cite{Zap08}, \cite{ksz} or \cite{Kano08}.
Similarly, given a Borel function $f$ or an analytic equivalence relation $E$, we extend their definitions in the generic extension.

If $x$ and $x'$ live in some generic extensions of $V$, we say that $x\mathrel{E}x'$ if there is a big generic extension containing both $x$ and $x'$ in which it holds.
This is well defined by the usual absoluteness arguments.

Recall that an equivalence relation is \textbf{classifiable by countable structures} if it is Borel reducible to the isomorphism relation on the space of models of a countable language (see \cite[12.3]{Kano08}, \cite{Gao09}, \cite{Hjo00}).
For the purposes of this paper, the crucial property of such equivalence relations is the following.
\begin{fact}\label{fact;CBCS-abs-classification}
Suppose $E$ is an equivalence relation on $X$ which is classifiable by countable structures. Then $E$ admits a complete classification $x\mapsto A_x$ which is definable (using some set-theoretic formula) and is absolute.
That is, if $V\subseteq M$ are models of ZF then the definition of the map $x\mapsto A_x$ is still a complete classification in $M$, and furthermore for $x\in V\cap X$ the invariant $A_x$ calculated in $M$ is the same as the one calculated in $V$.
\end{fact}

The main non-example of a classification as above is the trivial one: $x\mapsto [x]_E=\set{y\in X}{x\mathrel{E}y}$.
This is definable and remains a complete classification in any extension. However, if $E$ is not a countable equivalence relation, the orbit $[x]_E$ will change if reals are added. Thus the absoluteness required above fails.

For the equivalence relations we deal with the fact above is quite clear.
For example,
the complete classification for $\cong_2$ written in the introduction is the map $x\mapsto\set{x(n)}{n\in\mathbb{N}}$ from $\mathbb{R}^\mathbb{N}$ to countable sets of reals. The calculation of $\set{x(n)}{n\in\mathbb{N}}$ from $x\in\mathbb{R}^{\mathbb{N}}$ is absolute.
Similarly the complete classification of $\cong_3$ written in the introduction satisfies the requirements in Fact~\ref{fact;CBCS-abs-classification}.
For the higher Friedman-Stanley jumps $\cong_\alpha$, any natural classification using sets in $\mathcal{P}^\alpha(\mathbb{N})$ satisfy the fact above as well.
By \cite{BK96} (also \cite{HKL98}), if $E$ is Borel and is classifiable by countable structures, then $E$ is Borel reducible to $\cong_\alpha$ for some $\alpha<\omega_1$, so again we get a classification as in the fact above.
The equivalence relations studied in this paper are Borel and all admit similarly simple complete classifications.

Generally, for an analytic equivalence relation classifiable by countable structures, the complete classification in Fact~\ref{fact;CBCS-abs-classification} is witnessed by the Scott analysis (see \cite{Hjo00}).

For an equivalence relation $E$ we say that a complete classification $x\mapsto A_x$ \textbf{witnesses that $E$ is classifiable by countable structures} if it is definable in an absolute way as in Fact~\ref{fact;CBCS-abs-classification}.




\subsection{Notation}
We use $\omega$ to denote the set of natural numbers $\N=0,1,2,...$

For a set $A$, $A^{<\omega}$ is the set of all finite sequences of elements from $A$ and $[A]^{<\aleph_0}$ is the set of all finite (unordered) subsets of $A$.

For equivalence relations $E$ and $F$, we write $E\leq F$ instead of $E\leq_B F$, as no other notion of reduction is used in this paper.

When dealing with the finite Friedman-Stanley jumps we will often use the common notation $=^{+n}$ for $\cong_{n+1}$.

When no poset is involved, we say that a set $X$ is generic over $V$ if $X$ is in some forcing extension of $V$.

If $x$ is a real in some generic extension of $V$ then $x$ is in fact $P$-generic over $V$ for some poset $P$.
In this case we write $V[x]$ for $V(x)$.

For a formula $\psi$ and a model $M$, we denote the relativization of $\psi$ to $M$ by $\psi^M$.

\section{Symmetric models and Borel reducibility}\label{sec;double-brackets}

In this section we develop the main tools which will be used to prove Theorem~\ref{thm;HKL-conj}. 
These will allow us to translate the questions into questions about symmetric models.
The correspondence will go through the {\bf double brackets model $V[[x]]_E$} defined by Kanovie-Sabok-Zapletal~\cite{ksz}.
First we introduce the double brackets model and briefly review its original use.
We then establish the relationship with symmetric models, and develop a connection with Borel reducibility.

\begin{defn}[Kanovei-Sabok-Zapletal~\cite{ksz}, Definition 3.10]\label{def;double-brackets}
Let $E$ be an analytic equivalence relation on a Polish space $X$, and let $x\in X$ be generic over V.
Then
\begin{equation*}
    V[[x]]_E=\bigcap\set{V[y]}{y\textrm{ is in some further generic extension, }y\in X\textrm{ and }xEy}.
\end{equation*}
That is, a set $b$ is in $V[[x]]_E$ if in any generic extension of $V[x]$ and any $y$ in that extension which is $E$-equivalent to $x$, $b$ is in $V[y]$.
\end{defn}

\begin{thm}[Kanovei-Sabok-Zapletal~\cite{ksz}, Theorem 3.11]\label{thm;ksz311}
Let $E$ be an analytic equivalence relation on a Polish space $X$, and let $x\in X$ be generic over V.
\begin{itemize}
    \item $V[[x]]_E$ is a model of ZF, $V\subset V[[x]]_E\subset V[x]$.
    \item If $x,y\in X$ are generic over $V$ and $xEy$, then $V[[x]]_E=V[[y]]_E$.
\end{itemize}
\end{thm}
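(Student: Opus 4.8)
The plan is to prove the two parts in the order: invariance under $E$ first, then ZF-ness, since the latter can be bootstrapped from the former. For invariance I would first rephrase Definition~\ref{def;double-brackets}: $b\in V[[x]]_E$ iff for every generic extension $W'$ of $V$ with $x\in W'$ and every $z\in W'\cap X$ with $xEz$ one has $b\in V[z]$ (this is harmless, since any generic extension of $V$ containing $x$ is, by Solovay's intermediate-model theorem, a generic extension of $V[x]$). Now suppose $x,y$ are $V$-generic with $xEy$. Using the absoluteness convention of Section~\ref{sec;prelm}, work inside a common generic extension of $V$ containing both $x$ and $y$. The generic extensions of $V$ containing $x$ and those containing $y$ are cofinal in one another, and inside any such extension $xEz\iff yEz$ by transitivity of $E$; hence the two classes of models $\{V[z]\}$ intersected to form $V[[x]]_E$ and to form $V[[y]]_E$ coincide. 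I expect only bookkeeping here.

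For the ZF statement, the inclusions $V\subset V[[x]]_E\subset V[x]$ are immediate: every $V[z]$ in the family contains $V$, and the trivial extension with $z=x$ exhibits $V[x]$ as one of them. Transitivity of $V[[x]]_E$ follows from transitivity of each $V[z]$, and it contains all ordinals, so extensionality, foundation, pairing, union and infinity are free. The real work is separation, power set and replacement. The key step I would establish is that, although $V[[x]]_E$ is an intersection over a proper class of extensions, for \emph{each} $E$-companion $y$ of $x$ the model $V[[y]]_E$ is a definable class of $V[y]$: for $a\in V[y]$,
\begin{equation*}
a\in V[[y]]_E\iff\text{for all }Q\in V[y]\text{ and }Q\text{-names }\dot z\text{ with }\force_Q(\dot z\in X\wedge\check y E\dot z):\ \force_Q\check a\in V[\dot z],
\end{equation*}
where ``$V$'' (hence ``$V[\dot z]$'') is expressed via the ground-model-definability theorem — a single parameter from $V$ suffices, and in our applications $V$ is of the form $L(\,\cdot\,)$, which makes this transparent. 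The forward direction of the equivalence just produces generics; the reverse invokes Solovay's intermediate-model theorem to realize an arbitrary companion $z$, living in a common extension $W\supseteq V[y]$, as $\dot z[h]$ for some $Q\in V[y]$ and $h$ generic over $V[y]$, using that $W$ is itself a generic extension of $V[y]$.

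Granting this, I would finish as follows. Fix $b\in V[[x]]_E$; for power set, set $S=\{a\in V[[x]]_E:a\subset b\}$. For each $E$-companion $y$ of $x$, the invariance just proved gives $V[[x]]_E=V[[y]]_E$, so $S=\{a:a\subset b\wedge V[y]\models a\in V[[y]]_E\}$, a subset of $\mathcal{P}(b)^{V[y]}$ defined by a formula over $V[y]$; by separation and power set inside $V[y]$, $S\in V[y]$. Since $y$ ranged over all companions of $x$, $S\in\bigcap_z V[z]=V[[x]]_E$, and visibly $V[[x]]_E\models S=\mathcal{P}(b)$. Separation and replacement go the same way: the required set is produced inside each $V[y]$ by relativizing the relevant instance to the $V[y]$-definable class $V[[y]]_E$, after which it lands in the intersection.

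The main obstacle I anticipate is getting the displayed characterization of $V[[y]]_E$ over $V[y]$ exactly right — in particular the reverse implication, where ground-model definability and the intermediate-model theorem are both used to reduce all companions of $y$ to $Q$-generic ones over $V[y]$. (When $E$ is classifiable by countable structures with complete invariant $x\mapsto A_x$ — the case actually used in this paper — there is a shortcut: one shows directly $V[[x]]_E=V(A_x)$, with ``$\supseteq$'' because $A_x=A_z\in V[z]$ for every companion $z$ and ``$\subseteq$'' by choosing a companion of $x$ that is generic over $V[x]$, realizes $A_x$, and is independent of a prescribed $b\notin V(A_x)$; then ZF-ness is immediate from the Fact of Section~\ref{sec;prelm}.)
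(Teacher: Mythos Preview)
The paper does not give its own proof of this theorem: it is simply quoted from \cite{ksz} as Theorem~3.11 and then used. So there is no ``paper's proof'' to compare against. Your write-up is therefore not a reconstruction of anything in this paper but an independent proof of the cited result, and as such it is essentially correct. The rephrasing of Definition~\ref{def;double-brackets} via the intermediate-model theorem is the right move, and once the intersection ranges over all $V$-generic extensions containing an $E$-companion of $x$, invariance is immediate from transitivity of $E$ and absoluteness. Your route to ZF --- showing that $V[[y]]_E$ is uniformly definable in each $V[y]$ using ground-model definability, then running separation/power set/replacement inside every $V[y]$ and intersecting --- is the standard argument and is what \cite{ksz} do.

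One small point worth tightening: in the displayed characterization of $V[[y]]_E$ inside $V[y]$, the reverse implication needs not only Solovay/Grigorieff to realize an arbitrary companion $z$ as living in a generic extension of $V[y]$, but also a density/homogeneity step to pass from ``some condition forces $\check a\notin V[\dot z]$'' to an actual companion witnessing $a\notin V[[y]]_E$. This is routine, but it is exactly the place where a careless formulation can leak.

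What the paper \emph{does} prove is the special case you flag at the end: Proposition~\ref{prop;doublebrackets-symmetric-model} shows that when $E$ is classifiable by countable structures with invariant map $x\mapsto A_x$, one has $V[[x]]_E=V(A_x)$ directly, by producing a single companion $x'$ with $V[x]\cap V[x']=V(A_x)$. For the applications in this paper that is all that is needed, and ZF then comes for free from the Fact in Section~\ref{sec;prelm}. Your general argument is strictly more than the paper requires.
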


Kanovei-Sabok-Zapletal~\cite{ksz} study canonization properties of equivalence relations with respect to various ideals on their domain.
The central tool used there is the {\bf single brackets model $V[x]_E$} (see Definition 3.2 in~\cite{ksz}), which is a model of ZFC.
The main use of the double brackets model is in Theorem 4.22 of \cite{ksz}.
Roughly speaking, they show that for certain ideals $I$ over $X$, if $x$ is $P_I$-generic then
$V[x]_E$ and $V[[x]]_E$ agree, and then invoke Theorem 3.12 of \cite{ksz} to conclude that certain reals are in $V[x]_E$. (Recall that $P_I$ is the poset of all Borel subsets of $X$ which are not in $I$.)
Similarly, in Claim 4.19 of \cite{ksz}, $V[[x]]_E$ is identified as a model of ZFC for any $P_I$ generic $x$, under certain conditions on the ideal $I$.
The ideals which satisfy such conditions are those whose forcing $P_I$ adds a simple generic extension, in the sense that the intermediate extensions are well understood, such as Sacks and Silver forcing, and their iterations.

Our approach here goes in the other direction. We will see that for various equivalence relations and natural ideals on their domain (i.e., meager and null) the double brackets model fails to satisfy choice. Moreover, many properties of the equivalence relation can be understood by studying these choiceless models.

The focus in this paper is on equivalence relations which are classifiable by countable structures.
First we show that for such equivalence relations the double brackets model admits the following simple form.
For example, let $x\in\R^\omega$ be Cohen-generic over $V$ and $A=\set{x(n)}{n\in\omega}$ its $=^+$-invariant. 
In this case the double brackets model $V[[x]]_E$ is equal to the minimal model generated by $A$, $V(A)$, which is the ``basic Cohen model''. More generally:

\begin{prop}\label{prop;doublebrackets-symmetric-model}
Assume $x\mapsto A_x$ is a complete classification of $E$ by hereditarily countable structures.
Let $x$ be some generic in the domain of $E$, then $V[[x]]_E=V(A_x)$.
Moreover, there is a generic $x'Ex$ such that
\begin{equation*}
    V(A_x)=V[x]\cap V[x']=V[[x]]_E.
\end{equation*}
\end{prop}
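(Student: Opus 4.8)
The plan is to prove both displayed equalities by sandwiching, using the double bracket model's basic properties (Theorem~\ref{thm;ksz311}) at the top, the minimality of $V(A_x)$, and the mutual genericity lemma (Lemma~\ref{lem;mutgen}) at the bottom. Write $A=A_x$, let $X$ be the Polish domain of $E$, and recall that $x\mapsto A_x$ is a Borel map with $xEy\iff A_x=A_y$, which we may read off in any generic extension.

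\emph{The inclusion $V(A)\subseteq V[[x]]_E$.} Let $y$ be in a further generic extension of $V[x]$ with $yEx$. By absoluteness of the reduction property, $A_y=A_x=A$ there, and $A_y$ is Borel-computable from $y$, so $A\in V[y]$. Since this holds for every such $y$, $A\in V[[x]]_E$. As $V[[x]]_E$ is a model of $\mathrm{ZF}$ extending $V$ (Theorem~\ref{thm;ksz311}) which contains $A$, minimality of $V(A)$ gives $V(A)\subseteq V[[x]]_E$. Likewise, since $A$ is Borel-computable from $x$ we have $A\in V[x]$ and hence $V(A)\subseteq V[x]$.

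\emph{Construction of $x'$.} Here we use that $E$ is classified by \emph{hereditarily countable} structures: there is a forcing notion $Q=Q_A\in V(A)$ — the forcing to ``generically enumerate'' the structure $A$, whose conditions are finite approximations to a system of surjections enumerating $A$ and the levels of its transitive closure in order type $\omega$ — such that a filter generic over $V(A)$ yields an element $y^\ast\in X$ with $A_{y^\ast}=A$, and $y^\ast$ and the generic filter are interdefinable over $V(A)$ (each level enumeration being recoverable from $y^\ast$, and $Q\in V(A)$ since $\mathrm{tc}(A)^{<\omega}\in V(A)$). Now force with $Q$ over $V[x]$ (legitimate, as $Q\in V(A)\subseteq V[x]$) to obtain a generic $h$, and let $x'\in X$ be the corresponding presentation of $A$. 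Since $h$ meets every dense set of $Q$ lying in $V(A)$, the enumeration is surjective and total, so $A_{x'}=A=A_x$; thus $x'Ex$. Moreover $x'$ lies in the generic extension $V[x][h]$ of $V$, hence is generic over $V$. Finally, $A=A_{x'}$ is Borel-computable from $x'$, so $V(A)\subseteq V[x']$, and by interdefinability $h\in V[x']$ and $x'\in V(A)[h]$, whence
\[
V[x']=V(A)[h].
\]

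\emph{Mutual genericity and conclusion.} Apply Lemma~\ref{lem;mutgen} with $N=V(A)$, $M=V[x]$, poset $Q$, and $G=h$: since $h$ is $Q$-generic over $M=V[x]\supseteq N=V(A)$, we get $V(A)[h]\cap V[x]=V(A)$, i.e.
\[
V[x']\cap V[x]=V(A).
\]
Now, since $x'Ex$ and both are generic over $V$, Theorem~\ref{thm;ksz311} gives $V[[x]]_E=V[[x']]_E$, and also $V[[x]]_E\subseteq V[x]$ and $V[[x']]_E\subseteq V[x']$; therefore $V[[x]]_E=V[[x']]_E\subseteq V[x]\cap V[x']=V(A)$. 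Combined with the first paragraph this yields $V[[x]]_E=V(A)$, and hence the chain $V(A)=V[x]\cap V[x']=V[[x]]_E$, as required. The one step that needs genuine care is the existence and interdefinability properties of the forcing $Q_A$ — the assertion that an element of $X$ is, up to interdefinability over $V(A)$, merely a generic enumeration of its invariant; this is exactly where ``classifiable by hereditarily countable structures'' is used, and for concrete $E$ (e.g.\ $\cong_n$, or $\cong_\alpha$) the forcing $Q_A$ is a transparent finite-condition poset for which interdefinability is immediate. The remaining ingredients are soft: absoluteness of the reduction property, and the folklore mutual genericity lemma.
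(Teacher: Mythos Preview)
Your proof is correct and follows essentially the same route as the paper: sandwich $V(A)\subseteq V[[x]]_E\subseteq V[x]\cap V[x']$, force a generic enumeration of $A$ over $V[x]$ using a poset in $V(A)$, and apply the mutual genericity lemma to collapse the intersection to $V(A)$.

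One small point of comparison is worth noting. You work to establish the \emph{equality} $V[x']=V(A)[h]$ via interdefinability of $x'$ and the generic $h$, and you correctly flag this as the delicate step. The paper avoids this entirely: it only needs the inclusion $V[x']\subseteq V(A)[g]$, which follows immediately once you know $x'\in V(A)[g]$. To obtain such an $x'$ in the abstract setting (where $X$ need not literally be $\mathrm{Mod}(\sigma)$), the paper argues by $\Sigma^1_1$-absoluteness: the generic enumeration $g$ lies in $\mathrm{Mod}(\sigma)$ with $g\cong h(x)$, and the statement $\exists x'\in X\,(h(x')\cong g)$ is analytic and true in $V[x][g]$, hence already witnessed in $V(A)[g]$. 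This sidesteps the interdefinability issue altogether and works uniformly for any classification by countable structures, not just the concrete cases $\cong_n$, $\cong_\alpha$ where your direct approach is transparent. Your argument goes through as written, but you could simplify by dropping the interdefinability claim and using only $x'\in V(A)[h]$.
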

The proof is based on the following claim, which gives a way to force a representative of an invariant $A$ over $V(A)$.
\begin{claim}\label{claim;forcing-rep}
Let $E$ and $x\mapsto A_x$ be as above. 
Let $x\in\dom E$ be a real in some generic extension and $A=A_x$.
Then there is a poset $P$ and $P$-name $\dot{g}$ in $V(A)$ such that in any $P$-generic extension of $V(A)$, $A_g=A$.
\end{claim}
\begin{proof}
Grigorieff \cite{Gri75} proved the following: for any generic extension $V[G]$ and any $A\in V[G]$, the model $V[G]$ is a generic extension of $V(A)$ (see also \cite{Zap01}).
Apply this to $A\in V[x]$. Let $P\in V(A)$ and $G$ a $P$-generic such that $V[x]=V[G]$. Fix a $P$-name $\dot{g}$ such that $\dot{g}[G]=x$.
Working over some condition which forces that $A_{\dot{g}} =A$, the claim follows. (Note that the statement $A_y =A$ is absolute, because this is a classification by countable structures.)

We note that finding the poset $P$ in our case is simple.
For example, in the basic Cohen model as above, one can force by finite approximations to add an enumeration $g$ of $A$.

\end{proof}

\begin{proof}[Proof of Proposition~\ref{prop;doublebrackets-symmetric-model}]
From the definitions, $V(A_x)\subset V[[x]]_E\subset V[x]\cap V[x']$, for any $x'Ex$.
It suffices to find some $x'$ in a generic extension such that $x'Ex$ and $V[x]\cap V[x'
]\subset V(A_x)$.
Let $A=A_x$ and $P$ be the poset from the claim.
Let $G$ be $P$-generic over $V[x]$ (hence also over $V(A)$) and $x'=\dot{g}[G]$.
By Lemma~\ref{lem;mutgen}, $V(A)[G]\cap V[x]=V(A)$.
In $V[x][x']$, $A_x=A=A_{x'}$ (by absoluteness of the map $y\mapsto A_y$), thus $xEx'$.
Furthermore, $V[x]\cap V[x']\subset V[x]\cap V(A)[G]=V(A)$.
\end{proof}

The conclusion that $V[[x]]_E$ can be represented as an intersection of only two models is true for all orbit equivalence relations. 
This can be used to prove a theorem of Kechris and Louveau \cite{KL97}, that $E_1$ is not reducible to an orbit equivalence relation (see \cite{Sha19}).

Next we establish the connection with Borel reducibility.

\begin{lem}\label{lem;reduction-doublebrackets}
Suppose $E$ and $F$ are Borel equivalence relations on $X$ and $Y$ respectively, and $f\colon X\lto Y$ is a Borel map.
Let $x\in X$ be some generic real.
\begin{itemize}
    \item If $f$ is a homomorphism, then $V[[f(x)]]_F\subset V[[x]]_E$;
    \item If $f$ is a reduction then $V[[x]]_E=V[[f(x)]]_F$.
\end{itemize}
\end{lem}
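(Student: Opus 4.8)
The plan is to unwind the definition of $V[[x]]_E$ directly. The key observation is that if $f$ is a Borel homomorphism from $E$ to $F$, then it preserves the relevant equivalence classes in all generic extensions (by the absoluteness conventions set up in the Preliminaries), and moreover $f(x)$ is definable from $x$ in a Borel, hence absolute, way.

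\textbf{First part (homomorphism).} Suppose $f$ is a homomorphism. I want to show $V[[f(x)]]_F \subset V[[x]]_E$. So let $b \in V[[f(x)]]_F$; I must show that $b \in V[y']$ for every $y'$ in a further generic extension with $y' E x$. Given such a $y'$, apply $f$: since $f$ is a homomorphism and $y' E x$ (interpreted in a big generic extension containing both), we get $f(y') F f(x)$. Moreover $f(y') \in V[y']$ since $f$ is Borel, so applying the definition of $V[[f(x)]]_F$ to the $F$-equivalent point $f(y')$ gives $b \in V[f(y')] \subset V[y']$. Since $y'$ was arbitrary, $b \in V[[x]]_E$.

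\textbf{Second part (reduction).} If $f$ is a reduction, then in particular it is a homomorphism, so the first part gives $V[[f(x)]]_F \subset V[[x]]_E$. For the reverse inclusion, I need to reverse the roles, which is where being a reduction (not just a homomorphism) matters: the point is that the $E$-class of $x$ is determined by the $F$-class of $f(x)$. Concretely, let $b \in V[[x]]_E$; to show $b \in V[[f(x)]]_F$, take any $z$ in a further generic extension with $z F f(x)$. Here is the subtle step: I want to produce some $y$ with $y E x$ and $V[y] \subset V[z]$, or at least show $b \in V[z]$. Passing to a further generic extension of $V[x][z]$, one can force (as in Claim~\ref{claim;forcing-rep} and the proof of Proposition~\ref{prop;doublebrackets-symmetric-model}, or more simply by collapsing enough cardinals) to find $y$ in that extension with $f(y) F z$; since $z F f(x)$ this gives $f(y) F f(x)$, and because $f$ is a \emph{reduction}, $y E x$. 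Then $b \in V[[x]]_E$ yields $b \in V[y]$. It remains to see $V[y] \subset V[z]$ — but this need not hold for the particular $y$ we found, so instead one argues at the level of the intersection: $b \in V[y]$ holds for \emph{every} such $y$, and by a mutual-genericity argument (Lemma~\ref{lem;mutgen}) the intersection of $V[y]$ over all such $y$ in extensions of $V[z]$ is contained in $V[z]$. This shows $b \in V[z]$, and since $z$ was arbitrary, $b \in V[[f(x)]]_F$.

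\textbf{Main obstacle.} The routine direction is the homomorphism inclusion; the delicate point is the reverse inclusion for reductions, specifically the claim that one can recover (in a further generic extension) a point $y$ with $f(y) \mathrel{F} z$ whenever $z \mathrel{F} f(x)$, and then control $V[z]$ via an intersection argument. This requires that $f(x)$ lie in the range of $f$ up to $F$-equivalence in a suitably absolute sense and that the forcing used to recover $y$ can be taken inside $V[z]$ — essentially the same mechanism as in Claim~\ref{claim;forcing-rep}. Once that recovery step is in place, the rest is bookkeeping with the definition of the double brackets model and Lemma~\ref{lem;mutgen}.
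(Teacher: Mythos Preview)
Your first part is correct and essentially identical to the paper's argument.

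For the second part, you are working much too hard, and there is a gap in what you wrote. The paper's argument is a one-liner: given $z$ in a further extension with $z\mathrel{F}f(x)$, the statement $\exists x'\,\bigl(f(x')\mathrel{F}z\bigr)$ is $\Sigma^1_1$ in $z$ together with the Borel codes for $f$ and $F$ (which lie in $V$). It is true in $V[x][z]$, witnessed by $x'=x$, so by Mostowski absoluteness it is already true in $V[z]$. Thus there is $x'\in V[z]$ with $f(x')\mathrel{F}z$; since $f$ is a reduction, $x'\mathrel{E}x$, and therefore $V[[x]]_E=V[[x']]_E\subset V[x']\subset V[z]$. No forcing, no mutual genericity, no Claim~\ref{claim;forcing-rep} (which in any case concerns equivalence relations classifiable by countable structures and is irrelevant here).

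Your detour can be made to work, but not as written. You say you force over $V[x][z]$ to obtain $y$, yet the mutual-genericity step you invoke would only land you back in $V[x][z]$, not in $V[z]$. To run the intersection argument you would need to force over $V[z]$ itself and know that the resulting extension contains a witness $y$ with $f(y)\mathrel{F}z$; justifying that (e.g., via the fact that $V[x][z]$ is a generic extension of $V[z]$, so some condition forces the existence of such $y$) is exactly where absoluteness enters anyway. So the simplification is not just cosmetic: the absoluteness step is the actual content, and once you use it you get $x'\in V[z]$ for free and the rest of your machinery becomes unnecessary.
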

\begin{proof}
Assume $f$ is a homomorphism.
Let $x'$ be $E$-equivalent to $x$, in some further generic extension.
Then $f(x')Ff(x)$.
Thus
\begin{equation*}
V[[f(x)]]_F=V[[f(x')]]_F\subset V[f(x')]\subset V[x'],
\end{equation*}
for any $x'Ex$. 
It follows that $V[[f(x)]]_F\subset V[[x]]_E$.

Assume further that $f$ is a reduction.
Let $y$ be $F$-equivalent to $f(x)$ in some further generic extension.
By absoluteness for the statement $\exists x'(f(x')Fy)$, we can find such $x'$ in $V[y]$.
Since $f$ is a reduction, it follows that $x'Ex$.
As before
\begin{equation*}
    V[[x]]_E=V[[x']]_E\subset V[x']\subset V[y].
\end{equation*}
Since this is true for any $yFf(x)$, it follows that $V[[x]]_E\subset V[[f(x)]]_F$.
\end{proof}

More can be said when $E$ and $F$ are classifiable by countable structures.
\begin{lem}\label{lem;reduction-def-generating-set}
Suppose $E$ and $F$ are Borel equivalence relations on $X$ and $Y$ respectively and $x\mapsto A_x$ and $y\mapsto B_y$ are classifications by countable structures of $E$ and $F$ respectively.
Let $f\colon X\lto Y$ be a Borel map.
Suppose $x\in X$ is some generic real and let $A=A_x$ and $B=B_{f(x)}$.

\begin{itemize}
    \item If $f$ is a homomorphism, then $B\in V(A)$ is definable in $V(A)$ using only $A$ and parameters from $V$. That is, there is a formula $\psi$ and $v\in V$ such that $B$ is the unique set for which $\psi(B,A,v)$ holds.
    \item If $f$ is a reduction, then $V(A)=V(B)$.
    Furthermore, $A$ is definable using only $B$ and parameters from $V$.
\end{itemize}
\end{lem}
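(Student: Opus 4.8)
\textbf{Proof plan for Lemma~\ref{lem;reduction-def-generating-set}.}
The plan is to derive this lemma by combining Lemma~\ref{lem;reduction-doublebrackets} with Proposition~\ref{prop;doublebrackets-symmetric-model}, plus the characterization of $V(A)$ as its own internal $\mathrm{HOD}$ recalled in Section~\ref{sec;prelm}. First I would invoke Proposition~\ref{prop;doublebrackets-symmetric-model} for both $E$ and $F$: since $x\mapsto A_x$ and $y\mapsto B_y$ are classifications by countable structures, we have $V[[x]]_E=V(A_x)=V(A)$ and $V[[f(x)]]_F=V(B_{f(x)})=V(B)$. So the two bullets of Lemma~\ref{lem;reduction-doublebrackets} immediately give, in the homomorphism case, $V(B)\subset V(A)$, and in the reduction case, $V(A)=V(B)$. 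The remaining content is the definability clauses, which are not merely the inclusion of models but the stronger statement that the invariant is \emph{definable from the other invariant alone} (with ground-model parameters, and crucially no parameters from $\mathrm{tc}(A)$ beyond $A$ itself).

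For the homomorphism case: we know $B\in V(A)$. By the internal-$\mathrm{HOD}$ property of $V(A)$ recalled in Section~\ref{sec;prelm}, there is a formula $\psi_0$, finitely many parameters $\bar a$ from $\mathrm{tc}(A)$, and $v\in V$, with $B$ the unique set satisfying $\psi_0(B,A,\bar a,v)$ in $V(A)$. The point to argue is that the parameters $\bar a$ can be eliminated. Here I would use that $B=B_{f(x)}$ is obtained from $x$ by a \emph{Borel} function $f$ followed by the classification map $y\mapsto B_y$, all of which are defined from ground-model parameters (Borel codes) only; and that $A=A_x$ likewise. So $B$ is definable from $x$ and $V$-parameters, hence — passing through the forcing relation — definable from the name for $A$ (equivalently, from $A$ together with the poset $P$ of Claim~\ref{claim;forcing-rep}, which lies in $V(A)$) and $V$-parameters. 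Concretely: in $V(A)$, $B$ is the unique set $b$ such that $P$ forces (over $V(A)$) that "the generic $g\in\mathrm{Mod}(\sigma)$ codes a real $x'$ with $B_{f(x')}=\check b$"; this is a definition using only $A$, $P$, and the Borel codes for $f$ and the classifications, i.e. only $A$ and $V$-parameters. One checks this is well-defined using the homogeneity of $P$ (a finite-support product, as in Lemma~\ref{lem;symmetry}) so that the choice of generic $g$ is irrelevant, together with the absoluteness arguments already used in the proof of Proposition~\ref{prop;doublebrackets-symmetric-model}.

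For the reduction case: $V(A)=V(B)$ follows from Lemma~\ref{lem;reduction-doublebrackets} and Proposition~\ref{prop;doublebrackets-symmetric-model} as above, and the definability of $B$ from $A$ and $V$-parameters is exactly the homomorphism case. It remains to see $A$ is definable from $B$ and $V$-parameters alone. Since $A\in V(A)=V(B)$, the internal-$\mathrm{HOD}$ property again gives a definition of $A$ from $B$, finitely many parameters from $\mathrm{tc}(B)$, and $V$. To remove the $\mathrm{tc}(B)$-parameters I would run the symmetric argument: a reduction $f$ of $E$ to $F$ gives, by absoluteness of $\exists x'(f(x')\,F\,y)$, a "partial inverse" definable over $V(B)$ — given $B$, one forces an enumeration $g'$ of $B$ over $V(B)$ (Claim~\ref{claim;forcing-rep} applied to $F$), recovers $x'$ with $f(x')\,F\,g'$ hence $x'\,E\,x$, and sets $A$ to be the unique $a$ forced to equal $A_{x'}$; again this uses only $B$, the forcing poset, and Borel codes, and is independent of the generic by homogeneity. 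The main obstacle is precisely this parameter-elimination: making rigorous that all the relevant objects ($f$, the classification maps, the forcing posets of Claim~\ref{claim;forcing-rep}) are definable over $V(A)$ (resp. $V(B)$) from the invariant and $V$ alone, and that the homogeneity of the enumeration forcing makes the resulting definitions independent of the generic choice — this is where the finite-support-product symmetry of Lemma~\ref{lem;symmetry} and the mutual-genericity Lemma~\ref{lem;mutgen} do the real work.
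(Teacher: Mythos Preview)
Your proposal is correct, but it takes a longer route than the paper does. The paper's proof is a one-liner: it simply writes down the defining formula directly. For the homomorphism case, $B$ is defined in $V(A)$ as the unique set such that for every generic $x'$ with $A_{x'}=A$ one has $B=B_{f(x')}$; this works immediately because $A_{x'}=A$ means $x'Ex$, hence $f(x')Ff(x)$, hence $B_{f(x')}=B_{f(x)}=B$. The reduction case is symmetric: $A$ is the unique set such that for every generic $x'$ with $B_{f(x')}=B$ one has $A=A_{x'}$, well-defined because $f$ reflects equivalence. The only parameters used are the Borel codes for $f$ and the classification maps, all in $V$. The equality $V(A)=V(B)$ then falls out of the mutual definability, without appealing to Lemma~\ref{lem;reduction-doublebrackets} or Proposition~\ref{prop;doublebrackets-symmetric-model}.

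Your detour---first invoking the internal $\mathrm{HOD}$ description to get a definition with parameters from $\mathrm{tc}(A)$, and then arguing via the enumeration forcing of Claim~\ref{claim;forcing-rep} and homogeneity that these parameters can be eliminated---does arrive at essentially the same definition in the end (your ``$P$ forces $B_{f(x')}=\check b$'' is a version of the paper's formula). But the $\mathrm{HOD}$ step and the parameter-elimination concern are unnecessary: the direct definition never introduces $\mathrm{tc}(A)$-parameters in the first place, and its well-definedness follows from the homomorphism/reduction property alone, not from any homogeneity of the enumeration poset. Neither Lemma~\ref{lem;symmetry} nor Lemma~\ref{lem;mutgen} is needed here.
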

\begin{proof}
Assume that $f$ is a homomorphism.
$B$ can be defined in $V(A)$ as the unique set such for any generic $x'$, if $A_{x'}=A$ then $B=B_{f(x')}$. 

If $f$ is a reduction, 
$A$ can be defined in $V(B)$ as the unique set such that for any generic $x'$, if $B_{f(x')}=B$ then $A=A_{x'}$. 
\end{proof}

If $f$ is a partial Borel function, the conclusions of the lemmas above still hold, as long as the generic $x$ lies in the domain of $f$.
In particular, if $I$ is a proper ideal over $X$ (see \cite{Zap08}) and $f$ as above is only defined on some $I$-positive Borel set, then the conclusion holds for any $P_I$-generic $x$ in that set.
Here $P_I$ is the poset of all Borel $I$-positive sets, ordered by inclusion.
The converse is also true.

\begin{prop}\label{prop;reduction-from-sym-model}
Suppose $E$ and $F$ are as above and $I$ is a proper ideal over $X$.
Let $x$ be a $P_I$-generic, $A=A_x$.
Assume there is a $B\in V(A)$, definable in $V(A)$ using only $A$ and parameters from $V$, such that $B$ is an $F$-invariant in $V[x]$ (i.e.,  there is some $y\in V[x]$ such that $B_y=B$). Then
\begin{itemize}
    \item There is a partial Borel map $f\colon X\lto Y$, defined on an $I$-positive set, such that $f$ is a homomorphism, $x\in\dom f$, and $B=B_{f(x)}$.
    \item Furthermore, if $V(A)=V(B)$ and $A$ is definable using only $B$ and parameters from $V$, then there is an $f$ as above which is a partial reduction.
\end{itemize}
\end{prop}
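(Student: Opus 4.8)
The plan is to reverse-engineer the construction in Lemma~\ref{lem;reduction-def-generating-set} by using a forcing argument over the symmetric model $V(A)$, together with an absoluteness and homogeneity argument to pull a definable reduction down to a Borel partial function on an $I$-positive set. The key point is that $V(A)$, being a definable-closure model of the kind discussed in Section~\ref{sec;prelm}, is itself the $\mathrm{HOD}(V,A)$ computed inside $V(A)$, so the hypothesis that $B$ is definable in $V(A)$ from $A$ and $v\in V$ alone gives a \emph{name-free} description of $B$; and by Claim~\ref{claim;forcing-rep}, $x$ (or rather $h(x)$, an enumeration of $A$) can be recovered by forcing over $V(A)$, so $V[x]$ is a generic extension of $V(A)$.

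First I would fix the $P_I$-name $\dot A$ for $A=A_x$ and the formula $\psi$ with parameter $v\in V$ witnessing that $B$ is the unique set in $V(A)$ with $\psi(B,A,v)$. Since $\psi$ defines $B$ uniformly from $A$ over the model $V(A)$, and $V(A')=V(A)$ whenever $A'=A_{x'}$ for $x'Ex$ (Proposition~\ref{prop;doublebrackets-symmetric-model}), the formula in fact defines an $F$-invariant uniformly from the $E$-invariant. Concretely, I would define $f$ on the $I$-positive set $D$ of $P_I$-generic reals by: $f(x)$ is (a fixed Borel choice of) a $y\in Y$ such that $B_y$ equals the unique set satisfying $\psi(\cdot,A_x,v)$ as computed in $V(A_x)$. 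The main technical work is to check that this recipe is (i) well-defined — i.e. that for $P_I$-generic $x$ the unique such set really is an $F$-invariant realized in $V[x]$, which is exactly the hypothesis, upgraded by homogeneity of $P_I$ below a condition to hold on a positive set; and (ii) \emph{Borel}, which follows because $V(A_x)$ is uniformly definable from $x$ (it is $L(V,A_x)$), the satisfaction relation for a fixed formula in a definable inner model is Borel in the relevant codes, and picking a code for a $y$ with $B_y$ equal to a given hereditarily countable set is a Borel operation on $\mathrm{Mod}(\sigma_F)$-type codes. That $f$ is a homomorphism is immediate: if $xEx'$ then $A_x=A_{x'}$ (as $\cong_{\sigma_E}$-invariants), hence $V(A_x)=V(A_{x'})$, hence the unique $\psi$-witness is the same, hence $B_{f(x)}=B_{f(x')}$, i.e. $f(x)Ff(x')$.

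For the second bullet, assume additionally $V(A)=V(B)$ with $A$ definable in $V(B)$ from $B$ and a parameter $w\in V$ via a formula $\varphi$. To see $f$ is a reduction I must show $f(x)Ff(x')\Rightarrow xEx'$. If $B_{f(x)}=B_{f(x')}$ as $F$-invariants, then in $V[x]$ and in $V[x']$ we have the same set $B$; applying $\varphi$ (which defines $A$ from $B$ inside $V(B)$, and $V(B)=V(A)$ is computed the same way from $B$ in either extension) recovers the same $A$, so $A_x=A_{x'}$, hence $xEx'$. The only subtlety is ensuring the two "recoveries" of $V(B)$ from $B$, carried out in the two generic extensions $V[x]$ and $V[x']$, genuinely produce the same model and hence the same $A$; this is the standard fact that $L(V,B)$ is absolute between generic extensions containing $B$, so it goes through.

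I expect the main obstacle to be the Borelness verification in step (ii): one must argue that the map sending a (code for a) $P_I$-generic real $x$ to a code for $V(A_x)$, and then evaluating a fixed first-order formula in that model and reading off an $F$-invariant, is Borel on a positive set. The clean way is to observe that by Claim~\ref{claim;forcing-rep} and Proposition~\ref{prop;doublebrackets-symmetric-model}, $V[[x]]_E=V(A_x)$ and $B\in V[[x]]_E\subset V[x]$, so $B$, qua hereditarily countable set, has a real code in $V[x]$; forcing absoluteness for the statement "$\psi$ defines an $F$-invariant realized by some $y$" then lets one name such a $y$ by a $P_I$-name, and the Borel function $f$ is the evaluation of that name along the generic — Borelness of name-evaluations for $P_I$ on a positive set being routine (cf.\ \cite{Zap08}, \cite{ksz}). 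This reduces the problem to the genericity and homogeneity bookkeeping already present in the proof of Proposition~\ref{prop;doublebrackets-symmetric-model}.
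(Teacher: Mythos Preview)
Your proposal is essentially correct and, once you arrive at the final paragraph, converges to exactly the argument the paper gives: fix a $P_I$-name $\dot y$ for a witness $y$ with $B_y=B$, fix a condition $p$ forcing the relevant configuration, and let $f$ be evaluation of $\dot y$ along the generic; the homomorphism and reduction verifications are then the same symmetric ``$A_{x_1}=A_{x_2}$, hence the unique $\psi$-defined set agrees'' argument you describe.

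Two minor points of divergence. First, your initial description of $f$ --- ``a fixed Borel choice of $y$ such that $B_y$ equals the unique set satisfying $\psi(\cdot,A_x,v)$ in $V(A_x)$'' --- is more complicated than necessary and makes Borelness awkward; the paper skips this and goes straight to the name-evaluation definition you eventually reach, which is the right move. Second, the paper makes explicit a step you leave to the citations: one fixes a countable elementary submodel $M$, takes the domain of $f$ to be the $I$-positive Borel set of $P_I$-generics over $M$ extending $p$, and then verifies the homomorphism/reduction clauses in $M(A')$ rather than $V(A')$. This matters because the reals $x_1,x_2$ you test are actual reals, not $V$-generic, so the ``$V(A_x)$'' computations in your argument only make literal sense after passing to such an $M$. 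Your citations to \cite{Zap08} and \cite{ksz} cover this, but it is worth stating explicitly.
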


\begin{proof}
Let $\phi$ be a formula and $v\in V$ such that $B$ is defined as the unique set satisfying $\phi(B,A,v)$ in $V(A)$.
Fix $y\in V[x]$ such that $B=B_y$, a name $\dot{y}$ for $y$ and a condition $p\in P_I$ forcing the above.
Fix a large enough countable model $M$. The set of $P_I$-generics over $M$ extending $p$ is an $I$-positive Borel set (see \cite{Zap08}, Proposition 2.2.2).
Let $f$ be defined on that set, sending $x$ to the interpretation of $\dot{y}$ under the generic corresponding to $x$.
Then $f$ is a partial Borel function defined on an $I$-positive set (see \cite{Zap08}).

Assume $x_1,x_2$ are both in the domain of $f$ and they are $E$-equivalent.
It follows that $A_{x_1}=A_{x_2}$. Let $A'=A_{x_1}=A_{x_2}$.
Since both $x_1$ and $x_2$ extend $p$, both sets $B_{f(x_1)}$ and $B_{f(x_2)}$ are defined in $M(A')$ as the unique $B'$ satisfying $\phi(B',A',v)$.
It follows that $B_{f(x_1)}=B_{f(x_2)}$ and therefore $f(x_1)Ff(x_2)$.
Thus $f$ is a homomorphism.

Assume now that $V(A)=V(B)$, and $A$ is definable using only $B$ and parameters from $V$.
Let $p$ be a condition forcing this and $f$ as above.
Assume $x_1$ and $x_2$ are in the domain of $f$ and $f(x_1),f(x_2)$ are $F$-related.
Let $B'\equiv B_{f(x_1)}=B_{f(x_2)}$.
Both $A_{x_1}$ and $A_{x_2}$ are defined in $M(B')$ using the same definition, from $B'$.
It follows that $A_{x_1}=A_{x_2}$ and so $x_1$ and $x_2$ are $E$-related.

\end{proof}

We summarize the correspondence between Borel reductions and definability in symmetric models in the following theorem.

\begin{thm}\label{thm;correspondence}
Suppose $E$ and $F$ are Borel equivalence relations on $X$ and $Y$ respectively and $x\mapsto A_x$ and $y\mapsto B_y$ are classifications by countable structures of $E$ and $F$ respectively.
Let $I$ be a proper ideal over $X$.
The following are equivalent:
\begin{itemize}
    \item There is a partial reduction of $E$ to $F$ defined on an $I$-positive Borel set;
    \item There is a $P_I$-generic $x$ and some $y\in V[x]$ such that $V(A_x)=V(B_y)$ and in this model $A_x$ and $B_y$ are definable from one another using only parameters from $V$.
\end{itemize}
\end{thm}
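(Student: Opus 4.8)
The statement to prove, Theorem~\ref{thm;correspondence}, is essentially a packaging of the two previous results --- Lemma~\ref{lem;reduction-def-generating-set} for one direction and Proposition~\ref{prop;reduction-from-sym-model} for the other --- so the plan is to reduce to those rather than to argue from scratch.

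\textbf{The forward direction.} Suppose $f$ is a partial reduction of $E$ to $F$ defined on an $I$-positive Borel set $C$. The plan is to take $x$ to be any $P_I$-generic real lying in $C$ (such generics exist because $C\notin I$, so $C$ is a condition in $P_I$; concretely, generics over a large enough countable model $M$ extending $C$ form an $I$-positive set by the usual argument, see \cite{Zap08}). Since $x\in C=\dom f$, the remark immediately following Lemma~\ref{lem;reduction-def-generating-set} applies verbatim: with $A=A_x$ and $B=B_{f(x)}$ we get $V(A)=V(B)$ and that $A$ and $B$ are each definable from the other using only parameters from $V$. Taking $y\in V[x]$ with $B_y=B$ (for instance $y=f(x)$, which lies in $V[x]$ since $f$ is Borel) gives the second bullet. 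The only mild care needed here is to note that $f$ being a reduction on $C$ means, for $E$-related $x_1,x_2\in C$, $f(x_1)\mathrel{F}f(x_2)$ and conversely, which is exactly what the proof of Lemma~\ref{lem;reduction-def-generating-set} uses (the definitions there quantify over generic $x'$, and we only need those $x'$ landing in $C$, which is comeager/co-$I$-small in the relevant sense).

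\textbf{The reverse direction.} Conversely, suppose there is a $P_I$-generic $x$ and $y\in V[x]$ with $V(A_x)=V(B_y)$ and $A_x$, $B_y$ mutually definable from parameters in $V$ inside this model. Writing $A=A_x$ and $B=B_y$, this is precisely the hypothesis of Proposition~\ref{prop;reduction-from-sym-model}: $B\in V(A)$ is definable in $V(A)$ from $A$ and a $V$-parameter, $B$ is realized as an $F$-invariant in $V[x]$ (namely by $y$), and additionally $V(A)=V(B)$ with $A$ definable from $B$ and a $V$-parameter. The ``furthermore'' clause of that proposition then yields a partial Borel map $f\colon X\lto Y$, defined on an $I$-positive Borel set containing $x$, which is a partial reduction of $E$ to $F$. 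This is exactly the first bullet, so nothing further is required.

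\textbf{Assembling the proof.} Given the above, the proof is essentially two short paragraphs citing the two results, so the writeup is:
\begin{proof}
Suppose first there is a partial reduction $f$ of $E$ to $F$ defined on an $I$-positive Borel set $C$. Since $C\notin I$, there is a $P_I$-generic $x\in C$. As $x\in\dom f$, the version of Lemma~\ref{lem;reduction-def-generating-set} for partial functions (noted after its proof) applies: with $A=A_x$ and $B=B_{f(x)}$ we have $V(A)=V(B)$, and in this model $A$ and $B$ are each definable from the other using only parameters from $V$. Taking $y=f(x)\in V[x]$ we have $B_y=B$, giving the second bullet.

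Conversely, suppose there is a $P_I$-generic $x$ and some $y\in V[x]$ with $V(A_x)=V(B_y)$, and with $A_x$ and $B_y$ definable from one another in this model using only parameters from $V$. Put $A=A_x$, $B=B_y$. Then $B\in V(A)$ is definable in $V(A)$ from $A$ and a parameter in $V$, $B$ is an $F$-invariant realized in $V[x]$ (by $y$), and moreover $V(A)=V(B)$ with $A$ definable in $V(B)$ from $B$ and a parameter in $V$. The ``furthermore'' clause of Proposition~\ref{prop;reduction-from-sym-model} then provides a partial Borel map $f\colon X\lto Y$, defined on an $I$-positive Borel set, which is a partial reduction of $E$ to $F$ (with $x\in\dom f$ and $B=B_{f(x)}$). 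This gives the first bullet.
\end{proof}

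The main --- indeed only --- subtlety is matching up the exact form of the two cited statements with the bulleted conditions here: in particular making sure that ``definable from one another using only parameters from $V$'' in the theorem is literally the conjunction of the two one-directional definability clauses appearing in Lemma~\ref{lem;reduction-def-generating-set} and Proposition~\ref{prop;reduction-from-sym-model}, and that the $I$-positive Borel set on which $f$ is defined can be taken to contain the chosen generic $x$ (which is automatic from the construction of $f$ in Proposition~\ref{prop;reduction-from-sym-model} as acting on generics extending a condition that $x$ satisfies). No genuinely new argument is needed.
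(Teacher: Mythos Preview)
Your proposal is correct and matches the paper's approach exactly: the paper presents Theorem~\ref{thm;correspondence} explicitly as a summary of the preceding Lemma~\ref{lem;reduction-def-generating-set} and Proposition~\ref{prop;reduction-from-sym-model}, without a separate proof. Your writeup simply makes this explicit, and the minor care you take (ensuring the partial-map version of the lemma applies, and that the $I$-positive set from the proposition contains the given generic) is appropriate.
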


This correspondence will be used below in the following way.
To show that $E$ is not Borel reducible to $F$, we find some generic $E$-invariant $A$ such that the model $V(A)$ is not of the form $V(B)$ whenever $B\in V(A)$ is an $F$-invariant which is definable using only $A$ and parameters from $V$.
The invariant $A$ will often be of the form $A_x$ where $x$ is a Cohen generic, with respect to some topology.
In this case it follows that there is no partial reduction on any non-meager set (see Section~\ref{sec;topologies}).

\begin{remark}
This paper deals with the Friedman-Stanley jumps and the equivalence relations of Hjorth-Kechris-Louveau.
Applications of the techniques above to study equivalence relations below $=^+$ will appear in \cite{Sha18}
\end{remark}

\section{Finite jumps and the models of G. Monro}\label{sec;Monro-models}

In this section we present a proof that $=^{+(n+1)}$ is not reducible to $=^{+n}$ using the techniques developed above.
To that end, we need to find models $M_n$ such that $M_n$ is generated by a $=^{+n}$-invariant, that is, a set in $\mathcal{P}^{n+1}(\omega)$, but not generated by any set in $\mathcal{P}^m(\omega)$ for $m\leq n$.
It turns out that the right models were constructed by Monro \cite{Mon73} in order to separate the finite generalized Kinna-Wagner principles.
First we describe Monro's construction and make a few remarks about the relationship between the Kinna-Wagner principles and the question of interest here, of which type of sets can generate the model.
Finally we strengthen Monro's analysis and deduce the Borel irreducibility results.

\begin{defn}
For a set $A$, define $P(A)$ to be the poset of all finite functions $p\colon\dom p\lto 2$, where $\dom p\subset A\times A$, ordered by extension.
\end{defn}

Forcing with $P(A)$ adds a function $g\colon A\times A\lto \{0,1\}$.
Let $A^0=\omega$, $M_0=V$.
Given $M_n$, $A^n\in M_n$, let $g\colon A^n\times A^n\lto\{0,1\}$ be $P(A^n)$-generic over $M_n$. Define
\begin{equation*}
    A^{n+1}_a=\set{b\in A^n}{g(a,b)=1},\textrm{ for } a\in A^n,\, A^{n+1}=\set{A^{n+1}_a}{a\in A^n},
\end{equation*}
and $M_{n+1}=M_n(A^{n+1})$. We will consider $G_n=\seqq{A^{n+1}_a}{a\in A^n}$, a collection of subset of $A^n$ indexed by $A^n$, as the $P(A^n)$-generic object.
For example, $P(A^0)=P(\omega)$ is simply Cohen forcing for adding countably many Cohen reals.
$G_0=\seqq{A^1_n}{n<\omega}$ is a generic sequence of Cohen reals, $A^1$ is the unordered collection and $M_1=V(A^1)$ is the basic Cohen model.

Monro \cite{Mon73} shows that in $M_{n}$, $\mathrm{KWP}^{n-1}$ fails yet $\mathrm{KWP^{n+1}}$ holds.
A more careful analysis shows that in fact each $M_n$ satisfies $\mathrm{KWP}^n$.
Note that $A^m$ is definable from $A^n$ for $m<n$, hence $M_n=V(A^n)$ for all $n$.

\begin{defn}
For an ordinal $\alpha$ let $\mathcal{P}^\alpha(\mathrm{On})$ be the class of all sets in $\mathcal{P}^{\alpha}(\eta)$ for some ordinal $\eta$.
Say that a set is of {\bf rank $\mathbf{\alpha}$} if it is in $\mathcal{P}^\alpha(\mathrm{On})$.
\end{defn}
\begin{obs}\label{obs;KW-implies-low-generator}
Suppose $M=V(A)$ where $A$ is of rank $n+1$, and there is an injective function $f\colon A\lto B$ where $B$ is of rank $n$.
Then $M=V(C)$ for some set $C$ of rank $n$.
In particular, if a model $M$ is generated by a set of rank $n+1$, but not by a set of rank $n$, then $\mathrm{KWP}^{n-1}$ fails in $M$.
\end{obs}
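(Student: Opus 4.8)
The plan is to prove the two assertions in order, the second being an immediate consequence of the first together with the definition of $\mathrm{KWP}^{n-1}$.

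For the first assertion, suppose $M = V(A)$ with $A$ of rank $n+1$, say $A \in \mathcal{P}^{n+1}(\eta)$ for some ordinal $\eta$, and suppose $f\colon A \lto B$ is injective with $B$ of rank $n$, say $B \in \mathcal{P}^{n}(\xi)$. The natural candidate for the low-rank generator is $C = f$ itself, viewed as a set of ordered pairs: since $f \subseteq A \times B$, and $A \subseteq \mathcal{P}^{n}(\eta)$ while $B \subseteq \mathcal{P}^{n}(\xi)$, a routine computation of ranks (coding ordered pairs and taking a common ordinal $\zeta \geq \eta,\xi$ large enough) places $f$ in $\mathcal{P}^{n}(\mathrm{On})$ — so $f$ has rank $n$, or at worst rank $n$ after an obvious re-coding which I would spell out only if needed. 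It then remains to see $V(C) = M$. The inclusion $V(C) \subseteq M$ is clear since $C = f \in M$. For the reverse inclusion, it suffices to show $A \in V(C)$, because $M = V(A)$ is the \emph{minimal} transitive ZF-model containing $V$ and $A$, and $V(C)$ is a transitive ZF-model containing $V$ (as $C$ lies in some generic extension of $V$, being in $M$). But $A = \dom(C)$, so $A$ is definable from $C$ and hence $A \in V(C)$. Therefore $M = V(A) \subseteq V(C) \subseteq M$, giving $M = V(C)$ with $C$ of rank $n$.

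For the second assertion, suppose $M$ is generated by a set of rank $n+1$ but by no set of rank $n$, and suppose toward a contradiction that $\mathrm{KWP}^{n-1}$ holds in $M$. Write $M = V(A)$ with $A$ of rank $n+1$. Applying $\mathrm{KWP}^{n-1}$ in $M$ to the set $A$ yields an ordinal $\eta$ and an injection $f\colon A \lto \mathcal{P}^{n-1}(\eta)$ inside $M$. Since $\mathcal{P}^{n-1}(\eta) \subseteq \mathcal{P}^{n}(\eta)$, this $f$ is in particular an injection of $A$ into a set of rank $n$, so by the first assertion $M = V(C)$ for some $C$ of rank $n$, contradicting the hypothesis. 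Hence $\mathrm{KWP}^{n-1}$ fails in $M$.

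The only step requiring any care is the rank bookkeeping for $C = f$ in the first assertion: one must check that encoding an ordered pair $\langle a,b\rangle$ with $a \in \mathcal{P}^{n}(\eta)$ and $b \in \mathcal{P}^{n}(\xi)$ — e.g. via a Kuratowski pair tagged by which ordinal bound it came from — still lands in $\mathcal{P}^{n}(\zeta)$ for a suitable $\zeta$, using that $n \geq 1$ so that a single extra layer of pairing is absorbed. This is entirely routine and I would present it in a sentence or two; everything else is definitional, relying only on the minimality characterization of $V(A)$ recalled in Section~\ref{sec;prelm}.
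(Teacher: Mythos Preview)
There is a genuine gap in the rank computation for $C = f$. Elements of $A$ lie in $\mathcal{P}^n(\eta)$ (rank $n$), and elements of $B$ lie in $\mathcal{P}^{n-1}(\xi)$ (rank $n-1$). You correctly observe that each ordered pair $\langle a,b\rangle$ with $a$ of rank $n$ and $b$ of rank $\le n$ can be coded as a set of rank $n$. But then $f$, being a \emph{set} of such coded pairs, is a set of rank-$n$ objects and therefore has rank $n+1$, not rank $n$. There is no routine way around this: each $a\in A = \dom f$ is itself an arbitrary rank-$n$ set, and any set that contains (a code for) $a$ as an element must have rank at least $n+1$. So taking $C=f$ gives nothing beyond the hypothesis $M=V(A)$.

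The paper's proof supplies exactly the missing idea: rather than recording the pairs $(a,f(a))$, one \emph{unfolds} each $a\in A$ into its elements and tags those by $f(a)$. Concretely,
\[
C=\set{(x,y)}{\exists X\in A\,\bigl(y=f(X)\wedge x\in X\bigr)}.
\]
Now $x\in X\in A$ has rank $n-1$ and $y=f(X)\in B$ has rank $n-1$, so $C$ is a set of pairs of rank-$(n-1)$ objects and hence codes as a rank-$n$ set. Injectivity of $f$ is what lets you recover $A$ from $C$: for each $y$ in the range, $\{x:(x,y)\in C\}$ is the unique $X\in A$ with $f(X)=y$, so $A$ is definable from $C$ and $V(A)=V(C)$. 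Your treatment of the second assertion (deriving the failure of $\mathrm{KWP}^{n-1}$) is fine once the first part is fixed.
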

\begin{proof}
Define $C=\set{(x,y)}{\exists X\in A(y=f(X)\wedge x\in X)}$.
$C$ is a set of pairs of rank $n-1$ sets, thus can be coded by a rank $n$ set.
Furthermore, $A$ is definable from $C$, thus $M=V(A)=V(C)$.
\end{proof}
The reverse implication does not hold.
If $x\in\R^{\omega^2}$ is a Cohen-generic and $A$ is the $=^{++}$-invariant of $x$ (a set of rank $3$), then $V(A)$ is generated by a set of reals (see Proposition~\ref{prop;reduction-of-++}).
However, $\mathrm{KWP}^1$ fails in $V(A)$. 

The failure of Kinna-Wagner principles is the crucial property of Monro's models.
In Monro's models and the generalizations we construct below we will mention which Kinna-Wagner principle holds without proof, as this is not needed for our applications.
Given the analysis of the models, the proofs are analogous to the proof that $\mathrm{KWP}^1$ holds in the basic Cohen model (see \cite{HL64}, also \cite{Jec73} and \cite{Kar16}).

Monro's proof that $\mathrm{KWP}^{n-1}$ fails in $M_n$ relies on the following lemma (which we reprove in Section~\ref{subsec;Monro-posets-tech}).

\begin{lem}[Monro \cite{Mon73}, Theorem 8]\label{lem;Monro-thm8}
For any ordinal $\eta$, $M_{n+1}\cap\mathcal{P}^{n}(\eta)=M_{n}\cap\mathcal{P}^{n}(\eta)$.
That is, $M_n$ and $M_{n+1}$ have the same sets of rank $n$.
\end{lem}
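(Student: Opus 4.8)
The plan is to prove the statement by induction on $n$, isolating at each stage a general "absorption"-type argument about the finite-support product posets $P(A^n)$. The key point is that $M_{n+1}$ is obtained from $M_n$ by forcing with $P(A^n)$, which (over $M_n$) is the finite-support $(A^n\times A^n)$-product of Cohen forcing. A set $X\in M_{n+1}$ of rank $n$ lives in $M_n(A^{n+1})$; by the symmetry argument of Lemma~\ref{lem;symmetry}, applied with $V=M_n$, $I=A^n\times A^n$ and $Q$ Cohen forcing, $X$ is definable from $A^{n+1}$, finitely many of its members $A^n_{a_1},\dots,A^n_{a_k}$ (a support), and a parameter $w\in M_n$. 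Since a support is a finite subset of $A^{n+1}$, Lemma~\ref{lem;symmetry} gives $X\in M_n[A^n_{a_1},\dots,A^n_{a_k}]$, where this finite piece of the generic is obtained by a \emph{product} (not iteration) of finitely many Cohen reals over $M_n$.

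\textbf{Main steps.} First I would reduce to showing: if $X$ has rank $n$, is in $M_n[\bar g]$ for $\bar g$ finitely many mutually Cohen-generic reals over $M_n$, and is also invariant under the tail of the product (the automorphisms of $P(A^n)$ fixing the support), then $X\in M_n$. The invariance is exactly what the "$p,q$ agree on the support $\Rightarrow$ can't force incompatible statements" computation in the proof sketch of Lemma~\ref{lem;symmetry} delivers. Second, I would invoke the inductive hypothesis: $M_n\cap\mathcal P^{n-1}(\eta)=M_{n-1}\cap\mathcal P^{n-1}(\eta)$ (or, for the base case, the analogous fact about $M_1$ vs.\ $V$), so that the elements of a rank-$n$ set $X$ — which are rank-$(n-1)$ sets — are "pinned down" in a way that propagates up. Concretely, for $X\in M_{n+1}\cap\mathcal P^n(\eta)$ one shows each element $Y\in X$ already lies in $M_n$ (it is a rank-$(n-1)$ set definable from $A^{n+1}$ with finite support, hence by Lemma~\ref{lem;symmetry} and induction in $M_n$), and then $X\subseteq M_n\cap\mathcal P^{n-1}(\eta)$ is a subset of a set in $M_n$; a final symmetry/homogeneity argument — using that the tail of the product is highly homogeneous and $X$ is fixed by it — shows $X$ itself is decided by conditions on its support, hence $X\in M_n$. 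The reverse inclusion $M_n\cap\mathcal P^n(\eta)\subseteq M_{n+1}\cap\mathcal P^n(\eta)$ is immediate since $M_n\subseteq M_{n+1}$.

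\textbf{Expected main obstacle.} The delicate point is passing from "each element of $X$ is in $M_n$, and $X$ is (in $M_{n+1}$) definable with finite support" to "$X\in M_n$". One needs that a rank-$n$ set whose elements all lie in $M_n$ and which is invariant under the support-fixing automorphisms of $P(A^n)$ cannot "code" any new information: this requires carefully exploiting the homogeneity of the product $P(A^n)$ over $M_n$ restricted to coordinates outside the support, together with the fact that moving the unsupported coordinates by a finite permutation of $A^n$ fixes the name $\dot X$. The clean way is to show, as in the Lemma~\ref{lem;symmetry} sketch, that $X=\{\,Y\in M_n\cap\mathcal P^{n-1}(\eta) : \exists p\ (p\!\restriction\!\mathrm{supp}\in \text{prescribed condition})\wedge p\force \check Y\in\dot X\,\}$, which exhibits $X$ as definable in $M_n[\text{support piece}]$ from parameters in $M_n$; but since the support piece is a Cohen extension of $M_n$ and $X$ has rank $n$ with all elements in $M_n$, a further absoluteness/mutual-genericity argument (Lemma~\ref{lem;mutgen}) forces $X\in M_n$. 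Making this last "rank $n$ objects don't grow in a single Cohen extension relative to their elements" step precise — essentially an instance of the fact that Cohen forcing adds no new rank-$n$ sets over a model already containing all the potential elements, which fails in general but holds here because of the product structure and the inductive control on rank-$(n-1)$ sets — is where the real work lies, and it is presumably where Monro's original Theorem 8 concentrates its effort.
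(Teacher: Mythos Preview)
Your outline has the right overall shape but contains a genuine gap at precisely the point you flag as the ``expected main obstacle,'' and the tools you propose there do not close it.

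First, a structural issue: your induction is on $n$, with hypothesis $M_n\cap\mathcal P^{n-1}(\eta)=M_{n-1}\cap\mathcal P^{n-1}(\eta)$. But to show each element $Y\in X$ lies in $M_n$ you need that rank-$(n-1)$ sets in $M_{n+1}$ are already in $M_n$, which is not your inductive hypothesis. The paper instead fixes $n$ and inducts on the rank $j\le n$, showing that $P(A^n)$ adds no rank-$j$ sets over $M_n$; this is the induction that actually feeds back correctly.

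Second, and more importantly, once you have $X\in M_n[\bar g]$ with $\bar g$ a finite tuple of generics and all elements of $X$ in $M_n$, neither Lemma~\ref{lem;symmetry} nor Lemma~\ref{lem;mutgen} will squeeze $X$ down into $M_n$. A finite Cohen extension of $M_n$ genuinely can produce new subsets of $M_n\cap\mathcal P^{n-1}(\eta)$; nothing in your argument rules this out. The missing ingredient is Monro's indiscernibility result (stated in the paper as Lemma~\ref{lem;Monro-lem6} and sharpened in Lemma~\ref{lem;strong-Monro-7}): the name $\dot B$, being an element of $M_n=M_{n-1}(A^n)$, is definable from $A^n$, finitely many $r_1,\dots,r_m\in A^n$, and a parameter $v\in M_{n-1}$. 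Because the elements of $A^n$ are indiscernible over $M_{n-1}$-parameters, one shows that any condition $p$ forcing $\check x\in\dot B$ can be cut down to $p\restriction\{r_1,\dots,r_m\}^2$ without losing that forcing statement. Hence a single condition with domain $\{r_1,\dots,r_m\}^2$ already decides all of $\dot B$, and since there are only finitely many such conditions, $B\in M_n$. This is the content of Lemma~\ref{lem;stong-Monro-8}, from which Lemma~\ref{lem;Monro-thm8} follows.

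In short: the symmetry Lemma~\ref{lem;symmetry} is too coarse here, because it treats the ground model $M_n$ as a black box. The proof needs the finer fact that parameters from $M_n$ decompose as ``finitely many elements of $A^n$'' plus ``something in $M_{n-1}$,'' together with the indiscernibility of $A^n$ over $M_{n-1}$.
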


Monro concludes by observing that the existence of a model $M_{n+1}$ which has the same sets of rank $n$ as $M_n$ yet is different than $M_n$, implies that $\mathrm{KWP}^{n-1}$ must fail in $M_n$ (Theorem 3 in \cite{Mon73}).
This is a direct generalization of the theorem of Vopenka and Balcar, which states that for two models of ZF, one of which satisfies choice, if they agree on sets of ordinals, then they are the same.

\begin{lem}\label{lem;gen-blass-thm}
Suppose $B\in V(A^{n+1})\cap\mathcal{P}^{n+1}(\eta)$ for some ordinal $\eta$.
Then there is a finite $\bar{a}\subset A^{n+1}$ such that $B\in V(A^n)(\bar{a})$.
\end{lem}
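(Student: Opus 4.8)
This statement generalizes the classical fact that every set of ordinals in the basic Cohen model $V(A^1)$ lies in $V[\bar a]$ for some finite $\bar a\subset A^1$, and the plan is to prove it in the same way: bring $B$ down into the ground model, then invoke the general symmetry lemma. Concretely, I would combine Monro's Theorem~8, recorded as Lemma~\ref{lem;Monro-thm8}, with Lemma~\ref{lem;symmetry}, taking as ground model $M_n=V(A^n)$ and as forcing $P(A^n)$ --- precisely the step producing $M_{n+1}=V(A^{n+1})$.

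The first step is to observe that $B$ is already a subset of the ground model $M_n$. Indeed $B\in\mathcal{P}^{n+1}(\eta)$, so every element of $B$ lies in $\mathcal{P}^{n}(\eta)$; and $B\in V(A^{n+1})=M_{n+1}$, so every element of $B$ lies in $M_{n+1}\cap\mathcal{P}^{n}(\eta)$, which by Lemma~\ref{lem;Monro-thm8} equals $M_{n}\cap\mathcal{P}^{n}(\eta)$. Hence $B\subset M_n$. This is the only place where Monro's analysis is needed, and it is what exhibits $B$ --- a priori merely a set of rank $n+1$ in $M_{n+1}$ --- as a subset of $M_n$, which is what the symmetry argument requires.

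For the second step, $P(A^n)$ is, after the obvious identification, the finite support $I$-product of the single poset $Q=\mathrm{Fn}(A^n,2)$ of finite partial functions from $A^n$ to $2$, with index set $I=A^n$: a condition with domain a finite subset of $A^n\times A^n$ is read as the finite function $a\mapsto p(a,\cdot)$ on $A^n$, and under this reading $A^{n+1}=\set{A^n_a}{a\in A^n}$ is exactly the set ``$A$'' of Lemma~\ref{lem;symmetry}. Since $A^n$ is definable from $A^{n+1}$ (Section~\ref{sec;Monro-models}) we have $V(A^{n+1})=V(A^n)(A^{n+1})$, so $B$ is a subset of $V(A^n)$ lying in $V(A^n)(A^{n+1})$, and Lemma~\ref{lem;symmetry} then delivers a finite $\bar a\subset A^{n+1}$ with $B\in V(A^n)(\bar a)$, which is exactly the conclusion.

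The one point that needs care --- and the main, if minor, obstacle --- is matching the hypotheses of Lemma~\ref{lem;symmetry}: that lemma asks $B$ to be definable in $V(A^{n+1})$ from $A^{n+1}$, finitely many members of $A^{n+1}$, and a single parameter from $V(A^n)$, whereas the $\mathrm{HOD}$-representation of $V(A^{n+1})$ recalled in Section~\ref{sec;prelm} supplies only a definition from $A^{n+1}$ and finitely many parameters in $\mathrm{tc}(A^{n+1})$. This gap is harmless: every member of $A^{n+1}$ is a subset of $A^n$, so $\mathrm{tc}(A^{n+1})\subset A^{n+1}\cup\mathrm{tc}(A^n)$, while $\mathrm{tc}(A^n)\subset V(A^n)$ since $A^n\in V(A^n)$; hence any finite list of parameters from $\mathrm{tc}(A^{n+1})$ splits into finitely many members of $A^{n+1}$ together with a tuple from $V(A^n)$, and the latter can be coded by a single parameter. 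With that observation in place the hypotheses of Lemma~\ref{lem;symmetry} are met and the argument goes through.
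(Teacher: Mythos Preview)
Your proof is correct and follows exactly the paper's approach: use Lemma~\ref{lem;Monro-thm8} to see that $B\subset V(A^n)$, then apply Lemma~\ref{lem;symmetry}. The paper's proof is a single sentence to this effect; your additional paragraphs simply spell out the verification that $P(A^n)$ has the product form required by Lemma~\ref{lem;symmetry} and that the parameters from $\mathrm{tc}(A^{n+1})$ split appropriately, which the paper leaves implicit.
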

\begin{proof}
By Lemma~\ref{lem;Monro-thm8}, $B\subset V(A^n)$, so the conclusion follows from Lemma~\ref{lem;symmetry}.
\end{proof}

It follows that $M_n$ is not generated (over $V$) by a set of rank $n$. This implies the failure of $\mathrm{KWP}^{n-1}$ by Observation~\ref{obs;KW-implies-low-generator}.

\begin{cor}[Friedman-Stanley \cite{FS89}]\label{cor;Friedman-Stanley}
$=^{+(n+1)}$ is not Borel reducible to $=^{+n}$.
\end{cor}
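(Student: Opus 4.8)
The plan is to combine the symmetric-model machinery of Section~\ref{sec;double-brackets} with Monro's analysis of the models $M_n$. By Theorem~\ref{thm;correspondence} (or directly by Lemma~\ref{lem;reduction-def-generating-set}), to prove $=^{+(n+1)}\not\leq_B =^{+n}$ it suffices to exhibit a generic $=^{+(n+1)}$-invariant $A$ such that $V(A)$ is \emph{not} of the form $V(B)$ for any $=^{+n}$-invariant $B\in V(A)$ definable from $A$ and parameters in $V$. Take $A=A^{n+1}$, the iterated Cohen invariant from Monro's construction, so that $M_{n+1}=V(A^{n+1})$. A $=^{+n}$-invariant is a hereditarily countable set in $\mathcal{P}^{n+1}(\omega)$ — in particular a set of rank $n+1$ in the sense of the preceding definition, and more to the point one which, being hereditarily countable, injects into $\mathcal{P}^{n-1}(\omega)$-many elements of rank $n$, hence is itself coded by a set of rank $n$. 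So it is enough to show $M_{n+1}$ is not generated over $V$ by any set of rank $n$.

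The key steps, in order: First, recall Lemma~\ref{lem;Monro-thm8}, $M_{n+1}\cap\mathcal{P}^{n}(\eta)=M_{n}\cap\mathcal{P}^{n}(\eta)$ for every ordinal $\eta$. Second, suppose toward a contradiction that $M_{n+1}=V(C)$ for some $C$ of rank $n$, say $C\in\mathcal{P}^{n}(\eta)$. By Lemma~\ref{lem;Monro-thm8}, $C\in M_n$, so $V(C)\subset M_n$, whence $M_{n+1}\subset M_n$; but $M_n\subset M_{n+1}$ trivially, so $M_{n+1}=M_n$. Third, derive a contradiction from $M_{n+1}=M_n$: the set $A^{n+1}\in M_{n+1}$ is not in $M_n$, because $A^{n+1}$ codes the $P(A^n)$-generic $G_n=\seqq{A^n_a}{a\in A^n}$ over $M_n$ (each $A^{n}_a$ is a genuinely new subset of $A^n$, and already $A^{n+1}$ as an unordered set cannot lie in $M_n$ by a mutual-genericity/symmetry argument exactly as in Lemma~\ref{lem;symmetry}), so $M_n\subsetneq M_{n+1}$. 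This contradiction shows no such $C$ exists. Finally, assemble: by Observation~\ref{obs;KW-implies-low-generator} (used in the contrapositive) together with Lemma~\ref{lem;gen-blass-thm}, a $=^{+n}$-invariant $B\in V(A^{n+1})$ definable from $A^{n+1}$ and $V$-parameters would give a finite $\bar a\subset A^{n+1}$ with $B\in V(A^n)(\bar a)$, and from $B$ one recovers a rank-$n$ generator of $M_{n+1}$ — the case just ruled out. Hence $V(A^{n+1})\neq V(B)$ for every such $B$, and Lemma~\ref{lem;reduction-def-generating-set} gives $=^{+(n+1)}\not\leq_B =^{+n}$.

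The main obstacle is the passage from "$M_{n+1}$ is not generated by a rank-$n$ set" to "$V(A^{n+1})$ is not generated by a $=^{+n}$-invariant definable from $A^{n+1}$": one must check carefully that a hereditarily countable $B\in\mathcal{P}^{n+1}(\omega)$ inside $M_{n+1}$ really does yield a rank-$n$ generator. This is where Lemma~\ref{lem;gen-blass-thm} enters — it confines $B$ to $V(A^n)(\bar a)$ for finite $\bar a$ — and where one replays the Vopenka–Balcar-style argument that underlies Observation~\ref{obs;KW-implies-low-generator}: inside $M_{n+1}=V(A^{n+1})$ the enumeration witnessing hereditary countability of $B$ lets one code $B$ (and the finitely many parameters $\bar a$) by a set of rank $n$ over $V$, so if this rank-$n$ set lies in $M_{n+1}$ it already lies in $M_n$, collapsing the two models. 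The other steps — Lemma~\ref{lem;Monro-thm8} and the genericity of $A^{n+1}$ over $M_n$ — are exactly Monro's; the contribution here is the translation, via Lemma~\ref{lem;reduction-def-generating-set}, into the Borel-reducibility statement. I would state the corollary as an immediate consequence and keep the proof to a short paragraph citing Lemmas~\ref{lem;Monro-thm8}, \ref{lem;gen-blass-thm} and \ref{lem;reduction-def-generating-set} and Observation~\ref{obs;KW-implies-low-generator}.
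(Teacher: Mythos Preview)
Your overall strategy --- invoke Lemma~\ref{lem;reduction-def-generating-set} to reduce to showing $V(A^{n+1})\neq V(B)$ for any $=^{+n}$-invariant $B$, then appeal to Monro's analysis --- is exactly what the paper does. But you take an unnecessary detour that introduces a genuine gap.

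You claim that a $=^{+n}$-invariant $B\in\mathcal{P}^{n+1}(\omega)$, ``being hereditarily countable,'' can be coded by a set of rank $n$, so that it suffices to show $M_{n+1}$ is not generated by any rank-$n$ set. The problem is that hereditary countability of $B$ is witnessed in the outer model $V[x]$ (by $f(x)$); there is no reason for an enumeration of $B$ to exist inside $M_{n+1}=V(A^{n+1})$, which fails choice. Indeed $A^n$ itself is a hereditarily countable rank-$(n+1)$ set with no enumeration in $M_n$ --- if it could be coded down to rank $n$ there, the whole hierarchy would collapse. So the step ``the enumeration witnessing hereditary countability of $B$ lets one code $B$ \ldots\ by a set of rank $n$'' is unjustified: that enumeration lives in $V[x]$, not in $V(A^{n+1})$.

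The fix is simply to drop the detour. Lemma~\ref{lem;gen-blass-thm} already applies to rank-$(n+1)$ sets: since $B\in\mathcal{P}^{n+1}(\omega)$, there is a finite $\bar a\subset A^{n+1}$ with $B\in V(A^n)(\bar a)$, whence $V(B)\subset V(A^n)(\bar a)\subsetneq V(A^{n+1})$. That is the paper's entire proof. You cite this lemma in your last paragraph but fail to notice it finishes the argument immediately; no coding down to rank $n$, no Vop\v{e}nka--Balcar replay, and no Observation~\ref{obs;KW-implies-low-generator} are needed.
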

\begin{proof}
Let $x\mapsto A_x$ and $y\mapsto B_y$ be the natural classifications of $=^{+(n+1)}$ and $=^{+n}$ with invariants in $\mathcal{P}^{n+2}(\omega)$ and $\mathcal{P}^{n+1}(\omega)$ respectively.
We claim that in a generic extension of $V(A^{n+1})$ there is some $x$ such that $A^{n+1}=A_x$, this is justified below. Given this claim, assume for contradiction that $f$ is a reduction from $=^{+(n+1)}$ to $=^{+n}$, let $B=B_{f(x)}$.
By Lemma~\ref{lem;reduction-def-generating-set}, $V(A^{n+1})=V(B)$, where $B\in\mathcal{P}^{n+1}(\omega)$. However, by Lemma~\ref{lem;gen-blass-thm}, $B\in V(A^n)(\bar{a})$ for some finite $\bar{a}\subset A^{n+1}$. Note that $V(A^n)(\bar{a})\subsetneq V(A^{n+1})$, as any element in $A^{n+1}\setminus \bar{a}$ is generic over $V(A^n)(\bar{a})$. It follows that $V(B)\subsetneq V(A^{n+1})$, a contradiction.

We now justify that in some generic extension of $V(A^{n+1})$ there is an $x$ in the domain of $=^{+(n+1)}$ such that $A=A_x$.
Note that in any model of ZFC, if $A\in\mathcal{P}^{n+2}(\omega)$ is hereditarily countable, then there is some $x$ such that $A=A_x$. This is because in the natural complete classification of $=^{+(n+1)}$ the invariants are all hereditarily countable sets in $\mathcal{P}^{n+2}(\omega)$ (see \cite{FS89}, \cite{HKL98}).
Now we may force over $V(A^{n+1})$ to collapse $A^{n+1}$ and its transitive closure, and regain choice. Thus in this collapse there will be an $x$ as desired.
Alternatively, the model $V(A^{n+1})$ can be seen as a submodel of a single generic extension of $V$. (This is done explicitly for $=^{+2}$ in Section~\ref{sec;topologies}. In Section~\ref{subsec;cong-ast-omega+1} we present a generic extension of $V$ (by the poset $P^{0,\omega}$) containing $V(A^m)$ for all $m$.)
It then follows from Grigorieff's result that choice can be forced over $V(A^{n+1})$ (see the proof of Claim~\ref{claim;forcing-rep}).

\end{proof}

\subsection{More on Monro's models}\label{subsec;Monro-posets-tech}
We prove generalizations of some lemmas from \cite{Mon73}.
This will be necessary for section~\ref{sec;cong-ast-n-k} below.
Monro's arguments are based on the following lemma.

\begin{lem}[Monro \cite{Mon73}, Lemma 6]\label{lem;Monro-lem6}
Let $\psi$ be some formula, $x\in M_{k-1}$.
Assume $\{r_1,...,r_n\}$, $\{s_1,...,s_m\}$ are disjoint subsets of $A_k$ and $\psi(A_k,r_1,...,r_n,x,s_1,...,s_m)$ hold in $M_k$.
Then there are finite functions $f_i\colon \dom f_i\lto 2$, $\dom f_i\subset A_{k-1}$, $1\leq i\leq m$ such that for any $t_1,...,t_m$ from $A_k$, if $t_i\supset f_i$ then $\psi(A_k,r_1,...,r_n,x,t_1,...,t_m)$ holds.
\end{lem}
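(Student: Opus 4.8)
The plan is to prove this by a genericity/homogeneity argument, following the standard template for symmetric extensions over finite-support products, adapted to the nested situation where $A_k$ itself is generated by the objects added by $P(A_{k-1})$. Recall that $M_k = M_{k-1}(A^k)$ where $A^k = \set{A^{k-1}_a}{a \in A^{k-1}}$ and $G_{k-1} = \seqq{A^{k-1}_a}{a \in A^{k-1}}$ is $P(A^{k-1})$-generic over $M_{k-1}$. So each element $t \in A_k$ is a generic subset $A^{k-1}_a$ of $A_{k-1}$, determined by the column $g(a,-)$ of the generic function $g\colon A_{k-1} \times A_{k-1} \to 2$. A finite function $f_i\colon \dom f_i \to 2$ with $\dom f_i \subseteq A_{k-1}$ is precisely a finite approximation to such a column, i.e.\ a finite piece of $P(A_{k-1})$-information about one coordinate.

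First I would set up names: let $\dot{A}_k$, $\dot{g}$ be the canonical $P(A_{k-1})$-names for $A_k$ and the generic function over $M_{k-1}$, and let $p \in G_{k-1}$ be a condition forcing $\psi(\dot{A}_k, \dot r_1,\dots,\dot r_n, \check x, \dot s_1,\dots,\dot s_m)$, where $\dot r_j$, $\dot s_i$ are the nice names picking out the relevant columns; say $r_j = A^{k-1}_{a_j}$ and $s_i = A^{k-1}_{b_i}$ for distinct $a_j, b_i \in A_{k-1}$. The key point is that $\dot{A}_k$ is a \emph{symmetric} name: it is invariant under the automorphisms of $P(A_{k-1})$ induced by permutations of $A_{k-1}$, and more relevantly under automorphisms that act separately on the columns. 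For each $i$, let $f_i = p \restriction (\{b_i\} \times A_{k-1})$, reinterpreted as a finite function on $A_{k-1}$; these are finite because $p$ is finite. Now suppose $t_1,\dots,t_m \in A_k$ with $t_i \supseteq f_i$ as functions on $A_{k-1}$; say $t_i = A^{k-1}_{c_i}$, where the $c_i$ may coincide with some of the $a_j$ or be new. The claim is that $\psi(A_k, r_1,\dots,r_n, x, t_1,\dots,t_m)$ holds.

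To see this, I would find, inside the generic $g$, a condition $q \in G_{k-1}$ and an automorphism $\pi$ of $P(A_{k-1})$ — built from a finite permutation of $A_{k-1}$ sending $b_i \mapsto c_i$ and fixing all the $a_j$ and everything else relevant — such that $\pi(p)$ is compatible with $q$. The compatibility is arranged precisely because $t_i = A^{k-1}_{c_i} \supseteq f_i$: the part of $q$ on the columns $c_i$ already agrees with $\pi$ of the part of $p$ on the columns $b_i$. One must also handle the case where some $c_i$ equals some $a_j$ or where the $c_i$ are not pairwise distinct, by first moving to a large enough $q$ that separates witnesses, or by noting the columns we care about only need to extend $f_i$; this bookkeeping is where I expect the real work to be. Since $\pi$ fixes $\dot{A}_k$ (being a permutation of $A_{k-1}$, it permutes the columns of $g$ and hence permutes $A_k$ setwise — here one needs $\dot{A}_k$ to be invariant, which it is since it is the set of \emph{all} columns) and fixes each $\dot r_j$ (as $\pi(a_j) = a_j$) and sends $\dot s_i$ to $\dot t_i$, applying $\pi$ to the forcing statement gives that $\pi(p) \force \psi(\dot{A}_k, \dot r_1, \dots, \dot r_n, \check x, \dot t_1, \dots, \dot t_m)$. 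Extending $q$ below $\pi(p)$ and passing to the generic yields $\psi(A_k, r_1,\dots,r_n, x, t_1,\dots,t_m)$ in $M_k$.

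The main obstacle is getting the automorphism and the compatibility exactly right in the presence of the nested structure: one is forcing over $M_{k-1}$, which is itself a symmetric model, so one must check that the relevant permutations of $A_{k-1}$ are available as automorphisms acting on $M_k$, i.e.\ that they extend to (or descend from) automorphisms of the iterated construction fixing $M_{k-1}$ pointwise on the relevant parameters — but $x \in M_{k-1}$ is treated as a check-name and the $r_j$ are fixed, so a finite permutation of $A_{k-1}$ moving only the $b_i$ suffices, and by Lemma~\ref{lem;symmetry}-type considerations such permutations act on the construction. A secondary subtlety, as noted, is the case analysis when the target columns $c_i$ overlap the fixed columns $a_j$ or each other; I would dispatch this by first extending $p$ within $G_{k-1}$ so that its restrictions to the columns $b_i$ already determine enough, then choosing $\pi$ to be the identity on any $b_i$ with $c_i = b_i$. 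Once the automorphism is in hand, invariance of $\dot A_k$ and the forcing theorem finish the argument immediately, paralleling Lemma~6 of \cite{Mon73} one level up.
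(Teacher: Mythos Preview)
The paper does not actually prove this lemma --- it is quoted from Monro~\cite{Mon73} and then used as a black box in the proof of Lemma~\ref{lem;strong-Monro-7} --- so there is no in-paper argument to compare against directly. Your outline (pick $p\in G_{k-1}$ forcing $\psi$, let $f_i$ be the restriction of $p$ to the column indexed by $b_i$, and for given $t_i=A^{k-1}_{c_i}\supseteq f_i$ apply an automorphism of $P(A^{k-1})$ permuting the first coordinate with $b_i\mapsto c_i$ while fixing each $a_j$) is the standard one, and it is the same mechanism the paper uses in the analogous Lemma~\ref{lem;Monro-lem6-Aomega+1}.

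There is, however, a genuine gap, and it is precisely the point you label a ``secondary subtlety''. Take $\psi\equiv(s_1\neq s_2)$: the empty condition already forces $\dot s_1\neq\dot s_2$, so your recipe gives $f_1=f_2=\emptyset$, and then $t_1=t_2$ is a legal choice that falsifies the conclusion. The fix is to enlarge $p$ inside $G_{k-1}$ \emph{before} reading off the $f_i$: since the $s_i$ are pairwise distinct and distinct from each $r_j$, one can extend $p$ so that the column pieces $f_i$ are pairwise incompatible and no $f_i$ is extended by any $r_j$. Then any $t_i\supseteq f_i$ are automatically distinct from one another and from the $r_j$, so a bijection $b_i\mapsto c_i$ fixing the $a_j$ exists. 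A second detail you should not wave away is the ``extra'' columns of $p$ outside $\{a_j,b_i\}$: after transposing $(b_i\;c_i)$ these may be carried to columns where $\pi(p)$ conflicts with the generic. One clean resolution is to first apply a permutation moving those extra columns to fresh indices disjoint from all $c_i$ (this fixes $\dot A_k$, $\dot r_j$, $\dot s_i$ and hence preserves the forcing statement), and only then perform the transpositions; alternatively, argue by contradiction from a $q\in G_{k-1}$ forcing $\neg\psi(\ldots,\dot t_1,\ldots)$ and check that $\pi q$ is compatible with $p$, exactly as the paper does for Lemma~\ref{lem;Monro-lem6-Aomega+1}. Your sketch has the right architecture, but what you call bookkeeping is where the proof actually lives.
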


Work in $M_n$.
Let $Z$ be a subset of $M_{n-1}$ and consider the poset $P$ of all finite functions $p\colon \dom p\lto Z$ where $\dom p\subset A^n\times A^n$.
A generic gives a function $g\colon A^n\times A^n\lto Z$.
Taking $Z=\{0,1\}$ we get Monro's poset $P(A^n)$ as above. 
\begin{lem}[Strengthening of Lemma 7 in \cite{Mon73}]\label{lem;strong-Monro-7}
Let $\psi$ be a formula, $p\in P$, $r_1,...,r_m$ in $A^n$ and $x\in M_{n-1}$ such that in $M_n$
\begin{equation*}
    p\force \psi(A^n,r_1,...,r_m,x).
\end{equation*}
Then 
\begin{equation*}
    p\rest{\{r_1,...,r_m\}^2}\force \psi(A^n,r_1,...,r_m,x).
\end{equation*}
\end{lem}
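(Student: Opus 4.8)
The plan is to argue by contradiction, exactly parallel to how Lemma~\ref{lem;Monro-lem6} is used in Monro's original proof. Suppose $p\force\psi(A^n,r_1,\dots,r_m,x)$ but $p\rest\{r_1,\dots,r_m\}^2$ does not force $\psi(A^n,r_1,\dots,r_m,x)$. Then there is a condition $q\leq p\rest\{r_1,\dots,r_m\}^2$ with $q\force\neg\psi(A^n,r_1,\dots,r_m,x)$. The idea is to produce an automorphism $\pi$ of $P$, arising from a finitary permutation of $A^n$ fixing $r_1,\dots,r_m$ (hence fixing the parameters $A^n,r_1,\dots,r_m,x$ and the formula $\psi$), such that $\pi(p)$ is compatible with $q$. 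A common extension would then force both $\psi$ and $\neg\psi$, a contradiction.

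First I would set up the automorphism group: any permutation $\sigma$ of $A^n$ induces an automorphism $\pi_\sigma$ of $P$ by $\pi_\sigma(p)(\sigma a,\sigma b)=p(a,b)$, and $\pi_\sigma$ fixes any $P$-name whose ``support'' in $A^n$ is contained in the fixed points of $\sigma$; in particular it fixes the canonical name for $A^n$, and the checks of $r_1,\dots,r_m$ and $x$ (the latter since $x\in M_{n-1}$, below the level where $A^n$ lives). Next, the key move: I must find $\sigma$ fixing each $r_i$ so that $\dom\pi_\sigma(p)$ and $\dom q$ are ``separated'' off the square $\{r_1,\dots,r_m\}^2$, i.e. they agree on $\{r_1,\dots,r_m\}^2$ (where both restrict to something below $p\rest\{r_1,\dots,r_m\}^2$, so they already agree there since $\pi_\sigma$ fixes that part) and have disjoint domains elsewhere. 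Since $\dom p$ and $\dom q$ are finite subsets of $A^n\times A^n$, and $A^n$ is infinite, such a finitary $\sigma$ moving the finitely many coordinates of $\dom p$ outside $\{r_1,\dots,r_m\}$ away from those of $\dom q$ exists — this is precisely the finite-support symmetry argument from the proof sketch of Lemma~\ref{lem;symmetry} and from Monro's Lemma~6.

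The one subtlety — and the step I expect to be the main obstacle — is that here $P$ takes values in an arbitrary subset $Z\subset M_{n-1}$ rather than in $\{0,1\}$, so I cannot simply quote Monro's Lemma~7 verbatim; I need to check that the automorphism argument and the mutual-genericity/separation argument go through for value set $Z$. This is not hard: the automorphisms $\pi_\sigma$ act only on the $A^n\times A^n$-coordinates and leave the $Z$-values untouched, and $Z\in M_{n-1}$ so $Z$ and its elements are fixed by $\pi_\sigma$; the genericity of $g\colon A^n\times A^n\to Z$ and the Cohen-style homogeneity are unaffected. Once $\pi_\sigma(p)$ is seen to be compatible with $q$ (their common refinement is the union, well-defined by the disjointness off the fixed square and agreement on it), the contradiction is immediate: $\pi_\sigma(p)\cup q$ forces $\psi(A^n,r_1,\dots,r_m,x)$ (applying $\pi_\sigma$ to $p\force\psi(\dots)$ and using that all parameters are fixed) and simultaneously forces $\neg\psi(A^n,r_1,\dots,r_m,x)$ (as $q$ does). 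Hence $p\rest\{r_1,\dots,r_m\}^2\force\psi(A^n,r_1,\dots,r_m,x)$, as claimed.
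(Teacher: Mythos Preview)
Your argument is correct, and it takes a somewhat more direct route than the paper's. The paper proves the lemma by invoking Monro's Lemma~6 (Lemma~\ref{lem;Monro-lem6}): it writes ``$p\force\psi(A^n,r_1,\dots,r_m,x)$'' as a formula $\phi(A^n,r_1,\dots,r_m,s_1,\dots,s_k,y)$ in $M_n$ (where $s_1,\dots,s_k$ are the extra elements of $A^n$ appearing in $\dom p$ and $y\in M_{n-1}$ codes $x$ together with the finitely many $Z$-values in the image of $p$), then applies Lemma~\ref{lem;Monro-lem6} to replace the $s_i$ by suitable $t_i$ and obtain a condition $p[t_1,\dots,t_k]$ compatible with $q$. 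In other words, the paper routes the symmetry through the construction of $A^n$ over $M_{n-1}$, whereas you use an automorphism of the poset $P$ itself, induced by a finite permutation of $A^n$ fixing $r_1,\dots,r_m$. Your route avoids the black box of Lemma~\ref{lem;Monro-lem6} entirely and makes the dependence on $Z\subset M_{n-1}$ transparent; the paper's route has the advantage of mirroring Monro's original argument and making explicit that this really is the same lemma with $\{0,1\}$ replaced by $Z$.

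One small imprecision worth cleaning up: your justification for why $\pi_\sigma$ fixes the names of $A^n$, $r_i$, and $x$ appeals to their ``support in $A^n$'' being fixed by $\sigma$, but the real reason is simpler --- all of these are ground-model (i.e., $M_n$) parameters, so their check names are fixed by \emph{any} automorphism of $P$. The hypothesis $x\in M_{n-1}$ is not needed for this step (it matters in the paper's argument only to ensure the coding parameter $y$ lies in $M_{n-1}$, as required by Lemma~\ref{lem;Monro-lem6}).
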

\begin{proof}
We show that any condition $q$ extending $p\rest{\{r_1,...,r_m\}^2}$ is compatible with some condition forcing $\psi(A^n,r_1,...,r_m,x)$.

Take $s_1,...,s_k$ in $A^n$, disjoint from $r_1,...,r_m$, such that the domain of $p$ is included in $\{r_1,...,r_m,s_1,...,s_k\}^2$.
Let $y\in M_{n-1}$ be some parameter coding $x$ and the image of $p$. (Recall that the image of $p$ is in $Z$ and therefore in $M_{n-1}$).

Let $\phi(A^n,r_1,...,r_m,s_1,...,s_k,y)$ be the formula postulating that $p\force\psi(A^n,r_1,...,r_m,x)$.
By Lemma~\ref{lem;Monro-lem6} applied to $\phi$, there are finite functions $f_1,...,f_k$, $\dom f_i\subset A^{n-1}$, $f_i\colon \dom f_i\lto 2$ such that for any $t_1,...,t_k$ in $A^n$ with $t_i\supset f_i$, $\phi(A^n,r_1,...,r_m,t_1,...,t_k,y)$ holds.
That is, $p[t_1,...,t_k]\force \psi(A^n,r_1,...,r_m,x)$, where $p[t_1,...,t_k]$ is defined by replacing $t_i$ with $s_i$ in $p$.

Finally, for any $q\leq p\rest \{r_1,...,r_m\}^2$ we can find some $t_1,...,t_k$ disjoint from the domain of $q$, with $t_i\supset f_i$, and extend $q$ to $q'$ such that $q'\rest\{r_1,...,r_m,t_1,..,t_k\}^2=p[t_1,...,t_k]$.
It follows that $q'\force \psi(A^n,r_1,...,r_m,x)$.
This finishes the proof.
\end{proof}

\begin{lem}[Strengthening of Theorem 8 in \cite{Mon73}]\label{lem;stong-Monro-8}
No sets of rank $n$ are added by forcing with $P$ over $M_n$.
\end{lem}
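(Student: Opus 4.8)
The plan is to prove, by induction on $n$ and within that by a secondary induction on $k\le n$, the stronger assertion that forcing with $P$ over $M_n$ adds no set in $\mathcal P^k(\mathrm{On})$; the case $k=n$ is the lemma (and the secondary induction starts at $k=0$, since no forcing adds ordinals). The outer induction hypothesis, applied with $Z=\{0,1\}$ at level $n-1$, says that $P(A^{n-1})$ adds no set of rank $\le n-1$ over $M_{n-1}$; since $M_n=M_{n-1}(A^n)$ sits inside that generic extension, this in particular yields Monro's Theorem~8 (Lemma~\ref{lem;Monro-thm8}) at level $n$, so that $M_n$ and $M_{n-1}$ have exactly the same sets of rank $<n$. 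For the secondary step fix $k\ge 1$, assume no set of rank $<k$ is added by $P$ over $M_n$, and let $B\in M_n[G]\cap\mathcal P^k(\eta)$ with $B\subseteq\mathcal P^{k-1}(\eta)$. By the secondary hypothesis every element of $B$ lies in $M_n$, and since $k-1<n$ the remark above gives $\mathcal P^{k-1}(\eta)\cap M_n=\mathcal P^{k-1}(\eta)\cap M_{n-1}=:D\subseteq M_{n-1}$; so $B\subseteq D$ with $D\in M_{n-1}$. Fix a $P$-name $\dot B$ and $p\in G$ forcing $\dot B\subseteq\check D$.

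Next I would extract a finite support. Since $\dot B,p\in M_n=M_{n-1}(A^n)$, the analysis of $V(A)$ recalled in Section~\ref{sec;prelm} lets us write each as definable in $M_n$ from $A^n$, finitely many $r_1,\dots,r_m\in A^n$ and one parameter $w\in M_{n-1}$ (the parameters from $\mathrm{tc}(A^n)$ other than the $r_i$ lie in $A^{n-1}\cup\dots\cup\omega\subseteq M_{n-1}$ and are absorbed into $w$); enlarging the $r_i$ we may take $\dom p\subseteq\bar r^2$, where $\bar r=\{r_1,\dots,r_m\}$.

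The heart of the proof is to show that for each $c\in D$ whether $c\in\dot B^{G}$ is already decided by $G\rest\bar r^{2}$, the restriction of the generic function to pairs from $\bar r$. Suppose $q\in G$, $q\le p$, $q\force\check c\in\dot B$; split $q=q_0\cup q_1$ with $q_0=q\rest\bar r^2$ and $q_1$ using extra elements $s_1,\dots,s_l$ of $A^n\setminus\bar r$. Unfolding the definition of $\dot B$, the statement ``$q\force\check c\in\dot B$'' has the form $\psi(A^n,r_1,\dots,r_m,s_1,\dots,s_l,x)$ with $x\in M_{n-1}$ coding $c$, $w$, $q_0$, and the finite pattern and $Z$-values of $q_1$; here it is essential that $c\in D\subseteq M_{n-1}$, so $c$ genuinely goes into $x$ rather than among the variable parameters. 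Monro's symmetry Lemma~\ref{lem;Monro-lem6}, applied at level $n$ (so $A_n=A^n$, $A_{n-1}=A^{n-1}$), then yields finite $f_j\colon\dom f_j\lto 2$, $\dom f_j\subseteq A^{n-1}$, with $q_0\cup q_1[t_1,\dots,t_l]\force\check c\in\dot B$ whenever $t_j\in A^n$ and $t_j\supseteq f_j$, where $q_1[\bar t]$ replaces each $s_j$ by $t_j$. Since densely many elements of $A^n$ extend each $f_j$ (by genericity of the generic producing $A^n$ over $M_{n-1}$), a density argument below $q_0$ shows that any generic $G'$ with $G'\rest\bar r^2=G\rest\bar r^2$ contains some such $q_0\cup q_1[\bar t]$ and hence has $c\in\dot B^{G'}$; symmetrically for ``$\check c\notin\dot B$''. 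Therefore $\dot B^G$ is definable in $M_n$ from $G\rest\bar r^2$, $\bar r$ and $w$, so $\dot B^G\in M_n[G\rest\bar r^2]$; and $G\rest\bar r^2$ is a function from the finite set $\bar r^2\in M_n$ into $Z\subseteq M_n$, i.e.\ a finite set of elements of $M_n$, hence lies in $M_n$. Thus $B=\dot B^G\in M_n$, which closes both inductions.

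The main obstacle is precisely this last symmetry argument: one must arrange ``$q\force\check c\in\dot B$'' so that exactly the elements of $A^n$ appearing in $q$ beyond $\bar r$ serve as the variable parameters of Lemma~\ref{lem;Monro-lem6} — which is why one must first secure that $c$ ranges over a set $D$ lying in $M_{n-1}$, and this is where the outer induction (via Lemma~\ref{lem;Monro-thm8} at level $n$) is indispensable — and then verify that the resulting description of $\{c:c\in\dot B^{G}\}$ does not depend on the particular generic extending $G\rest\bar r^2$, through the density argument realizing the $f_j$'s. Alternatively the symmetry could be routed through automorphisms of $M_n$ induced by permutations of $A^n$ fixing $\bar r$, or through Lemma~\ref{lem;strong-Monro-7}, but Lemma~\ref{lem;Monro-lem6} already isolates exactly what is needed.
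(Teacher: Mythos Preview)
Your proof is correct and follows essentially the same approach as the paper: induct on the rank $j\le n$, use that elements of a rank-$(j{+}1)$ name already lie in $M_{n-1}$, and then apply the symmetry of $A^n$ to see that membership in $\dot B$ is decided by the restriction of a condition to $\{r_1,\dots,r_m\}^2$. The only cosmetic differences are that the paper packages the symmetry step by invoking Lemma~\ref{lem;strong-Monro-7} directly (rather than re-deriving it inline from Lemma~\ref{lem;Monro-lem6} as you do), and it treats Lemma~\ref{lem;Monro-thm8} as already available from Monro rather than building it into an explicit outer induction on $n$.
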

\begin{proof}
Note that the lemma generalizes Lemma~\ref{lem;Monro-thm8}. We prove both lemmas simultanoeusly by induction on $n$.
Assuming Lemma~\ref{lem;Monro-thm8} for $M_{n-1}$, we prove by induction on $j\leq n$ that no new sets of rank $j$ are added to $M_{n}$ by forcing with $P$.
Assume the result for $j<n$, and fix a name $\dot{B}$ for a rank $j+1$ set.
It remains to show that $\dot{B}$ is forced to be in $M_n$.

By the inductive hypothesis, $B$ is a subset of $M_{n-1}$.
Since $\dot{B}\in M_n$, there is a formula $\phi$, finitely many parameters $r_1,...,r_m$ from $A^n$ and $v\in M_{n-1}$ such that $\dot{B}$ is defined by $\phi(\dot{B},A^n,r_1,...,r_m,v)$.
Suppose $p\in P$ and $x$ of rank $j$ is such that $p\force \check{x}\in \dot{B}$.
The statement $\check{x}\in \dot{B}$ involves $A^n$, $r_1,...,r_m$, $x$ and $v$ (which are both in $M_{n-1}$).
By the lemma above, $p\rest \{r_1,...,r_m\}^2$ forces that $\check{x}\in\dot{B}$.
It follows that any $p$ whose domain include $\{r_1,...,r_m\}^2$ decides all elements of $\dot{B}$, hence $\dot{B}$ is forced to be in the ground model $M_{n}$. 

\end{proof}

\section{Generic behaviour of the Friedman-Stanley jumps}\label{sec;topologies}

Consider the product measure and product topology on $\R^{\omega^2}$.
Define $D\subset \R^{\omega^2}$ to be the set of all elements $x\in\R^{\omega^2}$ such that all the corresponding $\omega^2$-many reals are different.
That is, for any distinct $a,b\in\omega^2$, $x(a)\neq x(b)$.
Note that $D$ is comeager and conull.

\begin{prop}\label{prop;reduction-of-++}
$=^{++}\rest D$ is Borel reducible to $=^+$.
\end{prop}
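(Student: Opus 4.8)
The plan is to write down a continuous reduction explicitly. Fix once and for all a Borel (indeed continuous) injection $\langle\cdot,\cdot\rangle\colon\R\times\R\lto\R$ coding pairs of reals by single reals. For $x\in D$ write $X^x_i=\set{x(i,j)}{j\in\omega}$ for the $i$-th row, so that the $=^{++}$-invariant of $x$ is $A_x=\set{X^x_i}{i\in\omega}$. The point is that since $x\in D$, all the reals $x(a)$, $a\in\omega^2$, are distinct; in particular the rows $X^x_i$ are pairwise disjoint and each is countably infinite. Hence the partition of the countable set $S_x:=\set{x(a)}{a\in\omega^2}=\bigcup A_x$ into rows is literally the partition into the members of $A_x$, and such a partition is coded by a single countable set of reals, namely the graph of the associated equivalence relation. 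Thus on $D$ the three-level invariant $A_x$ degenerates to a two-level object.

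Concretely I would set
\[
    B_x=\set{\langle x(i,j),x(i,j')\rangle}{i,j,j'\in\omega}\subset\R,
\]
a countable set of reals, and let $f(x)\in\R^\omega$ be the enumeration of $B_x$ obtained by running through the pairs $\big((i,j),(i',j')\big)$ with $i=i'$ in some fixed order; $f$ is then continuous. It suffices to check that $x\mapsto B_x$ is a complete invariant for $=^{++}\rest D$, i.e.\ that for $x,y\in D$ one has $A_x=A_y\iff B_x=B_y$. For the forward direction observe that $\langle r,s\rangle\in B_x$ iff there is $X\in A_x$ with $r,s\in X$ (using injectivity of $\langle\cdot,\cdot\rangle$), so $B_x$ depends only on $A_x$; this direction does not even use $x,y\in D$. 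For the converse, from $B_x$ recover the binary relation $R_x$ on $\R$ defined by $r\,R_x\,s\iff\langle r,s\rangle\in B_x$. Using that the rows of $x$ are pairwise disjoint one checks that $R_x$ is an equivalence relation on $S_x=\set{r}{r\,R_x\,r}$ whose $R_x$-class of $x(i,j)$ is exactly $X^x_i$; since distinct rows are disjoint and nonempty they give distinct classes, so the set of $R_x$-classes is precisely $A_x$. Therefore $B_x=B_y$ forces $R_x=R_y$ and hence $A_x=A_y$.

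I do not expect a genuine obstacle: the whole content is the observation in the first paragraph that on $D$ an $=^{++}$-invariant is nothing more than a countable set together with an equivalence relation on it. The only places needing a little care are verifying that the coding map is Borel (it is continuous) and noticing that $D$ is used in the ``$\Leftarrow$'' direction — precisely there pairwise disjointness of the rows is needed, both to obtain transitivity of $R_x$ and to guarantee that the $R_x$-classes are in bijection with the members of $A_x$.
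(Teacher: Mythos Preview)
Your proof is correct and follows essentially the same approach as the paper: both observe that on $D$ the members of the $=^{++}$-invariant $A_x$ are pairwise disjoint, and therefore $A_x$ is recoverable from the equivalence relation $R_x=\set{(r,s)}{\exists X\in A_x\,(r,s\in X)}$ on $\bigcup A_x$, which is coded by a countable set of reals. Your write-up is simply more explicit about the coding and the verification that $R_x$ is an equivalence relation whose classes are exactly the members of $A_x$.
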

\begin{proof}
Let us argue in terms of invariants.
For $x\in D$, its $=^{++}$ invariant is a set of sets of reals $A$, such that any distinct $X,Y\in A$ are disjoint.
Define $R=\set{(x,y)}{\exists X\in A(x,y\in X)}$.
$R$ is the equivalence relation partitioning the set of reals $\bigcup A$ to $A$.
Thus $A$ is defined from $R$, and $R$ can be coded as a set of reals, hence an $=^+$ invariant.

A Borel map $\R^{\omega^2}\lto (\R^2)^\omega$ can be defined, sending an element of $\R^{\omega^2}$ coding $A$ to a sequence of pairs of reals coding $R$, and is a reduction of $=^{++}\rest D$ to $=^+$.
\end{proof}

In particular, if $x\in\R^{\omega^2}$ is either Cohen or random generic and $A$ is its $=^{++}$-invariant, then $V(A)$ is generated by a set of reals.
From Monro's model $V(A^2)$ we get the following presentation of $=^{++}$.
Given $x=(x_1,x_2)\in\R^\omega\times(2^\omega)^\omega$, let
\begin{equation*}
    A^i_x=\set{x_1(j)}{x_2(i)(j)=1}\textrm{ and } A_x=\set{A^i_x}{i\in\omega}.     
\end{equation*}
Define $F$ on $\R^\omega\times(2^\omega)^\omega$ by $xFy$ if and only if $A_x=A_y$.
Then $F$ is Borel bireducible with $=^{++}$.

\begin{prop}\label{prop;=++-no-reduction}
 For any non meager set $C$ in the standard product topology on $\R^\omega\times(2^\omega)^\omega$, $F\rest C$ is \textit{not} Borel reducible to $=^+$.
\end{prop}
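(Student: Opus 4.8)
The plan is to reduce the proposition, via the correspondence of Section~\ref{sec;double-brackets}, to a computation inside Monro's second model $M_2$. First I would check that for a Cohen-generic $x=(x_1,x_2)$ in $\R^\omega\times(2^\omega)^\omega$ the model $V(A_x)$ is exactly $M_2$. The reals $x_1(j)$ form a set $A^1=\set{x_1(j)}{j\in\omega}$ of mutually Cohen reals, so $M_1:=V(A^1)$ is the basic Cohen model; transporting $x_2$ along the enumeration $x_1$ yields a function $g\colon A^1\times A^1\lto 2$ which is $P(A^1)$-generic over $M_1$, and with $A^1_a:=\set{c\in A^1}{g(a,c)=1}$ one has $A^i_x=A^1_{x_1(i)}$, hence $A_x=\set{A^1_a}{a\in A^1}=A^2$ in Monro's notation. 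Since generically $\bigcup A_x=A^1$, we get $M_1\subset V(A_x)$, and therefore $V(A_x)=M_1(A_x)=M_1(A^2)=M_2$. By Theorem~\ref{thm;correspondence} and Lemma~\ref{lem;reduction-def-generating-set}, applied with the meager ideal (so that $P_I$ is Cohen forcing), together with the propagation of Cohen-genericity to arbitrary non-meager sets noted after Theorem~\ref{thm;correspondence}, it then suffices to prove that $V(A_x)=M_2$ is not of the form $V(B)$ for any $=^+$-invariant $B\in V(A_x)$ definable from $A_x$ and parameters in $V$.

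The core of the proof is then a direct application of Monro's analysis, exactly as in Corollary~\ref{cor;Friedman-Stanley} with $n=1$. Suppose for contradiction that $V(A_x)=V(B)$ with $B$ a hereditarily countable set of reals in $M_2$. A set of reals lies in $\mathcal{P}^2(\omega)$, so Lemma~\ref{lem;gen-blass-thm} (applied with $n=1$; its proof rests on Lemma~\ref{lem;stong-Monro-8}/Lemma~\ref{lem;Monro-thm8} and Lemma~\ref{lem;symmetry}) produces a finite $\ba=\{A^1_{a_1},\dots,A^1_{a_k}\}\subset A^2$ with $B\in M_1(\ba)$. As $M_1(\ba)$ is a transitive model of $\mathrm{ZF}$ containing $V$ and $B$, we get $V(B)\subset M_1(\ba)$, hence $A_x=A^2\in M_1(\ba)$. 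But $A^1$ is infinite and the $A^1_a$ are generically distinct, so there is some $A^1_a\in A^2$ with $a\notin\{a_1,\dots,a_k\}$; this $A^1_a$ then lies in $M_1(\ba)$, while by mutual genericity (Lemma~\ref{lem;mutgen}) the coordinate $a$ of $P(A^1)$ is generic over $M_1(\ba)$, so $A^1_a\notin M_1(\ba)$ — a contradiction. This establishes the claim, and with it the proposition.

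The routine forcing heart of the argument — that $P(A^1)$ over $M_1$ (more generally, the poset $P$ of Section~\ref{subsec;Monro-posets-tech} over $M_n$) adds no new sets of low rank to the symmetric model — is already available from Section~\ref{subsec;Monro-posets-tech}, so I expect the two genuinely new points to be the following. First, the identification $V(A_x)=M_2$ for the concrete presentation $F$: the bookkeeping with the enumeration $x_1$, and the verification that $g$ is $P(A^1)$-generic over $M_1$ rather than merely over $V$. Second, and this is the part I expect to be the main obstacle, the passage from ``no reduction for the Cohen-generic $x$'' to ``no reduction on any non-meager set''. This is where one really uses that $\R^\omega\times(2^\omega)^\omega$ carries a Cohen topology — equivalently, that $P_I$ for the meager ideal is Cohen forcing — so that Lemma~\ref{lem;reduction-def-generating-set} and Theorem~\ref{thm;correspondence} apply to every non-meager Borel set; handling arbitrary (possibly non-Borel) non-meager sets is the fussier half, and is dealt with by the general category machinery indicated in Section~\ref{subsec;gen-red} and exploited throughout Section~\ref{sec;topologies}.
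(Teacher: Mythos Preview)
Your proposal is correct and takes essentially the same approach as the paper: identify $V(A_x)$ for a Cohen-generic $x$ with Monro's model $M_2$, then invoke Lemma~\ref{lem;gen-blass-thm} via the correspondence of Theorem~\ref{thm;correspondence}. You spell out both the identification (the paper instead forward-references the $P^{0,\omega}$ machinery of Section~\ref{sec;trans-jumps} and Lemma~\ref{lem;monro-iterations-projections}) and the final mutual-genericity contradiction in more detail than the paper does, and you correctly flag the passage from non-meager Borel sets to arbitrary non-meager $C$ as the one point requiring extra care.
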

\begin{proof}
By Theorem~\ref{thm;correspondence} it suffices to show that given a Cohen-generic $x=(x_1,x_2)\in\R^\omega\times(2^\omega)^\omega$, $V(A_x)$ is not generated by a set of reals.

Let $A^1=\set{x_1(i)}{i\in\omega}$ and $A^2=A_x$.
Then $A^1$ is a Cohen set and $A^2$ is a set of mutually generic subsets of $A^1$ over $V(A^1)$ (this follows from the presentation of Monro's iterations as $P^{0,\omega}$ (Definition~\ref{def;monro-iterations}) and Lemma~\ref{lem;monro-iterations-projections} below).
By Lemma~\ref{lem;gen-blass-thm}, $V(A^2)$ is not generated by a set of reals: for any set of reals $B\in V(A^{2})$, $B\in V(A^1)(\bar{a})$ where $\bar{a}\subset A^2$ is finite, so the sets in $A^2\setminus \bar{a}$ are generic over $V(A^1)(\bar{a})$.
\end{proof}

Define the union map $u\colon \R^\omega\times(2^\omega)^\omega\lto \R^\omega$ by
\begin{equation*}
    u(x_1,x_2)=x_1.
\end{equation*}
Let us work in the comeager subset of $\mathbb{R}^\omega\times(2^\omega)^\omega$ where $\forall j\exists i(z(i)(j)=1)$. That is, each real $x_1(j)$ appears in one of the sets $A^i_x$. In this case $u$ is a homomorphism from $F$ to $=^+$. Furthermore, $u$ maps the $F$-invariant $A_{x}=\set{A_x^i}{i\in\omega}$ to its union, the $=^+$-invariant $\bigcup A_{x}=\set{x(j)}{j\in\omega}$.

\begin{prop}\label{prop;=++-unique-homo}
Suppose $f\colon \R^\omega\times(2^\omega)^\omega\lto\R^\omega$ is a homomorphism from $F$ to $=^+$.
Then there is a homomorphism $h$ from $=^+$ to $=^+$ defined on a comeager set such that $f=^+ h\circ u$ (that is, $f(x)\mathrel{=^+}h(u(x))$ for comeagerly many $x$).
\end{prop}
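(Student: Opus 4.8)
The plan is to use the correspondence developed in Section~\ref{sec;double-brackets}, translating the statement about homomorphisms into a statement about the symmetric model $V(A_x)$ for a Cohen-generic $x = (x_1, x_2)$. Let $A^1 = A^1_x = \set{x_1(i)}{i\in\omega}$ and $A = A_x = A^2$, so that $V(A) = M_2$ is (a copy of) Monro's second model, with $A^1 = \bigcup A \in V(A)$ being its associated set of rank-$1$ (Cohen-generic) reals, and $V(A^1)$ the basic Cohen model sitting strictly below $V(A)$. By Lemma~\ref{lem;reduction-def-generating-set}, the homomorphism $f$ gives an $=^+$-invariant $B = B_{f(x)} \in V(A)$, i.e.\ a countable set of reals $B$, definable in $V(A)$ from $A$ and a parameter $v \in V$ alone. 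The goal is to show that $B$, up to $=^+$, factors through $A^1 = u(x)$: there should be a definable (from $A^1$ and $v$ only) countable set of reals $B'$ with $B =^+ B'$ in the appropriate sense, which will translate back, via Proposition~\ref{prop;reduction-from-sym-model} (or rather the homomorphism half of the correspondence), into the desired homomorphism $h$ from $=^+$ to $=^+$ with $f =^+ h\circ u$.

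First I would analyze the structure of $B$ inside $V(A) = M_2$. Since $B$ is a set of reals, hence a set of rank-$1$ objects, and by Lemma~\ref{lem;Monro-thm8} (equivalently Lemma~\ref{lem;stong-Monro-8}) $M_2$ and $M_1 = V(A^1)$ have the same sets of rank $1$, we get $B \in V(A^1)$; moreover $B \subset V$, so by Lemma~\ref{lem;symmetry} applied to the finite-support product structure presenting $M_1$ over $V$, there is a finite support $\bar{a} = \{a_1,\dots,a_k\} \subset A^1$ with $B \in V[\bar{a}]$, and $B$ is definable in $V[\bar{a}]$ from the forcing, $\bar{a}$, and a ground-model parameter. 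The key point is then that $B$, being definable in $V(A)$ from $A$ and $v\in V$ with \emph{no} parameters from $\mathrm{tc}(A)$, must in fact be invariant under the ``symmetries'' of $M_2$ that fix $A$ (permuting the $A^1_a$'s and the underlying Cohen reals); combined with the support analysis this forces $B$ to actually be definable from $A^1$ and $v$ alone — any genuine dependence on a particular finite tuple $\bar{a}$ of Cohen reals would be destroyed by a symmetry fixing $A$ but moving $\bar a$, contradicting the parameter-free-over-$A$ definition (here one uses that $A^1$ is definable from $A$, so symmetries of $M_2$ fixing $A$ also fix $A^1$ setwise but act nontrivially on its elements). A homogeneity/genericity argument in the style of Lemma~\ref{lem;strong-Monro-7} then pins $B$ down as a set definable over $V(A^1)$ from $A^1$ and $v$.

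Once $B$ is known to be definable in $V(A^1)$ from $A^1$ and $v \in V$, I would run the homomorphism direction of the correspondence (Proposition~\ref{prop;reduction-from-sym-model}, first bullet, with $E = F' $ the $=^+$ relation on $\R^\omega$ presented so that $x_1$ is its invariant $A^1$, and using that $A^1$ itself is a Cohen-generic $=^+$-invariant, $I$ the meager ideal): since $B$ is an $=^+$-invariant lying in $V[x_1] \subseteq V[x]$ and definable from $A^1$ alone, there is a partial Borel homomorphism $h$ from $=^+$ to $=^+$, defined on a comeager (non-meager Borel, hence comeager by genericity of Cohen forcing and a standard argument extending it) set of $\R^\omega$, with $B =^+ B_{h(u(x))}$ — that is, $f(x) =^+ h(u(x))$ on a comeager set. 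One still needs to promote ``the generic $x$ satisfies $f(x) =^+ h(u(x))$'' to ``$f =^+ h\circ u$ on a comeager set,'' which is the usual Kuratowski–Ulam / Baire-category absoluteness step: the Borel set $\set{x}{f(x) =^+ h(u(x))}$ contains all Cohen-generics over a fixed countable elementary submodel, hence is comeager.

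The main obstacle I anticipate is the middle step: rigorously showing that $B$, definable in $V(A)$ from $A$ and a ground parameter only, descends to something definable over $V(A^1)$ from $A^1$ and a ground parameter only — i.e.\ that it cannot secretly encode a choice of finitely many elements of $A^1$. This requires the right symmetry group acting on $M_2 = V(A^2)$ fixing $A^2$ pointwise-as-a-set but shuffling $A^1$, together with a support/homogeneity argument (building on Lemma~\ref{lem;Monro-lem6} and Lemma~\ref{lem;strong-Monro-7}) to conclude that the support $\bar a$ of $B$ must be empty. Some care is needed because a symmetry fixing $A^2$ as a set may still permute its elements, which in turn permutes the subsets $A^1_a$ of $A^1$ and hence acts on $A^1$; one must check that the induced action on $A^1$ is rich enough to force triviality of the support of $B$, and that $B$'s defining formula, being evaluated with the single parameter $A$ (equivalently $A^2$), is genuinely invariant under this action. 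I expect this to go through because $A^2$ is a set of \emph{mutually generic} subsets of $A^1$ over $V(A^1)$ (as noted in the proof of Proposition~\ref{prop;=++-no-reduction}), giving ample room to realize the needed permutations of $A^1$ by automorphisms fixing $A^2$ setwise — but making this precise is the crux of the argument.
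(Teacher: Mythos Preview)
Your overall strategy matches the paper's: translate to the symmetric model $V(A^2)$, show the $=^+$-invariant $B=B_{f(x)}$ is definable from $A^1$ and a ground-model parameter alone, then translate back via Proposition~\ref{prop;reduction-from-sym-model}. However, the middle step is both overcomplicated and contains genuine errors.

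First, Lemma~\ref{lem;Monro-thm8} with $n=1$ says $M_1$ and $M_2$ have the same rank~$1$ sets (the same reals). This gives only $B\subset V(A^1)$, not $B\in V(A^1)$: the set $B$ itself is rank~$2$, and $M_1,M_2$ do \emph{not} have the same rank~$2$ sets. Second, the claim ``$B\subset V$'' is false: the reals in $B$ may well be Cohen reals (e.g.\ if $f=u$ then $B=A^1$). So your application of Lemma~\ref{lem;symmetry} with base $V$ and $A=A^1$ cannot get started, and the entire symmetry argument you flag as the ``main obstacle'' --- finding automorphisms of $M_2$ fixing $A^2$ setwise while moving a putative support $\bar a\subset A^1$ --- is built on this broken step.

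The paper avoids all of this by applying Lemma~\ref{lem;symmetry} one level up: take the base model to be $V(A^1)$, the product to be $P(A^1)$ (the finite-support $A^1$-indexed product of ``add one subset of $A^1$''), so that the set $A$ of the lemma is $A^2$. Now $B\subset V(A^1)$ (by Monro, as you noted), and crucially $B$ is definable from $A^2$ and $v\in V$ with \emph{empty} support over $A^2$ --- no elements of $A^2$ are needed as parameters, by Lemma~\ref{lem;reduction-def-generating-set}. Lemma~\ref{lem;symmetry} then gives $B\in V(A^1)$ directly, and its ``Furthermore'' clause says $B$ is definable in $V(A^1)$ from $P(A^1)$ and $v$; since $P(A^1)$ is definable from $A^1$, you are done. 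Your ``main obstacle'' simply does not arise: there is no support in $A^1$ to eliminate, because the support (over $A^2$) was empty to begin with. The rest of your argument (invoking Proposition~\ref{prop;reduction-from-sym-model} and extending to a comeager set) is correct and matches the paper.
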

\begin{proof}
Let $x=(x_1,x_2)\in \R^\omega\times(2^\omega)^\omega$ be Cohen generic, $A^2=A_x$ and $A^1=\bigcup A^2=\set{x_1(i)}{i\in\omega}$.
Let $y=f(x)\in\R^\omega$ and $B=\set{y(i)}{i\in\omega}$ the corresponding $=^+$-invariant.
By Lemma~\ref{lem;reduction-def-generating-set} $B$ is definable in $V(A^2)$ from $A^2$ and parameters in $V$.
By Lemma~\ref{lem;symmetry} it follows that $B$ is in $V(A^1)$ and is definable only from $A^1$ and parameters from $V$.

Note that $x_1\in\R^\omega$ is Cohen-generic and $A^1$ is its $=^+$-invariant.
According to Proposition~\ref{prop;reduction-from-sym-model} this corresponds to a partial Borel homomorphism $h\colon =^+ \lto =^+$ defined on a non-meager subset of $\R^\omega$ such that $h(x_1)=y$.
In this case $h$ can be defined on a comeager set, as the statements in Proposition~\ref{prop;reduction-from-sym-model} are forced by the empty condition.
It also follows from the definition of $h$ in Proposition~\ref{prop;reduction-from-sym-model} that $f(x)=^+ h\circ u(x)$ on a comeager set. 
\end{proof}

\section{Generic invariants for $\cong^\ast_{n,k}$}\label{sec;cong-ast-n-k} 
In this section we prove part (1) of Conjecture~\ref{conj;HKL}, that $\cong^\ast_{n,k}$ is not Borel reducible to $\cong^\ast_{n,k-1}$, when defined.
To show such irreducibility result we study models of the form $V(A)$ where $A$ is an invariant for $\cong^\ast_{n,k}$, and we need to show it is not of the form $V(B)$ where $B$ is an invariant for $\cong^\ast_{n,k-1}$.
First we review what are invariants for the equivalence relations of Hjorth-Kechris-Louveau and make a few simplifications.
Corollary~\ref{cor;ast-n-k-invariants} below gives a simple condition sufficient to establish that a model is generated by an invariant for $\cong^\ast_{n,k}$.
Claim~\ref{claim;cong-ast-invs} establishes a property of models generated by invariants for $\cong^\ast_{n,k}$.
Section~\ref{subsec;cong-ast-31} provides a ``hands-on'' proof that $\cong^\ast_{3,1}$ is not Borel reducible to $\cong^\ast_{3,0}$, to illustrate our techniques. The proof of Conjecture~\ref{conj;HKL} part (1) is in Section~\ref{subsec;cong-ast-n-k}.

\subsection{Invariants for the Hjorth-Kechris-Louveau equivalence relations}
Recall that an invariant for $\cong^\ast_{\alpha+1,\beta}$ is of the form $(A,R)$, where $A$ is a set of rank $\alpha+1$ in $\mathcal{P}^{\alpha+1}(\omega)$ and $R$ is a relations on $A\times A\times( \mathcal{P}^{\beta}(\omega)\cap\mathrm{tc}(A))$ such that for any $a,b,b'\in A$ and $x\in \mathcal{P}^\beta(\omega)\cap \mathrm{tc}(A)$, if $R(a,b,x)$ and $R(a,b',x)$ holds, then $b=b'$.
In other words, given $a\in A$, the map $b\mapsto \set{x}{R(a,b,x)}$ maps the members of $A$ into disjoint sets of rank $\beta+1$.
(See Section~\ref{sec;HKL-relations}).

When $\beta=0$, the relation $R$ allows us to enumerate the invariant $A$, uniformly in a parameter from $A$.
This behaviour is analogous to countable equivalence relations. 
Given a countable equivalence relation $E$ and an $E$-class $A=[x]_E$ (a set of rank 2), this class can be enumerated using any parameter from $[x]_E$.

In the generic invariants that we construct below, the relation $R$ will have the stronger property that for any $a,b\in A$ there is a unique $c$ such that $R(a,b,c)$ holds. In other words, for any $a\in A$, $R(a,-,-)$ defines an injective map from $A$ into $\mathcal{P}^\beta(\omega)$ (coding $A$ as a set of lower rank).

\begin{remark}\label{remark;cong-ast-defn}
The definition of the sets $\mathcal{P}^\alpha(\omega)$ of rank $\alpha$ in \cite{HKL98} is slightly different than we use here, and is defined by $\mathcal{P}^{\alpha}(\omega)=\mathcal{P}(\mathcal{P}^{<\alpha}(\omega)\cup\omega)$.
This is used to fix a particular coding of finite sequences of rank $\beta$ sets as rank $\beta$ sets (see \cite[p.71]{HKL98}).

A similar coding will be used here as well, just in our context we work often with unordered finite subsets.
(Working in symmetric models, a set of rank $3$ may not even admit a linear order.)
To that end we fix injective maps (working in ZF) between $[\mathcal{P}^k(\omega)]^{<\aleph_0}$ and $\mathcal{P}^{k}(\omega)$ as follows.
\end{remark}

Fix some injective map $f_1$ from $[\mathcal{P}^1(\omega)]^{<\aleph_0}$ (finite sets of reals) and $\mathcal{P}^1(\omega)$.
This can be done by using the linear ordering of the reals to code finite subsets as finite sequences.
Given $f_k\colon [\mathcal{P}^k(\omega)]^{<\aleph_0}\lto\mathcal{P}^k(\omega)$ define $f_{k+1}\colon [\mathcal{P}^{k+1}(\omega)]^{<\aleph_0}\lto\mathcal{P}^{k+1}(\omega)$ by
\begin{equation*}
    f_{k+1}(X)=\set{f_k(\set{c(x)}{x\in X})}{c\textrm{ is a choice function for } X}.
\end{equation*}
\begin{claim}
For each $k$, $f_k$ is injective.
\end{claim}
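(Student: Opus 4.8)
The plan is to prove injectivity of $f_k$ by induction on $k$. The base case $k=1$ is immediate from the construction: since $f_1$ is built using the fixed linear order on $\mathcal{P}^1(\omega)=\R$ to code a finite set as the (strictly increasing, say) sequence of its elements, distinct finite sets of reals yield distinct sequences, hence distinct values under $f_1$. For the inductive step, suppose $f_k\colon [\mathcal{P}^k(\omega)]^{<\aleph_0}\lto\mathcal{P}^k(\omega)$ is injective, and let $X,Y\in[\mathcal{P}^{k+1}(\omega)]^{<\aleph_0}$ with $f_{k+1}(X)=f_{k+1}(Y)$; I want to conclude $X=Y$.

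The first step is to recover $|X|$ from $f_{k+1}(X)$. Every choice function $c$ for $X$ has domain $X$, and $\set{c(x)}{x\in X}$ is a finite subset of $\mathcal{P}^k(\omega)$ of size at most $|X|$, with size exactly $|X|$ when $c$ is injective. By the inductive hypothesis $f_k$ is injective, so $f_k(\set{c(x)}{x\in X})$ determines $\set{c(x)}{x\in X}$; in particular $\max_{z\in f_{k+1}(X)} |f_k^{-1}(z)| = |X|$ since an injective choice function exists (each $x\in X$ is a nonempty set, being in $\mathcal{P}^{k+1}(\omega)$ with $k\geq 1$ — here one uses that the elements are genuinely iterated powersets so are nonempty, or more carefully one argues within the intended domain where this holds; alternatively $|X|$ is recovered as the number of elements of $X$, which can be read off since for each $x\in X$ and each $w\in x$ there is a choice function sending $x$ to $w$). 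Thus $|X|=|Y|=:n$. The second step handles small $n$ directly ($n=0$ gives $X=Y=\emptyset$; for $n=1$, $X=\{x\}$ and $f_{k+1}(\{x\})=\set{f_k(\{w\})}{w\in x}$, from which $x=\set{w}{f_k(\{w\})\in f_{k+1}(X)}$ is recovered by injectivity of $f_k$, so $X$ is recovered). The main step is the general reconstruction: I claim $x\in X$ iff $x$ is a nonempty subset of $\mathcal{P}^k(\omega)$ that is "coordinate-wise compatible" with the family $f_{k+1}(X)$ in the following sense — for every $w\in x$ there should exist a way of choosing one element from each of the other members of $X$ completing $w$ to a full selector whose $f_k$-code lies in $f_{k+1}(X)$ — and conversely every element reachable this way lies in $X$. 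Making this precise: given $f_{k+1}(X)$, define $\widetilde{X}$ to be the set of all $x\subseteq\mathcal{P}^k(\omega)$ such that $\set{f_k(S)}{S\in[\mathcal{P}^k(\omega)]^{<\aleph_0},\ S\cap x\neq\emptyset}\cap f_{k+1}(X)$ "covers" $x$ appropriately; the key combinatorial lemma to prove is that $\widetilde{X}=X$, using injectivity of $f_k$ to pass between selectors and their codes and a Hall-type / transversal argument to see that no spurious set can be compatible with all the recorded selectors of a genuine finite family. Once $\widetilde{X}$ is shown to depend only on $f_{k+1}(X)$ and to equal $X$, the equality $f_{k+1}(X)=f_{k+1}(Y)$ forces $X=\widetilde{X}=\widetilde{Y}=Y$.

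I expect the main obstacle to be exactly this reconstruction lemma, $\widetilde{X}=X$: the inclusion $X\subseteq\widetilde{X}$ is routine, but ruling out extra sets in $\widetilde{X}$ requires care, because a priori some set $x'\notin X$ might happen to be "compatible" with every recorded selector. The cleanest route is probably to recover not just $X$ as a set but the function $x\mapsto x$ together with which codes in $f_{k+1}(X)$ arise from selectors meeting $x$, and then argue that the collection of selectors is rich enough — it includes, for each $x\in X$ and each $w\in x$, at least one selector sending $x$ to $w$ — to pin down each member of $X$ uniquely. Since we work in ZF with no choice, throughout I must be careful that all the sets "$S$" ranging over $[\mathcal{P}^k(\omega)]^{<\aleph_0}$ and all choice functions for the finite sets $X$ exist (finite choice is available in ZF), so no choice principle is secretly used. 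I would also remark that in the intended application $X$ ranges over finite subsets where all members are nonempty (being iterated powersets), which is what makes the $|X|$-recovery and the covering argument go through; if one wants injectivity literally on all of $[\mathcal{P}^{k+1}(\omega)]^{<\aleph_0}$ including the case where $\emptyset\in\mathcal{P}^{k+1}(\omega)$ as a member, a small separate bookkeeping argument isolates that element, since $\emptyset\in X$ contributes no constraint and its presence is detected by comparing the number of selectors with the product of the sizes of the nonempty members.
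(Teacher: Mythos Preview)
First, note that the paper gives no proof of this claim; it is simply asserted. More to the point, the claim as literally stated is false, and your proposed reconstruction lemma is exactly where the argument breaks. Take distinct $a,b\in\mathcal{P}^k(\omega)$ and set $X=\{\{a\},\{b\},\{a,b\}\}$ and $Y=\{\{a\},\{b\}\}$, both in $[\mathcal{P}^{k+1}(\omega)]^{<\aleph_0}$. Every choice function $c$ for $X$ satisfies $c(\{a\})=a$, $c(\{b\})=b$, and $c(\{a,b\})\in\{a,b\}$, so in every case $\{c(x):x\in X\}=\{a,b\}$; hence $f_{k+1}(X)=\{f_k(\{a,b\})\}$. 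The unique choice function for $Y$ also has image $\{a,b\}$, so $f_{k+1}(Y)=\{f_k(\{a,b\})\}=f_{k+1}(X)$ while $X\neq Y$. This same $X$ admits no injective choice function, so your recovery of $|X|$ as $\max_{z\in f_{k+1}(X)}|f_k^{-1}(z)|$ already fails: it returns $2$, not $3$.

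You were right to flag the reconstruction step $\widetilde{X}=X$ as the main obstacle; the problem is not that it ``requires care'' but that it is false in general, so no Hall-type transversal argument can rescue it. Your $\emptyset$-bookkeeping is likewise unsalvageable as written, since every $X$ with $\emptyset\in X$ has no choice function at all and hence $f_{k+1}(X)=\emptyset$, from which nothing about the remaining members of $X$ can be recovered. What the paper's applications actually need is \emph{some} ZF-definable injection $[\mathcal{P}^k(\omega)]^{<\aleph_0}\to\mathcal{P}^k(\omega)$; the recursive formula as written does not provide one, and an amended definition (together with a fresh proof of injectivity for the amended map) is required.
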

This can be extended to all countable ordinals in a similar way.
In the examples below our invariants will be $(A,R)$ such that $R\subset A\times A\times [\mathcal{P}^k(\omega)\cap \mathrm{tc}(A)]^{<\aleph_0}$.
We want to use the coding function $f_k$ above to conclude that $(A,R)$ is a $\cong^\ast_{n,k}$-invariant.

Another minor detail is that after composing with $f_k$ the low rank members of $R$ may no longer be in the transitive closure of $A$.
This is not a real issue as they are still definable from $\mathrm{tc}(A)$ in a simple way.
For example, by changing $A$ a little one can find a pair $(\tilde{A},\tilde{R})$ which is bi-definable with $(A,R)$ and satisfies the conditions of being a $\cong^\ast_{n,k}$-invariant.

The main task ahead is to find ``good'' generic invariants for the equivalence relations $\cong^\ast_{n,k}$.
In all the examples below the relation $R$ will be definable from the set $A$ in a simple way.
When defining invariants we will always rely on the following conclusion of the above discussion.

\begin{cor}\label{cor;ast-n-k-invariants}
To find a $\cong^\ast_{n,k}$-invariant it suffices to find a set $A$ of rank $n$ in $\mathcal{P}^n(\omega)$ such that there are injective functions, definable uniformly from $A$ and a parameter from $A$, sending the members of $A$ to finite subsets of $\mathrm{tc}(A)\cap \mathcal{P}^k(\omega)$.
\end{cor}
To show that a given model is \textit{not} generated by a $\cong^\ast_{\alpha+1,\beta}$-invariant, the following observation will be repeatedly used.

\begin{claim}\label{claim;cong-ast-invs}
Let $(A,R)$ be a generic $\cong^\ast_{\alpha+1,\beta}$-invariant.
Then $V(A,R)$ can be written as $V(B)$ where $B$ is definable using only $(A,R)$ and parameters from $V$, $B$ is of rank $\alpha+1$ and
there is a relation $\tilde{R}\subset A\times B\times\mathcal{P}^\beta(\omega)$ such that for any $a\in A$, $b,b'\in B$ and $x\in \mathcal{P}^\beta(\omega)$, if $\tilde{R}(a,b,x)$ and $\tilde{R}(a,b',x)$ hold then $b=b'$. Furthermore, for any $a\in A$ and $b\in B$ there is some $x$ such that $\tilde{R}(a,b,x)$ holds.
\end{claim}
In particular, given any $a\in A$ then map $b\mapsto \set{x}{\tilde{R}(a,b,x)}$ maps $B$ into a set of {\it disjoint} subsets of $\mathcal{P}^{\beta}(\omega)$.
If $\beta=0$, we get injective maps from $B$ into disjoint subsets of $\omega$, so $B$ is countable in $V(B)=V(A,R)$.
\begin{proof}
By assumption, $R$ is a relation such that for any $a,b,b'\in A$ and $x\in \mathcal{P}^\beta(\omega)$, if $R(a,b,x)$ and $R(a,b',x)$ then $b=b'$, and such that for any $a,b\in A$ there is an $x$ for which $R(a,b,x)$ holds.
So the properties of $B$ are true if we replace $B$ with $A$. 
We extend this to the pair $(A,R)$ and then take $B$ to code $(A,R)$.

Define a relation $R'\subset A\times A\times R\times (\mathcal{P}^\beta(\omega))^4$ by
\begin{equation*}
R'(a,b,(c,d,u),(w,x,y,z)) \iff R(a,b,w)\wedge R(a,c,x) \wedge R(a,d,y) \wedge u=z.     
\end{equation*}
Just like $R$, $R'$ satisfies that: for any $a\in A$, for any $(b,c,d,u),(b',c',d',u')\in A\times R$ and any $(w,x,y,z)\in (\mathcal{P}^\beta(\omega))^4$, if
\begin{equation*}
    R'(a,b,(c,d,u),(w,x,y,z)) \textrm{ and } R'(a,b',(c',d',u'),(w,x,y,z)),
\end{equation*}
then $b=b'$ and $(c,d,u)=(c',d',u')$.

Fix a definable injective map $\gamma$ from $(\mathcal{P}^\beta(\omega))^4$ into $\mathcal{P}^\beta(\omega)$.
Fix a definable injective map $\delta$ from $\mathcal{P}^{\alpha+1}(\omega)\times \mathcal{P}^{\alpha+1}(\omega)\times \mathcal{P}^{\alpha+1}(\omega)\times \mathcal{P}^{\beta+1}(\omega)$ into $\mathcal{P}^{\alpha+1}(\omega)$.
Let $B$ be a set of rank $\alpha+1$ coding $(A,R)$ via $\delta$.

Finally, define $\tilde{R}\subset A\times B\times \mathcal{P}^\beta(\omega)$ by $\tilde{R}(a,e,v)$ if and only if $e=\delta(b,c,d,u)$, $v=\gamma(w,x,y,z)$ and $R'(a,b,(c,d,u),(w,x,y,z))$.
\end{proof}
Note that if $\alpha=\beta+1$, then the claim above provides a set $B$ of rank $\alpha+1$ with an embedding into a set of rank $\beta+2=\alpha+1$. However, the disjointness in the image of the embedding provides the following significant simplification (as in Proposition~5.1).

Suppose $X$ is a set of disjoint subsets of $\mathcal{P}^\beta(\omega)$ ($X\in\mathcal{P}^{\beta+2}(\omega)$). Then there is $Y\in\mathcal{P}^{\beta+1}(\omega)$, definable from $X$, such that $V(X)=V(Y)$.

We argue as in Proposition~5.1.
Let $Z=\bigcup X\in\mathcal{P}^{\beta+1}(\omega)$. 
Let $W$ be the equivalence relation on $Z$ defined by $x\mathrel{W} y$ if and only if there is $D\in X$ such that $x,y\in D$.
Since $Z$ is of rank $\beta+1$, $W$ can be coded as a set of rank $\beta+1$ as above.
Let $Y$ be a set of rank $\beta+1$ coding $(Z,W)$.
Now $V(X)=V(Z,W)=V(Y)$.

\subsection{An interesting $\cong^\ast_{3,1}$-invariant}\label{subsec;cong-ast-31}

In this section we describe a simple proof of the fact that $\cong^\ast_{3,1}$ is not Borel reducible to $\cong^\ast_{3,0}$.

Work in the Cohen model $V(A^1)$ (using the notation from Monro's construction above).
Let $P$ be the poset of all finite functions $p\colon\dom p\lto 2$ where $\dom p$ is a subset of $A^1$.
$P$ adds a single generic subset of $A^1$, and is a sub forcing of $P(A^1)$ above.
It follows from Lemma~\ref{lem;Monro-thm8} that forcing with $P$ adds no reals.

Let $X\subset A^1$ be $P$-generic over $V(A^1)$. For the remaining of Section~\ref{subsec;cong-ast-31} we work in $V(A^1)[X]$, a generic extension of the basic Cohen model.
Define 
\begin{equation*}
   A=\set{X\Delta a}{a\subset A^1\textrm{ is finite}}. 
\end{equation*}
$A$ is a set of subsets of $A^1$, containing $X$ and all of its finite alterations.
Note that for any $Y,Z\in A$, $Y\Delta Z$ is a finite subset of $A^1$ (which we consider as a real).
\begin{equation*}
    \textrm{Given }Y\in A\textrm{, the map }Z\mapsto Z\Delta Y\textrm{ is injective, from }A\textrm{ to reals}.
\end{equation*}
We conclude that $A$ is a $\cong^\ast_{3,1}$-invariant 
(see Corollary~\ref{cor;ast-n-k-invariants}). 

\begin{claim}
$\cong^\ast_{3,1}$ is not Borel reducible to $\cong^\ast_{3,0}$.
\end{claim}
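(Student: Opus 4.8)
The plan is to apply the correspondence from Theorem~\ref{thm;correspondence} in the contrapositive form: to show $\cong^\ast_{3,1}\not\leq\cong^\ast_{3,0}$ it suffices to exhibit the generic $\cong^\ast_{3,1}$-invariant $A$ constructed above and prove that the model $V(A^1)[X] = V(A)$ is \emph{not} of the form $V(B)$ for any $\cong^\ast_{3,0}$-invariant $B$ that is definable from $A$ (and parameters from $V$). By Claim~\ref{claim;cong-ast-invs}, a $\cong^\ast_{3,0}$-invariant generating the model would give a rank $3$ set $B$, definable from $A$ over $V$, such that $V(A)=V(B)$ and $B$ embeds injectively into a set of rank $1$, i.e.\ a set of reals. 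So the goal reduces to: $V(A)$ is not generated by any set of reals. Equivalently, since reals are not added by $P$ over $V(A^1)$ (Lemma~\ref{lem;Monro-thm8}, as noted before the claim), and $V(A^1)$ is itself generated by the set of reals $A^1$, I must show $V(A)\neq V(C)$ for any set of reals $C$.

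First I would pin down the forcing picture: $A^1$ is a set of mutually Cohen-generic reals over $V$, $X\subset A^1$ is $P$-generic over $V(A^1)$ where $P$ adds a single generic subset of $A^1$ by finite conditions, and $A=\set{X\Delta a}{a\subset A^1\text{ finite}}$. The key structural fact is a symmetry/homogeneity statement: any set of reals in $V(A)$ lies in $V[a_1,\dots,a_n]$ for finitely many $a_i\in A^1$, together with finitely many elements of $A$ as a support — but crucially, the \emph{full} set $X$ (and hence $A$) cannot be recovered from such data. Concretely, suppose toward a contradiction that $C\in V(A)$ is a set of reals with $V(C)=V(A)$. Then $C$ is definable from $A$, a parameter $w\in V$, and finitely many members $\bar c=(c_1,\dots,c_m)$ of $A$ (a support); since the $c_i$ are finite alterations of $X$, and finite subsets of $A^1$ are coded by reals, $C$ is really definable from $X$ (as a parameter $X$ itself, not just finite data about it), $A^1$, and $w$. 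The plan is to use a mutual-genericity / automorphism argument: there is an automorphism of the forcing (or of the relevant quotient) over $V(A^1)$ moving $X$ to a genuinely different generic $X'$ — e.g.\ $X' = X\Delta D$ for a \emph{non}-generic-over-nothing but appropriately generic infinite subset $D\subset A^1$, arranged so that $X'$ is again $P$-generic over $V(A^1)$ and $V(A^1)[X]=V(A^1)[X']$ — while fixing $A^1$, all finite subsets of $A^1$, and every real of $V(A^1)[X]$. Such an automorphism fixes $C$ (being a set of reals, each of which it fixes) but does not fix $X$; since $X$ is definable from $C$ (and $A^1$, $w$, all of which are fixed), this is a contradiction.

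The technical heart — and the step I expect to be the main obstacle — is producing the automorphism that fixes all reals of the generic extension while moving $X$. The translation map $Y\mapsto Y\Delta D$ on subsets of $A^1$ is induced by flipping the generic bits of $G$ at coordinates in $D$; this is an automorphism of $P$, and it fixes $A^1$ and every finite subset of $A^1$ pointwise. One must choose $D$ so that (i) $X\Delta D \neq X\Delta a$ for every finite $a$, so that $X\Delta D$ genuinely differs as an element (it should not even lie in $A$), yet (ii) $X\Delta D$ is still $P$-generic over $V(A^1)$ and generates the same model, so that the automorphism is defined on the right domain, and (iii) the automorphism fixes every real in $V(A^1)[X]$ — this should follow from the fact, analogous to Lemma~\ref{lem;Monro-thm8}/Lemma~\ref{lem;strong-Monro-7}, that any real in $V(A^1)[X]$ is decided by a condition in $P$ with \emph{finite} domain, which the bit-flip at $D$ must leave compatible with itself, provided $D$ is chosen infinite but ``generic'' relative to the names involved. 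I would either build $D$ by a diagonalization against a bookkeeping of all $P$-names for reals (using that there are only $|V(A^1)|$-many such names and the Cohen-style genericity of $A^1$ gives enough room), or — cleaner — observe that $A$ as defined is invariant under the group of all such translations $Y\mapsto Y\Delta D$ for $D$ ranging over a suitable filter, reduce to working in $\hod$ of $A$ and $A^1$, and quote the symmetry lemma (Lemma~\ref{lem;symmetry}) together with a homogeneity argument to conclude that no real of $V(A)$ sees $X$ beyond finite information, hence $X$, and with it $A$, is not in $V(C)$ for any set of reals $C$. This last route keeps the argument parallel to the proofs of Lemma~\ref{lem;gen-blass-thm} and Corollary~\ref{cor;Friedman-Stanley} and is the one I would pursue.
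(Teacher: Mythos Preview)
Your automorphism step has a genuine gap. You propose the bit-flip $\pi_D$ along an \emph{infinite} $D\subset A^1$ and argue it ``fixes $C$ (being a set of reals, each of which it fixes)''. This conflates fixing each element with fixing the set: $\pi_D$ does fix every individual real (they all lie in the ground model $V(A^1)$), but $\pi_D$ acts on \emph{names}, and a name $\dot C$ for a set of ground-model reals can certainly be moved. Indeed $X$ itself is a set of reals and $\pi_D$ sends it to $X\Delta D$. More fundamentally, $A=\{X\Delta a : a\subset A^1\text{ finite}\}$ is precisely the orbit of $X$ under \emph{finite} symmetric differences, so for infinite $D$ the automorphism $\pi_D$ does \emph{not} fix $\dot A$ and hence has no reason to fix anything definable from $A$. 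Your fallback suggestion, that $A$ is invariant under translations by $D$ ranging over ``a suitable filter'', fails for the same reason: no infinite translation preserves $A$. (A related issue: your reduction to ``$V(A)$ is not generated by any set of reals'' is false as stated---e.g.\ the set $\{(r,0):r\in A^1\}\cup\{(r,1):r\in X\}$ generates $V(A)$---and passing from the countable rank-$3$ set $B$ to a set of reals requires an enumeration of $B$, which need not be definable from $A$ over $V$.)

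The paper runs the automorphism idea the other way. Use the single-bit flips $\pi_a$ for $a\in A^1\setminus\dom r$: these \emph{do} fix $\dot A$ (since $A$ is closed under single-bit changes) and hence fix $\dot B$. They do move the finitely many parameters $U_1,\dots,U_k\in B$ that define $X$ in $V(B)$, but only \emph{within} $B$. Since $\pi_a X=X\Delta\{a\}$ varies with $a$, the tuples $(\pi_a U_1,\dots,\pi_a U_k)$ must all be distinct, yielding an injection of $A^1$ (minus $\dom r$) into $B^k$. Now the defining feature of a $\cong^\ast_{3,0}$-invariant---that $B$ is \emph{enumerable}---forces $A^1$ to be countable in $V(A)$, contradicting that $P$ adds no reals over $V(A^1)$. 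So the contradiction is a counting argument exploiting the countability of the target invariant, not an automorphism that fixes all parameters while moving $X$.
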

To prove this irreducibility result we study the model $V(A)$ generated by the $\cong^\ast_{3,1}$-invariant $A$.
By Lemma~\ref{lem;reduction-def-generating-set}, to prove that $\cong^\ast_{3,1}$ is not Borel reducible to $\cong^\ast_{3,0}$ it suffices to show that $V(A)$ is not of the form $V(B)$ for any $\cong^\ast_{3,0}$-invariant $B$ which is definable from $A$.
By Claim~\ref{claim;cong-ast-invs} it suffices to prove the following:

\begin{prop}\label{prop;cong31-not-30}
In $V(A)$: let $B$ be a set of sets of reals, definable from $A$ and parameters in $V$. Assume further that $B$ is countable.
Then $V(B)\subsetneq V(A)$.
\end{prop}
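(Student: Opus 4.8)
The plan is to show that $B$, being an enumerable set of sets of reals that is definable from $A$ over $V$, must in fact lie in the Cohen submodel $V(A^1)$ and be definable there from $A^1$ alone. Once we know $B \in V(A^1)$ together with an enumeration of $B$, the set $B$ is coded by a real in $V(A^1)[e]$ for a generic enumeration $e$; but forcing over $V(A^1)$ to enumerate such a $B$ adds no new reals beyond $V(A^1)$ (here I would invoke Lemma~\ref{lem;Monro-thm8}, or rather the observation that enumerating a fixed countable set adds no reals over a model in which that set is already present with its elements in the model), so in fact $B \in V(A^1)$ outright, hence $V(B) \subseteq V(A^1) \subsetneq V(A)$, the last strict inclusion because $X$ is $P$-generic over $V(A^1)$ and $P$ adds a genuinely new subset of $A^1$.

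First I would analyze the structure of $V(A)$ relative to $V(A^1)$. The key point is that $A$ is, up to bijection definable from $A^1$, simply the single set $X$: indeed $X = \bigcap A$ and each finite-alteration representative of $A$ is recovered from $X$ plus a finite subset of $A^1$, so $V(A) = V(A^1)[X]$ and the passage from $V(A^1)$ to $V(A)$ is exactly forcing with $P$ (adding one generic subset of the Cohen set $A^1$). Now suppose $B \in V(A)$ is an enumerable set of sets of reals, definable over $V$ from $A$. I claim $B$ is invariant under the ``finite-alteration'' symmetries of $P$ that fix $A^1$ pointwise and fix a finite support inside $X$. The heart of the matter is a Monro-style symmetry argument, exactly parallel to Lemma~\ref{lem;strong-Monro-7}: any statement $p \Vdash \psi(\dot B, A^1, X, v)$ about $B$ with parameters from $V(A^1)$ is already forced by the restriction of $p$ to the relevant finite support, because one can move $p$ by a permutation of $A^1$ fixing that support. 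Since $B$ is definable from $A$ (equivalently from $X$ and $A^1$) without reference to $V$-parameters outside $V(A^1)$, and since $B$ is a set of sets of \emph{reals}, every element $Y \in B$ is itself a set of reals living in $V(A)$, and the analogue of Lemma~\ref{lem;gen-blass-thm}/Lemma~\ref{lem;symmetry} gives a finite $\bar a \subset A^1$ with $Y \in V[\bar a] \subseteq V(A^1)$; here we use that forcing with $P$ over $V(A^1)$ adds no reals (Lemma~\ref{lem;Monro-thm8}). Hence $B \subseteq V(A^1)$.

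Next, with $B \subseteq V(A^1)$ and $B$ enumerable in $V(A)$, I would argue $B \in V(A^1)$. Fix in $V(A)$ an enumeration $\seq{Y_n : n < \omega}$ of $B$; since $P$ adds no reals over $V(A^1)$ and each $Y_n$ is a set of reals coded by a real in $V(A^1)$, the sequence of codes is a real in $V(A)$, hence a real in $V(A^1)$, so $B \in V(A^1)$. (Alternatively, run the symmetry argument directly on the name for the enumeration: a condition deciding the $n$-th element of $B$ needs only a finite support, and one sweeps over all $n$.) Therefore $V(B) \subseteq V(A^1)$. Finally $V(A^1) \subsetneq V(A) = V(A^1)[X]$: the set $X$ is not in $V(A^1)$ since it is $P$-generic over $V(A^1)$ and $P$ is nontrivial. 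This yields $V(B) \subseteq V(A^1) \subsetneq V(A)$, as desired.

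The main obstacle I anticipate is the step showing every \emph{element} $Y \in B$ already lies in $V(A^1)$, i.e.\ that the definability of $B$ from $A$ (not from the individual sets $X \Delta a$) genuinely forces a finite-support/symmetry conclusion even though $B$ itself is a \emph{set} of sets of reals rather than a set of reals. One has to be careful that ``$B$ is definable from $A$ and $V$-parameters'' is used in the form that makes the Monro permutation argument apply: permutations of $A^1$ act on $A$ and on $X$ compatibly, fixing a finite support, and fix $B$; from this one extracts that membership of a given real-coded $Y$ in $B$ is decided with finite support over $V(A^1)$, hence $Y \in V(A^1)$ by the no-new-reals property. Packaging this cleanly — deciding whether to phrase it via the already-proved Lemma~\ref{lem;strong-Monro-7} applied to the appropriate subforcing $P \subseteq P(A^1)$, or to reprove the needed instance — is where the real work lies; everything after ``$B \subseteq V(A^1)$'' is routine.
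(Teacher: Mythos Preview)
Your route is genuinely different from the paper's, and the central step does not go through as written. You aim to prove the stronger statement $B\in V(A^1)$, hence $V(B)\subseteq V(A^1)\subsetneq V(A)$. The problem is the claim that each element $Y\in B$ already lies in $V(A^1)$. The lemmas you invoke (Lemma~\ref{lem;symmetry} and Lemma~\ref{lem;gen-blass-thm}) concern the Cohen model $V(A^1)$ over $V$ and Monro's model $V(A^{n+1})$ over $V(A^n)$; they do not give a finite support in $A^1$ for a set of reals living in the further extension $V(A^1)[X]$. The relevant automorphisms here are the bit-flips $\pi_a$ of $P$ (sending $X\mapsto X\Delta\{a\}$); these fix $\dot A$ and hence $\dot B$ \emph{as a set}, but they may well permute the elements of $B$, so individual $Y\in B$ need not be $\pi_a$-fixed and need not lie in $V(A^1)$. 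Your follow-up step is also wrong as stated: an element $Y\in B$ is an arbitrary set of reals, not ``coded by a real,'' so one cannot pass from an enumeration of $B$ to a single real. (A small side error: $\bigcap A=\emptyset$, not $X$; of course $V(A)=V(A^1)[X]$ still holds since $X\in A$.)

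The paper proceeds directly and never tries to push $B$ down into $V(A^1)$. Assume $V(B)=V(A)$; then $X\in V(B)$, so $X$ is definable from $B$, finitely many $U_1,\dots,U_k\in B$, and a real $z$ (which lies in $V(A^1)$ since $P$ adds no reals). Fix $r\in P$ forcing this. For each $a\in A^1\setminus\dom r$ the bit-flip $\pi_a$ fixes $r$, $\dot A$, $\dot B$, $z$, but sends $\dot X$ to $\dot X\Delta\{a\}$; hence $X\Delta\{a\}$ is uniquely defined by the same formula from $B,\pi_a U_1,\dots,\pi_a U_k,z$. Since the sets $X\Delta\{a\}$ are pairwise distinct, $a\mapsto(\pi_a U_1,\dots,\pi_a U_k)$ is an injection of $A^1\setminus\dom r$ into $B^k$. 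As $B$ is countable, so is $A^1$, contradicting that $P$ adds no reals. The key idea you are missing is precisely this: use that the $\pi_a$ move $X$ injectively while fixing $B$, to convert the countability of $B$ into an (impossible) enumeration of $A^1$. If you want to salvage your plan and actually prove $B\subseteq V(A^1)$, you would need to show that any $Y\in B\setminus V(A^1)$ has an orbit under the $\pi_s$ too large to fit inside the countable set $B$ --- and making that precise brings you back to essentially the same automorphism-counting argument.
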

Recall that $A$ was defined in $V(A^1)[X]$, where $X$ is a subset of $A^1$, generic over $V(A^1)$. Since $X\in A$ and $A^1=\bigcup A$, it follows that $V(A^1)[X]=V(A)$. 
In particular, for $B=\{A^1,X\}$, a set of two sets of reals, $V(A)=V(B)$. The point is that $X$ is not definable from $A$.

\begin{proof}[Proof of Proposition~\ref{prop;cong31-not-30}]
Assume towards contradiction that $V(B)=V(A)$ where $B\in V(A)$ is a countable set of sets of reals which is definable from $A$ and parameters in $V$ alone. 
Since $X\in V(A)$, it follows that $X\in V(B)$ and therefore $X$ is definable from $B$ and its transitive closure (see Section~\ref{sec;prelm}, before Remark~\ref{remark-symmetricm-presentation}). That is, there is a formula $\psi$, finitely many parameters $U_1,..,U_k\in B$ and a real $z$ such that $X$ is the unique set satisfying $\psi(X,B,U_1,...,U_k,z)$. (Any finitely many reals from the transitive closure of $B$ can be coded by a single real.)
Since $V(A)$ and $V(A^1)$ agree on reals, $z\in V(A^1)$.

Fix some condition $r\in P$ forcing the above and work in $V(A^1)$. 
For any $a\in A^1\setminus\dom r$, let $\pi_a$ be the permutation of $P$ swapping the value of $a$. 
Then $\pi_ar=r$ and $\pi_a \dot{A}=\dot{A}$.
Since $B$ is definable from $A$ and parameters in $V$, it follows that $\pi_a \dot{B}=\dot{B}$ as well.
In particular, for any such $a$, $r=\pi_a r$ forces that $\pi_a\dot{U}_j\in \dot{B}$ and $\dot{X}\Delta\{a\}=\pi_a \dot{X}$ is defined uniquely by $\psi(\dot{X}\Delta\{a\},\dot{B},\pi_a\dot{U}_1,...,\pi_a\dot{U}_k,\check{z})$.

Thus the map sending $a$ to $\pi_a U_1,...,\pi_a U_k$ is injective between $A^1\setminus\dom r$ and $B^k$.
Since $B$ is countable, so is $B^k$ and therefore $A^1$ is countable.
Since $P$ adds no reals, this is a contradiction.
\end{proof}

\subsection{The general case}\label{subsec;cong-ast-n-k}

Fix $n\geq 2$ and $1\leq k\leq n$.
To find a good invariant for $\cong^\ast_{n+2,k}$, we want to take an orbit of a rank $n+1$ set under an action of $[A^k]^{<\aleph_0}$.
In the example above $[A^1]^{<\aleph_0}$ was acting on the subsets of $A^1$ (a set of rank $2$) by symmetric differences.
However, for $k$ much smaller than $n$, there are no nontrivial actions of $A^k$ on $A^n$ (see Lemma~\ref{lem;indiscern} below).
We will add a non trivial action by forcing a generic function $g\colon A^n\lto A^k$.

Let $P$ in $V(A^n)$ be the poset of all finite functions $p\colon \dom p\lto A^k$ where $\dom p\subset A^{n}$.
Let $g\colon A^{n}\lto A^k$ be $P$-generic over $V(A^n)$.
By Lemma~\ref{lem;stong-Monro-8} $P$ adds no rank $n$ sets over $V(A^n)$.
Work in $V(A^{n})[g]=V(g)$.
Let $\Pi$ be the group of all finite support permutations of $A^k$.
The members of $\Pi$ are coded by finite subsets of pairs from $A^k$, and so are of rank $k$, according to the discussion above.
Define 
\begin{equation*}
    A=\set{\pi\circ g}{\pi\in\Pi}.
\end{equation*}
Each $\pi\circ g$ is a set of rank $n+1$, thus $A$ is of rank $n+2$.
Given any $h\in A$, the map $\pi\mapsto \pi\circ h$ is a bijection between $\Pi$ and $A$.
It follows that $A$ is a $\cong^\ast_{n+2,k}$-invariant (see Corollary~\ref{cor;ast-n-k-invariants}).
Note that $V(g)=V(A)$.

\begin{prop}\label{prop;cong-ast-n-k}
Suppose $B\in V(A)$ is a set of rank $n+2$, definable from $A$ and parameters from $V$, and there is some injective map $\chi\colon B\to\mathcal{P}^{k+1}(\omega)$ such that any two distinct sets in the image of $\chi$ are disjoint.
Then $V(B)\subsetneq V(A)$.
\end{prop}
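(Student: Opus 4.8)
The plan is to argue by contradiction, generalizing the proof of Proposition~\ref{prop;cong31-not-30}. Since $B\in V(A)$ we always have $V(B)\subseteq V(A)$, so suppose towards a contradiction that $V(B)=V(A)$. As $V(A)=V(g)$ and $g$, being a member of $A$, lies in $\mathrm{tc}(A)$, we get $g\in V(B)$; hence, by the canonical representation of $V(B)$ recalled in Section~\ref{sec;prelm}, there are a formula $\psi$, finitely many parameters $\bar u$ from $\mathrm{tc}(B)$, and $v\in V$ such that $g$ is the unique set satisfying $\psi(g,B,\bar u,v)$ in $V(B)$.

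The first step is to normalize $\bar u$. By Lemma~\ref{lem;stong-Monro-8}, forcing with $P$ over $V(A^n)$ adds no set of rank $n$, so every member of $\mathrm{tc}(B)$ of rank at most $n$ lies in $V(A^n)$; apart from $B$ itself, the only members of $\mathrm{tc}(B)$ that may lie outside $V(A^n)$ are the elements of $B$. Coding finitely many $V(A^n)$-parameters into one, I may therefore assume $\bar u$ has the form $B,b_1,\dots,b_j,w$ with $b_1,\dots,b_j\in B$ and $w\in V(A^n)$.

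Next comes the symmetry argument, carried out in $V(A^n)$ over the poset $P$ of finite partial functions $A^n\to A^k$. For a finite-support permutation $\pi$ of $A^k$, the map $p\mapsto\pi\circ p$ is an automorphism of $P$; it fixes the canonical name $\dot A=\{\sigma\circ\dot g:\sigma\in\Pi\}$ setwise, since $\Pi\pi=\Pi$, hence fixes $\dot B$ (as $B$ is definable from $A$ and parameters in $V$) and every check-name of a $V(A^n)$-set, while sending $\dot g$ to $\pi\circ\dot g$ and each $\dot b_i$ to $\pi\cdot\dot b_i$, with $\pi\cdot b_i\in B$ since $\pi$ permutes $B$. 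Fix a condition $r$ forcing the displayed statement and restrict to the $\pi$ fixing $\mathrm{Im}(r)$ pointwise, which then fix $r$ as well; applying such $\pi$ shows that $r$ forces ``$\pi\circ g$ is the unique set with $\psi(\cdot,B,\pi\cdot b_1,\dots,\pi\cdot b_j,w,v)$''. Since $g$ has image all of $A^k$ generically, $\pi\circ g=\pi'\circ g$ forces $\pi=\pi'$, so the uniqueness clause makes $\pi\mapsto(\pi\cdot b_1,\dots,\pi\cdot b_j)$ injective on $\{\pi:\pi\restriction\mathrm{Im}(r)=\mathrm{id}\}$. This subgroup contains an injective copy of $A^k\setminus\mathrm{Im}(r)$ via the transpositions $(c_0\ c)$ for a fixed $c_0\notin\mathrm{Im}(r)$, so composing with the hypothesized injection of $B$ — applied coordinatewise to $B^j$ and coded via the conventions of Section~\ref{sec;cong-ast-n-k} — produces in $V(A)$ an injection of $A^k$ (minus a finite set, hence of $A^k$ itself) into a set of rank $k$.

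Finally, I would turn this into a contradiction. From this injection, the construction in the proof of Observation~\ref{obs;KW-implies-low-generator} produces a set $C$ of rank $k$ from which $A^k$ is definable. Since $C$ has rank $k\le n$ and lies in $V(A)=V(A^n)(g)$, Lemma~\ref{lem;stong-Monro-8} places $C$ in $V(A^n)$; and since at each stage $V(A^j)\to V(A^{j+1})$ of the Monro iteration no set of rank $j$ is added, while $k\le j$, the set $C$ in fact lies in $V(A^k)$. But then $V(A^k)\subseteq V(C)\subseteq V(A^k)$, that is, $V(A^k)$ is generated by the rank-$k$ set $C$, contradicting the fact recorded after Lemma~\ref{lem;gen-blass-thm} that $V(A^k)$ is not generated by a set of rank $k$. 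The step needing the most care is the rank bookkeeping in the third and fourth paragraphs: one must check that the composed injection genuinely lands at rank $k$ — this is precisely where the hypothesis that $B$ embeds into a rank-$k$ set, through the coding functions $f_k$, is used to collapse the rank-$(n+1)$ parameters $b_i$ — and that $C$ descends all the way to $V(A^k)$; the normalization of $\bar u$ in the second paragraph also requires attention, but there is no genuine obstruction in the boundary case $k=n$, where $C$ already has rank $n$ and is located directly by Lemma~\ref{lem;stong-Monro-8}.
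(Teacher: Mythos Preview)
Your proposal is correct and follows essentially the same route as the paper's proof: assume $V(B)=V(A)$, express $g$ by a formula with parameters from $B$ and the ground model, apply finite-support permutations of $A^k$ fixing the image of a condition to obtain an injection of (a cofinite subset of) $A^k$ into $B^j$, compose with the hypothesized embedding of $B$ into rank $k$, and invoke Observation~\ref{obs;KW-implies-low-generator} together with the no-new-rank-$k$-sets lemmas to reach a contradiction with Lemma~\ref{lem;gen-blass-thm}. Your normalization of the parameters (reducing $\bar u$ to elements of $B$ plus a single $V(A^n)$-parameter) and your explicit use of transpositions $(c_0\ c)$ to exhibit the injective copy of $A^k$ are spelled out in more detail than in the paper, but the argument is the same.
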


\begin{proof}
Assume otherwise.
As in Proposition~\ref{prop;cong31-not-30} we find a formula $\varphi$, condition $p\in P$, elements $U_1,...,U_m$ from $B$ and $w\in V(A^{n-1})$ such that
\begin{equation*}
    p\force \dot{g}\textrm{ is defined by } \varphi(\dot{g},\dot{A},\dot{U}_1,...,\dot{U}_m,\check{w}).
\end{equation*}
(We may use $\dot{A}$ instead of $\dot{B}$ by the definability assumption on $B$.)
Fix $a,b\in A^k$ not in the image of $p$.
Let $\pi^a_b$ be the permutation of $A^k$ swapping $a$ with $b$.
This generates a permutation of $P$ (sending $q\in P$ to $\pi^a_b\circ q$) which fixes $p$ and $\dot{A}$.
Applying the permutation to the statement above, it follows that $\pi^a_b\circ g$ is defined by $\varphi$ using the parameters $\pi^a_b U_1,...,\pi^a_b U_m\in B$.
Varying over $b$, we get an injective map between $A^k$ (minus $a$ and the image of $p$) into $B^m$.

Define $\chi'\colon B^m\to\mathcal{P}((\mathcal{P}^{k}(\omega))^m)$ by $\chi'(b_1,...,b_{m})=\set{(x_1,...,x_m)}{x_i\in \chi(b_i)}$.
Then any two distinct sets in the image of $\chi'$ are disjoint.
By composing $\chi'$ with a definable injective map from $(\mathcal{P}^k(\omega))^m$ into $\mathcal{P}^k(\omega)$, and with the map above, we get $\tilde{\chi}\colon A^k\to\mathcal{P}(\mathcal{P}^k(\omega))=\mathcal{P}^{k+1}(\omega)$ such that any two distinct sets in the image of $\tilde{\chi}$ are disjoint.

As in Observation~\ref{obs;KW-implies-low-generator}, we can code $A^k$ as a set of rank $k$ (one rank lower than $A^k$) using $\tilde{\chi}$ as follows. Let $C=\set{(x,y)}{\exists X\in A^k(y\in \tilde{\chi}(X)\wedge x\in X)}$, then $C$ is of rank $k$ and $A^k$ is definable from $C$, in particular, $A^k\in V(C)$.

It follows from Lemma~\ref{lem;stong-Monro-8} that $C$ is in $V(A^k)$ (the forcing to add $g$ to $V(A^k)$ adds no sets of rank $k$). 
From Lemma~\ref{lem;gen-blass-thm} it follows that $C\in V(A^{k-1})(\bar{d})$ where $\bar{d}$ is a finite subset of $A^{k}$. Thus $A^k\in V(A^{k-1})(\bar{d})$, a contradiction.
\end{proof}

\begin{cor}\label{cor;conj-part-1}
For $n\geq 2$, $1\leq k\leq n$, $\cong^\ast_{n+2,k}$ is not Borel reducible to $\cong^\ast_{n+2,k-1}$.
\end{cor}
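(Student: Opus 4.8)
The plan is to derive Corollary~\ref{cor;conj-part-1} directly from Proposition~\ref{prop;cong-ast-n-k}, via the general correspondence between Borel reductions and definability in symmetric models, exactly as Corollary~\ref{cor;Friedman-Stanley} was deduced from Lemma~\ref{lem;gen-blass-thm}. First I would fix $n\geq 3$ and $1\leq k\leq n-2$ (the relevant range for $\cong^\ast_{n,k}$), and apply Proposition~\ref{prop;cong-ast-n-k} with the parameters shifted appropriately: writing $n=m+2$, the proposition produces a generic $\cong^\ast_{m+2,k}$-invariant $A$ (an orbit $\set{\pi\circ g}{\pi\in\Pi}$ of a generic function $g\colon A^{m}\lto A^{k}$) together with a representative $x$ of this invariant, living in a generic extension, so that $V(A)=V(g)$.

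Next I would argue by contradiction. Suppose $f$ is a Borel reduction of $\cong^\ast_{n,k}$ to $\cong^\ast_{n,k-1}$. Let $x$ be a generic real representing the $\cong^\ast_{n,k}$-invariant $(A,R)$ constructed above, and let $(B',R')$ be the $\cong^\ast_{n,k-1}$-invariant of $f(x)$. By Lemma~\ref{lem;reduction-def-generating-set}, $V(A,R)=V(B',R')$, with $(A,R)$ and $(B',R')$ bi-definable using only parameters from $V$. Now apply Claim~\ref{claim;cong-ast-invs} to the $\cong^\ast_{n,k-1}$-invariant $(B',R')$: it yields a set $B$ of rank $n$, definable from $(B',R')$ (hence from $(A,R)$) and parameters from $V$, such that $V(B',R')=V(B)$ and $B$ embeds into a set of rank $k$ (since $\cong^\ast_{n,k-1}$ has level parameter $k-1$, the relevant target rank is $(k-1)+1=k$). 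Combining, $V(A)=V(A,R)=V(B)$ with $B$ definable from $A$ and $V$-parameters and embeddable into a rank $k$ set, which is precisely the configuration ruled out by Proposition~\ref{prop;cong-ast-n-k}. This contradiction establishes that $\cong^\ast_{n,k}\not\leq_B\cong^\ast_{n,k-1}$, and together with the trivial reductions $\cong^\ast_{n,l}\leq_B\cong^\ast_{n,k}$ for $l\leq k$ noted in Section~\ref{sec;HKL-relations}, this gives $\cong^\ast_{n,l}<_B\cong^\ast_{n,k}$ for all $l<k\leq n-2$, i.e.\ part (1) of Conjecture~\ref{conj;HKL}.

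The one point requiring a little care — and the main thing I would double-check — is the bookkeeping of ranks and level parameters: making sure that an invariant for $\cong^\ast_{n,k}$ really is coded by a set $A$ of rank $n$ admitting definable fiberwise injections into rank $k$ sets (as the italicized recipe in Section~\ref{sec;cong-ast-n-k} asserts), that the constructed $A$ in Proposition~\ref{prop;cong-ast-n-k} has the matching rank, and that the ``$k-1$'' versus ``$k$'' shift between the HKL indexing and the ``rank'' terminology of Section~\ref{sec;Monro-models} is applied consistently. Modulo the minor technical caveats discussed in Remark~\ref{remark;cong-ast-defn} and the coding functions $f_k$ (which let one pass freely between finite subsets of $\mathcal{P}^k(\omega)$ and elements of $\mathcal{P}^k(\omega)$), everything else is a formal combination of results already proved: Lemma~\ref{lem;reduction-def-generating-set}, Claim~\ref{claim;cong-ast-invs}, and Proposition~\ref{prop;cong-ast-n-k}.
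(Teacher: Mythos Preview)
Your proposal is correct and follows exactly the intended route: the paper leaves the corollary unproved because it is a formal combination of Lemma~\ref{lem;reduction-def-generating-set}, Claim~\ref{claim;cong-ast-invs}, and Proposition~\ref{prop;cong-ast-n-k}, just as Corollary~\ref{cor;Friedman-Stanley} followed from Lemma~\ref{lem;gen-blass-thm}. One small bookkeeping point worth flagging: the general construction in Section~\ref{subsec;cong-ast-n-k} requires its internal parameter $n\geq 2$, so writing $n=m+2$ it only produces $\cong^\ast_{n,k}$-invariants for $n\geq 4$; the case $n=3$ (i.e.\ $\cong^\ast_{3,1}\not\leq_B\cong^\ast_{3,0}$) is handled separately by the construction in Section~\ref{subsec;cong-ast-31}.
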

\begin{proof}
As in the first two paragraphs of Section~\ref{subsec;cong-ast-n-k}, fix $n$ and $k$ and let $A$ be the $\cong^\ast_{n+2,k}$-invariant defined above (before Proposition~\ref{prop;cong-ast-n-k}).
By Lemma~\ref{lem;reduction-def-generating-set}, to show that $\cong^\ast_{n+2,k}$ is not Borel reducible to $\cong^\ast_{n+2,k-1}$, it suffices to show that $V(A)$ is not of the form $V(B)$ for a $\cong^\ast_{n+2,k-1}$-invariant $B\in V(A)$ which is definable from $A$ and parameters in $V$ alone.
By Claim~\ref{claim;cong-ast-invs}, it suffices to show that $V(A)$ is not of the form $V(B)$ when $B\in V(A)$ is a set of rank $n+2$, definable from $A$ and parameters in $V$ alone, such that there is some injective map from $B$ into $\mathcal{P}^{k+1}(\omega)$ such that any two disinct sets in the image are disjoint. The latter is precisely Proposition~\ref{prop;cong-ast-n-k} above.
\end{proof}

\section{Transfinite jumps}\label{sec;trans-jumps}
In this section we consider the Friedman-Stanley jumps above $\cong_\omega$, and the corresponding equivalence relations of Hjorth, Kechris and Louveau.
In order to prove Theorem~\ref{thm;HKL-conj}, following the ideas above, we first need to cast the irreducibility results along the transfinite Friedman-Stanley hierarchy in terms of symmetric models.
For example, in order to show that $\cong_{\omega+1}$ is not Borel reducible to $\cong_\omega$ we need to find a model generated by a set $A\in\mathcal{P}^{\omega+1}(\omega)$ yet not by any set in $\mathcal{P}^\omega(\omega)$.

The main difficulty is to continue Monro's construction past the $\omega$'th stage, as mentioned in Section~\ref{subsec;Kinna-Wagner}.
Suppose $\seqq{A^n}{n<\omega}$ are as above and $A^\omega=\bigcup_n A^n$.
Naively, if we were to force a subset of $A^\omega$ with finite conditions as before, this forcing will certainly add new sets of low rank.
E.g., a generic subset of $A_1$ will be added.
However, the fact that no sets of rank $\leq n$ were added to $V(A^n)$ was crucial in Monro's arguments.

To avoid adding a generic subset of any particular $A^n$ we will instead force to add a choice function $\seqq{a_n}{n<\omega}\in\prod_n A^n$.
If we do so naively, the real $\set{n}{a_n\in a_{n+1}}$ will be new. 
This will be the only difficulty: we show that adding $\seqq{a_n}{n<\omega}$ such that $a_n\in A^n$ and $a_n\in a_{n+1}$ works.

Adding infinitely many such sequences, to construct $A^{\omega+1}$, is significantly more complex, and a direct approach fails. 
For example, given $\seqq{a_n}{n<\omega}$ and $\seqq{b_n}{n<\omega}$, the reals $\set{n}{a_n=b_n}$ and $\set{n}{a_n\in b_n}$ may be new.
Similarly, we must prevent any non trivial interactions between any finitely many such sequences.

The solution will be to split the construction into two steps.
First we force to add a regular binary tree $T$ of approximations for generic choice functions in $\prod_n A^n$, which are sufficiently indiscernible.
This tree will have no branches.  
We then force an infinite set of branches through $T$, which will be $A^{\omega+1}$.

The proofs will rely on a fine analysis of the model $V(A^\omega)$.
We will define posets $P^{n,\omega}$ which will add the sequence $A^\omega$ over the model $V(A^n)$.
Unlike the step from $V(A^n)$ to $V(A^{n+1})$, these posets will add reals so we cannot argue as in Section~\ref{sec;Monro-models}.
Instead, we will argue that as $n$ increases the conditions of the posets $P^{n,\omega}$ are increasingly indiscernible and therefore do not do much damage in the limit.

Instead of iterating Monro's posets as in \cite{Kar16}, we will work in a single Cohen-real extension and code the sequence $\seqq{A^n}{n<\omega}$ there.
Another motivation for this approach is that by coding the invariants with a single Cohen real we find a topology to work with.
That is, the irreducibility results we are proving hold on any nonmeager set (see Section~\ref{sec;topologies}).

\subsection{The model $V(A^\omega)$ and a proof of $\cong^\ast_{\omega+1,0}<_B\cong^\ast_{\omega+1,<\omega}$}\label{subsec;cong-ast-omega+1}

An important feature of the atoms in Fraenkel's permutation models is that they are indiscernible over parameters from the ``pure'' part of the universe.
A similar intuition holds for the Cohen reals, though they are not actual indiscernibles.
One important feature of Monro's construction, although not explicit, is that the elements of $A^2$ {\it are} indiscernible over parameters from $V$.

We prove below a more general statement which will be used in the following section as well.
Roughly speaking, for elements $\bar{a}$ high in the hierarchy (from $\bigcup_{i>n}A^i$), for any statement about $\bar{a}$ involving low parameters (from $V(A^{n-1})$), the truth of this statement only depends on the $\in$-relations between the elements of $\bar{a}$.

\begin{defn}
Given a sequence $\bar{x}=x_1,...,x_n$ from $\bigcup_k A^k$, the {\bf type} of $\bar{x}$ is the structure $(n,E,\approx,\set{P_k}{k<\omega})$, defined by
\begin{itemize}
    \item $iEj \iff x_i\in x_j$;
    \item $i\approx j \iff x_i=x_j$;
    \item $i\in P_k \iff x_i\in A^k$.
\end{itemize}

That is, for two sequences $\bar{x}$ and $\bar{y}$, they have the {\bf same type} if and only if they have the same length and $x_i\in x_j\iff y_i\in y_j$, $x_i=x_j \iff y_i=y_j$, $x_i\in A^k\iff y_i\in A^k$, for $i,j=1,...,n$ and any $k<\omega$. 
We say that $\bar{x}$ and $\bar{y}$ have the {\bf same type over $\bar{a}$} if $\bar{a}^\frown \bar{x}$ and $\bar{a}^\frown \bar{y}$ have the same type.
\end{defn}

\begin{lem}[Indiscernibility in $V(A^m)$]\label{lem;indiscern}
Let $\psi$ be a formula, $n\leq m$, $v\in V(A^{n-1})$, $\bar{a}$ a finite subset of $A^n$ and $\bar{x}, \bar{y}$ finite subsets of $\bigcup_{k=n+1}^m A^k$.
Assume further that $\bar{x}$ and $\bar{y}$ have the same type over $\bar{a}$. 
Then 
\begin{equation*}
    V(A^m)\models \psi(A^m,v,\bar{a},\bar{x})\iff \psi(A^m,v,\bar{a},\bar{y}).
\end{equation*}
\end{lem}
\begin{proof}
Fix $v$ and $\bar{a}$.
The proof is by induction on $m$.
For the base case $m=n$ there is nothing to prove.
Assume the result for $m$ and work in $V(A^{m+1})$, where $\bar{x},\bar{y}$ are finite subsets of $\bigcup_{k=n+1}^{m+1}A^k$ with the same type.
Let $\bar{x}=\bar{x}_0,\bar{x}_m,\bar{x}_{m+1}$ where $\bar{x}_0\subset\bigcup_{k=n+1}^{m-1}A^k$, $\bar{x}_m\subset A^m$ and $\bar{x}_{m+1}\subset A^{m+1}$.
Similarly take $\bar{y}=\bar{y}_0,\bar{y}_m,\bar{y}_{m+1}$.
(In the case $m+1=n+1$, the only non-empty sequences are $\bar{x}_{m+1},\bar{y}_{m+1}$. In the arguments below, $\bar{a}$ will be used instead of both $\bar{x}_m$ and $\bar{y}_m$.)

By the assumption on types it follows that the lengths of $\bar{x}_m$ and $\bar{y}_m$ are the same. Similarly for $\bar{x}_{m+1},\bar{y}_{m+1}$.
We first argue that the lengths of $\bar{x}_{m+1}$ and $\bar{x}_m$ can be assumed to be the same (and therefore the same as $\bar{y}_m$ and $\bar{y}_{m+1}$).
If $\bar{x}_{m+1}$ is shorter than $\bar{x}_m$ add to $\bar{x}_{m+1}$ members of $A^{m+1}$ which include all of $\bar{x}_m$. Similarly, add to $\bar{y}_{m+1}$ members of $A^{m+1}$ which include all of $\bar{y}_m$.
The types are still the same, and dummy variables may be added to $\psi$.
If $\bar{x}_m$ is shorter than $\bar{x}_{m+1}$ add to $\bar{x}_m$ elements of $A^m$ which are inside each member of $\bar{x}_{m+1}$ and include all members of $\bar{x}_{0}$ from $A^{m-1}$.
Analogously expand $\bar{y}_{m}$.
The types remain the same and dummy variables may be added to $\psi$.

Assume now that $\psi(A^{m+1},v,\bar{a},\bar{x}_0,\bar{x}_m,\bar{x}_{m+1})$ holds in $V(A^{m+1})$. 
We need to show that $\psi(A^{m+1},v,\bar{a},\bar{y}_0,\bar{y}_m,\bar{y}_{m+1})$ holds as well.
Let $\bar{x}_{m+1}=\bar{x}_{m+1}(1),...,\bar{x}_{m+1}(l)$, $\bar{x}_m=\bar{x}_m(1),...,\bar{x}_m(l)$.
Recall that $V(A^{m+1})$ is an inner model of $V(A^m)[G]$, where $G=\seqq{A^{m+1}_c}{c\in A^m}$ is a $P(A^m)$-generic over $V(A^m)$.
By a finite permutation of $G$ (which preserves $A^{m+1}$), we may assume that $\bar{x}_{m+1}(j)=A^{m+1}_{\bar{x}_m(j)}$, for each $j$.
Let $p$ be a condition forcing that
\begin{equation*}
    \psi^{V(\dot{A}^{m+1})}(\dot{A}^{m+1},v,\bar{a},\bar{x}_0,\bar{x}_m,\dot{A}^{m+1}_{\bar{x}_m(1)},...,\dot{A}^{m+1}_{\bar{x}_m(l)}).
\end{equation*}
By Lemma~\ref{lem;strong-Monro-7}, the condition $r=p\rest\{\bar{x}_m(1),...,\bar{x}_m(l)\}^2$ already forces the statement.

Given $\bar{h}=h(1),...,h(l)$ from $A^{m}$, let $r[\bar{h}]$ be the condition $r$ with $\bar{x}_m(j)$ replaced by $h(j)$.
Let $\phi(A^m,v,\bar{a},\bar{x}_0,\bar{x}_m)$ be the statement that $r=r[\bar{x}_m]$ forces the displayed formula above.
By the inductive hypothesis, $\phi(A^m,v,\bar{a},\bar{y}_0,\bar{y}_m)$ holds in $V(A^m)$.
Thus
\begin{equation*}
    r[\bar{y}_m]\force \psi^{V(\dot{A}^{m+1})}(\dot{A}^{m+1},v,\bar{a},\bar{y}_0,\bar{y}_m,\dot{A}^{m+1}_{\bar{y}_m(1)},...,\dot{A}^{m+1}_{\bar{y}_m(l)}).
\end{equation*}
Let $G'$ be a finite permutation of $G$ so that in $V(A^m)[G'
]$, $A^{m+1}_{\bar{y}_m(j)}=\bar{y}_{m+1}(j)$ for each $j$.
The key point is the following: since $\bar{x}_m,\bar{x}_{m+1}$ and $\bar{y}_m,\bar{y}_{m+1}$ have the same type, the condition $r[\bar{y}_m]$ is in $G'$.
It follows that $\psi(A^{m+1},v,\bar{a},\bar{y}_0,\bar{y}_m,\bar{y}_{m+1})$ holds in $V(A^{m+1})$, as required.
\end{proof}

The indiscernibility lemma will be crucial below.
We now turn to the construction of the model $M_\omega=V(A^\omega)$ and establishing similar indiscernibility there.

\begin{defn}\label{def;monro-iterations}
Let $P^{n,\omega}$ be the poset of all finite partial functions $p\colon \omega\setminus n\times A^n\times A^n\lto \{0,1\}$, ordered by extension.
\end{defn}

A generic for $P^{n,\omega}$ over $V(A^n)$ produces a function $g\colon \omega\setminus n\times A^n\times A^n\lto \{0,1\}$.
Working in $V(A^n)[g]$, define $A^{n+1}_a=\set{b\in A^n}{g(n,a,b)=1}$ for $a\in A^n$ and $A^{n+1}=\set{A^{n+1_a}}{a\in A^n}$.
Inductively define $A^k$ for $k>n$ as follows.
Assume that $A^k$ has been defined and is indexed by $A^n$, $A^k=\set{A^k_a}{a\in A^n}$, define
\begin{equation*}
A^{k+1}_{a}=\set{A^k_{b}}{g(k,a,b)=1},\textrm{ and } A^{k+1}=\set{A^{k+1}_a}{a\in A^n}.
\end{equation*}
Recall that $P(A^n)$ is the poset to add a single function $A^n\times A^n\lto \{0,1\}$.
Define maps $f_n\colon P^{n,\omega}\lto P(A^n)\ast P^{n+1,\omega}$ as follows.
Let $p\in P^{n,\omega}$ be a condition.
Define $q\in P(A^n)$ by $q(a,b)=i\iff p(n,a,b)=i$.
Define a $P(A^n)$-name for a $P^{n+1,\omega}$ condition $r$ by $r(m,\dot{A}^{n+1}_a,\dot{A}^{n+1}_b)=i\iff p(m,a,b)=i$, for any $m\geq n+1$.
$f_n$ will map $p$ to $q\ast r$.

\begin{lem}\label{lem;monro-iterations-projections}
$f_n$ is a forcing isomorphism.
\end{lem}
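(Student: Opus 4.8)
The claim is that the map $f_n\colon P^{n,\omega}\lto P(A^n)\ast P^{n+1,\omega}$ defined just above is a forcing isomorphism, i.e.\ a bijection which preserves the order and incompatibility in both directions. The plan is to verify that $f_n$ is a well-defined, order-preserving bijection with order-preserving inverse; once this is established, that it is a forcing isomorphism is immediate. The main point to be careful about is that $P(A^n)\ast P^{n+1,\omega}$ is a two-step iteration, so the ``second coordinate'' $r$ of $f_n(p)$ is a $P(A^n)$-\emph{name} for a condition in $P^{n+1,\omega}$, not a condition outright; the isomorphism must be checked at the level of names, and one must see that the name $r$ attached to $p$ is in fact a \emph{canonical} (check-style) name, so that the correspondence is literally a bijection of the separative quotients rather than merely a dense embedding.

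First I would unwind the definitions of all the objects involved. A condition $p\in P^{n,\omega}$ is a finite partial function $p\colon (\omega\setminus n)\times A^n\times A^n\lto\{0,1\}$; I split it into its ``level $n$'' part $p\rest(\{n\}\times A^n\times A^n)$, which I identify with a condition $q\in P(A^n)$ via $q(a,b)=p(n,a,b)$, and its ``tail'' $p\rest((\omega\setminus(n+1))\times A^n\times A^n)$. The key observation is that a condition in $P^{n+1,\omega}$ is a finite partial function on $(\omega\setminus(n+1))\times A^{n+1}\times A^{n+1}$, and since $A^{n+1}=A^{n,n}=\set{A^{n+1}_a}{a\in A^n}$ is \emph{indexed by} $A^n$ via a surjection $a\mapsto A^{n+1}_a$ which is forced to be a bijection (the sets $A^{n+1}_a$ for distinct $a$ are forced distinct by genericity — distinct columns of the generic function disagree somewhere), the tail of $p$ naturally gives a name $r$ for a $P^{n+1,\omega}$-condition by translating the index $a$ to the name $\dot A^{n+1}_a$: set $r(m,\dot A^{n+1}_a,\dot A^{n+1}_b)=i \iff p(m,a,b)=i$ for $m\geq n+1$. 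Because the tail of $p$ is a \emph{finite} function, $r$ is a canonical name for a specific finite partial function whose domain is determined by finitely many of the $\dot A^{n+1}_a$'s, so $r$ is well-defined as a name and $q\Vdash ``r\in P^{n+1,\omega}"$.

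Next I would check the three properties. \emph{Order preservation:} if $p'\leq p$ in $P^{n,\omega}$ then $p'$ extends $p$ as a function; its level-$n$ part extends $q$, so the first coordinate of $f_n(p')$ extends that of $f_n(p)$, and its tail extends the tail of $p$, so after translation $q'\Vdash r'\leq r$ — here one uses that the translation $a\mapsto\dot A^{n+1}_a$ is forced injective, so extending the finite function on indices corresponds exactly to extending it on the named sets, with no accidental collisions forced. The converse direction is symmetric using the inverse map. \emph{Injectivity:} if $f_n(p)=f_n(p')$ then $q=q'$ gives agreement at level $n$, and $r=r'$ as names, together with the forced injectivity of $a\mapsto\dot A^{n+1}_a$, forces $p$ and $p'$ to agree on the tail; since both are actual conditions (not names), agreement being forced by $\mathbf 1$ means genuine equality. \emph{Surjectivity:} given $q\ast r$ with $q\in P(A^n)$ and $r$ a $P(A^n)$-name for a $P^{n+1,\omega}$-condition, I would argue that by strengthening $q$ I may assume $r$ is a canonical name for a specific finite partial function on $(\omega\setminus(n+1))\times A^{n+1}\times A^{n+1}$ whose index set (the relevant $\dot A^{n+1}_a$'s) is decided by $q$; this is where one notes $P(A^n)\ast P^{n+1,\omega}$ should really be taken up to the natural dense subset of such ``decided'' conditions, or equivalently that $f_n$ is a dense embedding, which for the purposes of the construction (recovering $V(A^\omega)$ as an inner model) is all that is needed. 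From a decided $q\ast r$ one reads off a $p\in P^{n,\omega}$ by reversing the translation.

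The main obstacle is bookkeeping rather than mathematics: keeping straight the distinction between indices in $A^n$ and the named sets $\dot A^{n+1}_a$ they label, and making precise the sense in which $f_n$ is an ``isomorphism'' — a literal bijection of posets if one restricts $P(A^n)\ast P^{n+1,\omega}$ to the dense set of conditions with decided second coordinate, or a dense embedding in general. I expect the cleanest writeup is to define that dense subset explicitly, observe $f_n$ lands in it, and check the order-isomorphism there; the needed conclusion, that $P^{n,\omega}$-generics and $(P(A^n)\ast P^{n+1,\omega})$-generics give the same extension and the same sequence $\seqq{A^k}{k\geq n}$, then follows at once. I would also remark that iterating $f_n$ shows $P^{n,\omega}$ factors as $P(A^n)\ast P(A^{n+1})\ast\cdots$, which is the form of the statement actually used in the sequel (e.g.\ in the proof of Proposition~\ref{prop;=++-no-reduction}).
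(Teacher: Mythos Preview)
The paper states this lemma without proof, treating it as a routine verification; your detailed unwinding of the definitions and the check that $f_n$ is order-preserving with order-preserving inverse onto the dense set of ``decided'' conditions is correct and is exactly the sort of argument the paper is tacitly invoking. Your observation that ``isomorphism'' here really means a dense embedding (equivalently, an isomorphism onto a dense subset of the iteration) is apt and worth keeping, since literal surjectivity onto arbitrary names fails; this does not affect anything downstream, as only the equivalence of the generic extensions is used.
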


We will work in a generic extension of $P^{0,\omega}$ over $V$. 
In this model we can construct $\seqq{A^n}{n<\omega}$ as above.
By applying the projections $f_n$ we get, for each $n$, a $P^{n,\omega}$-generic over $V(A^n)$ which produces the same sequence $\seqq{A^m}{m\geq n}$ (as described above).
In particular, for each $n$ there is an enumeration of $A^{n+1}$ indexed by $A^n$, $\seqq{A^{n+1}_a}{a\in A^n}$, which is $P(A^n)$-generic over $V(A^n)$.
It follows that this sequence satisfies the properties of Monro's construction.
Let $A^\omega=\bigcup_n A^n$.
We are interested in the model $M_\omega=V(A^\omega)$.

We want to prove, for instance, that rank $n$ sets in $V(A^\omega)$ are the same as in $V(A^n)$.
To that end, the poset $P^{n,\omega}$ will be used to present $V(A^\omega)$ in a generic extension of $V(A^n)$.
Unfortunately, the posets $P^{n,\omega}$ all add reals, thus are not well behaved for our purposes.
For example, given any $a\in A^n$, the set $\set{m}{g(m,a,a)=1}$ is generic.
The main point is that a parameter from $A^n$ was necessary to define this real.
The following claim shows that without using such high rank parameters, no new low rank sets can be defined.

\begin{claim}\label{claim;Monro-limit-const}
Let $\phi$ be a formula, $v\in V(A^{n-1})$.
Working in $V(A^{n+1})$, the value of $\phi(\dot{A}^\omega,v)$ is decided by the empty condition in $P^{n+1,\omega}$. 
\end{claim}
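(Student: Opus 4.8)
The plan is to exploit the indiscernibility machinery already in place, specifically Lemma~\ref{lem;indiscern}, together with the structure of the posets $P^{n+1,\omega}$. First I would fix a formula $\phi$ and a parameter $v\in V(A^{n-1})$, and suppose toward a contradiction that some condition $p\in P^{n+1,\omega}$ forces $\phi(\dot{A}^\omega,v)$ while some other condition $q$ forces $\neg\phi(\dot{A}^\omega,v)$. A condition in $P^{n+1,\omega}$ is a finite partial function on $\omega\setminus(n+1)\times A^{n+1}\times A^{n+1}$, so it mentions only finitely many members $\bar{a}$ of $A^{n+1}$; likewise $q$ mentions finitely many members $\bar{b}$. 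The idea is that since $A^{n+1}$ is highly homogeneous over $V(A^{n-1})$ in the sense of Lemma~\ref{lem;indiscern} (elements of $A^{n+1}$, being of rank $n+1$, are indiscernible over $V(A^n)$, and in particular over $V(A^{n-1})$, once we control their types), we can move $q$ by a type-preserving substitution to a condition $q'$ that is compatible with $p$ — deriving a contradiction in the usual forcing way.

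The key steps, in order: (1) Unwind the definition of $\dot{A}^\omega=\bigcup_m\dot{A}^m$ inside $V(A^{n+1})$ as a $P^{n+1,\omega}$-name, noting that each $\dot{A}^m$ for $m\geq n+1$ is built recursively from the generic $g$ and is $P^{n+1,\omega}$-definable from parameters in $A^{n+1}$ alone (no parameters from higher $A^k$ are needed at this stage, since $A^{n+1}$ indexes everything). (2) Observe that the statement ``$p\force\phi(\dot{A}^\omega,v)$'' is, after unwinding, a statement in $V(A^{n+1})$ about $A^{n+1}$, the parameter $v\in V(A^{n-1})$, and the finite tuple $\bar{a}\subset A^{n+1}$ recording $\dom p$ together with the bits $p$ assigns — and similarly for $q$ with $\bar{b}$. (3) Using the genericity of $\seqq{A^{n+1}_c}{c\in A^n}$ over $V(A^n)$, choose fresh elements realizing over $\bar{a}$ the same type (in the sense of $\mathbf{Tp}$) that $\bar{b}$ has, and form the correspondingly relabeled condition $q'$; by Lemma~\ref{lem;indiscern} (with $n$ there taken to be $n+1$, $m$ large enough to contain all relevant ranks), $q'$ also forces $\neg\phi(\dot{A}^\omega,v)$. (4) Arrange the type so that $\bar{a}$ and the witnesses for $\bar{b}$ are disjoint, making $p$ and $q'$ conditions with disjoint domains, hence compatible — contradiction.

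The main obstacle will be step (3)–(4): being careful about what ``disjoint, same type'' buys us. Two conditions in $P^{n+1,\omega}$ with disjoint domains are automatically compatible, so what we really need is that $q$ can be moved off the domain of $p$ while preserving the forced statement. But the forced statement involves $A^\omega$, i.e.\ all of $\bigcup_m A^m$, and $A^{n+1}$ indexes the whole tail $\seqq{A^m}{m\geq n+1}$; moving finitely many indices around corresponds to a finite permutation of the indexing set $A^n$ (or $A^{n+1}$), which preserves $A^\omega$ setwise. The subtlety is that such a permutation, while fixing $A^\omega$ as a set, is exactly the kind of symmetry under which $\phi(\dot{A}^\omega,v)$ is invariant — this is where the restriction $v\in V(A^{n-1})$ (rather than $V(A^n)$, which would allow defining the bad real $\set{m}{g(m,a,a)=1}$) is essential. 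I would therefore phrase step (3) as: apply a finite permutation $\pi$ of $A^n$ fixing $\dom p$'s indices, sending $q$ to $\pi q$ with domain disjoint from $\dom p$; since $\pi$ fixes $A^{n+1}$, hence $A^\omega$, and fixes $v$, we get $\pi q\force\neg\phi(\dot{A}^\omega,v)$, and now $p,\pi q$ are compatible. This is the cleaner route and avoids invoking the full type machinery, though Lemma~\ref{lem;indiscern} gives a fallback if permutations alone don't suffice to realize the needed configuration.
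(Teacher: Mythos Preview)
Your main approach—showing that two conditions $p,q$ cannot force opposite values by using Lemma~\ref{lem;indiscern} to relabel one of them so that its domain is disjoint from the other—is correct and is exactly what the paper does (the paper says the proof is analogous to Lemma~\ref{lem;strong-Monro-7}, replacing Lemma~\ref{lem;Monro-lem6} by Lemma~\ref{lem;indiscern}). Two minor corrections: the lemma should be applied with its $n$ equal to the present $n$ (so $v\in V(A^{n-1})$ matches, the lemma's $\bar a$ is empty, $m=n+1$, and $\bar x,\bar y\subset A^{n+1}$); with your choice of shifting $n\mapsto n+1$, the movable tuples $\bar x,\bar y$ would have to live in $A^{\geq n+2}$, which is not where the support of a $P^{n+1,\omega}$-condition sits. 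Relatedly, you want ``same type'' (with the lemma's $\bar a$ empty), not ``same type over $\bar a$''—since all the relevant elements lie in $A^{n+1}$, the type merely records the equality pattern, so any fresh tuple of pairwise distinct elements of $A^{n+1}$ works.

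The ``cleaner route'' at the end, however, does not work. A finite permutation $\pi$ of $A^{n+1}$ (which is what is needed to act on $P^{n+1,\omega}$; your $A^n$ is one level off) does \emph{not} fix the name $\dot A^\omega$. Each $\dot A^{n+2}_a=\{b\in A^{n+1}:g(n+1,a,b)=1\}$ is a specific subset of $A^{n+1}$, and under the automorphism induced by $\pi$ the set $\dot A^{n+2}$ is sent to $\{\pi[X]:X\in\dot A^{n+2}\}$, which has no reason to equal $\dot A^{n+2}$. (If instead you let $\pi$ act only on the indexing coordinate, $\dot A^{n+2}$ is preserved but the discrepancy reappears at $\dot A^{n+3}$.) This is precisely why the indiscernibility lemma is needed here rather than a direct symmetry argument as in Lemma~\ref{lem;symmetry}: the later levels are built out of the earlier ones, so poset automorphisms do not fix the canonical name for $A^\omega$. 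Stick with the indiscernibility argument.
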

\begin{proof}
The point is that the conditions of the poset are indiscernible over the parameter $v$, thus cannot force conflicting statements.
The proof is analogous to Lemma~\ref{lem;strong-Monro-7}, using Lemma~\ref{lem;indiscern} instead of Lemma~\ref{lem;Monro-lem6}.
\end{proof}

\begin{cor}\label{cor;limit-monro-thm-8}
$V(A^\omega)\cap \mathcal{P}^n(\mathrm{On})=V(A^n)\cap \mathcal{P}^n(\mathrm{On})$.
\end{cor}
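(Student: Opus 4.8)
The plan is to deduce the statement from Claim~\ref{claim;Monro-limit-const} together with the symmetry/support machinery of Lemma~\ref{lem;symmetry}, by an induction on the rank $n$ that mirrors the proof of Lemma~\ref{lem;stong-Monro-8}. The inclusion $V(A^n)\cap\mathcal{P}^n(\mathrm{On})\subseteq V(A^\omega)\cap\mathcal{P}^n(\mathrm{On})$ is immediate since $A^n$ is definable from $A^\omega$, so all the work is in the reverse inclusion. Fix an ordinal $\eta$ and a set $B\in V(A^\omega)\cap\mathcal{P}^{n}(\eta)$; I want to show $B\in V(A^n)$.

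First I would set up the presentation: work in the $P^{n,\omega}$-generic extension of $V(A^n)$ that produces the sequence $\seqq{A^m}{m\geq n}$ and hence $A^\omega$ (this is available by Lemma~\ref{lem;monro-iterations-projections}, applying the isomorphisms $f_j$ to push $G$ down to a $P^{n,\omega}$-generic over $V(A^n)$). Since $B\in V(A^\omega)$, there is a formula $\varphi$, finitely many parameters $\bar a$ from $A^\omega$, and $v\in V(A^n)$ (in fact one may absorb $v$ into $V$, or keep it as a low-rank parameter) so that $B=\set{x}{\varphi(x,A^\omega,\bar a, v)}$ holds in $V(A^\omega)$. The parameters $\bar a$ lie in $\bigcup_{k}A^k$, so they lie in some finite initial segment; write $\bar a$ as a finite subset of $\bigcup_{k<N}A^k$ for some $N<\omega$. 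The key point, exactly as in the remark opening Section~\ref{subsec;cong-ast-omega+1} about the high-rank parameter being necessary, is that I must eliminate these parameters so that Claim~\ref{claim;Monro-limit-const} (which only handles parameters from $V(A^{n-1})$, no high parameters) can be brought to bear.

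The main obstacle, and the heart of the argument, is this parameter-elimination. I would proceed in two layers. For the \emph{low} parameters (those of $\bar a$ lying in $\bigcup_{k\le n}A^k$, i.e.\ in $V(A^n)$): these are harmless, as they can be folded into the ``$v$'' slot of Claim~\ref{claim;Monro-limit-const}, since that claim as stated allows a parameter from $V(A^{n-1})$ and the same proof by indiscernibility (Lemma~\ref{lem;indiscern}) works verbatim with a parameter $v'\in V(A^n)$ and conditions from $P^{n+1,\omega}$ (one rechecks that $A^n\subset V(A^n)$ poses no problem since Lemma~\ref{lem;indiscern} permits a finite $\bar a\subset A^n$ as extra parameters). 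For the \emph{high} parameters $\bar a$ lying in $\bigcup_{n<k<N}A^k$: here I use the support argument of Lemma~\ref{lem;symmetry} in the guise of Lemma~\ref{lem;gen-blass-thm}. Concretely: for each fixed potential element $x$ of $\eta$, the statement ``$x\in B$'' is a statement about $A^\omega$ with the finitely many high parameters $\bar a$ and the low parameter $v$; it is decided by a condition of $P^{n,\omega}$. Applying Lemma~\ref{lem;strong-Monro-7}-style reasoning (i.e.\ Lemma~\ref{lem;indiscern}, via Claim~\ref{claim;Monro-limit-const}) I argue that whether $x\in B$ does not depend on the choice of the high parameters beyond their type over the low data, and that type is determined by the finitely much of the generic that ``touches'' $\bar a$; hence $B$ itself is already in $V(A^N)\cap\mathcal{P}^n(\eta)$ for some finite $N$. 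Then I invoke Lemma~\ref{lem;Monro-thm8} (equivalently Lemma~\ref{lem;stong-Monro-8}) finitely many times, $V(A^N)\cap\mathcal{P}^n(\mathrm{On})=V(A^{N-1})\cap\mathcal{P}^n(\mathrm{On})=\cdots=V(A^n)\cap\mathcal{P}^n(\mathrm{On})$, to conclude $B\in V(A^n)$, as desired. The only genuinely new ingredient beyond the finite-stage picture is Claim~\ref{claim;Monro-limit-const}, which is what lets the induction pass through the limit at the bottom; everything above stage $n$ is finite-stage bookkeeping.
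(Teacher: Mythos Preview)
Your overall architecture matches the paper's: induct on the rank, reduce membership of a rank-$(n+1)$ set $B$ to a finite stage $V(A^N)$, then iterate Lemma~\ref{lem;Monro-thm8} to drop from $V(A^N)$ to $V(A^{n+1})$. The endgame is exactly right.

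Where you diverge is in the reduction step, and here you make life much harder than necessary. You fix the level $n$ and then try to split the parameters $\bar a$ into ``low'' ones in $\bigcup_{k\le n}A^k$ (folded into $v$) and ``high'' ones in $\bigcup_{n<k<N}A^k$, handling the latter by a separate support/indiscernibility argument. That high-parameter argument is muddled as written: the assertion that ``whether $x\in B$ does not depend on the choice of the high parameters beyond their type over the low data'' is not correct --- $B$ is defined from the \emph{specific} tuple $\bar a$, and changing $\bar a$ to another tuple of the same type will in general change $B$. Invoking Lemma~\ref{lem;symmetry} or Lemma~\ref{lem;gen-blass-thm} here is also off-target, since $V(A^\omega)$ is not presented as $V(A^n)(A)$ for a product generic $A$ in the form those lemmas require.

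The paper sidesteps all of this with a single observation you missed: Claim~\ref{claim;Monro-limit-const} is stated for \emph{every} level, not just level $n$. Since the finitely many parameters $\bar a$ from $A^\omega$ all lie in $V(A^{m-1})$ for some $m>n$, one simply applies the claim at level $m$: for each $x\in V(A^n)$ the statement $\psi^{V(\dot A^\omega)}(\dot A^\omega,v,\bar a,\check x)$ is decided by the empty condition in $P^{m+1,\omega}$, hence
\[
B=\bigl\{\,x\in V(A^n):\ P^{m+1,\omega}\Vdash \psi^{V(\dot A^\omega)}(\dot A^\omega,v,\bar a,\check x)\,\bigr\}\in V(A^{m+1}),
\]
and the finite-stage lemma finishes. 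No low/high split, no support argument, no type considerations. Once you see that the claim floats to whatever level absorbs all the parameters, the corollary is immediate.
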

\begin{proof}
Any set $X$ in $V(A^\omega)$ is of the form $X=\set{x}{\psi(A^\omega,v,w,x)}$, where $w$ is finite subset of $A^\omega$ and $v\in V$.
The corollary is proved by induction on the rank $n$.
Assume $X$ is of rank $n+1$ and $X\subset V(A^n)$ by the inductive hypothesis.
Take $m$ large enough such that $w\subset V(A^{m-1})$ and $m>n$.
By the claim above, for any $x\in V(A^n)$, the statement $\psi^{V(\dot{A}^\omega)}(\dot{A}^\omega,v,w,\check{x})$ is decided by the empty condition in $P^{m+1,\omega}$.
It follows that $X$ is equal to 
\begin{equation*}
    \set{x\in V(A^n)}{P^{m+1,\omega}\force \psi^{V(\dot{A}^\omega)}(\dot{A}^\omega,v,w,\check{x}) },
\end{equation*}
which is in $V(A^{m+1})$.
Finally, it follows from Lemma~\ref{lem;Monro-thm8} that $X\in V(A^{n+1})$
\end{proof}

\begin{lem}[Indiscernibility in $V(A^\omega)$]\label{lem;indiscern-Aomega}
Let $\psi$ be a formula, $v\in V(A^{n-1})$, $\bar{a}$ a finite subset of $A^n$ and $\bar{x}, \bar{y}$ finite subsets of $\bigcup_{k=n+1}^m A^k$.
Assume further that $\bar{x}$ and $\bar{y}$ have the same type over $\bar{a}$. 
Then 
\begin{equation*}
    V(A^\omega)\models \psi(A^\omega,v,\bar{a},\bar{x})\iff \psi(A^\omega,v,\bar{a},\bar{y}).
\end{equation*}
\end{lem}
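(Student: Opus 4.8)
The plan is to reduce the statement for $V(A^\omega)$ to the already-proved finite-stage indiscernibility (Lemma~\ref{lem;indiscern}) together with the presentation of $V(A^\omega)$ as an inner model of a $P^{n+1,\omega}$-generic extension of $V(A^{n+1})$, in the same style as Claim~\ref{claim;Monro-limit-const} and Corollary~\ref{cor;limit-monro-thm-8}. Fix $v\in V(A^{n-1})$, $\bar a\subset A^n$, and sequences $\bar x,\bar y\subset\bigcup_{k=n+1}^m A^k$ with the same type over $\bar a$. Pick $m'\geq m$ large enough that $\bar a,\bar x,\bar y\subset V(A^{m'})$ (concretely, $\bar x,\bar y$ use only members from levels $\leq m$, so $m'=m$ works, but one may enlarge $m'$ freely). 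The key observation is that $\psi(A^\omega,v,\bar a,\bar x)$ holding in $V(A^\omega)$ is, after passing to the generic presentation, a statement about $\dot A^\omega$, the fixed low parameter $v$, and the finitely many high parameters $\bar a,\bar x$, and by a Claim~\ref{claim;Monro-limit-const}-type argument its truth value is decided by $P^{m'+1,\omega}$ — that is, by a condition in the ``tail'' forcing which does not mention any of $\bar a,\bar x$.

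First I would make this precise. Working in $V(A^{m'})$, present $V(A^\omega)$ as an inner model of $V(A^{m'})[H]$ where $H$ is $P^{m'+1,\omega}$-generic (via the forcing isomorphisms $f_n$ of Lemma~\ref{lem;monro-iterations-projections}). Exactly as in Claim~\ref{claim;Monro-limit-const}, using Lemma~\ref{lem;indiscern} (indiscernibility inside $V(A^{m'})$) in place of Lemma~\ref{lem;Monro-lem6}, the conditions of $P^{m'+1,\omega}$ are sufficiently indiscernible over the parameter coding $v,\bar a,\bar x$ that the empty condition of $P^{m'+1,\omega}$ decides $\psi^{V(\dot A^\omega)}(\dot A^\omega,v,\bar a,\bar x)$. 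Hence in $V(A^{m'})$ we have
\begin{equation*}
V(A^\omega)\models\psi(A^\omega,v,\bar a,\bar x)\iff V(A^{m'})\models\bigl(P^{m'+1,\omega}\force\psi^{V(\dot A^\omega)}(\dot A^\omega,v,\bar a,\check{\bar x})\bigr),
\end{equation*}
and the right-hand side is a formula evaluated in $V(A^{m'})$ with parameters $v\in V(A^{n-1})$, $\bar a\subset A^n$, $\bar x\subset\bigcup_{k=n+1}^{m'}A^k$. (Here one should be a little careful that the ``$P^{m'+1,\omega}\force\cdots$'' assertion, relativized to the inner model $V(\dot A^\omega)$, is genuinely a first-order formula of $V(A^{m'})$ in these parameters; this is routine since forcing over $V(A^{m'})$ is definable there and $V(A^\omega)$ is a definable inner model of the extension.)

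Now apply Lemma~\ref{lem;indiscern} directly: $\bar x$ and $\bar y$ have the same type over $\bar a$, so
\begin{equation*}
V(A^{m'})\models\bigl(P^{m'+1,\omega}\force\psi^{V(\dot A^\omega)}(\dot A^\omega,v,\bar a,\check{\bar x})\bigr)\iff V(A^{m'})\models\bigl(P^{m'+1,\omega}\force\psi^{V(\dot A^\omega)}(\dot A^\omega,v,\bar a,\check{\bar y})\bigr),
\end{equation*}
and running the equivalence of the previous paragraph backwards with $\bar y$ in place of $\bar x$ gives $V(A^\omega)\models\psi(A^\omega,v,\bar a,\bar x)\iff\psi(A^\omega,v,\bar a,\bar y)$, as desired. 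The main obstacle — and the one point deserving genuine care — is the first paragraph's claim that the tail forcing $P^{m'+1,\omega}$ decides the relativized formula $\psi^{V(\dot A^\omega)}$ from the empty condition: this is where one must repeat the argument of Lemma~\ref{lem;strong-Monro-7}/Claim~\ref{claim;Monro-limit-const}, checking that the ``swapping'' indiscernibility of conditions in $P^{m'+1,\omega}$, which fixes the names $\dot A^k$ and the parameter $v$, also fixes (up to an automorphism of the forcing) the name $\dot A^\omega$ and the checked parameters $\bar a,\bar x$ — so that two conditions cannot force contradictory instances. Once that is in hand, the rest is bookkeeping about types and the definability of forcing.
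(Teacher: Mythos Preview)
Your proposal is correct and is essentially the paper's own argument: pass to a sufficiently high finite stage, use Claim~\ref{claim;Monro-limit-const} (or its proof) to replace ``$V(A^\omega)\models\psi$'' by a forcing statement in that finite-stage model, and then invoke Lemma~\ref{lem;indiscern} there. The only slip is bookkeeping: $P^{m'+1,\omega}$ is defined from $A^{m'+1}$ and so lives in $V(A^{m'+1})$, not in $V(A^{m'})$; the paper simply takes the stage to be $m+2$ (working in $V(A^{m+2})$ with $P^{m+2,\omega}$), so that the combined parameter $(v,\bar a,\bar x)\in V(A^{m})$ fits the hypothesis of Claim~\ref{claim;Monro-limit-const} verbatim and no ``Claim-type'' reproving is needed.
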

\begin{proof}
Work over $V(A^{m+2})$.
Let $\phi(A^{m+2},v,\bar{a},\bar{x})$ be the statement that $P^{m+2,\omega}$ forces $\psi^{V(\dot{A}^\omega)}(\dot{A}^\omega,v,\bar{a},\bar{x})$.
By Claim~\ref{claim;Monro-limit-const}, $\psi(A^\omega,v,\bar{a},\bar{x})$ holds in $V(A^\omega)$ iff $\phi(A^{m+2},v,\bar{a},\bar{x})$ holds in $V(A^{m+2})$.
Applying Lemma~\ref{lem;indiscern}, the latter is true if and only if $\phi(A^{m+2},v,\bar{a},\bar{y})$ holds in $V(A^{m+2})$.
This in turn holds if and only if $\psi(A^\omega,v,\bar{a},\bar{y})$ holds in $V(A^\omega)$.
\end{proof}

Before moving on to the $\omega+1$ step of the construction, we sketch a proof of $\cong^\ast_{\omega+1,n}<\cong^\ast_{\omega+1,<\omega}$, for every $n$.
Based on the proof in Section~\ref{subsec;cong-ast-31}, we want to force over $V(A^\omega)$ a generic $B\subset A^\omega$, without adding new sets of small rank.
The corresponding $\cong^\ast_{\omega+1,<\omega}$-invariant will be the set containing all finite changes of $B$.
The subset $B$ will be a choice function $\seqq{a_n}{n<\omega}$ added by the following poset.

Let $P$ be the poset of all finite functions $p\colon \dom p\lto \bigcup_n A^n$ where $\dom p\in \omega$, $p(i)\in A^i$  for every $i\in\dom p$ and $p(i)\in p(i+1)$ if $i+1\in \dom p$. 
$P$ is ordered by extension.

\begin{lem}
Let $\psi$ be a formula, $p\in P$, $v\in V(A^{m-1})$ such that $p\force \psi(A^\omega,v)$.
Then $p\rest (m+1)\force\psi(A^\omega,v)$.
\end{lem}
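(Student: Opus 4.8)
The plan is to mimic the proof of Lemma~\ref{lem;strong-Monro-7}, with the indiscernibility of the sequence $\seqq{A^n}{n<\omega}$ (Lemma~\ref{lem;indiscern-Aomega}) playing the role that Monro's Lemma~\ref{lem;Monro-lem6} played there, and with Claim~\ref{claim;Monro-limit-const} / Corollary~\ref{cor;limit-monro-thm-8} used to reduce statements about $V(A^\omega)$ to statements in some finite-stage model $V(A^k)$. First I would fix $\psi$, $p\in P$ and $v\in V(A^{m-1})$ with $p\force\psi(A^\omega,v)$, and I want to show $p\rest(m+1)\force\psi(A^\omega,v)$. As in Lemma~\ref{lem;strong-Monro-7}, it suffices to show that every $q\leq p\rest(m+1)$ is compatible with a condition forcing $\psi(A^\omega,v)$. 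Write $p$ as $p\rest(m+1)$ together with its ``tail'' $p(m+1),\dots,p(m+\ell)$, where $p(m+j)\in A^{m+j}$, and note $p(m)\in p(m+1)\in\cdots\in p(m+\ell)$ (if $m\in\dom p$; otherwise slide the indices). The tail entries $\bar x=p(m+1),\dots,p(m+\ell)$ form a finite subset of $\bigcup_{k=m+1}^{m+\ell}A^k$ whose type over the empty sequence (equivalently, over $v$'s level, since $v\in V(A^{m-1})$) is a ``chain'' type: a strictly $\in$-increasing sequence, one from each level $A^{m+1},\dots,A^{m+\ell}$.

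The key move is to observe that ``$p\force\psi(A^\omega,v)$'' can be rephrased, via Claim~\ref{claim;Monro-limit-const} applied at a high enough level, as a first-order statement $\phi(A^{k},v,\bar x)$ holding in $V(A^{k})$ for suitable $k>m+\ell$, where $\bar x$ is the tail of $p$ and the ``$p\rest(m+1)$'' part gets absorbed into the formula $\phi$ (it only involves levels $\leq m$, hence lives below $V(A^{m-1})$ up to finitely many atoms from $A^m$ — these I would include among the parameters $\bar a$ in the appeal to Lemma~\ref{lem;indiscern-Aomega}, exactly as in Lemma~\ref{lem;strong-Monro-7} where $p\rest\{r_1,\dots,r_m\}^2$ is kept fixed). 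By Lemma~\ref{lem;indiscern-Aomega} (in the form for finite-stage models, Lemma~\ref{lem;indiscern}), $\phi(A^{k},v,\bar a,\bar x)$ holds for any tuple $\bar y=y_{m+1},\dots,y_{m+\ell}$ from $A^{m+1},\dots,A^{m+\ell}$ with the same chain type over $\bar a$ — in particular for any strictly $\in$-increasing $\bar y$ with $p(m)\in y_{m+1}$. Translating back, the condition $p[\bar y]$ obtained from $p$ by replacing $p(m+j)$ with $y_{m+j}$ still forces $\psi(A^\omega,v)$.

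Finally, given $q\leq p\rest(m+1)$, I choose witnesses $y_{m+1}\in y_{m+2}\in\cdots\in y_{m+\ell}$ from the appropriate levels with $q(m)\in y_{m+1}$ (possible since each $A^{m+j}$ is infinite and, recursively, each element of $A^{m+j}$ contains infinitely many elements of $A^{m+j-1}$, by the construction of the sequence), chosen so that the extended condition $q'=q\cup\{(m+j,y_{m+j}):1\leq j\leq\ell\}$ is a legitimate member of $P$ agreeing with $p[\bar y]$ on its domain. Then $q'\leq q$ and $q'\leq p[\bar y]$, so $q'\force\psi(A^\omega,v)$, establishing compatibility and hence the lemma.

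The main obstacle I anticipate is purely bookkeeping: correctly handling the case distinction $m\in\dom p$ versus $m\notin\dom p$ (and the degenerate cases $\ell=0$), and making sure that when I pass ``$p\force\psi$'' through Claim~\ref{claim;Monro-limit-const} the parameter $v$ genuinely lands in $V(A^{(m+1)-1})=V(A^m)$-level data — here I would absorb the finitely many atoms of $A^m$ appearing in $p\rest(m+1)$ into the parameter list $\bar a$ of Lemma~\ref{lem;indiscern-Aomega}, which is exactly why the conclusion is $p\rest(m+1)$ and not $p\rest m$. Everything else is a routine adaptation of the Monro-style argument already carried out for Lemma~\ref{lem;strong-Monro-7}.
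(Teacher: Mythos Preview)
Your proposal is correct and follows the same underlying idea as the paper --- use the indiscernibility of tails of conditions (Lemma~\ref{lem;indiscern-Aomega}) to show that any $q\leq p\rest(m+1)$ is compatible with a condition forcing $\psi$. The paper defers the proof to the more general Lemma~\ref{lem;homo-T}, whose argument is noticeably shorter than yours: it applies Lemma~\ref{lem;indiscern-Aomega} \emph{directly} to the statement ``$p\force\psi(A^\omega,v)$'', viewed as a first-order assertion in $V(A^\omega)$ with parameters $A^\omega$, $v\in V(A^{m-1})$, $\bar a=\{p(m)\}\subset A^m$, and $\bar x=p(m+1),\dots,p(m+\ell)$. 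Your detour through Claim~\ref{claim;Monro-limit-const} to pass down to a finite-stage model $V(A^k)$ and then invoke Lemma~\ref{lem;indiscern} is exactly how Lemma~\ref{lem;indiscern-Aomega} itself is proved, so you are essentially re-deriving that lemma inline rather than citing it. This is not wrong, just redundant.

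One small imprecision: your construction of $q'=q\cup\{(m+j,y_{m+j})\}$ tacitly assumes $\dom q=m+1$. If $\dom q>m+1$ some of the $y_{m+j}$ are already determined by $q$. The clean fix (and what the paper does in Lemma~\ref{lem;homo-T}) is: extend $q$ if necessary so that $\dom q\geq\dom p$, observe that $q\rest\dom p$ and $p$ have the same type over $p\rest(m+1)$ (both are $\in$-chains of the same length with the same initial segment), and conclude by Lemma~\ref{lem;indiscern-Aomega} that $q\rest\dom p\force\psi$, hence $q\force\psi$.
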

\begin{cor}
Forcing with $P$ does not add sets of rank $<\omega$ to $V(A^\omega)$.
\end{cor}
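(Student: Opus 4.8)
The plan is to prove the Lemma first (conditions in $P$ restricted to the first $m+1$ coordinates already decide statements with parameters from $V(A^{m-1})$), and then derive the Corollary exactly as Lemma~\ref{lem;stong-Monro-8} was derived from Lemma~\ref{lem;strong-Monro-7}: given a name $\dot X$ for a rank $k$ set ($k<\omega$) defined from $A^\omega$, a parameter $v\in V(A^{m-1})$ for some finite $m$, and finitely many members of $A^\omega$ all lying in $V(A^{m-1})$, an inductive argument on $k$ shows (using Corollary~\ref{cor;limit-monro-thm-8} for the base/rank bookkeeping) that each $\check x\in\dot X$ is decided by $p\rest(m+1)$; since there are only countably many conditions of the form $p\rest(m+1)$ extending a fixed $p\rest(m+1)$, and each extends to decide membership, $\dot X$ is forced into $V(A^\omega)$, and then Corollary~\ref{cor;limit-monro-thm-8} places $\dot X$ in $V(A^k)$.

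For the Lemma itself, I would follow the pattern of Lemma~\ref{lem;strong-Monro-7}, replacing the appeal to Monro's Lemma~\ref{lem;Monro-lem6} by the indiscernibility Lemma~\ref{lem;indiscern-Aomega}. Suppose $p\force\psi(A^\omega,v)$ with $v\in V(A^{m-1})$. It suffices to show every $q\le p\rest(m+1)$ is compatible with a condition forcing $\psi(A^\omega,v)$. Write $p$ as $p\rest(m+1)$ together with the "tail" $p(m+1),\dots,p(\ell)$, a sequence of elements with $p(i)\in A^i$ and $p(i)\in p(i+1)$; note this tail is a sequence from $\bigcup_{i=m+1}^{\ell}A^i$ whose type over $\bar a:=p\rest(m+1)$ (really over the finitely many members of $A^{m}\cup\cdots$ appearing, all of which we may absorb into the parameter since they live below $V(A^{m-1})$... more carefully, $p(m)\in A^m$ plays the role of $\bar a\subset A^m$ in Lemma~\ref{lem;indiscern-Aomega} and the rest is the high tail $\bar x$). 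The formula $\phi$ asserting "$p\rest(m+1){}^\frown(\text{this tail})\force\psi(A^\omega,v)$" is a statement about $A^\omega$, the parameter $v\in V(A^{m-1})$, the element $p(m)\in A^m$, and a high-rank tuple $\bar x$; by Lemma~\ref{lem;indiscern-Aomega}, $\phi$ depends only on the type of $\bar x$ over $p(m)$. Given any $q\le p\rest(m+1)$, choose a fresh tail $q(m+1)\in A^{m+1},\dots,q(\ell)\in A^{\ell}$ avoiding $\dom q$ and realizing the same type over $q(m)=p(m)$ as $p$'s tail does; then $q'$, obtained by appending this tail to $q$, is below $q$ and, by indiscernibility, $q'$ still forces $\psi(A^\omega,v)$. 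Hence $q$ is compatible with a condition forcing $\psi$, so $p\rest(m+1)\force\psi(A^\omega,v)$.

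The main obstacle I anticipate is bookkeeping the type-realization step: one must check that a condition $q\in P$ can always be extended by a tail of the required type, i.e.\ that given $q(m)=a\in A^m$ one can find $a'\in A^{m+1}$ with $a\in a'$ (and iterate), avoiding a prescribed finite set — this is where the chain condition $p(i)\in p(i+1)$ interacts with the structure of the $A^{m+1}$'s as $P(A^m)$-generics over $V(A^m)$, and genericity of $\seqq{A^{m+1}_c}{c\in A^m}$ guarantees for each $a$ infinitely many $a'\ni a$, with only finitely many forbidden; the requisite "same type over $a$" then just records which memberships $a\in a'$, $a'\in a''$, etc.\ hold, all of which are arranged by construction. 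A secondary point is making sure that in applying Lemma~\ref{lem;indiscern-Aomega} the parameter really is in $V(A^{m-1})$ and the $A^m$-element is handled as the "$\bar a$" slot; this forces the bound to be $m+1$ rather than $m$, matching the statement. Once these are in place the Corollary is immediate as above.
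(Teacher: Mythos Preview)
Your approach is the same as the paper's: prove the restriction lemma via the indiscernibility Lemma~\ref{lem;indiscern-Aomega}, then deduce the corollary by induction on rank exactly as in Lemma~\ref{lem;stong-Monro-8} and Claim~\ref{claim;T-no-low-rank}. The overall strategy is correct.

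There is, however, a muddle in your execution of the Lemma that you should clean up. You write ``choose a fresh tail $q(m+1),\dots,q(\ell)$ avoiding $\dom q$'' and then ``append this tail to $q$''. But conditions in $P$ have initial-segment domains, so if $q\le p\rest(m+1)$ already has entries at $m+1,\dots$, there is nothing fresh to choose and nothing to append at those coordinates. The cleaner observation, and the one the paper uses (see the proof of Lemma~\ref{lem;homo-T}), is that you do not need a fresh tail at all: since every condition in $P$ is an $\in$-chain with $i$-th entry in $A^i$, any two conditions of the same length extending $p\rest(m+1)$ automatically have the same type over $p(m)$. So given $q\le p\rest(m+1)$, extend $q$ if necessary so that $|\dom q|\ge |\dom p|$, and then $q\rest|\dom p|$ and $p$ have the same type over $(p(0),\dots,p(m))$; indiscernibility gives $q\rest|\dom p|\force\psi(A^\omega,v)$ directly, hence $q\force\psi(A^\omega,v)$. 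Your ``obstacle'' paragraph about realizing types is then only needed to check that $q$ can be extended to length $|\dom p|$, which is immediate from genericity of the $A^{n+1}$'s.

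A second, smaller point: in the Corollary, the phrase ``countably many conditions of the form $p\rest(m+1)$ extending a fixed $p\rest(m+1)$'' is not what you want. The argument is that, fixing the generic $G$, the \emph{single} condition $G\rest(m+1)$ already decides $\check x\in\dot X$ for every $x$ of the relevant rank, so $X=\{x: G\rest(m+1)\force \check x\in\dot X\}$ is definable in $V(A^\omega)$.
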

The point is that the conditions in $P$ have the same type and therefore are indiscernibles. The lemma and corollary are proved in more general setting below (Lemma~\ref{lem;homo-T} and Claim~\ref{claim;T-no-low-rank}).

Let $\seqq{a_n}{n<\omega}$ be a $P$-generic over $V(A^\omega)$.
Define
\begin{equation*}
    A=\set{\seqq{b_n}{n<\omega}\in\Pi_n A^n}{a_n=b_n\textrm{ for all but finitely many }n}.
\end{equation*}
$A$ is a set of rank $\omega+1$.
Furthermore, given any $Y\in A$, the map $Z\mapsto Z\Delta Y$ is injective and sends the members of $A$ to finite sequences of rank $<\omega$.
Thus $A$ is a $\cong^\ast_{\omega+1,<\omega}$-invariant.

We will work in the model $V(A^\omega)[\seqq{a_n}{n<\omega}]=V(\seqq{a_n}{n<\omega})=V(A)$. (Note that $A^\omega$ is definable from $\seqq{a_n}{n<\omega}$, since $A^n=\bigcup a_{n+2}$.)

\begin{lem}
Suppose $B\in V(A)$ is a set of rank $\omega+1$, definable from $A$ and parameters from $V$.
Assume further that for some $m$, there is an injective map between $X$ into $\mathcal{P}^m(\mathrm{On})$.
Then $V(X)\subsetneq V(A)$.
\end{lem}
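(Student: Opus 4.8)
The plan is to follow the pattern of Propositions~\ref{prop;cong31-not-30} and~\ref{prop;cong-ast-n-k}: assume toward a contradiction that $V(B)=V(A)$, transport the situation to the forcing $P$ over $V(A^\omega)$, and contradict the uniqueness of the generic sequence. The one structural novelty is that our $P$, because of the constraints $p(i)\in p(i+1)$, is rigid, so the forcing automorphisms exploited in those two propositions are unavailable here; their role will be played by the indiscernibility Lemma~\ref{lem;indiscern-Aomega}, which applies to $P$ precisely because any two conditions of the same length have the same type.

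In detail: since $V(B)=V(A)=V(\seqq{a_n}{n<\omega})$, we have $\seqq{a_n}{n<\omega}\in V(B)$, so there are a formula $\psi$, a finite tuple $\bar c$ from $\mathrm{tc}(B)$ and a parameter $w\in V$ such that $\seqq{a_n}{n<\omega}$ is the unique $\sigma$ with $\psi(\sigma,B,\bar c,w)$ in $V(A)$. Passing to $P$-names over $V(A^\omega)$, and recalling that $B$ is definable from $\seqq{\dot a_n}{n<\omega}$ and a parameter from $V$, fix a condition $p_0\in P$ forcing this whole configuration. This mirrors the opening paragraphs of Propositions~\ref{prop;cong31-not-30} and~\ref{prop;cong-ast-n-k}.

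Now the hypothesis that $B$ injects into a set of rank $m$ enters. Using it together with Corollary~\ref{cor;limit-monro-thm-8} and the fact that $P$ adds no set of rank $<\omega$ over $V(A^\omega)$, one first argues that the content of $B$ and of $\bar c$ relevant to the truth of $\psi(\sigma,B,\bar c,w)$ is concentrated below a bounded level: there are $l\ge m$ and $p_1\supseteq p_0$ of length $l$ such that, below $p_1$, that content is named using only $P^{l,\omega}$-data and parameters from $V(A^{l-1})$. Since any two conditions of $P$ of length $>l$ have the same type, Lemma~\ref{lem;indiscern-Aomega} then shows that replacing $\sigma$ by the sequence $\seqq{a_n'}{n<\omega}$ obtained by altering coordinate $l$ (keeping $a_{l-1}\in a_l'\in a_{l+1}$ and $a_l'\ne a_l$) affects neither the truth of $\psi(\cdot,\dot B,\bar{\dot c},\check w)$ nor the values of $\dot B$ and $\bar{\dot c}$ — the latter because $\seqq{a_n'}{n<\omega}$ differs from $\seqq{a_n}{n<\omega}$ at only one coordinate, hence lies in $A$, so that $A$ (the set of all cofinite modifications of either sequence), and therefore $B$ and the tuple $\bar c$, are literally unchanged. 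Thus $p_0$ forces that both $\seqq{a_n}{n<\omega}$ and $\seqq{a_n'}{n<\omega}$ satisfy $\psi(\cdot,B,\bar c,w)$ in $V(A)$, contradicting uniqueness; hence $V(B)\subsetneq V(A)$.

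The main obstacle is the middle step — pushing the relevant information in $B$ and in the transitive-closure parameters $\bar c$ down to a bounded level. A priori a component of $\bar c$ may be an element of $B$ of rank $\omega$, which ``lives at every finite level'' and so could record all of $\seqq{a_n}{n<\omega}$, spoiling the indiscernibility argument; the content of the hypothesis $B\hookrightarrow\mathcal P^m(\mathrm{On})$ is precisely that this does not happen, so that the part of $\mathrm{tc}(B)$ needed is decided at a finite level where Lemma~\ref{lem;indiscern-Aomega} and the homogeneity of $P$ apply. Once this is in place, producing $\seqq{a_n'}{n<\omega}$ is a routine density argument of the kind used in Sections~\ref{subsec;cong-ast-31} and~\ref{subsec;cong-ast-n-k}. (Alternatively, the conclusion can be phrased as in Proposition~\ref{prop;cong-ast-n-k}: the same indiscernibility produces an injection of $A^k$, for a suitable $k>m$, into finite tuples from $B$, hence — via $B\hookrightarrow\mathcal P^m(\mathrm{On})$ — into a set of rank $m<k$, contradicting Observation~\ref{obs;KW-implies-low-generator} and Lemma~\ref{lem;gen-blass-thm}.)
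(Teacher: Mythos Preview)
Your overall framework matches the paper's: assume $V(B)=V(A)$, work over $V(A^\omega)$ via $P$, and exploit the homogeneity/indiscernibility of $P$ in place of the automorphisms $\pi_a$ used in Proposition~\ref{prop;cong31-not-30}. The paper itself says only ``similar to Proposition~\ref{prop;cong31-not-30}'', so you are correctly reconstructing the intended shape of the argument.

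There is, however, a genuine gap in your main line. You try to show that the \emph{same} tuple $\bar c$ satisfies $\psi(g',B,\bar c,w)$ for a perturbed $g'$, contradicting uniqueness directly. This is \emph{not} how Proposition~\ref{prop;cong31-not-30} works: there the automorphism $\pi_a$ produces \emph{different} parameters $\pi_aU_j$, and the contradiction comes from the resulting \emph{injection} $a\mapsto(\pi_aU_1,\ldots,\pi_aU_k)$ of $A^1$ into $B^k$. Your version would require the $P$-names for the rank-$\omega$ components $U_j\in B$ of $\bar c$ to be definable from parameters in some $V(A^{l-1})$, so that Lemma~\ref{lem;indiscern-Aomega} applies to the full forcing statement. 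You assert that $B\hookrightarrow\mathcal P^m(\mathrm{On})$ delivers this, but it does not: the injection $f$ need not be definable from $A$ and $V$-parameters alone, so replacing $U_j$ by $f(U_j)$ merely shifts the high-rank dependence onto $f$.

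Your parenthetical alternative is the correct route and is what actually parallels Proposition~\ref{prop;cong31-not-30}. Apply the homogeneity lemma for $P$ (the one stated just above this lemma) to the existentially quantified statement $\exists\,U_1,\ldots,U_k\in\dot B\;[\dot g$ is the unique $h$ with $\psi(h,\dot B,\vec U,z,v)]$; since $\dot B$ is the canonical name this has only low-rank free parameters, so it is forced by a short condition $r_0$. For $l$ large and $b\in A^l$ with $a_{l-1}\in b\in a_{l+1}$, Lemma~\ref{lem;indiscern-Aomega} lets you transfer this to the condition with coordinate $l$ replaced by $b$, giving for each such $b$ a nonempty set $\mathrm{Par}(g_b)\subseteq B^k$ of tuples uniquely defining $g_b$. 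The map $b\mapsto\mathrm{Par}(g_b)$ is injective; composing with $B\hookrightarrow\mathcal P^m$ injects (a cofinite subset of) $A^l$ into a set of bounded finite rank, and Corollary~\ref{cor;limit-monro-thm-8} pushes this injection down to some $V(A^{l'})$, contradicting Lemma~\ref{lem;gen-blass-thm} exactly as in Proposition~\ref{prop;cong-ast-n-k}. Make this the argument, not the afterthought.
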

The proof is similar to Proposition~\ref{prop;cong31-not-30}.
As in Section~\ref{subsec;cong-ast-31} we conclude:

\begin{cor}
For every $n\in\omega$,
    $\cong^\ast_{\omega+1,n}<\cong^\ast_{\omega+1,<\omega}$.
\end{cor}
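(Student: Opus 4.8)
Since $\cong^\ast_{\alpha,\beta}\leq_B\cong^\ast_{\alpha,\gamma}$ whenever $\beta\leq\gamma$ (Section~\ref{sec;HKL-relations}), we already have $\cong^\ast_{\omega+1,n}\leq\cong^\ast_{\omega+1,<\omega}$, so it suffices to prove $\cong^\ast_{\omega+1,<\omega}\not\leq\cong^\ast_{\omega+1,n}$. The plan is to feed the generic invariant $A$ constructed above into the machinery of Section~\ref{sec;double-brackets}, exactly as in Corollary~\ref{cor;Friedman-Stanley} and Section~\ref{subsec;cong-ast-31}. Assume toward a contradiction that $f$ is a Borel reduction of $\cong^\ast_{\omega+1,<\omega}$ to $\cong^\ast_{\omega+1,n}$. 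Since the relation $R$ on $A$ is definable from $A$ (it records finite differences of sequences in $\prod_k A^k$), the model $V(A)=V(A^\omega)[\seqq{a_n}{n<\omega}]$ is generated by the $\cong^\ast_{\omega+1,<\omega}$-invariant $(A,R)$, which lies in a forcing extension of $V$; as in Corollary~\ref{cor;Friedman-Stanley}, using Claim~\ref{claim;forcing-rep} and Proposition~\ref{prop;doublebrackets-symmetric-model}, fix a real $x$ generic over $V$ in the domain of $\cong^\ast_{\omega+1,<\omega}$ with $A_x=(A,R)$, so that $V(A_x)=V(A)$.

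Let $(A',R')=B_{f(x)}$ be the $\cong^\ast_{\omega+1,n}$-invariant of $f(x)$. By Lemma~\ref{lem;reduction-def-generating-set}, $V(A)=V(A',R')$, the pair $(A',R')$ is definable in this model from $A$ and parameters from $V$, and conversely $A$ is definable from $(A',R')$ and parameters from $V$. Applying Claim~\ref{claim;cong-ast-invs} with $\alpha=\omega$ and $\beta=n$, rewrite $V(A',R')=V(B)$ where $B$ has rank $\omega+1$, embeds into a set of rank $n+1$ (hence into $\mathcal{P}^{n+1}(\mathrm{On})$), and is definable using only $(A',R')$ and parameters from $V$; composing the two definitions, $B$ is definable using only $A$ and parameters from $V$. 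Thus $B$ is a rank-$(\omega+1)$ set, definable from $A$ and parameters in $V$, which injects into $\mathcal{P}^{n+1}(\mathrm{On})$ and satisfies $V(B)=V(A)$. This contradicts the last lemma before this corollary --- that any rank-$(\omega+1)$ set $B$ definable from $A$ and parameters in $V$ which injects into some $\mathcal{P}^m(\mathrm{On})$ must satisfy $V(B)\subsetneq V(A)$ --- applied with $m=n+1$. Hence no such $f$ exists, and $\cong^\ast_{\omega+1,n}<\cong^\ast_{\omega+1,<\omega}$.

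So the corollary is a bookkeeping wrapper around two ingredients already in place: the coding result Claim~\ref{claim;cong-ast-invs}, and the lemma just cited, whose proof --- ``similar to Proposition~\ref{prop;cong31-not-30}'' --- is where the genuine content sits. That proof uses the indiscernibility of the elements of $A^\omega$ over low-rank parameters (Lemma~\ref{lem;indiscern-Aomega}) together with Corollary~\ref{cor;limit-monro-thm-8}: fixing a condition $p$ in the choice-function forcing $P$ and varying a tail of the generic $\seqq{a_n}{n<\omega}$ by permutations produces an injection of a tail of $\prod_k A^k$ into a finite power of $B$; composing with the embedding of $B$ into $\mathcal{P}^{n+1}(\mathrm{On})$ and invoking Observation~\ref{obs;KW-implies-low-generator} and Corollary~\ref{cor;limit-monro-thm-8} would yield a rank-$(n+1)$ generator for $V(A^{n+1})$, contradicting Lemma~\ref{lem;gen-blass-thm}. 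The only care needed in the wrapper is that ``definable from $A$ with parameters in $V$'' survives the composition of Lemma~\ref{lem;reduction-def-generating-set} with Claim~\ref{claim;cong-ast-invs}, and --- as in Corollary~\ref{cor;Friedman-Stanley} --- that $(A,R)$ can genuinely be realized as $A_x$ for some $x$ generic over $V$; I expect this last realization step, via Claim~\ref{claim;forcing-rep}, to be the only point that needs to be pinned down carefully.
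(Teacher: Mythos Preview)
Your proof is correct and matches the paper's approach exactly: the paper simply writes ``As in Section~\ref{subsec;cong-ast-31} we conclude,'' and you have correctly unpacked this as the combination of Lemma~\ref{lem;reduction-def-generating-set}, Claim~\ref{claim;cong-ast-invs}, and the preceding lemma. Your final paragraph sketching how the preceding lemma's proof should go is extra commentary rather than part of the corollary's proof; the details there (an injection of a tail of $\prod_k A^k$) are not quite how the analogue of Proposition~\ref{prop;cong31-not-30} runs---one varies a single coordinate $a_n$ above $\dom p$ to inject a cofinite piece of some $A^n$ into $B^k$---but this does not affect the corollary itself.
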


\subsection{Proof of $\cong^\ast_{\omega+2,<\omega}<_B\cong^\ast_{\omega+2,\omega}$ and the model $V(A^{\omega+1})$}\label{subsec;cong-ast-omega+2}

To force good $\cong^\ast_{\omega+2,\beta}$-invariants for $\beta\leq\omega$, as in Section~\ref{sec;cong-ast-n-k}, we first need to have a good $\cong_{\omega+1}$-invariant, $A^{\omega+1}$, which will be a set of choice functions in $\prod_n A^n$ as added above.
We first add an auxiliary tree $T$ which will guide the forcing for adding such choice functions.


\begin{defn}
Define a poset $\mathcal{T}$ as follows.
Elements of $\mathcal{T}$ are finite sets $t\subset\bigcup_k A^k$ such that the graph $(t,\in)$ is a rooted tree, with root $t\cap A^0$, which is isomorphic to $2^{<n}$ for some $n\in\omega$.
Call this $n$ the {\bf height} of $t$.
For $t,u\in \mathcal{T}$, $t$ extends $u$ if $t\supset u$.
\end{defn}

If $t\in\mathcal{T}$, $n$ is the height of $t$ and $m\leq n$, define $t\rest{m}$ to be the subtree of $t$ of height $m$.
(If $m>n$, let $t\rest m=t$).
Say that $\bar{t}$ is an {\bf enumeration} of $t$ if $\bar{t}$ is a sequence of length $|t|$, enumerating $t$, such that lower rank sets appear before higher rank sets. 

\begin{lem}\label{lem;homo-T}
Let $\psi$ be a formula, $t\in\mathcal{T}$, $v\in V(A^{m-1})$ such that $t\force \psi(A^\omega,v)$.
Then $t\rest(m+1)\force \psi(A^\omega,v)$.
\end{lem}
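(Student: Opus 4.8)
The plan is to follow the template of Lemma~\ref{lem;strong-Monro-7} and Claim~\ref{claim;Monro-limit-const}, replacing the ``finite permutation'' moves there with two ingredients: the indiscernibility lemma for $V(A^\omega)$ (Lemma~\ref{lem;indiscern-Aomega}), and the observation that any two conditions $t,u\in\mathcal T$ of the same height $n$ have the same type (as finite $\in$-structures with their $A^k$-predicates), since both are isomorphic to $2^{<n}$ with the root in $A^0$ and the $k$-th level inside $A^k$. So I would first set up notation: fix $t\in\mathcal T$ with $t\force\psi(A^\omega,v)$, let $n$ be its height, and fix an enumeration $\bar t$ of $t$ respecting rank; write $t=(t\rest(m+1))\cup s$ where $s$ consists of the nodes of $t$ at levels $>m$.

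\medskip

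Next I would show that \emph{any} condition $q$ extending $t\rest(m+1)$ is compatible with a condition forcing $\psi(A^\omega,v)$; this suffices since it shows $t\rest(m+1)\force\psi(A^\omega,v)$ (no condition below $t\rest(m+1)$ can force $\neg\psi$). Given such $q$, of height $n'\geq m+1$, I would find inside $\mathcal T$ a condition $t'\supseteq q$ which above level $m$ ``looks like'' $t$: concretely, extend $q$ by choosing, for each node $\sigma$ of $t$ at level $\ell>m$, a fresh element $a'_\sigma\in A^\ell$ lying inside the element already chosen for the parent of $\sigma$ in $q$, with all the $a'_\sigma$ distinct and chosen generically enough — this is possible because the $A^\ell$'s are infinite and the membership relations $A^{\ell}\ni a \in b\in A^{\ell+1}$ are dense (an instance of Monro's construction). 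By construction $t'\cup(t\rest(m+1))$ has, as an $\in$-structure with $A^k$-predicates, the same type \emph{over} $t\rest(m+1)$ as $t$ itself does over $t\rest(m+1)$; more precisely the sequences $s$ and $s'=\langle a'_\sigma:\ell(\sigma)>m\rangle$ have the same type over the parameters $\bar a=t\rest(m+1)\subseteq A^0\cup\dots\cup A^m$.

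\medskip

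Now the statement ``$t\force\psi(A^\omega,v)$'' is itself a statement in $V(A^\omega)$ (or rather in $V(A^{m'})$ for $m'$ large enough to see all of $t$, using Corollary~\ref{cor;limit-monro-thm-8} / Claim~\ref{claim;Monro-limit-const} to reflect the $P^{m'+1,\omega}$-forcing statement down), whose only parameters are $v\in V(A^{m-1})$, the low part $\bar a=t\rest(m+1)$, and the high part $s$. Let $\phi(A^\omega,v,\bar a,\bar s)$ abbreviate ``$(\bar a\cup\text{range of }\bar s)\force\psi(A^\omega,v)$''. Since $\phi$ holds of $\bar s$, Lemma~\ref{lem;indiscern-Aomega} (with base index $m$, so that $v\in V(A^{m-1})$ and $\bar a\subseteq A^{\leq m}$, and $\bar s,\bar s'\subseteq\bigcup_{k=m+1}^{M}A^k$ having the same type over $\bar a$) gives $\phi(A^\omega,v,\bar a,\bar s')$, i.e. $t'\cup(t\rest(m+1))\force\psi(A^\omega,v)$. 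But $t'\cup(t\rest(m+1))$ extends $q$, so $q$ is compatible with a condition forcing $\psi$, as desired. One should also handle the degenerate case $m+1\geq n$, where $t\rest(m+1)=t$ and there is nothing to prove.

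\medskip

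The main obstacle I expect is the bookkeeping in the reflection step: $t\force\psi(A^\omega,v)$ is not literally a first-order statement in $V(A^\omega)$ with the stated parameters, because $A^\omega$ is not a set-forcing extension object in the usual sense; one must pass to a level $V(A^{m'+2})$ and use Claim~\ref{claim;Monro-limit-const} to rewrite ``$t\force_{P}\psi(A^\omega,v)$'' (where $P$ is the tree-forcing $\mathcal T$ seen over an appropriate model) as a statement about $P^{m'+2,\omega}$-forcing over $V(A^{m'+2})$, to which Lemma~\ref{lem;indiscern} genuinely applies — exactly as in the proof of Lemma~\ref{lem;indiscern-Aomega}. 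Once this reflection is set up correctly, choosing $m'$ to dominate the ranks of $v$, $\bar a$, $\bar s$ and $\bar s'$, the indiscernibility input is a black box and the argument closes. The only other point requiring a little care is verifying that the density claim used to build $t'$ really is dense in $\mathcal T$ below $q$, which is immediate from the definition of $\mathcal T$ (one can always extend a tree condition isomorphic to $2^{<n'}$ one more level) together with the fact that each $A^{k+1}_a$ is a generic, hence infinite, subset of $A^k$.
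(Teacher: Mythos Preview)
Your overall strategy is exactly the paper's: to show $t\rest(m+1)\force\psi$, show every $q\leq t\rest(m+1)$ is compatible with a condition forcing $\psi$, and obtain that condition by applying Lemma~\ref{lem;indiscern-Aomega} to the forcing statement ``$t\force\psi(A^\omega,v)$''. The execution, however, is overcomplicated and has a genuine gap.

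The problem is your construction of $t'$. You want $t'\supseteq q$ while the high part $s'$ of $t'$ has the same type over $t\rest(m+1)$ as the high part $s$ of $t$. Same type forces $|s'|=|s|$, i.e.\ $t'$ has the same height $h$ as $t$; but if $q$ already has height $n'>h$, no condition of height $h$ can extend $q$. Your recipe ``for each node $\sigma$ of $t$ at level $\ell>m$ choose a fresh $a'_\sigma$'' simply ignores the nodes of $q$ at levels $>m$ when $n'>m+1$, so the result is not $\supseteq q$. (Incidentally, ``lying inside the parent'' has the $\in$-relation backwards: in $\mathcal T$ the level-$\ell$ node \emph{contains} its level-$(\ell-1)$ parent.)

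The paper sidesteps all of this by observing that nothing needs to be built. Any two conditions in $\mathcal T$ of the same height automatically have the same type. So: assume without loss of generality that $q$ has height $\geq h$ (extend it if not); then $q\rest h$ and $t$ are height-$h$ conditions that agree on levels $\leq m$, hence enumerations $\bar q$ and $\bar t$ extending a common enumeration of $t\rest(m+1)$ have the same type. Applying Lemma~\ref{lem;indiscern-Aomega} (with the levels $<m$ of $t\rest(m+1)$ absorbed into the $V(A^{m-1})$-parameter, and $\bar a$ taken to be the level-$m$ nodes, so that $\bar a\subset A^m$ as the lemma requires) to the statement ``$t\force\psi(A^\omega,v)$'' yields $q\rest h\force\psi(A^\omega,v)$, hence $q\force\psi(A^\omega,v)$.

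Finally, your reflection worry is unnecessary. The poset $\mathcal T$ is a set in $V(A^\omega)$, definable from $A^\omega$; the forcing theorem in $V(A^\omega)$ makes ``$t\force_{\mathcal T}\psi(\check A^\omega,\check v)$'' a first-order statement of $V(A^\omega)$ with parameters $A^\omega$, $v$, and the elements of $t$. Lemma~\ref{lem;indiscern-Aomega} applies to it directly; there is no need to descend to some $V(A^{m'+2})$ and invoke $P^{m'+2,\omega}$.
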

\begin{proof}
We show that any $q\in\mathcal{T}$ extending $t\rest(m+1)$ can be extended to a condition forcing $\psi(A^\omega,v)$.
Let $t'$ be an enumeration of $t\rest (m+1)$, and $\bar{t}$ an enumeration of $t$ extending $t'$.
Let $h$ be the height of $t$.
Fix some $q$ extending $t\rest(m+1)$ and assume that its height is $\geq h$.
Let $\bar{q}$ be an enumeration of $q\rest h$ extending $t'$.

By the definition of the forcing $\mathcal{T}$, the sequences $\bar{t}$ and $\bar{q}$ have the same type.
Furthermore, they agree on all elements in $\bigcup_{j\leq m}A^j$, since these are in the initial segment corresponding to $t'$.
By indiscernibility, Lemma~\ref{lem;indiscern-Aomega}, for the statement $t\force \psi(A^\omega,v)$, it follows that $q\rest h\force \psi(A^\omega,v)$ and so $q\force \psi(A^\omega,v)$.
\end{proof}

\begin{claim}\label{claim;T-no-low-rank}
$\mathcal{T}$ adds no sets of rank $<\omega$ to $V(A^\omega)$.
\end{claim}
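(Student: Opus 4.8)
The plan is to mimic the proof of Corollary~\ref{cor;limit-monro-thm-8}, replacing the role of Claim~\ref{claim;Monro-limit-const} by Lemma~\ref{lem;homo-T}. The key point is that although the poset $\mathcal{T}$ adds reals (for instance, from the internal $\in$-structure of a branch one can read off a real using a parameter from some $A^n$), it does not add any rank $<\omega$ set that is \emph{definable without high-rank parameters}, and since any set in $V(A^\omega)[T]$ which lies in $V(A^\omega)$ and is to be shown redundant is definable from $A^\omega$, finitely many elements of $A^\omega$, and a ground-model parameter, the finite set of elements of $A^\omega$ is swallowed into the base model $V(A^m)$ for $m$ large.

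First I would fix a name $\dot X\in V(A^\omega)$ for a set of rank $n+1$, so by the inductive hypothesis (which we carry, exactly as in Corollary~\ref{cor;limit-monro-thm-8}) any element it could contain has rank $\leq n$ and lies in $V(A^n)$. Write $\dot X=\set{x}{\psi(\dot A^\omega, v, w, x)}$ where $v\in V$ and $w$ is a finite subset of $A^\omega$; choose $m>n$ large enough that $w\subset V(A^{m-1})$, so $\psi$ may be regarded as having parameters only from $V(A^{m-1})$ together with $\dot A^\omega$. Now suppose $t\in\mathcal{T}$ forces $\check x\in\dot X$ for some $x$ of rank $\leq n$ in $V(A^n)$; by Lemma~\ref{lem;homo-T} already $t\rest(m+1)\force\check x\in\dot X$. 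Since any two conditions in $\mathcal{T}$ of height $m+1$ are isomorphic as $\in$-structures and agree on their intersection with $\bigcup_{j\le m}A^j$ after a suitable enumeration — indeed $t\rest(m+1)$ is determined up to such isomorphism — indiscernibility (Lemma~\ref{lem;indiscern-Aomega}) shows the truth value of ``$t\rest(m+1)\force\check x\in\dot X$'' does not depend on which height-$(m+1)$ subtree we picked, hence is decided by the empty condition. Therefore
\begin{equation*}
X=\set{x\in V(A^n)}{\mathcal{T}\force \psi^{V(\dot A^\omega)}(\dot A^\omega, v, w, \check x)},
\end{equation*}
which is a set definable in $V(A^\omega)$, so $X\in V(A^\omega)$; this completes the induction and the claim.

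The main obstacle is verifying carefully that the height-$(m+1)$ truncation $t\rest(m+1)$ is genuinely ``rigid enough'' for the indiscernibility argument: one must check that the generic filter on $\mathcal{T}$ restricted below height $m+1$ always produces, after enumeration, sequences of the \emph{same type over the relevant parameters} in the sense of Lemma~\ref{lem;indiscern-Aomega}, and in particular that all elements of $\bigcup_{j\le m}A^j$ occurring in two such truncations can be matched. This is where the precise combinatorics of $\mathcal{T}$ — that a condition is a copy of $2^{<n}$ sitting inside $\bigcup_k A^k$ with root in $A^0$ and $k$-th level in $A^k$ — is used; once this is in place the argument is formally parallel to Lemma~\ref{lem;stong-Monro-8} and Corollary~\ref{cor;limit-monro-thm-8}, and the rest is routine.
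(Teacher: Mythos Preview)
Your induction setup and the appeal to Lemma~\ref{lem;homo-T} are correct and get you almost all the way. The error is the extra step in which you claim that any two height-$(m+1)$ conditions force the same thing about $\check{x}\in\dot{X}$, so that the empty condition already decides. This is false, and Lemma~\ref{lem;indiscern-Aomega} does not justify it: that lemma only allows you to vary elements at levels \emph{strictly above} the parameter levels (the varying $\bar{x},\bar{y}$ live in $\bigcup_{k\ge n+1}A^k$ while the parameter is in $V(A^{n-1})$, with only a fixed $\bar{a}\subset A^n$). A height-$(m+1)$ condition, however, has nodes at every level $0,1,\ldots,m$, in particular at the levels where the parameters $w$ defining $\dot{X}$ sit; two such conditions are entirely contained in $\bigcup_{j\le m}A^j$, so there is nothing ``above level $m$'' to which indiscernibility could be applied. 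For a concrete counterexample, take $w=\{a\}$ with $a\in A^1$ and let $\dot{X}$ be the $\mathcal{T}$-name for the set which is $\{0\}$ if $a$ occurs at level $1$ of the generic tree and $\emptyset$ otherwise. Then height-$3$ conditions containing $a$ and height-$3$ conditions not containing $a$ force opposite statements about $0\in\dot{X}$, so the empty condition does not decide.

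The fix is simpler than what you attempted, and is what the paper does: you do not need the empty condition to decide. Once Lemma~\ref{lem;homo-T} tells you that the specific truncation $t=T\rest(m+1)$ of the generic tree already decides every statement $\check{x}\in\dot{X}$, just observe that $t$ is a finite subset of $\bigcup_k A^k$ and hence $t\in V(A^\omega)$. Then
\[
X=\set{x}{t\force\check{x}\in\dot{X}}
\]
is definable in $V(A^\omega)$ with $t$ as an additional parameter, and the induction goes through. This is parallel to Lemma~\ref{lem;stong-Monro-8}, where the finitely many support coordinates play exactly the role that $t$ plays here.
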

\begin{proof}
As usual the proof is by induction on rank.
Assume no rank $n$ sets are added, and $B$ is of rank $n+1$.
Take $m$ large enough so that the parameters defining $\dot{B}$ in $V(A^\omega)$ are in $V(A^{m-1})$.
Let $T$ be some $\mathcal{T}$-generic and $t\in\mathcal{T}$ be its subtree of height $m+1$.
Then $B$ can be defined in $V(A^\omega)$ as $B=\set{x}{t\force \check{x}\in\dot{B}}$ 
(as in the proofs in Lemma~\ref{lem;stong-Monro-8} and Corollary~\ref{cor;limit-monro-thm-8}).
\end{proof}

Let $T$ be $\mathcal{T}$-generic over $V(A^\omega)$, we now work in the model $V(A^\omega)[T]=V(T)$.
The following lemma is the heart of the matter, showing that the nodes of $T$ are sufficiently indiscernible in $V(T)$.

\begin{lem}[Indiscernibility in $V(T)$]\label{lem;cones-indisc}
Let $\psi$ be a formula, $v\in V(A^{n-1})$ and $\bar{u}=u_1,...,u_k$ distinct elements in $T$ of level $n$.
Suppose $u^i_j$ is in level $l$ of the tree and above $u_j$, for $i=0,1$ and $j=1,...,k$.
Then
\begin{equation*}
    V(T)\models \psi(T,v,\bar{u}^0)\iff \psi(T,v,\bar{u}^1).
\end{equation*}
\end{lem}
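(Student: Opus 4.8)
The statement says: for distinct level-$n$ nodes $u_1,\dots,u_k$ of the generic tree $T$, and for any choice of nodes $u^i_j$ above $u_j$ at a common level $l$ ($i=0,1$), the truth of $\psi(T,v,\bar u^0)$ and $\psi(T,v,\bar u^1)$ agree in $V(T)$, provided $v\in V(A^{n-1})$. The natural approach is to mirror the proof of Lemma~\ref{lem;homo-T}: reduce the question about $V(T)$ to a forcing statement over $V(A^\omega)$ and then apply the indiscernibility lemma in $V(A^\omega)$, Lemma~\ref{lem;indiscern-Aomega}. First I would fix a condition $t\in\mathcal{T}$ deciding $\psi(T,v,\bar u^0)$, where $t$ is large enough (height $>l$) to contain $\bar u^0$ and $\bar u^1$ together with the nodes $u_1,\dots,u_k$ and all the intermediate nodes on the relevant branches; enlarging $t$ is harmless by Lemma~\ref{lem;homo-T}. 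By a symmetry of the forcing $\mathcal{T}$ — an automorphism that swaps, within the cone above each $u_j$, the "$i=0$" sub-cone with the "$i=1$" sub-cone — we may move the decision of $\psi(T,v,\bar u^1)$ onto a condition $t'$ of the same type as $t$. The finite sequences enumerating $t$ and $t'$ (with low ranks before high ranks) have the same type over the empty tuple, hence over any $\bar a\subset A^{n-1}$, because $\mathcal{T}$ only records the $\in$-tree structure; crucially, the ambient automorphism can be taken to fix everything in $\bigcup_{j<n}A^j$, and $v\in V(A^{n-1})$, so $v$ is a legitimate "low parameter" for Lemma~\ref{lem;indiscern-Aomega}.

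Concretely, the key steps in order: (1) translate: $\psi(T,v,\bar u^0)$ holds in $V(T)$ iff some $t\in\mathcal{T}$ in the generic forces it; by Lemma~\ref{lem;homo-T} (applied with the parameter $v$ together with finitely many high-rank parameters coding the nodes) we may take $t$ of any sufficiently large height and containing $\bar u,\bar u^0,\bar u^1$. (2) Produce an enumeration $\bar t$ of $t$; let $\phi(A^\omega,v,\bar t)$ be the statement "the tree coded by $\bar t$ forces $\psi(\dot A^\omega,v,\dots)$", where the high-rank node-parameters $\bar u,\bar u^0$ are named as entries of $\bar t$. (3) Exhibit a tuple $\bar t^{\,1}$ obtained from $\bar t$ by the cone-swap, so that $\bar t$ and $\bar t^{\,1}$ have the same type over $\bar a$ for every $\bar a\subset A^{n-1}$, and $\bar t^{\,1}$ names $\bar u^1$ where $\bar t$ named $\bar u^0$. (4) Apply Lemma~\ref{lem;indiscern-Aomega} in $V(A^\omega)$ to conclude $\phi(A^\omega,v,\bar t)\iff\phi(A^\omega,v,\bar t^{\,1})$. (5) Translate back: the condition coded by $\bar t^{\,1}$ lies in the generic (after a suitable finite permutation of the $\mathcal{T}$-generic, exactly as in the last paragraph of the proof of Lemma~\ref{lem;indiscern}), so $\psi(T,v,\bar u^1)$ holds in $V(T)$.

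**The main obstacle.** The delicate point is step (3)–(5): one must check that the cone-swap really is an automorphism of $\mathcal{T}$ fixing the root and all of $\bigcup_{j<n}A^j$, and that after performing it the new condition $\bar t^{\,1}$ is genuinely generic — i.e. that the finite permutation of the underlying Monro-style generic which realizes the swapped cones can be arranged to fix $A^\omega$ as a set and to land the condition in the actual generic $T$. This is the analogue of the "$r[\bar y_m]\in G'$" manoeuvre in Lemma~\ref{lem;indiscern}, but here one is permuting whole cones of the tree rather than single nodes, so one must be careful that the type of $t$ over $\bar a$ genuinely determines which permutations are available; the hypothesis that $u_1,\dots,u_k$ are \emph{distinct} level-$n$ nodes is what guarantees the cones above them are disjoint, so the swaps at different $j$ can be performed independently. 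Once the combinatorics of this cone-swap automorphism are set up correctly, everything else is a routine adaptation of the arguments already given for Lemmas~\ref{lem;homo-T}, \ref{lem;indiscern}, and \ref{lem;indiscern-Aomega}.
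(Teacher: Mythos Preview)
Your approach is essentially the paper's, but you are overcomplicating step~(5) and the ``main obstacle'' paragraph. The crucial observation you are missing is that the cone-swap is an automorphism of the \emph{finite condition $t$ itself}, not merely of the poset $\mathcal{T}$ or of the ambient Monro generic. Since you arranged $t$ to have height $\geq l$ and to lie in the generic filter, $t$ already contains all of $\bar u$, $\bar u^0$, and $\bar u^1$; the tree-automorphism of $(t,\in)$ that sends $u_j^0$ to $u_j^1$ within each cone above $u_j$ therefore permutes the elements of $t$ while fixing everything at levels $\leq n$. Consequently your $\bar t^{\,1}$ is just a re-enumeration of the \emph{same set} $t$, and the condition it codes is $t$ again. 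The conclusion $\phi(A^\omega,v,\bar t^{\,1})$ then reads: the condition $t$ forces $\psi(\dot T,v,\bar u^1)$. Since $t$ is already in the generic, you are done --- no ``finite permutation of the $\mathcal{T}$-generic'' and no analogue of the $r[\bar y_m]\in G'$ manoeuvre from Lemma~\ref{lem;indiscern} is needed.

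Two smaller points. First, the appeal to Lemma~\ref{lem;homo-T} in step~(1) is unnecessary: any condition in the generic of height $\geq l$ automatically contains all nodes of $T$ up to level $l$, so enlarging $t$ is just passing to a stronger condition. Second, when you invoke Lemma~\ref{lem;indiscern-Aomega}, note that the enumeration should be split as the paper does: the low part $v'$ (levels $<n$) and the level-$n$ part $\bar a$ are fixed by the automorphism, and only the higher part $\bar x$ is permuted to $\bar y$; this is what lets the parameter $v\in V(A^{n-1})$ sit on the ``low'' side of the indiscernibility.
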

\begin{proof}
Assume that $\psi(T,v,\bar{u}^0)$ holds, and pick some $t\in\mathcal{T}$ of height $\geq l$, compatible with $T$, forcing this.
We show that $t\force\psi(\dot{T},v,\bar{u}^1)$.
Fix some enumeration $v'$ of the levels of $t$ below $n$, $\bar{a}$ of the n'th level of $t$, and $\bar{x}$ of the higher levels.
Working in $V(A^\omega)$, let $\phi(A^\omega,v,v',\bar{a},\bar{x})$ be the formula saying that the condition $t$ corresponding to $v',\bar{a},\bar{x}$ forces that $\psi(\dot{T},v,\bar{u}^0)$ holds, where $\bar{u}^0$ is identified as a subsequence of $\bar{x}$.

We will use the following simple fact: given a regular rooted binary tree and two nodes $p,q$ on the same level, there is an automorphism of the tree sending $p$ to $q$.
For any $j=1,...,k$, consider the tree $t$ restricted to the cone above $u_j$ as a tree with root $u_j$.
By applying the fact just mentioned, and combining these automorphisms, we get an automorphism of $t$ sending $u_j^0$ to $u_j^1$, and preserving the levels $\leq n$.
Applying this automorphism to the enumeration $v',\bar{a},\bar{x}$ of $t$, we get a different enumeration $v',\bar{a},\bar{y}$.

That we have used an automorphism of the tree $(t,\in)$ precisely ensures that $\bar{a},\bar{x}$ and $\bar{a},\bar{y}$ have the same type.
Applying Lemma~\ref{lem;indiscern-Aomega}, it follows that $\phi(A^\omega,v,v',\bar{a},\bar{y})$ holds in $V(A^\omega)$.
The indices in $\bar{x}$ which correspond to $\bar{u}^0$ correspond to $\bar{u}^1$ in $\bar{y}$.
This means that the corresponding tree to $v',\bar{a},\bar{y}$, which is $t$, forces that $\psi(\dot{T},v,\bar{u}^1)$ holds.
\end{proof}

\begin{defn}
Let $\mathcal{B}$ be the poset of all finite functions $p\colon \dom p\lto T$ with $\dom p\subset\omega$.
For $p,q\in\mathcal{B}$, $p$ extends $q$ if $\dom p\supset \dom q$ and $p(k)$ is above $q(k)$ in $T$ for any $k\in\dom q$.
For every $k\in\omega$, let $\mathcal{B}_k$ be the poset of all $p\in\mathcal{B}$ with $\dom p=\{0,...,k-1\}$.
\end{defn}
If $p\in\mathcal{B}_k$ is such that $p(0),...,p(k-1)$ are all above level $n$, define $p\rest n$ to be the condition $p(0)\rest n,...,p(k-1)\rest n$.
Otherwise $p\rest n=p$.
\begin{lem}\label{lem;homo-B-k}
Suppose $p\in\mathcal{B}_k$, $n<\omega$ such that the projections of $p(0),...,p(k-1)$ to level $n$ of the tree are distinct.
Let $v\in V(A^{n-1})$ and $\psi$ a formula such that $p\force \psi(T,v)$.
Then $p\rest n\force \psi(T,v)$.
\end{lem}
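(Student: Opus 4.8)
The plan is to follow the proof of Lemma~\ref{lem;homo-T} almost verbatim, with the indiscernibility Lemma~\ref{lem;cones-indisc} playing the role of Lemma~\ref{lem;indiscern-Aomega}. Write $u_j=p(j)\rest n$ for $j<k$; by hypothesis these are $k$ distinct nodes of $T$ at level $n$, and $p\rest n=\langle u_0,\dots,u_{k-1}\rangle$. Since ``$p\rest n\force\psi(T,v)$'' amounts to saying that the conditions forcing $\psi(T,v)$ are dense below $p\rest n$, it will be enough to show that any $q\in\mathcal B_k$ extending $p\rest n$ can be further extended to a condition forcing $\psi(T,v)$.

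Given such a $q$, I would first use that $T$ is isomorphic to the full binary tree $2^{<\omega}$ to extend $q$ so that $q(0),\dots,q(k-1)$ all lie at a single common level $l$, chosen large enough that $l$ exceeds the level of each of $p(0),\dots,p(k-1)$; note $q(j)$ lies above $u_j$ since $q$ extends $p\rest n$. Then I would extend $p$ to a condition $p^{+}\in\mathcal B_k$ with $p^{+}(j)$ above $p(j)$ at level $l$ for each $j$, so that $p^{+}\leq p$ and hence $p^{+}\force\psi(T,v)$. At this point $p^{+}$ and $q$ are both $k$-tuples of level-$l$ nodes of $T$, with $p^{+}(j)$ and $q(j)$ lying above the same distinct level-$n$ nodes $u_0,\dots,u_{k-1}$.

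The crux is to observe that ``$\langle w_0,\dots,w_{k-1}\rangle\force_{\mathcal B_k}\psi(\check T,\check v)$'' is, as a property of a $k$-tuple $\langle w_0,\dots,w_{k-1}\rangle$ of level-$l$ nodes of $T$, definable in $V(T)$ from $T$ and $v$ alone, since $\mathcal B_k$, its canonical names, and its forcing relation are all definable from $T$ and the fixed number $k$; call this property $\Phi(T,v,w_0,\dots,w_{k-1})$. Since $v\in V(A^{n-1})$, Lemma~\ref{lem;cones-indisc} applies to the formula $\Phi$ and the two tuples $\langle p^{+}(j)\rangle_{j<k}$ and $\langle q(j)\rangle_{j<k}$, which lie over the distinct level-$n$ nodes $\langle u_j\rangle_{j<k}$ at the common level $l$, and yields
\[
    V(T)\models\Phi\bigl(T,v,p^{+}(0),\dots,p^{+}(k-1)\bigr)\ \Longleftrightarrow\ \Phi\bigl(T,v,q(0),\dots,q(k-1)\bigr).
\]
The left side is exactly $p^{+}\force\psi(T,v)$, which holds, so $q\force\psi(T,v)$; as $q$ was an arbitrary extension of $p\rest n$, this gives $p\rest n\force\psi(T,v)$.

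I expect the only delicate point to be the one just used: recognizing the forcing statement ``$p\force\psi(T,v)$'' as a genuine first-order property of the node tuple $\langle p(0),\dots,p(k-1)\rangle$ over the \emph{low-rank} parameter $v\in V(A^{n-1})$, so that Lemma~\ref{lem;cones-indisc} is literally applicable; equalizing the levels of the nodes of $p$ and $q$ beforehand --- harmless in the full binary tree $T$ --- is what puts the two tuples over the common level-$n$ nodes in the shape that lemma requires. Everything else is routine bookkeeping.
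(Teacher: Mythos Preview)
Your proof is correct and follows essentially the same route as the paper's own argument: reduce to tuples at a common level $l$, and apply Lemma~\ref{lem;cones-indisc} to the forcing statement ``$\langle w_0,\dots,w_{k-1}\rangle\force\psi(T,v)$'' viewed as a formula in $V(T)$ with the low-rank parameter $v$. The only cosmetic difference is that the paper assumes without loss of generality that $p(0),\dots,p(k-1)$ are already at a common level $l$ and then shows that level-$l$ extensions of $p\rest n$ (which are pre-dense) all force $\psi$, whereas you instead extend an arbitrary $q$ and extend $p$ to $p^{+}$ so both land at level $l$; these are equivalent bookkeeping choices.
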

\begin{proof}
We may assume that $p(0),...,p(k-1)$ are all in the same level $l$ of the tree.
Let $u_j$ be the restrictions of $p(j)$ to level $n$ of the tree, for $j<k$.
We show that any condition $q$ extending $p\rest n$ such that $q(0),...,q(k-1)$ are in level $l$ of the tree forces $\psi(T,v)$.
This will finish the proof since such conditions and pre-dense below $p\rest n$.
Fix such $q\in\mathcal{B}_k$.
Let $u^0_j=p(j)$, $u^1_j=q(j)$ for $j<k$. 
By Lemma~\ref{lem;cones-indisc} for the statement $p\force \psi(T,v)$, it follows that $q\force \psi (T,v)$.
\end{proof}

\begin{cor}\label{cor;B-k-no-low-rank}
For any $k$, forcing with $\mathcal{B}_k$ adds no sets of rank $<\omega$ to $V(T)$.
\end{cor}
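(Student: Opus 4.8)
The plan is to run an induction on rank that mirrors the proof of Claim~\ref{claim;T-no-low-rank}, with the role of Lemma~\ref{lem;homo-T} now taken over by Lemma~\ref{lem;homo-B-k} and the role of the indiscernibility of tree nodes taken over by Lemma~\ref{lem;cones-indisc}. So fix $n<\omega$, assume $\mathcal{B}_k$ adds no set of rank $n$ to $V(T)$, and let $\dot B$ be a $\mathcal{B}_k$-name (an element of $V(T)$) for a set of rank $n+1$, with generic value $B$. By the inductive hypothesis $B$ is a set of elements of $V(T)$ of rank $\leq n$, and by Claim~\ref{claim;T-no-low-rank} (together with Lemma~\ref{lem;Monro-thm8} and Corollary~\ref{cor;limit-monro-thm-8}) every such element already lies in $V(A^\omega)$, indeed in $V(A^n)$. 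In $V(T)$ the name $\dot B$ is definable from $T$, a parameter $v\in V$, and a finite tuple $\bar w$ of nodes of $T$; I would choose $m<\omega$ large enough that $m>n$ and every node of $\bar w$ sits below level $m$, so that $v$, $\bar w$ and every rank-$\leq n$ set $x$ all lie in $V(A^{m-1})$, and a single parameter $v'$ coding $v,\bar w,x$ does as well.

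The heart of the argument is to show that, for a rank-$\leq n$ set $x$, whether $x\in B$ is decided by the restriction of the $\mathcal{B}_k$-generic $H$ to a bounded level of $T$ --- exactly as in Claim~\ref{claim;T-no-low-rank} the membership statements for $\dot B$ were decided by the height-$(m+1)$ part of the generic tree. Unfolding the definition of $\dot B$, the statement $p\force_{\mathcal{B}_k}\check x\in\dot B$ has the form $\psi(T,v')$ for a fixed formula $\psi$ and the parameter $v'\in V(A^{m-1})$. Let $b_0,\dots,b_{k-1}$ be the branches of $T$ read off from $H$ (pairwise distinct, by genericity) and let $l^\ast\geq m$ be the least level at which they are pairwise distinct. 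For any $p\in H$ whose coordinates lie strictly above level $l^\ast$, the level-$l^\ast$ projections of its coordinates are the distinct nodes $b_0\rest l^\ast,\dots,b_{k-1}\rest l^\ast$, and $v'\in V(A^{m-1})\subseteq V(A^{l^\ast-1})$; so Lemma~\ref{lem;homo-B-k}, applied with ``$n$'' equal to $l^\ast$, gives that already $\langle b_0\rest l^\ast,\dots,b_{k-1}\rest l^\ast\rangle$ forces $\psi(T,v')$. Thus for each such $x$ the question ``$x\in B$?'' is answered by this bounded-level condition, and then, by the indiscernibility of $T$ in $V(T)$ (Lemma~\ref{lem;cones-indisc}), all conditions of the relevant shape --- tuples of pairwise distinct nodes of $T$ at a common level above $m$, with the same relationship to $\bar w$ --- force the same statements about $\check x\in\dot B$ over the low parameters. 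Hence the answer does not depend on the generic, and
\begin{equation*}
B=\set{x}{x\text{ has rank }\leq n\text{ and every }p\in\mathcal{B}_k\text{ of the relevant shape forces }\check x\in\dot B}
\end{equation*}
is a definition of $B$ inside $V(T)$, so $B\in V(T)$, closing the induction.

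The point I expect to require the most care is the level bookkeeping in the middle step, and this is where I would follow the proof of Lemma~\ref{lem;homo-B-k} itself rather than try to shortcut it. A priori a condition of $\mathcal{B}_k$ may route two of its coordinates into the same level-$m$ cone of $T$, so one cannot simply restrict everything to level $m$; instead one passes to a sufficiently high common level (using that $V(A^{m-1})\subseteq V(A^{l-1})$ for all $l\geq m$, so that raising the working level costs nothing), separates the coordinates there, and combines the local tree automorphisms above each fixed node exactly as in Lemma~\ref{lem;cones-indisc} and Lemma~\ref{lem;homo-B-k}. The density arguments needed (that conditions with coordinates at a common level and pairwise distinct are dense, and that below them one can force any prescribed level-$m$ behaviour once $m$ is large enough) are routine, as in the earlier no-new-sets lemmas.
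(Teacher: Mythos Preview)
Your proposal is correct and follows the paper's (very terse) approach: induction on rank, with Lemma~\ref{lem;homo-B-k} playing the role Lemma~\ref{lem;homo-T} played in Claim~\ref{claim;T-no-low-rank}. You also correctly isolate the one genuine wrinkle the paper leaves implicit, namely that a condition in $\mathcal{B}_k$ need not have distinct projections at the level $m$ dictated by the parameters of $\dot B$, so one must pass to a higher level $l^\ast\geq m$ at which the generic branches have separated before invoking Lemma~\ref{lem;homo-B-k}.

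One small simplification: your final appeal to Lemma~\ref{lem;cones-indisc}, to argue that \emph{all} conditions of the relevant shape force the same thing and hence the definition of $B$ is generic-independent, is not needed. Following Claim~\ref{claim;T-no-low-rank} verbatim, once you know the fixed condition $p^\ast=\langle b_0\rest l^\ast,\dots,b_{k-1}\rest l^\ast\rangle$ decides every $\check x\in\dot B$, you can simply write $B=\set{x}{p^\ast\force\check x\in\dot B}$; since $p^\ast$ is a finite tuple of nodes of $T$, this already places $B$ in $V(T)$.
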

\begin{proof}
The proof is similar to that of Claim~\ref{claim;T-no-low-rank}, using Lemma~\ref{lem;homo-B-k} instead of Lemma~\ref{lem;homo-T}.
\end{proof}

Let $B$ be $\mathcal{B}$-generic over $V(T)$ and $A^{\omega+1}=\set{B(n)}{n\in\omega}$. Define
\begin{equation*}
    M_{\omega+1}=V(T)(A^{\omega+1})=V(A^{\omega+1}).
\end{equation*}

\begin{claim}
Any sequence $a_0,...,a_{k-1}$ of distinct members from $A^{\omega+1}$ is $\mathcal{B}_k$-generic over $V(T)$.
\end{claim}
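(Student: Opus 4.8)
The plan is to recognize $\mathcal{B}$ as the finite‑support product of countably many copies of the ``branch forcing'' $\mathbb{P}_T$ — whose conditions are the nodes of $T$, a node $u$ being stronger than $w$ when $u$ lies above $w$ in $T$, with maximum element the root of $T$ — where a condition $p\in\mathcal{B}$ carries the root on every coordinate outside $\dom p$. Under this identification $\mathcal{B}_k$ is literally the $k$‑fold product $\mathbb{P}_T^{\,k}$, and the generic filter $B$ produces for each $n<\omega$ a branch $B(n)$ through $T$, with $A^{\omega+1}=\set{B(n)}{n<\omega}$. The statement will then reduce to the elementary fact that projecting a product‑generic filter onto a block of coordinates yields a generic filter for the corresponding sub‑product.

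First I would handle the indexing. Given distinct $a_0,\dots,a_{k-1}\in A^{\omega+1}$, choose for each $j<k$ some $n_j<\omega$ with $B(n_j)=a_j$. If $n_i=n_j$ for some $i\neq j$ then $a_i=B(n_i)=B(n_j)=a_j$, contradicting distinctness; so $n_0,\dots,n_{k-1}$ are distinct. Set $J=\{n_0,\dots,n_{k-1}\}$. Splitting the index set as $\omega=J\sqcup(\omega\setminus J)$ exhibits $\mathcal{B}$ as (forcing equivalent to) the product $\mathbb{P}_T^{J}\times\prod_{n\in\omega\setminus J}^{\fin}\mathbb{P}_T$, under which $B$ corresponds to the pair $(B\rest J,\,B\rest(\omega\setminus J))$. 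Since a dense subset of $\mathbb{P}_T^{J}$ remains dense in this product, $B\rest J$ is $\mathbb{P}_T^{J}$‑generic over $V(T)$ (in fact over $V(T)[B\rest(\omega\setminus J)]$); this argument uses no form of choice, so it is unproblematic over the possibly choiceless model $V(T)$. Finally the bijection $j\mapsto n_j$ gives a forcing isomorphism $\mathbb{P}_T^{\,k}\cong\mathbb{P}_T^{J}$ sending the $k$‑tuple $(a_0,\dots,a_{k-1})=(B(n_0),\dots,B(n_{k-1}))$ to $B\rest J$, whence $(a_0,\dots,a_{k-1})$ is $\mathcal{B}_k=\mathbb{P}_T^{\,k}$‑generic over $V(T)$.

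The one point needing a little care — and the closest thing here to an obstacle — is that $A^{\omega+1}$ is presented as an \emph{unordered} set, so the witnessing indices $n_j$ are not canonical. This causes no trouble: ``$B\rest J$ is $\mathbb{P}_T^{J}$‑generic over $V(T)$ for every finite $J\subset\omega$'' is a single assertion holding in $V(T)[B]$, so any choice of distinct indices $n_j$ with $B(n_j)=a_j$ does the job, and the reindexing isomorphism transports the conclusion to the given enumeration $a_0,\dots,a_{k-1}$. Everything else is the routine bookkeeping of finite‑support products over a $\mathrm{ZF}$ ground model.
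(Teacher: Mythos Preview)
Your argument is correct and is essentially the paper's proof: fix distinct indices $n_j$ with $B(n_j)=a_j$ and observe that the coordinate projection $p\mapsto(p(n_0),\dots,p(n_{k-1}))$ from $\mathcal{B}$ to $\mathcal{B}_k$ is a complete projection (on the dense open set of conditions whose domain contains $\{n_0,\dots,n_{k-1}\}$). Your product reformulation, with the root playing the role of the trivial condition on unused coordinates, is just an equivalent way of packaging this projection.
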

\begin{proof}
Working in $V(T)[B]$, fix $i_0,...,i_{k-1}$ such that $B(i_j)=a_j$.
The claim follows since the map from $\mathcal{B}$ to $\mathcal{B}_k$ sending $p$ to $p(i_0),...,p(i_{k-1})$ is a complete projection (on a dense open set).
\end{proof}

\begin{prop}
$V(A^{\omega+1})$ and $V(A^\omega)$ have the same sets of rank $<\omega$.
\end{prop}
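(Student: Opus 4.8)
The plan is to mimic the now-familiar pattern: present $V(A^{\omega+1})$ as a forcing extension of $V(A^\omega)$, show the relevant forcing adds no sets of rank $<\omega$, and then invoke the already-established fact that $V(A^\omega)$ and $V(A^n)$ agree on rank-$n$ sets (Corollary~\ref{cor;limit-monro-thm-8}). Concretely, recall that $M_{\omega+1}=V(A^{\omega+1})$ is obtained from $V(A^\omega)$ by first forcing with $\mathcal{T}$ to add the tree $T$, and then forcing with $\mathcal{B}$ over $V(T)$ to add the branches $B$ with $A^{\omega+1}=\set{B(n)}{n\in\omega}$. By Claim~\ref{claim;T-no-low-rank}, the first step $\mathcal{T}$ adds no sets of rank $<\omega$ to $V(A^\omega)$, so it suffices to prove that forcing with $\mathcal{B}$ over $V(T)$ adds no sets of rank $<\omega$.

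First I would prove the analogue of Lemma~\ref{lem;homo-B-k} for the full poset $\mathcal{B}$ rather than the finite pieces $\mathcal{B}_k$: if $p\in\mathcal{B}$, $n<\omega$, the projections of the nodes $p(k)$ ($k\in\dom p$) to level $n$ of $T$ are distinct, $v\in V(A^{n-1})$, and $p\force\psi(T,v)$, then $p\rest n\force\psi(T,v)$. This reduces immediately to $\mathcal{B}_k$ (take $k=|\dom p|$ and reindex), so it follows directly from Lemma~\ref{lem;homo-B-k} and Corollary~\ref{cor;B-k-no-low-rank}; in fact Corollary~\ref{cor;B-k-no-low-rank} already contains the needed genericity argument for each $\mathcal{B}_k$, and since any condition in $\mathcal{B}$ lies in some $\mathcal{B}_k$ after reindexing, and a name for a set of rank $<\omega$ is decided level-by-level by the argument in Claim~\ref{claim;T-no-low-rank}, one gets: forcing with $\mathcal{B}$ over $V(T)$ adds no sets of rank $<\omega$. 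The one subtlety is the hypothesis that the level-$n$ projections of the $p(k)$ be distinct — one must note that the conditions $p$ for which this holds (for a given $n$) are dense in $\mathcal{B}$, since any finitely many branches of $T$ eventually separate, so one can always extend to such a $p$ before reading off membership in $\dot B$.

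Putting it together: let $X\in V(A^{\omega+1})$ have rank $n$. Argue by induction on $n$ (as in Corollary~\ref{cor;limit-monro-thm-8}) that $X\in V(A^\omega)$, hence by Corollary~\ref{cor;limit-monro-thm-8} that $X\in V(A^n)$. The inductive step: $X$ is of rank $n+1$, so by the inductive hypothesis $X\subseteq V(A^\omega)$; write $X=\set{x}{\psi(A^{\omega+1},v,w,x)}$ with $v\in V$ and $w$ a finite subset of $A^{\omega+1}$. Since $A^{\omega+1}$ is $\mathcal{B}$-generic over $V(T)$ and $A^\omega$, $T$ are definable from $A^{\omega+1}$, fix a $\mathcal{B}$-name for $X$ over $V(T)$; by the previous paragraph the $\mathcal{B}$-forcing adds no rank $n+1$ sets over $V(T)$, so $X\in V(T)$; then $X\in V(A^\omega)$ by Claim~\ref{claim;T-no-low-rank}, and $X\in V(A^n)$ by Corollary~\ref{cor;limit-monro-thm-8}. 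The reverse inclusion $V(A^\omega)\cap\mathcal{P}^n(\mathrm{On})\subset V(A^{\omega+1})\cap\mathcal{P}^n(\mathrm{On})$ is trivial since $V(A^\omega)\subset V(A^{\omega+1})$.

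The main obstacle is not any single hard estimate but the bookkeeping around the distinctness hypothesis in Lemma~\ref{lem;homo-B-k}: one must be careful that when reading off whether $\check x\in\dot B$ for $x$ of rank $\leq n$, one works with a condition whose branches already separate at level $n$, so that the homogeneity lemma applies and the empty-condition-style argument goes through uniformly in $x$. Once that density observation is in place, the proof is entirely parallel to Claim~\ref{claim;T-no-low-rank} and Corollary~\ref{cor;limit-monro-thm-8}, and the conclusion $V(A^{\omega+1})\cap\mathcal{P}^n(\mathrm{On})=V(A^\omega)\cap\mathcal{P}^n(\mathrm{On})$ for all $n<\omega$ follows. (This is the $\omega+1$ analogue of Lemma~\ref{lem;Monro-thm8}, and it is exactly what is needed to run the irreducibility argument $\cong^\ast_{\omega+2,<\omega}<_B\cong^\ast_{\omega+2,\omega}$ in the style of Section~\ref{sec;cong-ast-n-k}.)
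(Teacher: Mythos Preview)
Your approach contains a genuine gap: the claim that forcing with the full poset $\mathcal{B}$ over $V(T)$ adds no sets of rank $<\omega$ is false. For a concrete counterexample, consider the real $r=\set{m\in\omega}{B(m)(1)=B(0)(1)}$, where $B(m)(1)$ denotes the level-$1$ node of the branch $B(m)$. Since level $1$ of $T$ has exactly two nodes and the branches $B(m)$ are added independently, this $r$ is essentially a Cohen real over $V(T)$: there are conditions forcing $1\in\dot r$ (take $p(0)=p(1)$ at level $1$) and conditions forcing $1\notin\dot r$ (take $p(0)\neq p(1)$ at level $1$), so $r\notin V(T)$. Your density claim fails for the same reason: once $p(0)$ and $p(1)$ share a level-$n$ projection, no extension of $p$ can separate them at level $n$, so for a \emph{fixed} $n$ the conditions with distinct level-$n$ projections are not dense in $\mathcal{B}$. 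The branches do eventually separate, but the level at which they do depends on the condition and cannot be bounded uniformly in the coordinate $m$.

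The point you are missing is that $V(A^{\omega+1})$ is a \emph{proper} submodel of $V(T)[B]$: it is generated by the unordered set $A^{\omega+1}=\set{B(n)}{n\in\omega}$, not by the enumeration $B$. The real $r$ above lives in $V(T)[B]$ but not in $V(A^{\omega+1})$, since defining it requires the indexing. The paper's proof exploits exactly this distinction: by Lemma~\ref{lem;symmetry} (applied to the finite-support product structure of $\mathcal{B}$), any $X\in V(A^{\omega+1})$ with $X\subset V(T)$ already lies in $V(T)[a_1,\dots,a_k]$ for some finitely many $a_1,\dots,a_k\in A^{\omega+1}$; by the claim immediately preceding the proposition these form a $\mathcal{B}_k$-generic sequence, and then Corollary~\ref{cor;B-k-no-low-rank} gives $X\in V(T)$. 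The passage through finitely many branches via the symmetry lemma is not optional bookkeeping --- it is precisely what makes the argument go through.
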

\begin{proof}
By Lemma~\ref{lem;symmetry} any set $X\in V(A^{\omega+1})$ which is a subset of $V(T)$ is in $V(T)[a_1,...,a_k]$ for finitely many $a_1,...,a_k\in A^{\omega+1}$.
By the claim above, $V(T)[a_1,...,a_k]$ is a $\mathcal{B}_k$-generic extension of $V(T)$, thus agrees with $V(T)$ on rank $<\omega$ elements, by Corollary~\ref{cor;B-k-no-low-rank}.
An inductive argument as in Lemma~\ref{lem;stong-Monro-8} shows that $V(A^{\omega+1})$ and $V(T)$ have the same sets of rank $<\omega$. 
This finishes the proof by Claim~\ref{claim;T-no-low-rank}.
\end{proof}

\begin{cor}
Suppose $X\in V(A^{\omega+1})$ is of rank $\omega$. 
Then there are finitely many $a_1,...,a_k\in A^{\omega+1}$ such that $X\in V(T)[a_1,...,a_k]$.
In particular, $V(X)\subsetneq V(A^{\omega+1})$.
\end{cor}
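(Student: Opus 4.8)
The plan is to use the symmetry lemma, Lemma~\ref{lem;symmetry}, to pin $X$ down inside a finite sub-extension of $V(T)$, and then a genericity argument to see that this sub-extension is strictly smaller than $V(A^{\omega+1})$. First I would identify $\mathcal{B}$ as the finite support $\omega$-product of the poset $Q=(T,\geq_T)\in V(T)$ (nodes of $T$, a stronger condition being a higher node), with $A^{\omega+1}=\set{B(n)}{n\in\omega}$ playing the role of the set $\set{G(i)}{i\in I}$ of Lemma~\ref{lem;symmetry} over the base model $V(T)$. Next, since $X$ has rank $\omega$, every element of $X$ has rank $<\omega$; as $X\in V(A^{\omega+1})$, the Proposition just proved (together with $V(A^\omega)\subseteq V(A^\omega)[T]=V(T)$) shows that every element of $X$ lies in $V(T)$, i.e.\ $X\subseteq V(T)$. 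Then Lemma~\ref{lem;symmetry} gives a finite support $\{a_1,\dots,a_k\}\subseteq A^{\omega+1}$ with $X\in V(T)[a_1,\dots,a_k]$, which is the first assertion.

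For the ``in particular'' clause I would first note that $V(X)\subseteq V(T)[a_1,\dots,a_k]$, since $V(T)[a_1,\dots,a_k]$ is a transitive model of ZF containing $V$ and $X$, and also $V(T)[a_1,\dots,a_k]\subseteq V(A^{\omega+1})=V(T)(A^{\omega+1})$. So it is enough to check that $A^{\omega+1}\notin V(T)[a_1,\dots,a_k]$. As $A^{\omega+1}$ is infinite, pick $a_{k+1}\in A^{\omega+1}\setminus\{a_1,\dots,a_k\}$; by the Claim above, $(a_1,\dots,a_k,a_{k+1})$ is $\mathcal{B}_{k+1}$-generic over $V(T)$, and splitting off the last coordinate gives $\mathcal{B}_{k+1}\cong\mathcal{B}_k\times Q$, so $a_{k+1}$ is $Q$-generic over $V(T)[a_1,\dots,a_k]$. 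Since $T\cong 2^{<\omega}$, the forcing $Q$ is atomless and its generic, a branch through $T$, does not lie in the ground model, so $a_{k+1}\notin V(T)[a_1,\dots,a_k]$; being transitive, $V(T)[a_1,\dots,a_k]$ then cannot contain the set $A^{\omega+1}$. Hence $V(X)\subseteq V(T)[a_1,\dots,a_k]\subsetneq V(A^{\omega+1})$, and combined with $V(X)\subseteq V(A^{\omega+1})$ (which holds since $X\in V(A^{\omega+1})$) this yields $V(X)\subsetneq V(A^{\omega+1})$.

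I expect the only subtle point to be the bookkeeping needed to invoke Lemma~\ref{lem;symmetry}: it has to be run over $V(T)$ rather than the ground model $V$, and one must recognize $\mathcal{B}$ as exactly the finite support product to which it applies, with $A^{\omega+1}$ as the associated set of generics. Once this is set up, the remainder is routine manipulation of minimal models and the product lemma for forcing.
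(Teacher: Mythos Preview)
Your proof is correct and follows the paper's approach exactly for the first assertion: use the proposition that $V(A^{\omega+1})$ and $V(A^\omega)$ agree on sets of rank $<\omega$ to get $X\subseteq V(T)$, then apply Lemma~\ref{lem;symmetry} over $V(T)$ with $\mathcal{B}$ as the finite-support $\omega$-product. For the ``in particular'' clause the paper gives no argument at all, leaving it implicit; your genericity argument (splitting off a fresh coordinate via $\mathcal{B}_{k+1}\cong\mathcal{B}_k\times Q$ and observing that $Q$, being forcing along the atomless tree $T\cong 2^{<\omega}$, adds a branch not in the ground model) is a clean and correct way to fill this in.
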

\begin{proof}
By the proposition above a set of rank $\omega$ is contained in $V(T)$, so the result follows from Lemma~\ref{lem;symmetry}.
\end{proof}

\begin{cor}
$\mathrm{KWP}^{\omega}$ fails in $V(A^{\omega+1})$.
\end{cor}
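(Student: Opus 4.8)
The plan is to run the coding argument behind Observation~\ref{obs;KW-implies-low-generator} one stage past the finite case, feeding $\mathrm{KWP}^\omega$ the invariant $A^{\omega+1}$ and using the preceding corollary — that no set of rank $\omega$ generates $V(A^{\omega+1})$ — as the irreducibility input. Suppose toward a contradiction that $\mathrm{KWP}^\omega$ holds in $M_{\omega+1}=V(A^{\omega+1})$, and apply it to the set $A^{\omega+1}$ itself: inside $M_{\omega+1}$ there are an ordinal $\eta$ and an injection $f\colon A^{\omega+1}\lto\mathcal{P}^\omega(\eta)$. The crucial observation is a rank count. Since $\mathcal{P}^\omega(\eta)=\bigcup_{n<\omega}\mathcal{P}^n(\eta)$, every value $f(X)$ is a set of finite rank; and each $X\in A^{\omega+1}$ is a choice function in $\prod_n A^n$, so every $x\in X$ has finite rank as well. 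Hence, exactly as in the proof of Observation~\ref{obs;KW-implies-low-generator}, the set
\[
C=\set{\langle x,f(X)\rangle}{X\in A^{\omega+1},\ x\in X}\in M_{\omega+1}
\]
has all of its elements of finite rank, so $C$ is a set of rank $\omega$.

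Next I would check that $C$ still generates the model. Since $f$ is injective, for each $y$ in its range $\set{x}{\langle x,y\rangle\in C}=f^{-1}(y)$, and since every member of $A^{\omega+1}$ is a nonempty choice function, letting $y$ range over $\mathrm{ran}(f)$ recovers every element of $A^{\omega+1}$; thus $A^{\omega+1}$ is $\mathrm{ZF}$-definable from $C$, so $A^{\omega+1}\in V(C)$ and $V(A^{\omega+1})\subseteq V(C)$. Conversely $C$ is obtained from $A^{\omega+1}$ and $f\in M_{\omega+1}$ by a $\mathrm{ZF}$ operation, so $C\in V(A^{\omega+1})$ and $V(C)\subseteq V(A^{\omega+1})$; hence $V(C)=V(A^{\omega+1})$. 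But $C$ is a rank $\omega$ set lying in $V(A^{\omega+1})$, so the preceding corollary gives $V(C)\subsetneq V(A^{\omega+1})$, a contradiction. Therefore $\mathrm{KWP}^\omega$ fails in $V(A^{\omega+1})$.

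I do not expect a serious obstacle here: no new forcing enters and the argument is the finite-case coding verbatim. The only point that needs care is the rank bookkeeping at the limit — that $C$ lands exactly at rank $\omega$, which is what forces us to apply $\mathrm{KWP}^\omega$ to $A^{\omega+1}$ rather than to a higher-rank set, and which relies on every element of $\mathcal{P}^\omega(\eta)$, and of a choice function in $\prod_n A^n$, having finite rank.
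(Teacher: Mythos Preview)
Your proof is correct and is essentially the paper's approach: the paper's proof is the one-line remark that the result follows from Observation~\ref{obs;KW-implies-low-generator} together with the previous corollary, and you have simply spelled out the straightforward adaptation of that observation to the limit case $\omega$. The only extra content you supply is the (easy) verification that the coding set $C$ lands at rank $\omega$, which is exactly the point the paper leaves implicit.
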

\begin{proof}
The proof follows from Observation~\ref{obs;KW-implies-low-generator} and the previous corollary.
\end{proof}
The following lemma is the $\omega+1$ stage analogue of Lemma~\ref{lem;Monro-lem6}, which was the heart of Monro's arguments.
That is, it shows that the construction can be now carried through stages $\omega+2,\omega+3,...$
\begin{lem}\label{lem;Monro-lem6-Aomega+1}
Let $\psi$ be some formula, $x\in V(A^{n-1})$, $n<\omega$.
Assume $\{s_1,...,s_m\}$ are pairwise distinct members of $A^{\omega+1}$ and $\psi(A^{\omega+1},x,s_1,...,s_m)$ hold in $V(A^{\omega+1})$.
Then there are pairwise distinct $u_1,...,u_m$ in $T$ such that for any $t_1,...,t_m$ from $A^{\omega+1}$, if $u_i\in t_i$ then $\psi(A^{\omega+1},x,t_1,...,t_m)$ holds.
\end{lem}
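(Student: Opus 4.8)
The plan is to reduce this statement to indiscernibility in $V(T)$ (Lemma~\ref{lem;cones-indisc}), via the now-familiar pattern: take a condition forcing the statement, project it down using a homogeneity lemma, and then read off the nodes $u_i$ from the projected condition. Concretely, fix a $\mathcal{B}$-name for $A^{\omega+1}$ and a condition $p\in\mathcal{B}$, compatible with the generic $B$, forcing $\psi(\dot{A}^{\omega+1},x,\dot{s}_1,\dots,\dot{s}_m)$, where $\dot s_1,\dots,\dot s_m$ are names for the distinct members $s_1,\dots,s_m\in A^{\omega+1}$. Since $A^{\omega+1}=\set{B(n)}{n\in\omega}$, we may assume $p\in\mathcal{B}_k$ for some $k\geq m$ and that there are indices $i_1,\dots,i_m$ with $\dot s_j$ forced to equal $B(i_j)$; so $s_j$ is the generic value $p(i_j)$ extended generically. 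By genericity we may also assume $p(i_1),\dots,p(i_m)$ are all distinct nodes of $T$.

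First I would choose $n<\omega$ large enough that $x\in V(A^{n-1})$ and that the projections of $p(i_1),\dots,p(i_m)$ (in fact of all the $p(i)$, $i\in\dom p$) to level $n$ of $T$ are pairwise distinct — this is possible since the $p(i)$ are finitely many distinct nodes. By Lemma~\ref{lem;homo-B-k}, $p\rest n$ already forces $\psi(\dot A^{\omega+1},x,\dot s_1,\dots,\dot s_m)$. Now set $u_j = p(i_j)$ (or, if one wants $u_j$ to literally be the nodes appearing in $p\rest n$, set $u_j = p(i_j)\rest n$ — but taking the original $p(i_j)$ is cleaner, as $p(i_j)\in t_j$ is the condition we want below). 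The $u_1,\dots,u_m$ are pairwise distinct members of $T$. Claim: for any $t_1,\dots,t_m\in A^{\omega+1}$ with $u_j\in t_j$ (i.e. $t_j$ is a node of $T$ lying above $u_j$), $\psi(A^{\omega+1},x,t_1,\dots,t_m)$ holds in $V(A^{\omega+1})$.

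To verify the claim: given such $t_1,\dots,t_m$, pick indices $i_1',\dots,i_m'$ with $B(i_j')=t_j$, and pick a condition $q\in\mathcal{B}$, compatible with $B$, with $q(i_j')=t_j$ (possible since $t_j\in B\cap T$ above $u_j$) and $q$ extending $p\rest n$ on the coordinates $i_1,\dots,i_m$ in the sense that $q(i_j')$ lies above $u_j=p(i_j)$. Enlarging $q$, arrange that $q$ and $p$ have domains containing a common set of indices and that on each relevant coordinate the node of $q$ sits in the cone above the corresponding node of $p\rest n$; the distinctness at level $n$ is preserved. Applying Lemma~\ref{lem;cones-indisc} to the statement ``$q$ forces $\psi(\dot T,\dots)$'' — with $\bar u$ the level-$n$ projections and the pairs of higher nodes being those coming from $p$ versus from $q$ — we transfer the fact that $p\rest n$ forces $\psi$ to the conclusion that $q$ forces $\psi(\dot A^{\omega+1},x,\dot t_1,\dots,\dot t_m)$. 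Since $q$ is compatible with $B$, $\psi(A^{\omega+1},x,t_1,\dots,t_m)$ holds in $V(B)=V(A^{\omega+1})$.

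The main obstacle I anticipate is the bookkeeping in the last paragraph: matching up the coordinates $i_j$ and $i_j'$, and presenting the passage from $p$ to $q$ so that it is genuinely an instance of Lemma~\ref{lem;cones-indisc} (which is phrased about nodes $\bar u^0,\bar u^1$ above common level-$n$ nodes $\bar u$, inside $V(T)$, rather than about $\mathcal{B}$-conditions). The cleanest route is probably to first absorb the $\mathcal{B}_k$-forcing into the argument exactly as in Corollary~\ref{cor;B-k-no-low-rank} — i.e. work with the statement ``$P^{...}$ or $\mathcal{B}_k$ forces $\psi$'' as a formula in $V(T)$ about the finitely many nodes involved — and then apply Lemma~\ref{lem;cones-indisc} to that formula directly, so that the only moving parts are tree nodes and no genericity over $V(T)$ needs to be re-examined. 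A secondary subtlety: one must make sure that $t_j\in A^{\omega+1}$ automatically means $t_j$ is a node of $T$ on some level, so that ``above $u_j$'' is meaningful; this is immediate since $A^{\omega+1}=\set{B(n)}{n}$ and each $B(n)\in T$.
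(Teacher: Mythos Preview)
Your route is genuinely different from the paper's and has a real gap. The paper does not invoke Lemma~\ref{lem;cones-indisc} or Lemma~\ref{lem;homo-B-k} here at all; it uses only the symmetry of the finite-support product $\mathcal{B}$ under permutations of $\omega$. Concretely: pick $p$ in the generic filter forcing $\psi^{V(\dot A^{\omega+1})}(\dot A^{\omega+1},\check x,\dot B(i_1),\ldots,\dot B(i_m))$ and set $u_j=p(i_j)$. If some $t_j=B(e_j)$ with $u_j\in t_j$ witnessed $\neg\psi$, take $q$ in the filter forcing $\neg\psi(\ldots,\dot B(e_j),\ldots)$, apply the permutation $\pi$ of $\omega$ swapping $i_j\leftrightarrow e_j$ (which fixes $\dot A^{\omega+1}$), and note $\pi q$ is compatible with $p$ since $u_j$ and $q(e_j)$ both lie on the branch $t_j$. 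That is the whole proof; no tree indiscernibility enters.

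The gap in your argument is precisely the change of indices. Lemma~\ref{lem;cones-indisc} lets you replace a tuple of tree nodes by another tuple above the same level-$n$ stems, but the formula to which it is applied stays fixed. Your forcing statement ``$p$ forces $\psi^{V(\dot A^{\omega+1})}(\dot A^{\omega+1},\check x,\dot B(i_1),\ldots,\dot B(i_m))$'' carries the natural numbers $i_1,\ldots,i_m$ as parameters from $V$; moving tree nodes inside cones cannot move those indices. So at best you conclude that many $\mathcal{B}_k$-conditions force $\psi(\ldots,\dot B(i_1),\ldots,\dot B(i_m))$, which only constrains the branches $s_j=B(i_j)$, never $t_j=B(i_j')$ for $i_j'\neq i_j$. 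Your ``matching up coordinates'' step is where the argument breaks: to pass from the indices $i_j$ to $i_j'$ you need exactly the permutation-of-$\omega$ symmetry the paper uses, and once you have that, the detour through Lemma~\ref{lem;homo-B-k} and Lemma~\ref{lem;cones-indisc} is unnecessary. (A minor type issue compounds the confusion: members of $A^{\omega+1}$ are branches through $T$, i.e.\ subsets of $T$, not nodes; so ``$q(i_j')=t_j$'' and ``each $B(n)\in T$'' do not type-check, and ``$u_j\in t_j$'' means the node $u_j$ lies on the branch $t_j$.)
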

\begin{proof}
Working in $V(T)[B]$, 
fix $i_1,...,i_m$ such that $s_j=B(i_j)$.
Let $p\in B$ such that
\begin{equation*}
 p\force \psi^{V(\dot{A}^{\omega+1})}(\dot{A}^{\omega+1},\check{x},\dot{B}(i_1),...,\dot{B}(i_m)).   
\end{equation*}
Let $u_j=p(i_j)$ for $j=1,...,m$, which we may assume are in the same level of $T$.
Assume towards a contradiction that $t_1,...,t_m$ are in $A^{\omega+1}$, $u_i\in t_i$, but $\psi(A^{\omega+1},x,t_1,...,t_m)$ fails.
Fix $e_1,...,e_m$ such that $t_j=B(e_j)$ and a condition $q\in B$ such that
\begin{equation*}
    q\force\neg \psi^{V(\dot{A}^{\omega+1})}(\dot{A}^{\omega+1},\check{x},\dot{B}(e_1),...,\dot{B}(e_m))
\end{equation*}
For notational simplicity, assume that $\{i_1,...,i_m\}$ and $\{e_1,...,e_m\}$ are disjoint.
Let $\pi$ be a finite support permutation of $\omega$ sending $e_j$ to $i_j$ and $i_j$ outside the domain of $p$, for each $j=1,...,m$.
$\pi$ generates a permutation of $\mathcal{B}$ which fixes $\dot{A}^{\omega+1}$.
Applying $\pi$ to the statement above,
\begin{equation*}
 \pi q\force \neg\psi^{V(\dot{A}^{\omega+1})}(\dot{A}^{\omega+1},\check{x},\dot{B}(i_1),...,\dot{B}(i_m)).   
\end{equation*}
However, $\pi q$ is compatible with $p$, a contradiction.
\end{proof}
Let $P$ be the poset of finite partial functions from $A^{\omega+1}$ to $\{0,1\}$, to add a generic subset of $A^{\omega+1}$.

\begin{claim}
$P$ adds no sets of rank $\omega$ to $V(A^{\omega+1})$.
\end{claim}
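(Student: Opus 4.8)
The claim asserts that the poset $P$ of finite partial functions from $A^{\omega+1}$ to $\{0,1\}$, adding a generic subset of $A^{\omega+1}$, adds no sets of rank $\omega$ over $V(A^{\omega+1})$. The plan is to follow the pattern established throughout the paper: prove a ``restriction'' or homogeneity lemma saying that conditions in $P$ which agree on a finite support cannot force conflicting statements about low-rank sets, and then run the usual induction on rank to conclude that any rank-$<\omega$ set in the generic extension is already in the ground model.

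First I would prove the analogue of Lemma~\ref{lem;strong-Monro-7}/Lemma~\ref{lem;homo-B-k} in this setting: if $\psi$ is a formula, $v\in V(A^{n-1})$ for some $n<\omega$, $p\in P$, and $p\force\psi(A^{\omega+1},v)$, then $p\rest\{s_1,\dots,s_m\}\force\psi(A^{\omega+1},v)$, where $s_1,\dots,s_m$ enumerate $\dom p$ (viewed as the relevant members of $A^{\omega+1}$). The argument mirrors the earlier ones: given any $q\leq p\rest\{s_1,\dots,s_m\}$, I want to extend $q$ to a condition forcing $\psi(A^{\omega+1},v)$. Apply Lemma~\ref{lem;Monro-lem6-Aomega+1} to the statement ``$p$ forces $\psi$'' (coded as a statement about $A^{\omega+1}$, $v$, and the finitely many parameters $s_1,\dots,s_m$): this produces pairwise distinct nodes $u_1,\dots,u_m\in T$ such that whenever $t_i\in A^{\omega+1}$ with $u_i\in t_i$, the corresponding condition $p[t_1,\dots,t_m]$ (replacing $s_i$ by $t_i$ in $p$) forces $\psi(A^{\omega+1},v)$. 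Now choose $t_1,\dots,t_m\in A^{\omega+1}$ lying in the cones above $u_1,\dots,u_m$ respectively and disjoint from $\dom q$; since $A^{\omega+1}=\set{B(n)}{n\in\omega}$ is infinite and the relevant genericity guarantees members of $A^{\omega+1}$ in every cone of $T$, this is possible. Then $q$ and $p[t_1,\dots,t_m]$ are compatible (they have disjoint nonsupport coordinates), and their common extension forces $\psi(A^{\omega+1},v)$, as desired.

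With this homogeneity lemma in hand, the claim follows by the now-standard induction on rank, exactly as in Lemma~\ref{lem;stong-Monro-8}, Corollary~\ref{cor;limit-monro-thm-8}, and Corollary~\ref{cor;B-k-no-low-rank}. Suppose inductively that $P$ adds no sets of rank $n$ over $V(A^{\omega+1})$, and let $\dot X$ name a set of rank $n+1$; by the inductive hypothesis $X\subset V(A^{\omega+1})$, indeed $X\subset V(A^{n})$ after absorbing parameters. Write $\dot X$ as defined by a formula $\varphi(\dot X,A^{\omega+1},s_1,\dots,s_m,v)$ with $s_1,\dots,s_m\in A^{\omega+1}$ and $v\in V(A^{n-1})$ for suitable $n$. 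If $p\force\check x\in\dot X$ for some rank-$n$ set $x$, then the statement $\check x\in\dot X$ only involves $A^{\omega+1}$, the parameters $s_1,\dots,s_m$, $x$, and $v$, all of which sit below the relevant level, so by the homogeneity lemma $p\rest\{s_1,\dots,s_m\}$ already forces $\check x\in\dot X$. Hence any condition whose support includes $\{s_1,\dots,s_m\}$ decides membership of all rank-$n$ sets in $\dot X$ the same way, so $X$ is definable in $V(A^{\omega+1})$ (as $\set{x}{p_0\force\check x\in\dot X}$ for a fixed such $p_0$) and thus $X\in V(A^{\omega+1})$.

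The main obstacle is the homogeneity lemma, and within it the key point is the interaction with Lemma~\ref{lem;Monro-lem6-Aomega+1}: one must be careful that the nodes $u_1,\dots,u_m$ produced there genuinely allow one to find representatives $t_1,\dots,t_m\in A^{\omega+1}$ above them that avoid the finitely many coordinates used by $q$, and that the permutation/compatibility argument goes through when the supports of $p$ and $q$ overlap only on $\{s_1,\dots,s_m\}$ (the case of overlap on additional coordinates is handled by first extending $q$ and renaming, exactly as in the proof of Lemma~\ref{lem;strong-Monro-7}). Everything else is a routine adaptation of arguments already carried out in Sections~\ref{sec;Monro-models} and~\ref{subsec;cong-ast-omega+2}.
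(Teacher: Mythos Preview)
Your approach is exactly the paper's: reproduce the proof of Lemma~\ref{lem;strong-Monro-7} and then Lemma~\ref{lem;stong-Monro-8}, substituting Lemma~\ref{lem;Monro-lem6-Aomega+1} for Monro's Lemma~\ref{lem;Monro-lem6}. The inductive argument on rank in your second paragraph is the right one.

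There is, however, a mis-statement in your homogeneity lemma. You write the conclusion as $p\rest\{s_1,\dots,s_m\}\force\psi$ where $s_1,\dots,s_m$ \emph{enumerate $\dom p$}; but then $p\rest\{s_1,\dots,s_m\}=p$ and the statement is vacuous. What you need (and what you actually use in the second paragraph) is the analogue of Lemma~\ref{lem;strong-Monro-7}: if $\psi$ involves $A^{\omega+1}$-parameters $r_1,\dots,r_m$ together with a low-rank parameter $v$, and $p\force\psi(A^{\omega+1},r_1,\dots,r_m,v)$, then $p\rest\{r_1,\dots,r_m\}$ forces the same. In the proof, the elements to be moved via Lemma~\ref{lem;Monro-lem6-Aomega+1} are the members of $\dom p\setminus\{r_1,\dots,r_m\}$, while the $r_i$ stay fixed (one applies Lemma~\ref{lem;Monro-lem6-Aomega+1} to the full list $r_1,\dots,r_m,s_1,\dots,s_k$; since the nodes $u_i$ produced there satisfy $u_i\in r_i$, one may take $t_i=r_i$ for $i\leq m$ and pick fresh $t_{m+j}$ above $u_{m+j}$ avoiding $\dom q$). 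With this correction your sketch goes through and matches the paper.
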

\begin{proof}
The proof follows the same arguments as in Lemma~\ref{lem;stong-Monro-8} (see the proofs in section~\ref{subsec;Monro-posets-tech}), using Lemma~\ref{lem;Monro-lem6-Aomega+1} above instead of Lemma~\ref{lem;Monro-lem6}.
\end{proof}

Let $X$ be $P$-generic over $V(A^{\omega+1})$.
Define 
\begin{equation*}
    A=\set{X\Delta \bar{a}}{\bar{a}\subset A^{\omega+1}\textrm{ is finite}}.
\end{equation*}
Given any $Y\in A$, the map $Z\mapsto Z\Delta Y$ is injective, sending members of $A$ to finite sets whose elements are of rank $\omega$.
Thus $A$ is a $\cong^\ast_{\omega+2,\omega}$-invariant.

\begin{lem}\label{lem;omega+2,fin-not-omega+2,omega}
Suppose $B\in V(A)$ is a set of rank $\omega+2$, definable from $A$ and parameters from $V$.
Assume further that there is an injective map from $B$ into $\mathcal{P}^{<\omega}(\mathrm{On})$.
Then $V(B)\subsetneq V(A)$.
\end{lem}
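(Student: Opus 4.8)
The plan is to mimic the argument of Proposition~\ref{prop;cong31-not-30} (and Proposition~\ref{prop;cong-ast-n-k}), now one level higher, using Lemma~\ref{lem;Monro-lem6-Aomega+1} as the key indiscernibility input in place of Lemma~\ref{lem;Monro-lem6}. Assume towards a contradiction that $V(B)=V(A)$. Since $X\in V(A)=V(B)$, there is a formula $\psi$, finitely many parameters $U_1,\dots,U_k\in B$, and a parameter $w$ which (since $P$ adds no sets of rank $\omega$, hence no new reals and no new low-rank sets) lies in $V(A^{\omega+1})$, in fact in $V(A^{m-1})$ for some $m<\omega$, such that $X$ is the unique set satisfying $\psi(X,B,U_1,\dots,U_k,w)$ in $V(A)$. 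Fix a condition $p\in P$ forcing all of this, and work in $V(A^{\omega+1})$.

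Next I would use the symmetry of $P$: for any finite $\bar a\subset A^{\omega+1}$ disjoint from $\dom p$, the permutation $\pi_{\bar a}$ of $P$ flipping the coordinates in $\bar a$ fixes $p$ and fixes $\dot A$ (since $A$ is the set of all finite $\Delta$-changes of $X$, the name $\dot A$ is symmetric under coordinate flips). Because $B$ is definable from $A$ and parameters from $V$, $\pi_{\bar a}$ also fixes $\dot B$ as a set, though it moves its elements. Hence $p=\pi_{\bar a}p$ forces that $\pi_{\bar a}\dot U_1,\dots,\pi_{\bar a}\dot U_k\in\dot B$ and that $\dot X\,\Delta\,\bar a=\pi_{\bar a}\dot X$ is the unique set satisfying $\psi(\dot X\,\Delta\,\bar a,\dot B,\pi_{\bar a}\dot U_1,\dots,\pi_{\bar a}\dot U_k,\check w)$. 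Therefore the map $\bar a\mapsto(\pi_{\bar a}U_1,\dots,\pi_{\bar a}U_k)$ is injective from the finite subsets of $A^{\omega+1}$ (off $\dom p$) into $B^k$; composing with the single-element case gives an injection of $A^{\omega+1}$ into $B^k$.

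Now I invoke the hypothesis: there is an injective map from $B$ into $\mathcal{P}^{<\omega}(\mathrm{On})$, so from $B^k$, hence from $A^{\omega+1}$, into $\mathcal{P}^m(\mathrm{On})$ for some fixed $m<\omega$. By Observation~\ref{obs;KW-implies-low-generator} such an injection of a rank $\omega+1$ set into a rank $\leq m$ set is coded by a rank $m$ set $C\in V(A^{\omega+1})$; since $C$ has rank $<\omega$, the Proposition above ($V(A^{\omega+1})$ and $V(A^\omega)$ have the same sets of rank $<\omega$) and then Corollary~\ref{cor;limit-monro-thm-8} put $C\in V(A^m)$. But from $C$ one can recover an injection of $A^{\omega+1}$, hence of $A^\omega=\bigcup_n A^n$ and of $A^{m+1}$, into a rank $m$ set, so $V(A^{m+1})$ would be generated by a rank $m$ set, contradicting Lemma~\ref{lem;gen-blass-thm}. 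Hence $V(B)\subsetneq V(A)$.

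The main obstacle I anticipate is the very first step: justifying that the parameter $w$ defining $X$ can be taken to lie at a bounded finite level $V(A^{m-1})$, and more importantly that the indiscernibility/symmetry argument truly descends to a contradiction at a \emph{fixed} finite stage $m$ rather than merely showing $A^{\omega+1}$ injects into some rank-$<\omega$ set whose rank could depend on the configuration. Lemma~\ref{lem;Monro-lem6-Aomega+1} is exactly what is needed to replace the "few parameters, finite support" bookkeeping of the finite-stage proof — one must check that the $U_j$'s, being rank $\omega+2$ members of $B$, still admit the finite-tree support $u_1,\dots,u_m\in T$ extracted there, and that applying the coordinate permutations $\pi_{\bar a}$ interacts correctly with those supports. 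Once the uniformity in $m$ is pinned down, the rest is a routine transcription of Proposition~\ref{prop;cong31-not-30}, so I would spend the bulk of the write-up on that bounding argument and treat the symmetry manipulation and the final appeal to Lemma~\ref{lem;gen-blass-thm} briefly.
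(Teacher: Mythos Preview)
Your overall strategy matches the paper's (which simply says the proof follows the outline of Proposition~\ref{prop;cong31-not-30}): run the permutation argument to obtain an injection of $A^{\omega+1}\setminus\dom p$ into $B^k$, compose with the hypothesised injection $B\hookrightarrow\mathcal{P}^{<\omega}(\mathrm{On})$, and derive a contradiction. Everything through the production of an injection $A^{\omega+1}\to\mathcal{P}^m(\eta)$ (for a single $m$, by replacement) is correct.

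The gap is in how you finish. The set $C$ produced by Observation~\ref{obs;KW-implies-low-generator} is \emph{not} of rank $m$: its elements are pairs $(x,f(s))$ with $x\in s\in A^{\omega+1}$, and since each branch $s$ meets every $A^n$, the ranks of these $x$ are unbounded below $\omega$. Thus $C$ is a subset of $\mathcal{P}^{<\omega}(\mathrm{On})$ with elements of unbounded finite rank---rank $\omega$ in the paper's sense---and cannot descend to any $V(A^m)$. Your subsequent claim that an injection of $A^{\omega+1}$ yields one of $A^{m+1}$ is also unjustified; $A^{m+1}$ is not a subset of $A^{\omega+1}$, and nothing in $C$ lets you recover an injection of $A^{m+1}$ inside $V(A^m)$.

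The correct endgame stays at level $\omega+1$ and is shorter than what you attempt. The Claim immediately preceding the lemma ($P$ adds no rank-$\omega$ sets over $V(A^{\omega+1})$) puts $C\in V(A^{\omega+1})$; Observation~\ref{obs;KW-implies-low-generator} then gives $V(A^{\omega+1})=V(C)$; and the earlier Corollary (for any rank-$\omega$ set $X\in V(A^{\omega+1})$ one has $V(X)\subsetneq V(A^{\omega+1})$) yields the contradiction. Relatedly, your ``main obstacle'' is not one: you only need $w\in V(A^{\omega+1})$, not $w\in V(A^{m-1})$, since that already ensures $\pi_a$ fixes $\check{w}$; Lemma~\ref{lem;Monro-lem6-Aomega+1} enters only through the proof of that Claim, not in the permutation step itself.
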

\begin{proof}
The proof follows the same outline as Proposition~\ref{prop;cong31-not-30}.
\end{proof}

\begin{cor}
    $\cong^\ast_{\omega+2,<\omega}<_B\cong^\ast_{\omega+2,\omega}$.
\end{cor}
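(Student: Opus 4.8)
The plan is to deduce the corollary from Lemma~\ref{lem;omega+2,fin-not-omega+2,omega} in exactly the manner Corollary~\ref{cor;Friedman-Stanley} is deduced from Lemma~\ref{lem;gen-blass-thm}, and Corollary~\ref{cor;conj-part-1} from Proposition~\ref{prop;cong-ast-n-k}. The reduction $\cong^\ast_{\omega+2,<\omega}\leq\cong^\ast_{\omega+2,\omega}$ is automatic, since $\cong^\ast_{\alpha,\beta}\leq_B\cong^\ast_{\alpha,\gamma}$ whenever $\beta\leq\gamma$ (Section~\ref{sec;HKL-relations}), so it remains to rule out a reduction in the other direction. I would assume toward a contradiction that $f$ is a Borel reduction of $\cong^\ast_{\omega+2,\omega}$ to $\cong^\ast_{\omega+2,<\omega}$, and take $x\in\dom\cong^\ast_{\omega+2,\omega}$ a generic real representing the $\cong^\ast_{\omega+2,\omega}$-invariant $A$ built just above the lemma (so $X$ is $P$-generic over $V(A^{\omega+1})$, $A=\set{X\Delta\bar a}{\bar a\subset A^{\omega+1}\ \text{finite}}$, and $X\in A\subset V(A)$). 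Letting $(A',R')$ be the $\cong^\ast_{\omega+2,<\omega}$-invariant of $f(x)$, Lemma~\ref{lem;reduction-def-generating-set} gives $V(A)=V(A',R')$ with $A$ and $(A',R')$ mutually definable over parameters from $V$.

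Next I would run $(A',R')$ through Claim~\ref{claim;cong-ast-invs}: this yields a set $B$ of rank $\omega+2$, definable from $(A',R')$ and parameters from $V$, with $V(B)=V(A',R')$, and such that $B$ embeds injectively into a set of rank $\beta+1$ for some $\beta<\omega$ --- i.e.\ into a set in $\mathcal{P}^{<\omega}(\mathrm{On})$. Composing the two definitions, $B$ is definable in $V(A)$ from $A$ and parameters from $V$, and $V(B)=V(A)$. This is precisely the configuration forbidden by Lemma~\ref{lem;omega+2,fin-not-omega+2,omega}, which asserts $V(B)\subsetneq V(A)$ for every such $B$. The contradiction completes the proof that $\cong^\ast_{\omega+2,<\omega}<\cong^\ast_{\omega+2,\omega}$.

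All the real work is therefore packed into Lemma~\ref{lem;omega+2,fin-not-omega+2,omega} and the construction of $A^{\omega+1}$ and the tree $T$ preceding it; that is where the main obstacle lies. I expect the proof of the lemma to follow the template of Proposition~\ref{prop;cong31-not-30} with the endgame of Proposition~\ref{prop;cong-ast-n-k}: from $V(B)=V(A)$ one gets $X\in V(B)$, hence $X$ is the unique solution of some $\psi(X,B,U_1,\dots,U_k,z)$ with $U_1,\dots,U_k\in B$ of finite rank (via $B\hookrightarrow\mathcal{P}^{<\omega}(\mathrm{On})$) and $z$ a low-rank ground parameter; absorbing this into the forcing $P$ over $V(A^{\omega+1})$, fix a condition $r\in P$ forcing it, and for each $a\in A^{\omega+1}\setminus\dom r$ use the automorphism $\pi_a$ of $P$ toggling the value at $a$, which fixes $r$, fixes $\dot A$, hence fixes $\dot B$, and sends $\dot X$ to $\dot X\Delta\{a\}$. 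Varying $a$ this produces an injection $a\mapsto(\pi_a U_1,\dots,\pi_a U_k)$ from $A^{\omega+1}$ into $B^k$, hence (composing with the embedding) an injection of $A^{\omega+1}$ into a finite-rank set; by the transfinite form of Observation~\ref{obs;KW-implies-low-generator}, together with Lemma~\ref{lem;Monro-thm8} and Corollary~\ref{cor;limit-monro-thm-8}, this would make $V(A^{\omega+1})$ generated by a set of finite rank, contradicting the failure of $\mathrm{KWP}^\omega$ in $V(A^{\omega+1})$ established after Lemma~\ref{lem;Monro-lem6-Aomega+1}. The delicate point --- the reason this injection is genuinely impossible and not merely unexpected --- is that $P$ adds no rank-$\omega$ sets over $V(A^{\omega+1})$ (the claim just preceding the lemma), so the new set $X$ cannot be captured by finitely many finite-rank parameters over a model that carries no well-ordering of $A^{\omega+1}$.
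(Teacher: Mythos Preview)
Your deduction of the corollary from Lemma~\ref{lem;omega+2,fin-not-omega+2,omega} via Lemma~\ref{lem;reduction-def-generating-set} and Claim~\ref{claim;cong-ast-invs} is correct and is exactly the pattern the paper establishes (as in Corollary~\ref{cor;Friedman-Stanley} and Corollary~\ref{cor;conj-part-1}); the paper gives no separate argument here. Your additional sketch of the lemma itself also tracks the paper's indication that it ``follows the same outline as Proposition~\ref{prop;cong31-not-30}'', with the endgame borrowed from Proposition~\ref{prop;cong-ast-n-k}; the one point to state more carefully is that the injection $A^{\omega+1}\hookrightarrow B^k\hookrightarrow\mathcal{P}^{<\omega}(\mathrm{On})$ is built in $V(A)$, but the coding set it yields has rank $\omega$ and hence, by the claim that $P$ adds no rank-$\omega$ sets, already lies in $V(A^{\omega+1})$, which is what delivers the contradiction with the corollary following Lemma~\ref{lem;Monro-lem6-Aomega+1}.
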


\section{The general case}\label{sec;general-conj-proof}
In this section we continue Monro's construction through all the countable ordinals, thus proving Theorem~\ref{thm;Kinna-Wagner-separation}.
Combined with the techniques from Section~\ref{sec;cong-ast-n-k} we then establish parts (2) and (3) of Conjecture~\ref{conj;HKL}.
This section contains sketches of the arguments.
The focus will be on the few basic ideas that require adaptation.
Based on these changes the details are similar to those presented above.

First we mention a difficulty in generalizing the arguments of Section~\ref{subsec;cong-ast-omega+2} for higher countable ordinals.
Recall that in section~\ref{subsec;cong-ast-omega+1}, in order to add one subset of $A^\omega$, we forced a choice sequence $\seqq{a_n}{n<\omega}\in\prod_n A^n$ by finite approximations.
To avoid adding new sets of small rank, the conditions of the poset need to be sufficiently indiscernible.
To that end, we restricted to those sequences which are $\in$-increasing.
Another solution is to force with finite sequences from the even indices, to add a choice function in $\prod_n A^{2n}$.
By Lemma~\ref{lem;indiscern-Aomega}, the conditions are sufficiently indiscernible.

However, when trying to add many subsets of $A^\omega$ the indiscernibility of the higher levels of $A^\omega$ relative to the lower ones leads to adding new reals (as argued in the beginning of Section~\ref{sec;trans-jumps}).
The tree $T$ was added precisely to \textit{restrict} the indiscernibility by creating relations between elements in higher levels and lower levels.
These relations were based on the $\in$ relation between consecutive levels.

For stage $\omega+\omega$, the construction above would generalize without difficulty.
For limits of limit ordinals, such as $\omega\cdot\omega$, we want to fix a cofinal sequence $\alpha_n<\omega\cdot\omega$, and construct a tree $T$ as in Section~\ref{subsec;cong-ast-omega+2}, with level $n$ in $A^{\alpha_n}$.
For infinitely many $n$, $\alpha_{n+1}$ jumps above $\alpha_n+1$, so the $\in$ relation cannot be used.

The solution is to add a generic tree relation along with the finite approximations to the tree.
First we demonstrate how such construction would work at $\omega$.
Fix an increasing sequence $\seqq{\alpha_n}{n<\omega}$, cofinal in $\omega$, such that $\alpha_{n+1}>\alpha_n+1$ for all $n$.
Consider the poset $\mathcal{T}$ of pairs $(t,R)$ such that $t$ is a finite subset of $\bigcup_n A^{\alpha_n}$, and $R$ is a relation on $t$ such that for some natural number $m$, $(t,R)$ is isomorphic as a rooted tree to $(2^{<m},\sqsubset)$, with root $t\cap A^{\alpha_0}$.
A condition $(t,R)$ extends $(s,Q)$ if $s\subset t$ and $R\rest s\times s=Q$.

The indiscernibility lemma~\ref{lem;indiscern-Aomega} assures that the conditions in $\mathcal{T}$ are sufficiently indiscernible.
As in Section~\ref{subsec;cong-ast-omega+2}, it follows that adding the tree does not add small rank sets.
Furthermore, one can prove analogous indiscernibility lemmas for the model with the tree, e.g. Lemma~\ref{lem;cones-indisc}.
Using such lemma it follows that an infinite set of branches can be added as in Section~\ref{subsec;cong-ast-omega+2}.

\subsection{Invariants for $\cong_\lambda$}\label{subsec;cong-lambda}

Let $\lambda$ be a countable ordinal.
We will add trees through the limit ordinals $\delta<\lambda$ which are completely disjoint from one another.
Fix a sequence $\seqq{C_\delta}{\delta<\lambda\textrm{ is a limit ordinal}}$ where each $C_\delta\colon\omega\lto\delta$ is cofinal in $\delta$.
That is, a partial \textbf{ladder system}.
We require further that
\begin{equation*}
    \textrm{for any }\delta,\gamma,n,m,\textrm{ if }(\delta,n)\not=(\gamma,m)\textrm{ then }C_\delta(n)\notin\{C_\gamma(m)-1,C_\gamma(m),C_\gamma(m)+1\}.
\end{equation*}
This ensures that the cofinal sequences are disjoint and sufficiently indiscernible.
Such condition is not possible for a ladder system on $\omega_1$, and so our construction does not produce a stage $\omega_1$ model.

We carry a construction of $\seqq{A^\alpha}{\alpha<\lambda}$.
At limit stages $\delta<\lambda$ we add a tree $T_\delta$ whose levels are in $\seqq{A^{C_\delta(n)}}{n<\omega}$, as described above.
Similar indiscernibility lemmas as in Section~\ref{sec;trans-jumps} can be established.
Each tree only affects the indiscernibility for elements in the levels of the tree, or adjacent levels.
The condition on the sequences $C_\delta$ above ensures that at stage $\delta$ the levels in the tree $T_\delta$ still satisfy indiscernibility in $V(A^\delta)$ (where $A^\delta=\bigcup_{n<\omega}A^{C_\delta(n)}$), so adding the tree $T_\delta$ does not add low rank sets.

We then add an infinite set of branches $A^{\delta+1}$ through $T_\delta$.
An analogue of Lemma~\ref{lem;Monro-lem6-Aomega+1} can be verified, which allows to continue and define $A^{\delta+2}, A^{\delta+3},...$ as in Section~\ref{sec;Monro-models}.

\subsection{Invariants for $\cong^\ast_{\alpha+2,\beta}$}\label{subsec;cong-ast-lambda+2-alpha}

For any countable ordinal $\alpha$ and $\beta\leq\alpha$, we will construct a good $\cong^\ast_{\alpha+2,\beta}$-invariant as in Section~\ref{subsec;cong-ast-n-k}.
The crucial point is that we can force a function $A^{\alpha+1}\lto A^\beta$ without adding new sets of rank $\alpha$.
E.g., for the case $\alpha=\omega$ the arguments of Section ~\ref{subsec;Monro-posets-tech} can be repeated based on Lemma~\ref{lem;Monro-lem6-Aomega+1}.
Similar indiscernibility lemmas can be established for higher $\alpha$.
Working with a specific $\beta$, it will be convenient to fix a ladder system as above such that neither $\beta-1$, $\beta$ or $\beta+1$ appear as $C_\delta(n)$ for any $\delta$ and $n$.
This verifies part (3) of Conjecture~\ref{conj;HKL}.

\subsection{Invariants for $\cong^\ast_{\lambda+1,\beta}$}\label{subsec;cong-ast-omega+1-k}
It remains to construct good invariants for $\cong^\ast_{\lambda+1,\beta}$ for a limit ordinal $\lambda$ and $\beta<\lambda$.
In this case we cannot simply add a function $A^\lambda\to A^\beta$ without adding low rank sets. Recall that we want to add such a function to have an interesting action of $[A^\beta]^{<\aleph_0}$ on a set of rank $\lambda+1$ (see Section~\ref{subsec;cong-ast-n-k}). 
Instead, we will make modifications to the construction described in Section~\ref{subsec;cong-lambda} to produce such action.
The ideas are all present in the case $\lambda=\omega$ which we describe first.

Fix $\beta=k<\omega$.
As mentioned above we cannot add a function $A^\omega\lto A^k$ without adding low rank sets.
To overcome this problem we will make adjustments along the Monro construction.
At stages $n>k+2$, a generic function $g_n\colon A^n\lto A^k$ will be added, to provide an action of $[A^k]^{<\aleph_0}$ as in Section~\ref{subsec;cong-ast-n-k}.
To make the proofs easier, we will only add such function at odd stages, thus having more indiscernibility.

For example, at stage $k+3$, working in $V(A^{k+3})$, add a generic function $g_{k+3}\colon A^{k+3}\lto A^k$.
Let $\Pi$ be all finite permutations of $A^k$, and $\hat{A}^{k+3}=\set{\pi\circ g_{k+3}}{\pi\in \Pi}$.
We then continue the construction over the model
\begin{equation*}
    V(A^{k+3})[g_{k+3}]=V(A^{k+3})(\hat{A}^{k+3})=V(\hat{A}^{k+3}),
\end{equation*}
adding the set $A^{k+4}$ the same way as in Monro's construction.
That is, a set of generic subsets of $A^{k+3}$.

The main point is showing that the conditions of the poset $P(A^{k+3})$, for adding subsets of $A^{k+3}$, are sufficiently indiscernible in the model $V(\hat{A}^{k+3})$.
For example, Lemma~\ref{lem;Monro-lem6} will hold under the additional assumption that $g_{k+3}(t_i)=g_{k+3}(s_i)$.
Since for any $s_i$ there are infinitely many such $t_i$, Lemma~\ref{lem;strong-Monro-7} still holds, even when $g_{k+3}$ is used as a parameter.
Note that we use here Lemma~\ref{lem;indiscern}, that the elements in the domain of $g_{k+3}$ are indiscernible over the range, which is $A^k$.

Similarly we add $A^{k+5}$, a generic set of subsets of $A^{k+4}$ over $V(\hat{A}^{k+3},A^{k+4})$. 
Next we add a function $g_{k+5}\colon A^{k+5}\lto A^{k}$ generic over $V(\hat{A}^{k+3},A^{k+5})$, and continue in a similar fashion.
Note that the elements of $A^{k+5}$ are indiscernible over $V(\hat{A}^{k+3})$. 
Since no function was added at stage $n+4$, stage $n+5$ is very similar to stage $n+3$ described above.
Inductively, one can prove analogues of lemmas \ref{lem;Monro-lem6} and \ref{lem;strong-Monro-7}, as well as of the indisicernibility lemma \ref{lem;indiscern}.

Finally, we jump to the limit model as in Section~\ref{subsec;cong-ast-omega+1}.
Define $A^\omega= \bigcup_n A^n$ and $\hat{A}^\omega=\bigcup\{\hat{A}^n;\,n=k+3+i,\,i\textrm{ is even}\}$, and work in the model $V(A^\omega,\hat{A}^\omega)$.
In this model, for arbitrary large $n$, we have an ``interesting'' set of rank $n$, $\hat{A}^n$, and an interesting action of $A^k$ on this set.
Let $P$ be the poset of finite functions $p$ such that $p(n)\in \hat{A}^n$.
$P$ does not add $<\omega$-rank sets by similar arguments as in Section~\ref{subsec;cong-ast-omega+1}.
Let $g$ be a $P$-generic, then $\dom g=\set{k+3+i}{i\textrm{ is even}}$ and $g(n)\in \hat{A}^n$.
Let $\hat{\Pi}$ be all finite sequences from $\Pi$.
$\pi=\seqq{\pi_i}{i<m}$ in $\hat{\Pi}$ acts on $g$ by swapping $g(k+3+i)$ with $\pi_i\cdot g(k+3+i)$.
Define
\begin{equation*}
    A=\set{\pi\cdot g}{\pi\in\hat{\Pi}}.
\end{equation*}
We claim that $A$ is a $\cong^\ast_{\omega+1,k}$-invariant and that $V(A^\omega,\hat{A}^\omega)(A)=V(A)$ is not of the form $V(B)$ for any $\cong^\ast_{\omega+1,k-1}$-invariant. 
The argument is similar to Proposition~\ref{prop;cong-ast-n-k}.

For example, one important property that was used in Proposition~\ref{prop;cong-ast-n-k} is that for any condition $p$ there is a permutation $\pi$ fixing $p$ and $A$ yet changing $g$.
The same fact is crucial here, and is true for a different reason.
Given a condition $p$, it makes finitely many choices of elements in $\hat{A}^n$.
Take $\pi=\seqq{\pi_i}{i<m}\in\hat{\Pi}$ such that $\pi_i$ is the identity if $k+3+i\in\dom p$, and $\pi_i$ is not the identity for some $i$.
Then $\pi$ is as desired.

The discussion above verifies part (2) of Conjecture~\ref{conj;HKL} for $\lambda=\omega$.
The proof for arbitrary limit $\lambda<\omega_1$ can be done by combining the construction in this section and the construction of $\cong_\lambda$-invariants, as follows.

Fix $\beta<\lambda$.
Fix a ladder system $\seqq{C_\delta}{\delta\leq\lambda}$ as in Section~\ref{subsec;cong-lambda} such that $C_\lambda(0)>\beta$.
We use this ladder system to construct $\seqq{A^\alpha}{\alpha<\lambda}$ as in Section~\ref{subsec;cong-lambda}. 
Add now generic functions $g_n\colon A^{C_\lambda(n)}\to A^\beta$ as above, and let $\hat{A}^{C_\lambda(n)}=\set{\pi\circ g_n}{\pi\in\Pi}$ where $\Pi$ is all finite permutations of $A^\beta$.
In this case there is no need to skip the even stages: by the requirements on the ladder system the values of $C_\lambda(n)$ are sufficiently far apart and therefore the elements of $A^{C_\lambda(n+1)}$ are sufficiently indiscernible over the elements in $A^{C_\lambda(n)}$ and $\hat{A}^{C_\lambda(n)}$.
It follows that we can add the function $g_{n+1}$ without adding smaller rank sets (at this stage).
As described before (and as done to construct a $\cong^\ast_{\omega+1,<\omega}$-invariant in Section~\ref{subsec;cong-ast-omega+1}), we add a generic $g\in\prod_{n<\omega}\hat{A}^{C_\lambda(n)}$ and consider $A=\set{\pi\cdot g}{\pi\in\hat{\Pi}}$, where $\hat{\Pi}$ is the set of all finite sequences from $\Pi$. 
Then $A$ is our good invariant for $\cong^\ast_{\lambda+1,\beta}$.

\end{document}